%% file: 03.11.2025.tex
\documentclass[11pt]{article}

\usepackage[margin=1in]{geometry}
\usepackage{amsmath}
\usepackage{geometry}
\usepackage{indentfirst}
\usepackage{bm}
\usepackage{accents}
\usepackage{changepage}
\usepackage{color}
\usepackage{bigints}
\usepackage{yhmath}
\usepackage{dirtytalk}
\usepackage{tensor}

\usepackage{pifont}
\usepackage{multirow}
\usepackage{enumerate}
\usepackage{enumitem}
\usepackage{mathtools}
\usepackage{latexsym,amsthm,amsfonts,amssymb,amsmath,amsxtra,mathrsfs,stmaryrd,bm}
\usepackage{figsize,subfigure,graphicx,ifthen,calc,psfrag}
\usepackage{upgreek}
\usepackage[greek,english]{babel}
\usepackage[numbers,sort&compress]{natbib}

\usepackage{comment}
\usepackage[backref=page]{hyperref}
\usepackage{cleveref} 
\usepackage{caption}
\usepackage{float}
\usepackage{svg}

\DeclareMathOperator*{\argmin}{arg\,min}
\newtheorem{thm}{Theorem}[section]
\newtheorem{prop}[thm]{Proposition}
\newtheorem*{defi*}{Definition}
\newtheorem{defi}[thm]{Definition}

\newtheorem{lem}[thm]{Lemma}
\newtheorem{rem}[thm]{Remark}

\makeatletter
\newcommand{\mylabel}[2]{#2\def\@currentlabel{#2}\label{#1}}
\makeatother

\numberwithin{equation}{section}

\allowdisplaybreaks

\usepackage{nomencl}
\makenomenclature

\usepackage{etoolbox}
\providetoggle{nomsort}
\settoggle{nomsort}{true} % true = sort by use, false = sort as usual

\makeatletter
\iftoggle{nomsort}{%
    \let\old@@@nomenclature=\@@@nomenclature        
        \newcounter{@nomcount} \setcounter{@nomcount}{0}%
        \renewcommand\the@nomcount{\two@digits{\value{@nomcount}}}% Ensure 10>01
        \def\@@@nomenclature[#1]#2#3{% Taken from package documentation
          \addtocounter{@nomcount}{1}%
        \def\@tempa{#2}\def\@tempb{#3}%
          \protected@write\@nomenclaturefile{}%
          {\string\nomenclatureentry{\the@nomcount\nom@verb\@tempa @[{\nom@verb\@tempa}]%
          \begingroup\nom@verb\@tempb\protect\nomeqref{\theequation}%
          |nompageref}{\thepage}}%
          \endgroup
          \@esphack}%
      }{}
\makeatother

\usepackage{multicol}
\usepackage{algorithm}
\usepackage[noend]{algpseudocode}

\usepackage{algorithm}
\usepackage{algpseudocode}

\newcounter{example}
\newcommand{\example}{%
  \refstepcounter{example}%
  \par\medskip
  \noindent\textbf{Example~\theexample.}\quad
}

\crefname{example}{Example}{Examples}
\Crefname{example}{Example}{Examples}

\usepackage{float}

\title{
Extended Walk-on-Spheres Algorithm\\ for Linear and Nonlinear Elliptic Problems of Divergence-type \\[0.5 cm]
}
\author{
Iulian C\^{i}mpean$^{1,3,}$\thanks{Corresponding author. E-mails: \texttt{iulian.cimpean@unibuc.ro}},
Andreea Grecu$^{2,}$\thanks{E-mails: \texttt{andreea.grecu@ismma.ro}},
Arghir Zarnescu$^{4,5,3,}$ \thanks{E-mails: \texttt{azarnescu@bcamath.org}}
}

\date{\small
$^1$Department of Mathematics, Faculty of Mathematics and Computer Science, University of Bucharest, 
14~Academiei, 010014~Bucharest, Romania\\
$^2$``Gheorghe Mihoc~--~Caius Iacob" Institute of Mathematical Statistics and Applied Mathematics of the Romanian Academy, 13~Calea 13 Septembrie, 050711~Bucharest, Romania\\
$^3$``Simion Stoilow" Institute of Mathematics of the Romanian Academy, 21~Calea Grivi\c{t}ei,  010702~Bucharest, Romania \\
$^4$ BCAM, Basque Center for Applied Mathematics, Mazarredo 14, 48009 Bilbao, Bizkaia, Spain\\
$^5$IKERBASQUE, Basque Foundation for Science, Maria Diaz de Haro 3, 48013 Bilbao, Bizkaia, Spain
}

\begin{document}

\maketitle
%%%%%%%%%%%%%%%%%%%%%%%%%%%%%%%%%%%%%%%%%%%%%%%%%%%%%
\begin{abstract}
The Walk-on-Spheres algorithm, introduced by M. E. Muller in 1956, is a well known Monte Carlo method that leverages Brownian exit distributions from spheres to solve the Laplace equation with Dirichlet boundary conditions. 
Its mesh-free nature, robustness on complex geometries, favorable scaling with dimension, and intrinsic parallelism distinguish it from mesh-based solvers. However, its efficient applicability has been essentially limited to operators that admit explicit probabilistic exit laws, excluding most variable-coefficient and nonlinear elliptic operators.

We propose a general framework that aims to overcome this limitation by using the classical Dirichlet Laplacian and harmonic extension as universal building blocks. Rather than seeking a custom stochastic representation for each operator, we employ Walk-on-Spheres to precompute a reusable numerical operator toolbox that approximates the inverse Dirichlet Laplacian, the harmonic extension operator, and their gradients. 
These precomputed operators are then used to represent candidate solutions and to transform arbitrary Dirichlet boundary value problems into a finite-dimensional algebraic system/optimization problem for an unknown source term. 
Solving the resulting algebraic system/optimization problem and substituting back yields an approximate solution to the original PDE.
Even more, for a general linear second order elliptic operator, the above mentioned precomputed toolbox can be directly used to obtain not just an approximation of a certain solution of the corresponding generalized Dirichlet problem, but an estimator of both the Green's integral operator and the elliptic measure operator.

Numerical experiments on a range of benchmarks, including non-symmetric and anisotropic linear elliptic equations, semilinear and quasilinear problems, demonstrate the method’s flexibility and efficiency. %promising performance.

\end{abstract}

\noindent \textbf{Keywords:} Boundary value problem, Probabilistic representation, Monte Carlo method, Walk-on-Spheres, Voronoi diagram, GPU computing.

\medskip
\noindent \textbf{Mathematics Subject Classification:} 65N75, 35J25, 65C05, 65N80

% 65N21  	Numerical methods for inverse problems for boundary value problems involving PDEs
%65N75  	%Probabilistic methods, particle methods, etc. for boundary value problems involving PDEs
% 35J25  	Boundary value problems for second-order elliptic equations
%65C05  	%Monte Carlo methods
%65N80  	%Fundamental solutions, Green's function methods, etc. for boundary value problems involving PDEs
%65N35  	%Spectral, collocation and related methods for boundary value problems involving PDEs
% 60J65  	Brownian motion
% 65C40  	Numerical analysis or methods applied to Markov chains
% 60J60  	Diffusion processes
% 65N12  	Stability and convergence of numerical methods for boundary value problems involving PDEs
% 65N15  	Error bounds for boundary value problems involving PDEs
%65N20  	Numerical methods for ill-posed problems for boundary value problems involving PDEs
%65N21  	Numerical methods for inverse problems for boundary value problems involving PDEs
%65N22  	Numerical solution of discretized equations for boundary value problems involving PDEs [See also 65Fxx, 65Hxx]
%65N25  	Numerical methods for eigenvalue problems for boundary value problems involving PDEs

{\footnotesize{\tableofcontents}}

%%%%%%%%%%%%%%%%%%%%%%%%%%%%%%%%%%%%%%%%%%%%%%%%%%%%%%%%%%%%%%%%%%%%%%%%%%%%%%%%%%%%%%%%%%
%introduction

\input{Introduction.tex}

%%%%%%%%%%%%%%%%%%%%%%%%%%%%%%%%%%%%%%%%%%%%%%%%%%%%%%

% General framework

\input{General_framework.tex}

\input{numerical}

%%%%%%%%%%%%%%%%%%%%%%%%%%%%%%%%%%%%%%%%%%%%%%%%%%%%%%

%%%%%%%%%%%%%%%%%%%%%%%%%%%%%%%%%%%%%%%%%%%%%%%%%%%%%%
 %\appendix

\input{Appendix}

%%%%%%%%%%%%%%%%%%%%%%%%%%%%%%%%%%%%%%%%%%%%%%%%%%%%%

\bigskip
\noindent \textbf{Acknowledgements.} I.C. acknowledges support from the Ministry of Research, Innovation and Digitization (Romania), grant CF-194-PNRR-III-C9-2023.

The work of A.Z. has been partially supported by the Basque Government through the BERC 2026-2029 program and by the Spanish State Research Agency through BCAM Severo Ochoa excellence accreditation Severo Ochoa CEX2021-00114 and through project PID2023-146764NB-I00
funded by MICIU/AEI/10.13039/501100011033. A.Z. was also partially supported by a grant of the Ministry of Research, Innovation and Digitization, CNCS-UEFISCDI, project number PN-IV-P2-2.1-T-TE-2023-0219, within PNCDI IV.

\addcontentsline{toc}{section}{\refname}
% Add references to ToC (and bookmarks)
% \bibliographystyle{plain}
\bibliographystyle{abbrv}
% \bibliography{mybibliography}

\end{document}

%% file: Introduction.tex
\section{Introduction}\label{sec:Introduction}

The body of literature on numerical methods for partial differential equations is vast, and it lies well beyond our scope to attempt even a partial survey here. 
We simply point to a few representative monographs - such as \cite{smith1985numerical, quarteroni1994numerical, braess2001finite, strikwerda2004finite, bathe2006finite, thomee2007galerkin, johnson2009numerical} - together with the references therein.
For the sake of comparison with the methods developed in the present work, we would like to point out from the very beginning that most  {\it deterministic } numerical methods for approximating the solution of partial differential equations comprise three core components:

\medskip
\noindent{$\circ\quad$}\textit{A finite-dimensional representing function space \(\chi\).} 
The approximate solution is sought in a suitably chosen finite-dimensional subspace \(\chi\). Typical choices include piecewise-polynomial finite-element spaces such as \(P_1\) and \(P_2\) Lagrange elements, discontinuous Galerkin spaces, structured or adaptive finite-difference grids, spectral bases based on Fourier or Chebyshev polynomials, spline spaces, neural networks, etc.
  
\medskip
\noindent{$\circ\quad$}\textit{Efficient evaluation of functions in \(\chi\), as well as their derivatives and integrals.} 
Practical implementations require computationally effective procedures for evaluating basis functions and their gradients and for computing integrals that arise in weak formulations. Representative techniques include reference-element evaluation with Jacobian mappings for finite elements, numerical quadrature rules, finite-difference, fast Fourier transforms, automatic differentiation, etc.
  
\medskip
\noindent{$\circ\quad$}\textit{Reduction to and solution of algebraic systems/optimization problems.} 
The discretization and evaluation steps yield systems of linear or nonlinear algebraic equations/optimization problems that must be solved numerically. Common approaches comprise assembly of sparse linear systems and solution by direct sparse factorizations or iterative Krylov methods, gradient methods for nonlinear problems, multilevel or domain-decomposition techniques, etc.

\medskip
The selection of the function space, the evaluation strategy, and the solver for the finite dimensional problem, is typically done in a mutually consistent manner so as to balance accuracy, stability, and computational efficiency for the problem at hand.

\medskip
%\noindent{$\diamond\quad$} 
A traditionally different numerical approach for PDEs is provided by {\it the probabilistic methods} (see e.g. \cite{beck2021nonlinear, giles2015multilevel, graham2013stochastic, kroese2013handbook, barth2011multi, talay2006probabilistic, heinrich2001multilevel, cheridito2007second, fahim2011probabilistic} and the references therein) which are typically based on representing the solution $u$ of a given PDE, at a given point $x$ in the (possibly space-time) domain of definition of $u$ by
\begin{equation}\label{eq:probabilistic_method}
    u(x)=\mathbb{E}\left\{X_x\right\},
\end{equation}
where $X_x$ is a random variable that is obtained from an underlying stochastic process that corresponds to the operator involved in the formulation of the PDE, and depends on the given data (Dirichlet, Neumann, source terms, etc) and the point $x$.
Then, for each $x$ fixed, one uses a Monte Carlo scheme that approximates the average in \eqref{eq:probabilistic_method}, thus rendering an estimator of the value $u(x)$.

Monte-Carlo methods that directly represent $u(x)$ as in \eqref{eq:probabilistic_method}, typically do not follow the three-bullet scheme from above. 
When available, they have several remarkable features that distinguish them from the traditional deterministic methods: are mesh-free, scale at most polynomially with the dimension of the space, and are pleasingly parallel. 

Usually, deterministic methods that follow the three-bullet scheme are preferred for low dimensional PDEs, whilst probabilistic methods based on representations like \eqref{eq:probabilistic_method} tend to be significantly more efficient for high-dimensional problems, or when the unknown solution is aimed to be approximated only at a few number of points in the given domain.

Both of the approaches presented above can nevertheless be combined, as in the case of {\it Deep Kolmogorov methods} (see e.g. \cite{berner2020numerically,beck2021solving,cimpean2025error}), that essentially follow the three-bullet structure from above, but at the same time, rely on probabilistic representations like \eqref{eq:probabilistic_method} derived from the theory of stochastic differential equations, in order to construct a meaningful loss function to be optimized.

In this paper, our approach builds upon perhaps one of the most efficient schemes that falls in the class of probabilistic methods for PDEs, namely the classical Walk-on-Spheres (WoS) method.
WoS, introduced by M. E. Muller in 1956 \cite{Muller}, leverages Brownian exit distributions from spheres to efficiently represent (in the spirit of \eqref{eq:probabilistic_method}) the solution $u$ to the Laplace equation with Dirichlet boundary conditions:
\begin{equation*}
    \Delta u =f  \mbox{ in } D, \quad u=g \mbox{ on } \partial D,\quad D\subset \mathbb{R}^d.
\end{equation*}
For more recent developments of this method we refer to \cite{deaconu2006random, KyOs18, CiGrMaI,BCPZ, sabelfeld1995integral,Bossy_Champagnat_Maire_Talay_2010, deaconu2017walk, sawhney2020monte, shalimova2020random, sabelfeld2022global,ShalimovaSabelfeld+2023+79+93,miller2023boundary,WoB23} and the references therein.

The WoS method is distinguished from traditional mesh/grid-based solvers by its mesh-free nature, robustness on complex geometries and irregular data, favorable scaling with dimension, and intrinsic parallelism. 
However, despite its  impressive capabilities,  the use of this method has been limited  for a long time to operators that admit explicit probabilistic exit laws, excluding most variable-coefficient and nonlinear elliptic operators.
Recently, the applicability of the WoS algorithm to more general operators has been numerically investigated 
in \cite{sawhney2022grid} for second order elliptic operators with scalar and $C^2$-smooth diffusion coefficients, through a fixed point integral equation solved by a recursive application of the standard WoS algorithm; this method has been further used in \cite{viswanath2026operator} in order to train a neural network that aims at learning the solution operator of the same PDE. 

The aim of this work is to take a significant step forward and develop a general (yet simple) robust and versatile framework that uses the classical WoS algorithm in conjunction with a cheap domain discretization in order to construct a Laplacian Operator Toolbox (see \eqref{eq:LOT}) that, once precomputed and stored, can be used to solve much more general classes of boundary value problems of divergence-type, such as, for instance:
\begin{enumerate}
    \item[-] {\it linear elliptic PDEs with (possibly non-smooth) variable coefficients} of the type
    \begin{equation}\label{linear_intro}
        \nabla \cdot (A \nabla u) + b  \cdot \nabla u - cu = f  \mbox{ in } D, \quad
        u = g  \mbox{ on } \partial D,
    \end{equation}
    or (non)linear
    \item[-] {\it Euler-Lagrange equations} satisfied by the critical points of energy functionals $\mathcal{F}$ of the form
    \begin{equation}\label{energy_intro}
        \mathcal{F}(u)=\frac{1}{\lambda(D)}\int_D F(x,u(x),\nabla u(x)) \; dx, \quad u\in {\sf D}(\mathcal{F}),
    \end{equation} 
    where $D\subset \mathbb{R}^d$ is a bounded connected open set, $\lambda(D)$ is its volume, whilst $F:D \times \mathbb{R} \times \mathbb{R}^d\rightarrow \mathbb{R}$ is a sufficiently smooth function, e.g. of class $C^1$.
\end{enumerate}

\paragraph{Informal description of the proposed method.}
Given $D\subset \mathbb{R}^d$ a bounded, open and connected set (i.e. bounded domain), 
let $\mathcal{U}:=\left(-\frac{1}{2}\Delta_0\right)^{-1}$ be the usual inverse of $-\frac{1}{2}\Delta_0$, where $\Delta_0$ is the homogeneous Dirichlet Laplacian on $D$, denote by $U$ its corresponding integral operator, and let $H$ denote the operator that maps a bounded and measurable function on $\partial D$ to its harmonic extension in $\overline{D}$ (see \eqref{eq:mathcal U}, \Cref{defi:U_0}, \Cref{defi:H}).

\medskip
Our proposed method can be briefly described as follows:

\medskip
\noindent{\scalebox{0.7}{\ding{117}}$\quad$} The representing space $\chi$ in which we seek for an approximate solution is given by
    \begin{equation*}
        \chi \coloneqq {U}\left(L^\infty(D)\right)+H\left(\mathcal{B}_b(\partial D)\right),
    \end{equation*}
    where 
    \begin{equation*}
        \mathcal{B}_b(\partial D):=\{g:\partial D \rightarrow \mathbb{R} : g \mbox{ is measurable and bounded}\}.
    \end{equation*}
    Note that $\chi$ is a very rich space, in particular $\chi\cap H^1(D)$ is dense in $H^1(D)$ under minimal assumptions on the regularity of $D$, e.g. if $D$ is Lipschitz.

\medskip
\noindent{$\scalebox{0.7}{\ding{117}}\quad$} Both operators ${U}$ and $H$, together with their derivatives $\nabla {U}$ and $\nabla H$, when applied to bounded functions, admit continuous representatives given by expected values as in \eqref{eq:probabilistic_method}, and these representations render efficient Monte Carlo estimators that are easily implemented through parallel algorithms.
Furthermore, given a (possible unstructured) set of points in $D\cup \partial D$, the operators ${U}, \nabla {U},$ and $H, \nabla H$, are approximated by random matrices that are derived from the above mentioned Monte Carlo estimators.

The above mentioned point locations and the resulting matrices form what we call the {\it Laplacian Operator Toolbox (LOT)} associated to the domain $D$; see \eqref{eq:LOT}.

For computational efficiency, these matrices are retrieved by a simultaneous GPU-parallel Monte Carlo computation of the occupation time of the Voronoi cells corresponding to the given discretization points. 

\medskip
\noindent{\scalebox{0.7}{\ding{117}}$\quad$} Once the above mentioned Laplacian Operator Toolbox associated to a domain $D$ has been precomputed and stored, one can easily use it to solve various types of elliptic PDEs of divergence-type, like  generalized Dirichlet problem associated to the linear elliptic PDE with variable coefficients \eqref{linear_intro}, or Euler-Lagrange equations associated to the energy functionals\eqref{energy_intro}; we do this in \Cref{s:solving PDEs}.

In \Cref{sec:numerical examples}, we test the method described above on a range of benchmarks, including non-symmetric and anisotropic linear elliptic equations, semilinear and quasilinear problems with Dirichlet boundary conditions such as:
\begin{enumerate}[itemsep=-2pt]
    \item[-] Laplace and Poisson problem $\bf{-\Delta u = 0}$, $\bf{-\Delta u = f}$ (Examples \ref{example Laplace} - \ref{example Poisson});
    \item[-] second order linear elliptic equation $\bf{ \mathbf{div} (A(x) \nabla u ) +   b(x) \cdot \nabla u - c u  = f}$ (Examples \ref{example elliptic div} - \ref{example elliptic div min} );
    \item[-] $p-$Laplace problem $\bf{ -  \mathbf{div} (|\nabla u|^{p-2} \nabla u) = f}$ (Examples \ref{example plaplace disk} - \ref{example plaplace square});
    \item[-] minimal surface equation $ \bf{\mathbf{div} \left( \frac{\nabla u}{\sqrt{1+ |\nabla u|^2}} \right) = 0}$ (Example \ref{example minimal surface});
    \item[-] semilinear elliptic equation $\bf{\Delta u + g(u) = f}$(Example \ref{example semilinear}).
\end{enumerate}

\paragraph{Organization of the paper.}

In \Cref{s:1}, given merely a bounded, open and connected set $D\subset \mathbb{R}^d$, we rigorously derive the Laplacian Operator Toolbox needed in the next section in order to solve the general PDEs \eqref{linear_intro} and \eqref{energy_intro} from above.
More precisely:
\begin{enumerate}[itemsep=-2pt]
    \item[-]  In \Cref{ss1} we recall some crucial regularity properties for the Poisson and harmonic extension operators $U$ and $H$, respectively, and then we revisit the probabilistic representations for $U,\nabla U,H,\nabla H$ (see \Cref{prop:U_0 and DU_0} and \Cref{prop:H and DH}, as well as \Cref{prop:removing singularities}). 
    \item[-] In \Cref{ss:MC_estimators} we derive the Monte-Carlo estimators for $U,\nabla U,H,\nabla H$, rendered by the WoS algorithm, and we prove their consistency in \Cref{prop:lim_MN}.
    \item[-] In \Cref{ss:matrix_MC}, by relying on a cheap partitioning of the domain, we derive the announced Laplacian Operator Toolbox \eqref{eq:LOT} which essentially consists of random matrices that estimate $U,\nabla U,H,\nabla H$. 
    The consistency of these estimators is proved in \Cref{prop:general_convergence}.
    A basic example of partitioning is given by Voronoi diagrams, briefly explained at the end of \Cref{ss:matrix_MC}; this partitioning is also the main one used in our implementations in \Cref{sec:numerical examples}.
    \item[-] Finally, in \Cref{ss:pseudocode} we present the pseudocode algorithms that enable the implementation of the Laplacian Operator Toolbox \eqref{eq:LOT} mentioned above.
\end{enumerate}
Further, in \Cref{s:solving PDEs} we use the Laplacian Operator Toolbox introduced in \Cref{s:1} in order to solve general PDEs as those in \eqref{linear_intro} and \eqref{energy_intro}. 
We do this systematically, namely 
\begin{enumerate}[itemsep=-2pt]
    \item[-] In \Cref{ss:linearPDE} we treat general linear elliptic PDEs of the type \eqref{linear_intro}.
    \item[-] In \Cref{ss:EL} we deal with Euler-Lagrange equations associated to \eqref{energy_intro}.
    \item[-] In \Cref{sec:numerical examples} we present the numerical simulations for Examples \ref{example Laplace} - \ref{example semilinear} listed above. In the case of the Laplace and Poisson problems in Examples \ref{example Laplace} - \ref{example Poisson} we also make comparisons with the finite element method. 
\end{enumerate}
In Appendix \ref{appendix numerical} are provided supplementary numerical details for Examples \ref{example elliptic div} - \ref{example semilinear}, and in Appendix \ref{appendix proofs} we give the proofs for several statements from \Cref{s:1}.

%% file: General_framework.tex
\section{A Laplacian Operator Toolbox (LOT) through WoS}\label{s:1}
In this section we present systematically the construction of the Laplacian Operator Toolbox mentioned in Introduction \ref{sec:Introduction}; the resulting toolbox is summarized in \eqref{eq:LOT}.

\subsection{Probabilistic representations of functions and their gradients}\label{ss1}

Let $D\subset \mathbb{R}^d$ be a bounded domain (i.e. a bounded, open and connected set) and consider $\left(\Delta_0,{\sf D}(\Delta_0)\right)$ be the homogeneous Dirichlet Laplacian on $D$, with domain ${\sf D}(\Delta_0) = \{ u \in H^1_0(D), \, \Delta u \in L^2(D) \}$.
Recall that
\begin{equation*}
    {\sf D}(\Delta_0)\subset H_0^1(D) \mbox{ densely}, \quad \mbox{and the extension\;} \Delta_0:H_0^1(D)\rightarrow H^{-1}(D) \mbox{ is a continuous linear bijection}.
\end{equation*}
The bounded linear operator $\mathcal{U}$ defined by
\begin{equation}\label{eq:mathcal U}
    \mathcal{U}:H^{-1}(D)\rightarrow H_0^1(D), \quad  \mathcal{U}(f):=\left(-\frac{1}{2}\Delta_0\right)^{-1}
\end{equation}
satisfies
\begin{equation*}
\mathcal{U}\left(L^2(D)\right)={\sf D}(\Delta_0).
\end{equation*}
Note that unless the domain $D$ is sufficiently smooth, ${\sf D}(\Delta_0)$ is larger than $H^2(D)\cap H_0^1(D)$ \cite{Grisvard85}. 
However, by e.g. \cite[Theorem 8.8 \& Theorem 8.10]{GiTr01}, we have
\begin{equation*}
    \mathcal{U}f\in W^{k+2,2}_{\sf loc}(D)\cap H_0^1(D)\quad \mbox{for every } f\in W^{k,2}(D), \quad k\geq 0.
\end{equation*}
Moreover, by Calderón–Zygmund estimates we have
\begin{equation*}
    \mathcal{U}f\in W^{2,p}_{\sf loc}(D)\cap H_0^1(D)\quad \mbox{for every } f\in L^p(D), \quad p> 1.
\end{equation*}

\begin{defi}\label[defi]{defi:U_0}
Let $G_D(x,y)$ denote the Green function of $\Delta_0$ on $D$. 
We set
\begin{equation*}
    Uf(x) \coloneqq - 2\int_D f(y) G_D(x,y )\;dy, \quad \mbox{for every } x\in D \mbox{ and } f\in L^p, \, p>d/2.
\end{equation*}
\end{defi}
We have that
\begin{equation*}
    Uf\in C(D)\cap L^\infty(D), \quad \mbox{and} \quad \mathcal{U}f=Uf \quad dx\mbox{-a.e.}, \,f\in L^p, \, p>d/2,
\end{equation*}
see \cite[Theorem 8.31]{GiTr01} showing that $\mathcal{U}f\in  C(\bar{D})$
and by e.g. \cite[Theorem 8.17, Theorem 8.24, \&  Theorem 8.25]{GiTr01}, we deduce that
\begin{equation*}
   Uf \mbox{ is locally Holder,} \quad f\in L^p, p>d/2.
\end{equation*}
Moreover, by e.g. \cite[Theorem 8.31]{GiTr01} we deduce:
\begin{equation*}
    \mbox{If } D \mbox{ is regular, then}\quad Uf\in C(\overline{D}) \mbox{ and } Uf(x)=0, \, x\in \partial D, \quad \mbox{ for every }f\in L^p, \, \,p>d/2.
\end{equation*}
Finally, by \cite[Corollary 8.36]{GiTr01}, we have
\begin{equation*}
    Uf\in C^{1,\alpha}_{\sf loc}(D) \quad \mbox{for every } \alpha<1, \quad \mbox{whenever } f\in L^\infty(D).
\end{equation*}

Throughout, we set
\begin{equation}\label{eq:r}
    r(x):=\inf_{y\in \partial D}\{\|x-y\| : y\in \partial D\}, \quad x\in \overline{D}.
\end{equation}
Also, let us recall the definition of the the Walk-on-Spheres (WoS) chain  $(X^x_k)_{k \geq 0}$, for $x \in D$: 
\begin{equation}\label{eq:wos chain}
    \begin{cases}
        X^x_0=x\\
        X^x_{k+1}=X^x_k + r(X^x_k) \mathbf{U}_k, \quad k \geq 0,
    \end{cases}
\end{equation}
where $(\mathbf{U}_k)_{k \geq 0}$ are independent and identically distributed random variables on a probability space $(\widetilde{\Omega}, \widetilde{\mathcal{F}},\widetilde{\mathbb{P}})$, with
\begin{equation}\label{eq:Uk iid}
    \mathbf{U}_k \sim {\rm Uniform}(\partial B (0,1)), \quad k \geq 0,
\end{equation}
i.e. uniformly distributed on the $(d-1)$-dimensional unit sphere from $\mathbb{R}^d$ centered at the origin, $\partial B (0,1)$. 

Further, we let $(B_t)_{t\geq 0}$ be a standard $d$-dimensional Brownian motion defined on a filtered probability space $(\Omega,\mathcal{F},\mathcal{F}_t,\mathbb{P})$ satisfying the usual hypotheses, and consider the realization of the Brownian motion started at $x\in \mathbb{R}^d$ given by
\begin{equation*}
    B_t^x:=x+B_t,\quad t\geq 0.
\end{equation*}
Moreover, if $A\in \mathcal{B}(\mathbb{R}^d)$, we set
\begin{equation*}
    \tau^x_{A}:=\inf\{t>0 : B_t^x\in A\}.
\end{equation*}

In the next two results we present the key probabilistic representations for $U,\nabla U, H,\nabla H$; they are essentially known ( \cite{sawhney2020monte,BCPZ,cimpean2025quantitative} are some recent works where they have been used), however, for completeness, we also provide rigorous proofs.

\begin{prop}[Probabilistic representation for $U$ and $\nabla U$]\label[prop]{prop:U_0 and DU_0}
    The following assertions hold:
    \begin{enumerate}
        \item[(i)] Let $Y$ be a real valued random variable defined on $(\widetilde{\Omega}, \widetilde{\mathcal{F}},\widetilde{\mathbb{P}})$, independent of $(\mathbf{U}_k)_{k \geq 0}$ \eqref{eq:Uk iid}, with probability density function
        \begin{equation}\label{eq:density Green}
            f_Y(y) = -2 d G_{B(0,1)}(0,y), \quad y \in B(0,1),
        \end{equation}
        where $G_{B(0,1)}$ is the Green function for the Laplace operator in $B(0,1)$ with zero boundary conditions, hence $G_{B(0,1)}(0,y)$ has the explicit representation
        \begin{equation*}
            G_{B(0,1)}(0,y)
            =\begin{cases}
                \frac{1}{2\pi}\ln{\|y\|},& \quad d=2 \\
               - \frac{1}{d(d-2)V_d}\left[1-\frac{1}{\|y\|^{d-2}}\right],&\quad d\geq 3
            \end{cases},
            \quad y \in B(0,1).
        \end{equation*}
where $V_d = \dfrac{\pi^{d/2}}{\Gamma(d/2+1)}$ the volume of the unit $d$-ball.
        For every $f\in L^p(D)$, $p>d/2$, we have
        \begin{align}\label{eq:U0_rep_intro}
        U f (x) 
        &= \mathbb{E} \left\{ \int_{0}^{\tau^x_{\partial D}} f(B^x_t) \ d t \right\}=\dfrac{1}{d} \mathbb{E} \left\{ \sum_{k \geq 0} r^2(X^x_k) f(X^x_k + r(X^x_k) Y)   \right\}, \quad x \in D.
        \end{align}
        \item[(ii)] Let $\widetilde{Y} \sim {\rm Uniform}(B (0,1))$, i.e. uniformly distributed in the $d$-dimensional unit ball $B(0,1)$. For every $f\in L^\infty(D)$, we have
        \begin{equation}\label{eq:grad_U0_rep_intro}
        \begin{aligned}
        \nabla U f (x) 
        & = \dfrac{d}{r(x)^2} \ \mathbb{E} \left\{ \int_{\tau^x_{\partial B(x,r(x))}}^{\tau^x_{\partial D} }f\left(B^{B^x_{\tau^x_{\partial B (x,r(x))}}}_t\right) \ d t \  \left( 
         B^x_{\tau^x_{\partial B (x,r(x))}} -x \right)\right\}\\
        & \quad + 2 \dfrac{r(x)}{d} \ \mathbb{E} \left\{ \widetilde{Y} \left( \dfrac{1}{|\widetilde{Y}|^d} - 1 \right) f(x+r(x) \widetilde{Y} )\right\}\\
        &=\dfrac{1}{r(x)^2} \ \mathbb{E} \left\{ \left( 
         X_1^{x} -x \right)\sum\limits_{k\geq 1} r^2 \left(X^x_k \right) f \left(X^x_k + r\left(X^x_k\right)Y \right) \  \right\} \\
         &\quad + 2 \dfrac{r(x)}{d} \ \mathbb{E} \left\{ \widetilde{Y} \left( \dfrac{1}{|\widetilde{Y}|^d} - 1 \right) f(x+r(x) \widetilde{Y} )\right\}, \quad x \in D.
        \end{aligned}
        \end{equation}
    \end{enumerate}
\end{prop}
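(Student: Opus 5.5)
For (i) we first establish the Brownian representation, then pass to the WoS sum by the strong Markov property at the successive exit times from the maximal balls. For (ii) we differentiate the ball decomposition. The Brownian representation $Uf(x)=\mathbb{E}\int_0^{\tau^x_{\partial D}} f(B^x_t)\,dt$ is the classical fact that the occupation measure of Brownian motion killed on leaving $D$ has density $-2G_D(x,\cdot)$: the generator of $B$ is $\frac12\Delta$. We check it first for $f\in C_c^\infty(D)$, $f\geq 0$, via Dynkin's formula applied to $\mathcal U f$ on an exhausting sequence of smooth subdomains $D_n\uparrow D$. We let $n\to\infty$ by monotone convergence, using $\tau_{\partial D_n}\uparrow\tau_{\partial D}$ a.s.; this holds because $D$ is bounded, so $\mathbb E\tau^x_{\partial D}<\infty$. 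For general $f\in L^p$, $p>d/2$, we pass to the limit by density, splitting $f=f^+-f^-$. The justification is $\sup_x\int_D |G_D(x,y)|^{p'}dy<\infty$ for $p'<d/(d-2)$, which follows from $|G_D|\le|\Gamma|$.

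For the WoS form, we set $\sigma_0=0$ and let $\sigma_{k+1}$ be the exit time of $B$ from $B(B_{\sigma_k},r(B_{\sigma_k}))$. By rotational invariance, $(B_{\sigma_k})_k$ has the law of $(X^x_k)_k$. Moreover $\sigma_k\uparrow\tau_{\partial D}$ a.s. (convergence of WoS to the boundary), so
\[
\int_0^{\tau}f(B_t)\,dt=\sum_k\int_{\sigma_k}^{\sigma_{k+1}}f(B_t)\,dt .
\]
By the strong Markov property, each term has conditional expectation equal to the Green potential of $f$ on the ball $B(z,\rho)$, with $z=B_{\sigma_k}$ and $\rho=r(z)$. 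We rescale: $-2\int_{B(z,\rho)}G_{B(z,\rho)}(z,y)f(y)\,dy=-2\rho^2\int_{B(0,1)}G_{B(0,1)}(0,w)f(z+\rho w)\,dw=\frac{\rho^2}{d}\mathbb E f(z+\rho Y)$ by \eqref{eq:density Green}. We also check that $f_Y$ is a probability density, using $\int_{B(0,1)}-2G_{B(0,1)}(0,y)\,dy=\mathbb E\tau_{B(0,1)}=1/d$. Summing over $k$ via Fubini, first for $f\ge0$ and then in general by dominating with $|f|$, gives \eqref{eq:U0_rep_intro}.

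For (ii) we apply the strong Markov property only at $\sigma_1$, with $r=r(x)$:
\[
Uf(y)=U_{B(x,r)}f(y)+\int_{\partial B(x,r)}P_{B(x,r)}(y,dz)\,Uf(z), \qquad y\in B(x,r),
\]
where $P_{B(x,r)}$ is the Poisson kernel. Both terms are differentiable at $y=x$. For the harmonic part, $\nabla_y P(y,z)|_{y=x}=\frac{d}{r^2}(z-x)\,\sigma(dz)$ with $\sigma$ the normalized surface measure; this comes from differentiating the Poisson kernel, or equivalently from the mean-value formula combined with the divergence theorem. This gives $\frac{d}{r^2}\mathbb E\{Uf(B_{\sigma_1})(B_{\sigma_1}-x)\}$. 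We then insert the representation of $Uf(B_{\sigma_1})$ from (i) via the strong Markov property; this yields the first lines of \eqref{eq:grad_U0_rep_intro}. In WoS form, part (i) started at $X_1$ produces $\frac1d\sum_{k\ge1}r^2(X_k)f(\cdot)$, and the factor $d$ cancels. For the ball part we differentiate $-2\int_{B(x,r)}G_{B(x,r)}(y,w)f(w)\,dw$ at $y=x$ under the integral. This is legitimate because $|\nabla_yG|\lesssim|y-w|^{1-d}$ is integrable and $f\in L^\infty$. We use the explicit gradient $\nabla_yG_{B(0,1)}(y,w)|_{y=0}=\frac{1}{dV_d}\,w\,(|w|^{-d}-1)$, which comes from differentiating the Kelvin-reflected term of the ball Green function. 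Rescaling to $B(x,r)$ gives $r\cdot\frac{2}{dV_d}\int_{B(0,1)}w(|w|^{-d}-1)f(x+rw)\,dw$, which equals $2\frac{r}{d}\mathbb E\{\widetilde Y(|\widetilde Y|^{-d}-1)f(x+r\widetilde Y)\}$. Finally, since $Uf\in C^1(D)$ by the regularity stated earlier, the sum of these two derivatives is indeed $\nabla Uf(x)$.

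The main obstacle is the computation of $\nabla_yG_{B(0,1)}(y,w)$ at $y=0$, including its sign and constants, together with justifying differentiation under the integral despite the singularity. The latter we handle by dominated convergence with the $|y-w|^{1-d}$ bound, in the spirit of \Cref{prop:removing singularities}. A secondary technical point is the Fubini interchange of the infinite WoS sum with the weight $(X_1-x)$. Since $|X_1-x|=r$, this is dominated by $r\,\mathbb E\sum_k r^2(X_k)|f|\le C\|f\|_\infty$, using part (i) with $f\equiv1$ and the boundedness of $D$.
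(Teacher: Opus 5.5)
Your argument follows the paper's route step by step. For (i) you split $\int_0^{\tau^x_{\partial D}}$ at the successive sphere-exit times and use the strong Markov property and Brownian scaling; the paper merely cites the first equality. For (ii) you decompose $Uf$ on $B(x,r(x))$ into a Poisson-kernel part and a ball Green potential, differentiate both at the centre, and insert (i) at the first exit point. The structure is right, but the step you yourself singled out contains a sign error. With the paper's convention $G_{B(0,1)}\le 0$, the correct gradient is
\[
\nabla_y G_{B(0,1)}(y,w)\big|_{y=0}=-\frac{1}{dV_d}\,w\left(|w|^{-d}-1\right),
\]
because $y\mapsto G_{B(0,1)}(y,w)$ decreases towards its pole $w$. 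For instance, for $d=2$, $w=(a,0)$, $y=(t,0)$ one has $G_{B(0,1)}(y,w)=\frac{1}{2\pi}\ln\frac{a-t}{1-at}$, whose $t$-derivative at $t=0$ is $-\frac{1}{2\pi}(a^{-1}-a)$. The $|w|^{-d}$ part comes from the fundamental solution and the $-1$ from the reflected term, not from the reflected term alone. With the sign you wrote, the prefactor $-2$ would give $-\frac{2r}{dV_d}\int_{B(0,1)}w(|w|^{-d}-1)f(x+rw)\,dw$. So your final ball-part expression does not follow from the line before it; it is correct only because the true gradient carries the minus sign.

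A second, smaller point concerns your Dynkin argument for $Uf(x)=\mathbb{E}\int_0^{\tau^x_{\partial D}}f(B^x_t)\,dt$. Monotone convergence only handles the integral term. On $D_n$ you actually get
\[
\mathcal{U}f(x)=\mathbb{E}\int_0^{\tau^x_{\partial D_n}}f(B^x_t)\,dt+\mathbb{E}\,\mathcal{U}f\bigl(B^x_{\tau^x_{\partial D_n}}\bigr).
\]
For a merely bounded $D$, the last term tends to $0$ only because $\mathcal{U}f(y)\to 0$ as $y$ approaches a Wiener-regular boundary point, and $B^x_{\tau^x_{\partial D}}$ is a.s. regular (the irregular points form a polar set). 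Either supply this argument or simply cite the classical result, as the paper does.
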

\begin{proof}
  See Appendix \ref{proof:prop:U_0 and DU_0}.
\end{proof}

We focus next on the harmonic extension operator.
\begin{defi}\label[defi]{defi:H}
    For each $x\in D$ let $\mu_x$ denote the harmonic (probability) measure (at $x$) on $\partial D$.
    Recall that, by Harnack inequality \cite[Corollary 8.21]{GiTr01} (see also \cite[Proposition 2.1]{CiGrMaI}), $\mu_x$ and $\mu_y$ are mutually absolutely continuous for $x,y\in D$, and the space $L^1(\partial D; \mu_x)$ is independent of $x\in D$.
Set
\begin{equation*}
    Hg(x) \coloneqq \int_{\partial D} g(z)\;\mu_x(dz), \quad g\in L^1(\mu_{x}), \quad x\in D.
\end{equation*}
\end{defi}
We have that 
\begin{equation*}
    H\left(L^1(\mu_{z})\right)\subset C^\infty(D) \quad \mbox{and}\quad \Delta Hg=0 \mbox{ in } D, \quad g\in L^1(\mu_{z}).
\end{equation*}
Moreover, if $D$ is a regular domain (i.e. all boundary points are regular in the sense of Wiener \cite{Wiener24}, \cite[Section 2.8-2.9]{GiTr01}), then
\begin{equation*}
    \lim\limits_{D\ni x\to y} Hg(x)=g(y), \quad \mbox{for every } y\in \partial D \mbox{ and } g\in C(\partial D).
\end{equation*}

\begin{rem}
If $D$ is Lipschitz then for $g\in H^{1/2}(\partial D)$ we have that $Hg$ uniquely solves weakly in $H^1(D)$
\begin{equation*}
    \Delta Hg=0 \mbox{ in } D, \quad g={\sf Tr}(Hg) \mbox{ on } \partial D,
\end{equation*}
where ${\sf Tr}$ denotes the trace operator ${\sf Tr}:H^1(D)\rightarrow H^{1/2}(\partial D)$; this follows by the classical existence result \cite[Theorem 8.3]{GiTr01} and the fact that ${\sf Tr}$ has a (continuous) right-inverse \cite[Theorem 3.37]{mclean2000strongly}.
Moreover, we can use this extension of the boundary data to decompose uniquely any element $u\in H^1(D)$ as
\begin{equation*}
    u=u_0+h, \quad u_0\in H_0^1(D), \, h\in H^1(D), \mbox{ with } \langle \nabla u_0, \nabla h\rangle_{L^2(D)}=0,
\end{equation*}
and in fact this decomposition is given by
\begin{equation}
    u_0=u-H\left(u|_{\partial D}\right), \quad h=H\left(u|_{\partial D}\right).
\end{equation}
Moreover, if $\Delta u:=f\in L^2(D)$ then
$
    u_0=\mathcal{U} f.
$
\end{rem}

\begin{prop}[Probabilistic representation for $H$ and $\nabla H$]\label[prop]{prop:H and DH}
    The following assertions hold:
    \begin{enumerate}
        \item[(i)]  Recalling that $r$ is given by \eqref{eq:r}, we have
        \begin{align}\label{eq:H_rep_intro}
        H g (x) 
        &= \mathbb{E} \left\{ g\left(B^x_{\tau^x_{\partial D}}\right) \right\}, \quad x \in D, \quad g\in L^1(\mu_z)\\
        \nabla H g (x) 
        &= \dfrac{d}{r(x)^2} \ \mathbb{E} \left\{ g \left( B^x_{\tau^x_{\partial D}} \right) \ \left( B^x_{\tau^x_{\partial B (x,r(x))}} -x\right) \right\}, \quad x \in D, \quad g\in \mathcal{B}_b(\partial D).
        \end{align}
        \item[(ii)] Let $g:\overline{D}\rightarrow \mathbb{R}$ be bounded and measurable such that $\lim\limits_{D\ni x\to y} g(x)=g(y)$ for $y\in \partial D\setminus \mathcal{N}$, where $\mathcal{N}$ satisfies $\mu_x(\mathcal{N})=0$ for one (hence all, as it follows from Definition~\ref{defi:H}) $x\in D$; recall that the latter condition is trivially satisfied if $\mathcal{N}$ is at most countable, whilst if $\mathcal{N}=\emptyset$, then we must have $g|_{\partial D}\in C(\partial D)$, see e.g. \cite[p. 100]{doob1984classical}, \cite{Beznea_potentza}).
        Then for every $x\in D$ we have
        \begin{align}
            Hg(x)
            &=\lim\limits_{M\to \infty}\mathbb{E}\left\{g(X_M^x)\right\}.\\
            \nabla Hg (x) &=\lim\limits_{M\to \infty} \dfrac{d}{r(x)^2} \ \mathbb{E} \left\{ g \left( X^x_M \right) (X^x_1-x) \right\}.
        \end{align}
    \end{enumerate}
\end{prop}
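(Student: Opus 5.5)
For (i), we will use the classical Kakutani representation of harmonic measure: $\mu_x$ is the law of $B^x_{\tau^x_{\partial D}}$. We get this from the strong Markov property together with the fact that $\tau^x_{\partial D}<\infty$ a.s. for bounded $D$. The formula $Hg(x)=\mathbb{E}\{g(B^x_{\tau^x_{\partial D}})\}$ is then just \Cref{defi:H} rewritten.

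For the gradient, we use that $Hg$ is harmonic in $D$, hence smooth there, and so its gradient is harmonic too. Fix $x\in D$ and write $r=r(x)$. Applying the mean value property for $\nabla Hg$ over $B(x,r)$ and then the divergence theorem gives
\[
\nabla Hg(x)=\frac{1}{V_d r^d}\int_{B(x,r)}\nabla Hg\,dy=\frac{1}{V_d r^d}\int_{\partial B(x,r)} Hg(z)\frac{z-x}{r}\,\sigma(dz)=\frac{d}{r^2}\,\mathbb{E}\big\{Hg(B^x_{\tau})(B^x_\tau-x)\big\},
\]
where $\tau=\tau^x_{\partial B(x,r)}$ and $B^x_\tau$ is uniform on the sphere. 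Two points need care. Since $\partial B(x,r)$ may touch $\partial D$, we first carry this out on $B(x,\rho)$ with $\rho<r$. We then let $\rho\uparrow r$, using bounded convergence: $g$ is bounded, so $|Hg|\le\|g\|_\infty$, and $Hg$ is continuous in $D$. We also need to check that the touching points, where $Hg$ might not be defined, form a set of surface measure zero, which is harmless. Finally, the strong Markov property at $\tau$ gives $Hg(B^x_\tau)=\mathbb{E}\{g(B^x_{\tau^x_{\partial D}})\mid\mathcal{F}_\tau\}$, and substituting this yields the stated formula.

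For (ii), we couple the WoS chain with Brownian motion. Define stopping times $T_0=0$ and $T_{k+1}=\inf\{t>T_k: |B^x_t-B^x_{T_k}|=r(B^x_{T_k})\}$. By rotational invariance and the strong Markov property, $(B^x_{T_k})_k$ has the same law as $(X^x_k)_k$, with $X^x_1-x$ matched to $B^x_{T_1}-x$. It is standard, and we can cite the usual WoS convergence argument, that $T_k\uparrow\tau^x_{\partial D}$ and $B^x_{T_k}\to B^x_{\tau^x_{\partial D}}$ a.s., with $r(B^x_{T_k})\to0$. The extension $g$ on $\overline D$ is continuous up to the boundary at every point outside $\mathcal N$, and $\mathbb{P}(B^x_{\tau^x_{\partial D}}\in\mathcal N)=\mu_x(\mathcal N)=0$. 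Hence $g(B^x_{T_M})\to g(B^x_{\tau^x_{\partial D}})$ a.s. Dominated convergence, using boundedness of $g$ and $|X_1^x-x|=r$, then gives both limits, with the first formula of (i) and the gradient formula of (i) identifying them.

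We expect the main obstacle to be the a.s. convergence $B^x_{T_k}\to B^x_{\tau^x_{\partial D}}$, in particular showing $\lim T_k=\tau^x_{\partial D}$ rather than some earlier time. The plan is as follows. The times $T_k$ increase to a limit $T_\infty\le\tau^x_{\partial D}$, and by path continuity $B^x_{T_k}\to B^x_{T_\infty}$. If $r(B^x_{T_\infty})>0$, then the increments $|B^x_{T_{k+1}}-B^x_{T_k}|=r(B^x_{T_k})$ stay bounded below, which contradicts the Cauchy property. So $B^x_{T_\infty}\in\partial D$, and therefore $T_\infty=\tau^x_{\partial D}$.
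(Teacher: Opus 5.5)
\textbf{The gap is in (i).} It sits in the passage $\rho\uparrow r(x)$ in the gradient formula. You assert that the touching set $\partial B(x,r(x))\cap\partial D$ has surface measure zero, but this is false in general. For $D=B(0,1)$ and $x=0$ one has $r(0)=1$, and the touching set is the whole of $\partial D$.

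On that set $Hg$ is not defined. Your bounded-convergence step would therefore need $Hg(x+\rho u)\to g(x+r(x)u)$ as $\rho\uparrow r(x)$, for $\sigma$-a.e.\ unit vector $u$ with $x+r(x)u\in\partial D$. In the ball this is Fatou's theorem. In a general domain it does not follow from $|Hg|\le\|g\|_\infty$ and interior continuity alone. Likewise, the identity $Hg(B^x_\tau)=\mathbb{E}\{g(B^x_{\tau^x_{\partial D}})\mid\mathcal F_\tau\}$ has no meaning on the event $\{B^x_\tau\in\partial D\}$, which has probability one in this example. The paper's own argument applies the Poisson kernel of $B(x,r(x))$ to the data $Hg|_{\partial B(x,r(x))}$, so it glosses over the same point.

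\textbf{The repair} is to swap the order of the two operations. For $0<\rho<r(x)$ you have $\partial B(x,\rho)\subset D$ and $\tau_\rho:=\tau^x_{\partial B(x,\rho)}<\tau^x_{\partial D}$. So the strong Markov property legitimately gives $Hg(B^x_{\tau_\rho})=\mathbb{E}\{g(B^x_{\tau^x_{\partial D}})\mid\mathcal F_{\tau_\rho}\}$. Combined with your mean-value computation, this yields
\[
\nabla Hg(x)=\frac{d}{\rho^{2}}\,\mathbb{E}\left\{ g\left(B^x_{\tau^x_{\partial D}}\right)\left(B^x_{\tau_\rho}-x\right)\right\},\qquad 0<\rho<r(x).
\]
Now let $\rho\uparrow r(x)$ on the Brownian side instead. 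The times $\tau_\rho$ increase to some $\tau_*\le\tau^x_{\partial B(x,r(x))}$. Path continuity gives $|B^x_{\tau_*}-x|=r(x)$, so $\tau_*=\tau^x_{\partial B(x,r(x))}$ and $B^x_{\tau_\rho}\to B^x_{\tau^x_{\partial B(x,r(x))}}$. The integrand is bounded by $\|g\|_\infty r(x)$, so dominated convergence gives the stated formula without ever evaluating $Hg$ on $\partial B(x,r(x))$.

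\textbf{The rest is fine and follows the paper.} The first formula is Kakutani's representation. Your mean-value/divergence-theorem computation on $B(x,\rho)$ is equivalent to the paper's differentiation of the Poisson kernel at the centre. Part (ii) also matches the paper's coupling and dominated-convergence argument.

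Your Cauchy-sequence argument showing $T_\infty=\tau^x_{\partial D}$ is more complete than the paper's appeal to Doob's convergence theorem. That theorem yields a.s.\ convergence of $B^x_{T_M}$, but it does not by itself identify the limit as $B^x_{\tau^x_{\partial D}}$.
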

\begin{proof}
    See Appendix \ref{proof:prop:H and DH}.
\end{proof}

\paragraph{Taming the singular terms in \eqref{eq:density Green}, \eqref{eq:grad_U0_rep_intro}.}\label{ss2}
In this paragraph we highlight alternative representations that help avoid division‑by‑zero issues arising from numerical approximations, and that also reduce the sampling variance of two quantities appearing in \Cref{prop:U_0 and DU_0}, more precisely:
\begin{enumerate}
    \item[$\bullet$] The random variable $Y$ which is sampled according to the singular density $f_Y$ given by \eqref{eq:density Green};
    \item[$\bullet$] The expectation of the singular term $\widetilde{Y} \left( \dfrac{1}{|\widetilde{Y}|^d} - 1 \right) f(x+r(x) \widetilde{Y} )$ in \eqref{eq:U0_rep_intro} (\Cref{prop:U_0 and DU_0} (ii)) which is going to be computed by Monte Carlo simulation. 
\end{enumerate}

\begin{prop}\label[prop]{prop:removing singularities}
\begin{enumerate}
    \item[(i)] Let $Y$ be a random variable with values in $B(0,1)$ with density $f_Y$ given by \eqref{eq:density Green}.
        \begin{enumerate}
            \item[(i.1)](Case $d=2$) If $U\sim {\rm Uniform}([0,1])$ independently of $Z\sim { \rm Uniform}(\partial B(0,1))$, and $W_{-1}$ denotes the {\it Lambert $W$ function} (the $-1$ branch) which has the property that $W_{-1}(y)e^{W_{-1}(y)}=y$, then $R:=e^{\frac{1+W_{-1}(-U/e)}{2}}\sim \rho$ and
            \begin{equation*}
                RZ\sim f_Y.
            \end{equation*}
            Since an efficient PyTorch implementation of the Lambert function seems to be missing at the moment, we adopt the following convenient representation: Let $U_1,U_2,U_3 \sim {\rm Uniform([0,1])}$ be independent, and let $R:=U_{(2)}$ their middle order statistics. 
            It is known that $R\sim {\sf Beta}(2,2)$.
            Independently of $U_{(2)}$, let $Z\sim {\rm Uniform}(\partial B(0,1))$.
            Then for every bounded and measurable $\psi : B(0,1)\rightarrow \mathbb{R}$ we have
            \begin{equation*}
                \mathbb{E}\left[\psi(Y)\right]
                =-\frac{2}{3}\mathbb{E}\left\{\frac{\ln{R}}{(1-R)}\psi(RZ)\right\}
            \end{equation*}
            \item[(i.2)](Case $d\geq 3$) Let $R=U_{(2)}\sim \mbox{\sf Beta}\left(\frac{2}{d-2},2\right)$ and $Z\sim {\rm Uniform}(\partial B(0,1))$ be independent. 
            Then
            \begin{equation*}
                R^{\frac{1}{d-2}}Z\sim f_Y.
            \end{equation*}
        \end{enumerate}
        \item[(ii)] Let $\widetilde{Y} \sim {\rm Uniform} (B(0,1))$. 
        Further, let $U\sim {\rm Uniform}([0,1])$ and $Z\sim {\rm Uniform}(\partial B(0,1))$ such that $U$ and $Z$ are independent, and set $R:=1-\sqrt{U}$.
Then for every $f\in L^\infty(D)$ and $x\in D$ we have
\begin{equation*}
    \mathbb{E} \left\{ \widetilde{Y} \left( \dfrac{1}{|\widetilde{Y}|^d} - 1 \right) f(x+r(x) \widetilde{Y} )\right\}
    =\frac{d}{2}\mathbb{E}\left\{\sum\limits_{0\leq k\leq d-1}R^k Z  f(x+r(x) RZ )\right\}.
\end{equation*}
\end{enumerate}
\end{prop}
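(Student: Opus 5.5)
The plan is to exploit radial symmetry throughout. Each density in the statement depends only on $|y|$, so polar decomposition reduces every claim to a one-dimensional identity in the radius. Concretely, if $Y$ has a radial density $f_Y(y)=\phi(|y|)$ on $B(0,1)$, then $Y\overset{d}{=}SZ$, where $S=|Y|$ is independent of $Z\sim{\rm Uniform}(\partial B(0,1))$ and has density $\rho(s)=dV_d\,s^{d-1}\phi(s)$ on $(0,1)$. I would state this once (polar coordinates, using that $dV_d$ is the surface measure of the unit sphere) and then, in each item, identify $\rho$ and compare it with the law of the proposed radial variable.

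\emph{Case $d=2$ (i.1).} Here $f_Y(y)=-\frac{2}{\pi}\ln|y|$, so $\rho(s)=-4s\ln s$, and its CDF is $F(s)=s^2(1-\ln s^2)$. For the Lambert representation, put $W=W_{-1}(-U/e)$ and $R^2=e^{1+W}$. Then
\begin{equation*}
F(R)=-W e^{1+W}=-e\,(-U/e)=U .
\end{equation*}
Hence $R=F^{-1}(U)$ by inverse transform sampling. The only care needed is checking that the $-1$ branch picks the root with $R\in(0,1)$, i.e.\ $W\le -1$. For the Beta representation, the density of $U_{(2)}$ is $6s(1-s)$. The importance weight $\rho(s)/(6s(1-s))=-\frac{2}{3}\frac{\ln s}{1-s}$ then gives the stated identity by a change of measure, applied to $\psi(sZ)$ averaged over $Z$.

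\emph{Case $d\ge 3$ (i.2).} From the explicit Green function, $\phi(s)=\frac{2}{(d-2)V_d}(s^{2-d}-1)$, so $\rho(s)=\frac{2d}{d-2}(s-s^{d-1})$. If $R\sim{\sf Beta}(a,2)$ with $a=2/(d-2)$ and $S=R^{1/(d-2)}$, the change of variables $R=S^{d-2}$ gives a density proportional to
\begin{equation*}
S^{(d-2)(a-1)}\,(1-S^{d-2})\,S^{d-3}=S-S^{d-1}.
\end{equation*}
This matches $\rho$ up to normalization, and both are probability densities, so they coincide.

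\emph{Item (ii).} Since $\widetilde Y$ has radial density $d s^{d-1}$, the left-hand side equals
\begin{equation*}
\int_0^1 d(1-s^d)\,\mathbb{E}\{Zf(x+r(x)sZ)\}\,ds .
\end{equation*}
Here the singular factor $s^{-d}$ cancels against $s^{d-1}\cdot s$. Factoring $1-s^d=(1-s)\sum_{k=0}^{d-1}s^k$ and noting that $R=1-\sqrt U$ has density $2(1-s)$ yields the right-hand side with the constant $d/2$. Boundedness of $f$ justifies Fubini.

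I expect no real obstacle. The most delicate point is the Lambert branch selection in (i.1), together with the bookkeeping of normalizing constants, including the sign of the Green function.
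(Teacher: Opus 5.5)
Your proposal is correct and follows the paper's proof essentially step by step. Polar coordinates reduce each claim to the radial densities $-4s\ln s$, $\frac{2d}{d-2}(s-s^{d-1})$ and $d(1-s^d)$. After that the paper uses the same four computations you do: the inverse-CDF argument with $W_{-1}$, the $\mathsf{Beta}(2,2)$ reweighting with $B(2,2)=1/6$, the substitution $s\mapsto s^{1/(d-2)}$, and the factorization $1-s^d=(1-s)\sum_{k=0}^{d-1}s^k$ with $1-\sqrt{U}$ having density $2(1-s)$.

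For $d\ge 3$, you take the density $\frac{2}{(d-2)V_d}(s^{2-d}-1)$, which is positive. That is the right choice and is also what the paper's proof uses. The displayed formula for $G_{B(0,1)}(0,y)$ in the statement has its bracket reversed and would give a negative density.
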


\begin{proof}
    See Appendix \ref{proof:prop 2.6}.
\end{proof}

% \subsection{Monte Carlo estimators for {$U$, $\nabla U$, $H$, $\nabla H$} in $D \subset \mathbb{R}^d$}\label{ss:MC_estimators}
\subsection{\texorpdfstring{Monte Carlo estimators for $U$, $\nabla U$, $H$, $\nabla H$ in $D \subset \mathbb{R}^d$}{Monte Carlo estimators for U, ∇U, H, ∇H in D ⊂ ℝᵈ}}\label{ss:MC_estimators}
For every $x\in D$ we consider
$\left(X_k^{x,l}\right)_{k\geq 0},l\geq 1$, independent and identically distributed (i.i.d) copies of $\left(X_k^{x}\right)_{k\geq 0}$ \eqref{eq:wos chain}.
Moreover, with $Y,\widetilde{Y}$ as in  \Cref{prop:U_0 and DU_0}, let $Y^l,\widetilde{Y}^l,l\geq 1$, be independent such that $Y^l\sim Y$ and $\widetilde{Y}^l \sim \widetilde{Y}$.

Based on \Cref{prop:U_0 and DU_0} and \Cref{prop:H and DH}, we consider the following Monte Carlo estimators parametrized by the number of i.i.d. samples $N\geq 1$, and by the number of steps $M\geq0$ performed by the WoS chain.

\medskip
\noindent{$\bullet\quad$}For every $f:D\rightarrow \mathbb{R}$ measurable such that $f\in L^p(D), \, p>d/2$, and $x\in D$, we consider the estimator
\begin{equation}\label{eq:u MC}
 U^f_{M,N}(x) \coloneqq \dfrac{1}{N} \sum_{l=1}^N \dfrac{1}{d} \sum_{k = 0}^{M} r^2\left(X^{x,l}_k\right) f\left(X^{x,l}_k + r(X^{x,l}_k) Y^l\right).
\end{equation}

\medskip
\noindent{$\bullet\quad$} For every $f:D\rightarrow \mathbb{R}$ bounded and measurable, and $x\in D$, we consider the estimator
\begin{equation}\label{eq: gradu U MC}
    \begin{aligned}
    {}^\nabla U^{f}_{M,N}(x) & \coloneqq \dfrac{1}{r(x)^2} \dfrac{1}{N} \sum_{l=1}^N   (X^{x,l}_1 - x)  \sum_{k = 1}^{M} r^2 \left(X^{x,l}_k \right) f \left(X^{x,l}_k + r\left(X^{x,l}_k\right) Y^l \right) \\
    & \quad + 2 \dfrac{r(x)}{d} \ \dfrac{1}{N} \sum_{l=1}^N \left\{ \widetilde{Y}^l \left( \dfrac{1}{|\widetilde{Y}^l|^d} - 1 \right) f(x+r(x) \widetilde{Y}^l )\right\}.
    \end{aligned}
\end{equation}

\medskip
\noindent{$\bullet\quad$} For every $g$ as in \Cref{prop:H and DH} and $x\in D$ we consider the estimator
    \begin{equation}\label{eq:H MC}
        \left(Hg(x)\approx\right)\quad {H}_{ M,N}^g(x) \coloneqq \frac{1}{N}\sum_{l=1}^{N}g(X_M^{x,l}). 
    \end{equation}

\medskip
\noindent{$\bullet\quad$} For every $g$ as in \Cref{prop:H and DH} and $x\in D$ we consider the estimator
\begin{equation}\label{eq:dh MC}
     \left(\nabla Hg(x)\approx\right)\quad {}^\nabla {H}_{ M,N}^g (x) \coloneqq \dfrac{d}{r(x)^2} \dfrac{1}{N} \ \sum_{l=1}^N \left\{ g \left( X^{x,l}_M \right)  (X^{x,l}_1-x) \right\}.
\end{equation}

\begin{prop}\label[prop]{prop:lim_MN}
For every $f\in L^\infty(D)$ and every $g:\overline{D}\rightarrow \mathbb{R}$ as in \Cref{prop:H and DH}, we have $\mathbb{P}$-a.s. that    
\begin{equation*}
    \lim\limits_{M\to\infty} \, \lim\limits_{N \to \infty} \left(|U^f_{M,N}(x)-Uf(x)|+|{}^\nabla U^f_{M,N}(x)-\nabla Uf(x)|+|{H}_{ M,N}^g(x)-Hg(x)|+|{}^\nabla {H}_{ M,N}^g (x)-\nabla Hg(x)|\right)=0.
\end{equation*}
\end{prop}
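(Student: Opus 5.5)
The proof splits into two limits. For fixed $M$, apply the strong law of large numbers in $N$. Then let $M\to\infty$ and use the representations in \Cref{prop:U_0 and DU_0} and \Cref{prop:H and DH}, via dominated or monotone convergence.

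For fixed $x\in D$ and $M\geq 0$, each of the four estimators \eqref{eq:u MC}, \eqref{eq: gradu U MC}, \eqref{eq:H MC}, \eqref{eq:dh MC} is an empirical mean of $N$ i.i.d. copies of one random variable (vector-valued for the gradients). For $U$ this variable is $\frac1d\sum_{k=0}^M r^2(X_k^x)f(X_k^x+r(X_k^x)Y)$. It is bounded by $(M+1)\,{\rm diam}(D)^2\|f\|_\infty/d$, since $r\leq {\rm diam}(D)$. The same bound, times the factor $|X_1^x-x|/r(x)^2 = 1/r(x)$, controls the first term of \eqref{eq: gradu U MC}. The $H$ variables are bounded by $\|g\|_\infty$ and $d\|g\|_\infty/r(x)$. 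The only term needing care is the singular one, $\widetilde Y(|\widetilde Y|^{-d}-1)f(x+r(x)\widetilde Y)$. It is not bounded, but it is integrable: its norm is at most $\|f\|_\infty(|\widetilde Y|^{1-d}+1)$, and $\mathbb{E}|\widetilde Y|^{1-d}=d\int_0^1 \rho^{1-d}\rho^{d-1}d\rho=d<\infty$. Alternatively, \Cref{prop:removing singularities}(ii) rewrites this expectation as that of a bounded variable. So Kolmogorov's strong law applies, and almost surely each estimator converges as $N\to\infty$ to its expectation. Call these limits $U^f_M(x)$, ${}^\nabla U^f_M(x)$, $H^g_M(x)$, ${}^\nabla H^g_M(x)$. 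Taking the intersection of countably many full-measure events, over $M\in\mathbb{N}$ and the four estimators, leaves a single almost sure event. Since these limits are deterministic, it remains to show they converge as $M\to\infty$ to $Uf(x)$, $\nabla Uf(x)$, $Hg(x)$, $\nabla Hg(x)$.

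For $H$ and $\nabla H$ this is exactly \Cref{prop:H and DH}(ii), because $H^g_M(x)=\mathbb{E}\{g(X^x_M)\}$ and ${}^\nabla H^g_M(x)=\frac{d}{r(x)^2}\mathbb{E}\{g(X_M^x)(X_1^x-x)\}$. For $U$ we have
\begin{equation*}
|U^f_M(x)-Uf(x)|\leq \frac{\|f\|_\infty}{d}\,\mathbb{E}\Big\{\sum_{k>M} r^2(X_k^x)\Big\}
\end{equation*}
by \eqref{eq:U0_rep_intro}, and similarly for the first part of $\nabla U$ by \eqref{eq:grad_U0_rep_intro}, with an extra factor $1/r(x)$. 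The second part of $\nabla U$ does not depend on $M$. So everything reduces to showing that the tail $\mathbb{E}\{\sum_{k>M}r^2(X_k^x)\}$ tends to $0$. By monotone convergence this follows once
\begin{equation*}
\mathbb{E}\Big\{\sum_{k\geq0}r^2(X_k^x)\Big\}<\infty.
\end{equation*}

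This finiteness is the main step, and I would obtain it from \eqref{eq:U0_rep_intro} itself with a suitable $f$. Take $f\equiv 1$, which lies in $L^p(D)$ because $D$ is bounded. Then
\begin{equation*}
\frac1d\,\mathbb{E}\Big\{\sum_{k\geq0} r^2(X_k^x)\Big\}=U1(x)=\mathbb{E}\{\tau^x_{\partial D}\},
\end{equation*}
and this is finite since the exit time of Brownian motion from a bounded set has finite expectation: it is at most that of a ball containing $D$, namely $R^2/d$. All terms are nonnegative, so using \eqref{eq:U0_rep_intro} here is legitimate. The tail therefore vanishes, which also justifies the interchange in the gradient term, where the factor $|X_1^x-x|=r(x)$ is bounded. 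This completes the argument.
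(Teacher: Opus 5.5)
Your proposal is correct and follows the paper's own route: the strong law of large numbers for the limit in $N$, then \Cref{prop:U_0 and DU_0} and \Cref{prop:H and DH} for the limit in $M$. What you add are the details the paper's one-line proof leaves implicit. These are the integrability of the singular term, a single full-measure event obtained by intersecting over countably many $M$, and the tail bound via $\mathbb{E}\{\sum_{k\geq 0} r^2(X_k^x)\}=d\,\mathbb{E}\{\tau^x_{\partial D}\}<\infty$. The constants in your gradient bounds are off by a factor $d$, which does not affect the argument.
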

\begin{proof}
    It follows immediately by applying the law of large numbers for the limit over $N$, and \Cref{prop:U_0 and DU_0} and \Cref{prop:H and DH} for the limit over $M$.
\end{proof}

\begin{rem} \label[rem]{rem:MC_no_singular}
    Let $R^l\sim {\sf Beta}(2,2)$ if $d=2$, $R^l\sim {\sf Beta}(\frac{2}{d-2},2)$ if $d\geq 3$, $S^l\sim 1-\sqrt{{\rm Uniform}(0,1)}$, and $Z^l\sim {\rm Uniform}(\partial B(0,1))$, all independent, as well as independent from $\left(X_k^{x,l}\right)_{k\geq 0},l\geq 1$.
    Then, in view of \Cref{prop:removing singularities}, the Monte Carlo estimators $U^f_{M,N}(x)$ \eqref{eq:u MC} and $\nabla U^f_{M,N}(x)$ \eqref{eq: gradu U MC} are equal in distribution with the following refined versions:
\begin{equation*}
    \begin{aligned}
    U^f_{M,N}(x) 
    & \stackrel{\mathcal{D}}{=}
    \begin{cases}
        -\dfrac{2}{3N} \sum\limits_{l=1}^N\frac{\ln{R^l}}{(1-R^l)}\dfrac{1}{d} \sum\limits_{k = 0}^{M} r^2(X^{x,l}_k) f(X^{x,l}_k + r(X^{x,l}_k) R^lZ^l),\quad & d=2\\
        \dfrac{1}{N} \sum\limits_{l=1}^N \dfrac{1}{d} \sum\limits_{k = 0}^{M} r^2(X^{x,l}_k) f(X^{x,l}_k + r(X^{x,l}_k) (R^l)^{\frac{1}{d-2}}Z^l),\quad & d\geq 3
    \end{cases},\\
    {}^\nabla U^f_{M,N}(x) & \stackrel{\mathcal{D}}{=} 
    -\dfrac{1}{r(x)^2} \dfrac{2}{3N} \sum_{l=1}^N   \frac{\ln{R^l}}{(1-R^l)}(X^{x,l}_1 - x)  \sum_{k = 1}^{M} r^2 \left(X^{x,l}_k \right) f \left(X^{x,l}_k + r(X^{x,l}_k) R^lZ^l \right) \\
    & \quad + r(x) \ \dfrac{1}{N} \sum_{l=1}^N \left\{ (1+S^l)Z^l f(x+r(x) S^lZ^l )\right\}, \quad d=2,\\
    {}^\nabla U^f_{M,N}(x) & \stackrel{\mathcal{D}}{=} 
    \dfrac{1}{r(x)^2} \dfrac{1}{N} \sum_{l=1}^N   (X^{x,l}_1 - x)  \sum_{k = 1}^{M} r^2 \left(X^{x,l}_k \right) f \left(X^{x,l}_k + r\left(X^{x,l}_k\right) (R^l)^{\frac{1}{d-2}}Z^l \right) \\
    & \quad +  r(x) \ \dfrac{1}{N} \sum_{l=1}^N \left\{ \left[\sum\limits_{0\leq k\leq d-1}(S^l)^k\right] Z^l f(x+r(x) S^lZ^l )\right\}, \quad d\geq 3.
    \end{aligned}
\end{equation*}
\end{rem}
    
% \subsection{Matrix estimators for {$U$, $\nabla U$, $H$, $\nabla H$} in $D \subset \mathbb{R}^d$}\label{ss:matrix_MC}
\subsection{\texorpdfstring{Matrix estimators for $U$, $\nabla U$, $H$, $\nabla H$ in $D \subset \mathbb{R}^d$}{Matrix estimators for U, grad U, H, grad H in D subset ℝᵈ}}\label{ss:matrix_MC}

In this subsection we use the Monte Carlo estimators introduced in \Cref{ss:MC_estimators} in order to numerically estimate the linear operators $U$, $\nabla U$, $H$, $\nabla H$ by random matrices which we describe systematically in the following.

\paragraph{Spatial discretization.}
For a given bounded domain $D\subset \mathbb{R}^d$, we consider the following partitions/locations that are aimed to discretize $D$, as well as its boundary $\partial D$:
\begin{enumerate}
    \item[$\bullet$] $\mathbf{W}_n:=\{W_1,\dots, W_n\}$ is a measurable partition of $D$ and $x_i\in W_i$ represents some {\it centers} of the {\it cell} $W_i$, $1\leq i\leq n$,  used to discretize the {\bf outputs} of the operators $U$, $\nabla U$, $H$, $\nabla H$ which are, in particular, continuous functions.
    \begin{enumerate}
        \item[$\circ$] For $u\in C(D;\mathbb{R})$ we consider 
        \begin{equation}\label{eq:u approx}
        \left(u(x)\approx\right) \quad u_{\mathbf{W}_n}(x):=\sum \limits_{i=1}^{n} \bar{u}_i 1_{W_i}(x)\; x\in D, \quad \tilde{u}_i \coloneqq u(x_i), \quad 1 \leq i \leq n,
        \end{equation}
    and note that $u_{\mathbf{W}_n}$ (tacitly) depends also on the centers $(x_i)_{1\leq i\leq n}$.
        
    \end{enumerate}
    \item[$\bullet$] $\mathbf{V}_m:=\{V_1,\dots, V_m\}$ is a measurable partition of $D$, used to discretize the {\bf inputs} of the operators $U$ and $\nabla U$ (which need not be continuous so we use averages instead of point-values as for outputs).
    \begin{enumerate}
        \item[$\circ$] For $f \in L^1(D)$, we consider
        \begin{equation}\label{eq:f approx}
           \left(f(x)\approx\right)\quad f_{\mathbf{V}_m}(x):=\sum \limits_{j=1}^{m} \bar{f}_j 1_{V_j}(x),\; x\in D, \quad \bar{f}_j \coloneqq \frac{1}{\lambda(V_j)}\int_{V_j} f(x) dx, \quad 1 \leq j \leq m.
        \end{equation} 
    \end{enumerate}
    \item[$\bullet$] $\mathbf{\Gamma}_p:=\left\{\Gamma_1,\dots,\Gamma_p\right\}$ is a measurable partition of $\overline{ D}$.
    Further, let  $z_i\in \Gamma_i\cap \partial D$ some {\it boundary center} of the {\it cell} $\Gamma_i$, $1\leq i\leq p$, used to discretize the input of the operators $H$ and $\nabla H$. 
    \begin{enumerate}
        \item[$\circ$] For $g:\partial D\rightarrow\mathbb{R}$, typically (piecewise) continuous on $\partial D$, we consider  $g_{\mathbf{\Gamma}_p}:\overline{D}\to\mathbb{R}$
        \begin{equation}\label{eq:g approx}
            g_{\mathbf{\Gamma}_p}(x):=\sum \limits_{j=1}^{p} \tilde{g}_j 1_{\Gamma_j}(x), \;x\in \overline{D}, \quad \tilde{g}_j \coloneqq g(z_j), \quad 1 \leq j \leq p,
        \end{equation}
        and note that $g_{\mathbf{\Gamma}_p}$ (tacitly) depends also on the centers $(z_j)_{1\leq i\leq p}$.
    \end{enumerate}
\end{enumerate}

\begin{rem}
    Clearly, one can always consider the simpler case $n=m$ and choose $\mathbf{W}_n=\mathbf{V}_m$.
    However, as already announced, the role of the partition $\mathbf{W}_n$ is to discretize the image of the operators $U$, $\nabla U$, $H$, $\nabla H$, which have a smoothing effect. In contrast, the role of the partition $\mathbf{V}_m$ is to discretize the domains of the smoothing operators $U$, $\nabla U$, which typically consist of less regular functions. 
    In this light, it is justified to have the freedom to choose $\mathbf{W}_n$ and $\mathbf{V}_m$ independently. 
    Moreover, this could be extremely useful when one aims to speed up and/or reduce the memory cost of the resulting algorithms.
\end{rem}

For a collection $\mathbf{S}_m:=\left\{S_1,\dots,S_m\right\}\subset 2^{\mathbb{R}^d}$ we define 
\begin{equation*}
    {\sf diam}(\mathbf{S}_m):=\max_{1\leq i\leq m}\sup_{x,y\in S_i}\|x-y\|.
\end{equation*}

\begin{lem}\label[lem]{lem:weak_convergence} 
  Let us consider a sequence of partitions  $\mathbf{V}_m=\{V^1_1,\dots, V^1_m\},\mathbf{\Gamma}_p=
\{\Gamma^1_1,\dots, \Gamma^1_p\}$, 
$m,p\geq 1$, as above, such that 
    \begin{equation}\label{eq:diam_to_zero}
        \lim\limits_{m \to \infty}{\sf diam}(\mathbf{V}_m)=0=\lim\limits_{p\to\infty}{\sf diam}(\mathbf{\Gamma}_p \cap \partial D).
    \end{equation}
    Furthermore, let $f\in L^\infty(D)$ and $g:\partial D\rightarrow \mathbb{R}$ be bounded and measurable such that 
    $g$ is continuous at all but a finite set of points $A\subset\partial D$.
    Then 
    \begin{equation*}
        \lim\limits_{m\to\infty}f_{\mathbf{V}_m}=f,\quad \mbox{weakly in } L^r(D), \, 1<r<\infty,\quad \mbox{whilst}\quad  \lim\limits_{p\to\infty}g_{\mathbf{\Gamma}_p}=g,\quad \mbox{pointwise on } \partial D\setminus A. 
    \end{equation*} 
\end{lem}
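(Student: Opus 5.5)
The weak convergence of $f_{\mathbf{V}_m}$ is the heart of the statement; the boundary part is elementary. For the first assertion, write $f_{\mathbf{V}_m}=P_m f$, where $P_m$ is the conditional expectation (averaging operator) onto the $\sigma$-algebra generated by the partition $\mathbf{V}_m$. Here I ignore cells of zero measure, which form a null set.

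The plan has three steps.

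\textbf{Step 1: uniform bound.} By Jensen's inequality, $|\bar f_j|\le \|f\|_{L^\infty}$, so $\|f_{\mathbf{V}_m}\|_{L^\infty(D)}\le \|f\|_{L^\infty(D)}$. Since $D$ is bounded, this gives a uniform bound in every $L^r(D)$. Because $L^r$ is reflexive for $1<r<\infty$, it is enough to test weak convergence against a dense subset of the dual $L^{r'}(D)$. I take $C_c(D)$ as this dense subset.

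\textbf{Step 2: self-adjointness.} For $\phi\in C_c(D)$, Fubini applied cell by cell gives
\[
\int_D f_{\mathbf{V}_m}\phi\,dx=\sum_j \bar f_j\int_{V_j}\phi\,dx=\int_D f\,\phi_{\mathbf{V}_m}\,dx .
\]
That is, $P_m$ is self-adjoint in $L^2$.

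\textbf{Step 3: uniform convergence of $\phi_{\mathbf{V}_m}$.} The function $\phi$ is uniformly continuous on $\overline D$ with modulus $\omega$. For $x\in V_j$, $\phi_{\mathbf{V}_m}(x)$ is an average of $\phi$ over $V_j$, so $|\phi_{\mathbf{V}_m}(x)-\phi(x)|\le \omega({\sf diam}(\mathbf{V}_m))$. This tends to $0$ by \eqref{eq:diam_to_zero}, hence $\phi_{\mathbf{V}_m}\to\phi$ uniformly. Then
\[
\Bigl|\int_D f(\phi_{\mathbf{V}_m}-\phi)\,dx\Bigr|\le \|f\|_{L^1(D)}\,\omega({\sf diam}(\mathbf{V}_m))\to0 .
\]
Combining this with the uniform bound from Step 1 and the density of $C_c(D)$ in $L^{r'}(D)$, a standard $\varepsilon/3$ argument gives weak convergence in $L^r(D)$.

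\textbf{Boundary part.} Fix $y\in\partial D\setminus A$. For each $p$, $y$ lies in exactly one cell $\Gamma_{j(p)}$, so $g_{\mathbf{\Gamma}_p}(y)=g(z_{j(p)})$. Both $y$ and $z_{j(p)}$ lie in $\Gamma_{j(p)}\cap\partial D$, so $\|z_{j(p)}-y\|\le{\sf diam}(\mathbf{\Gamma}_p\cap\partial D)\to0$. Continuity of $g$ at $y$ then gives $g(z_{j(p)})\to g(y)$. This argument needs continuity at $y$ only, so the finite set $A$ plays no further role.

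\textbf{Main obstacle.} The only genuinely delicate point is the density/boundedness step that upgrades convergence against $C_c(D)$ to weak convergence in $L^r$. Strictly, $C_c(D)$ is dense in $L^{r'}$ only because $r'<\infty$; this is why the endpoint $r=1$ (where $r'=\infty$) is excluded. I also note that one could even obtain strong $L^r$ convergence, by approximating $f$ by continuous functions and using $\|P_m\|\le1$. That stronger statement implies the weak one, so the proof could alternatively go through this route.
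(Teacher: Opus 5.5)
Your proof is correct and is essentially the paper's argument: the same uniform $L^\infty$ bound, the same transfer of the cell-averaging onto the test function, and the same uniform convergence of averaged test functions as ${\sf diam}(\mathbf{V}_m)\to 0$. The only differences are that you conclude by a direct density/$\varepsilon/3$ argument over $C_c(D)$ instead of the paper's Banach--Alaoglu subsequence extraction with identification of the limit via Lipschitz test functions, and that on the boundary you use only continuity of $g$ at the point itself, which is cleaner than the paper's appeal to uniform continuity on compact sets avoiding $A$ (reflexivity plays no role in your route, and since $f_{\mathbf{V}_m}\to f$ even strongly in every $L^r(D)$, $1\le r<\infty$, the exclusion of $r=1$ is not forced by the result itself).
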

\begin{proof}
    See Appendix \ref{proof:lem 2.10}.
\end{proof}

\begin{prop}\label[prop]{prop:lim_mp}
Consider the settings and hypotheses of  Lemma~\ref{lem:weak_convergence}.
Then for every $x\in D$ and $1\leq j\leq d$, we have
\begin{align}
    \lim\limits_{m\to\infty} \left[|Uf_{\mathbf{V}_m}(x)-Uf(x)|+\|\nabla Uf_{\mathbf{V}_m}(x)-\nabla Uf(x)\|_{\mathbb{R}^d}\right]
    &=0,\\
    \lim\limits_{p\to\infty} \left[|Hg_{\mathbf{\Gamma}_p}(x)-Hg(x)|+\|\nabla Hg_{\mathbf{\Gamma}_p}(x)-\nabla Hg(x)\|_{\mathbb{R}^d}\right]
    &=0.
\end{align}
\end{prop}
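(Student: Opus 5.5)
The plan is to write each of the four quantities at a fixed $x\in D$ as the pairing of the approximated datum with a kernel that is integrable in the right sense. Weak $L^r$ convergence (for $U,\nabla U$) or dominated convergence (for $H,\nabla H$), both supplied by Lemma~\ref{lem:weak_convergence}, then give the limits.

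\textbf{The $U$ part.} Fix $x\in D$. By Definition~\ref{defi:U_0}, $Uf(x)=\int_D f(y)k_x(y)\,dy$ with $k_x(y):=-2G_D(x,y)$. I will check that $k_x\in L^{s}(D)$ for some $s>1$. This follows from $0\le -G_D(x,y)\le -\Gamma(x-y)+C$, the comparison with the fundamental solution. Indeed, $|x-y|^{2-d}$ (or $|\ln|x-y||$ when $d=2$) lies in $L^s_{\sf loc}$ for every $s<d/(d-2)$ (every $s<\infty$ when $d=2$), and $D$ is bounded. Taking $r=s'$, the conjugate exponent, which lies in $(1,\infty)$, the weak convergence $f_{\mathbf V_m}\rightharpoonup f$ in $L^{r}(D)$ gives $Uf_{\mathbf V_m}(x)\to Uf(x)$. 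Note that $\|f_{\mathbf V_m}\|_\infty\le\|f\|_\infty$, so all these functions lie in $L^\infty$ and the probabilistic formulas apply to them.

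\textbf{The $\nabla U$ part.} I would use the representation \eqref{eq:grad_U0_rep_intro} of Proposition~\ref{prop:U_0 and DU_0}(ii) and rewrite it as $\nabla Uf(x)=\int_D f(y)K_x(y)\,dy$ with a vector kernel $K_x$.

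1. For the second term, the kernel is explicitly $c\,\frac{y-x}{r(x)}\bigl(\frac{r(x)^d}{|y-x|^d}-1\bigr)1_{B(x,r(x))}(y)$. It is $O(|y-x|^{1-d})$, hence in $L^s$ for $s<d/(d-1)$.
2. For the first term, condition on the exit point $Z=B^x_{\tau_{\partial B(x,r(x))}}$, which is uniform on the sphere. By the strong Markov property and part (i), it equals $\frac{d}{r(x)^2}\mathbb E\{(Z-x)\,Uf(Z)\}$. This is $\int_D f(y)\,\frac{d}{r(x)^2}\int_{\partial B}(z-x)(-2G_D(z,y))\,\sigma(dz)\,dy$, so its kernel is an average of Green kernels over the sphere. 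By Minkowski's inequality its $L^s$ norm is bounded by $\sup_z\|G_D(z,\cdot)\|_{L^s}<\infty$.

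Thus $K_x\in L^s$ for some $s>1$, and weak convergence with $r=s'$ gives the claim. An alternative would be to differentiate the Green kernel directly using $|\nabla_xG_D(x,y)|\lesssim|x-y|^{1-d}$, but that bound may fail near an irregular boundary. The probabilistic route avoids this issue because it only needs Green function bounds.

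\textbf{The $H$ part.} I will use $Hg_{\mathbf\Gamma_p}(x)=\int_{\partial D}g_{\mathbf\Gamma_p}\,d\mu_x$. The $g_{\mathbf\Gamma_p}$ are uniformly bounded by $\sup|g|$ and converge to $g$ on $\partial D\setminus A$. It remains to check that $\mu_x(A)=0$ for a finite set $A$; this holds because singletons are polar for Brownian motion when $d\ge2$. Dominated convergence then applies. For $\nabla H$, I will use Proposition~\ref{prop:H and DH}(i) with the uniformly bounded integrand $g_{\mathbf\Gamma_p}(B_{\tau_{\partial D}})(B_{\tau_{\partial B}}-x)$, which converges a.s. because the exit point avoids $A$ a.s. Dominated convergence again gives the limit.

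\textbf{Main obstacle.} I expect the kernel integrability for $\nabla U$ to be the hardest step, since it requires justifying the conditioning/Markov identity for general $f\in L^\infty$. The uniform $L^s$ bound on $G_D(z,\cdot)$ is then straightforward.
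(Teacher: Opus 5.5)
Your proof is correct. For $U$, $H$ and $\nabla H$ it is the paper's argument: the $L^s$-integrability of $G_D(x,\cdot)$ is paired with the weak convergence of Lemma~\ref{lem:weak_convergence}, and dominated convergence is applied in the probabilistic formulas, using that Brownian motion a.s.\ avoids the finite set $A$.

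The genuine difference is in the $\nabla U$ step. The paper writes $\nabla Uf_{\mathbf V_m}(x)=\int_D\nabla_xG_D(x,y)f_{\mathbf V_m}(y)\,dy$ and quotes $\nabla_xG_D(x,\cdot)\in L^r(D)$, $r<d/(d-1)$, from Gr\"uter--Widman. You instead read the kernel off the mean-value formula of Proposition~\ref{prop:U_0 and DU_0}(ii). That kernel is a $(z-x)$-weighted sphere average of $-2G_D(z,\cdot)$ plus an explicit ball kernel of size $|y-x|^{1-d}$. So you only need the zeroth-order bound $0\le -G_D\le-\phi+C$, uniform in the pole, together with Minkowski's inequality. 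The paper's route is a one-liner resting on an external estimate, and it differentiates under the integral sign without comment. Yours uses only results proved in the paper, needs no gradient bound on $G_D$, and yields exactly the kernel that $\mathbf B^{(j)}_{M,N}$ discretizes.

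Three small points.

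First, the step you call the main obstacle is already done. The first term in Lemma~\ref{thm:grad U0 representation} is the sphere average for every $f\in L^\infty(D)$; you only substitute $Uf(z)=-2\int_DG_D(z,y)f(y)\,dy$ and use Fubini.

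Second, on the contact set $\partial B(x,r(x))\cap\partial D$, which can carry positive surface measure (e.g.\ $D$ a ball centred at $x$), you should set $Uf(z)=0$ and $G_D(z,\cdot)=0$. This is what the strong Markov identity produces, since the path has already left $D$ there, and it leaves your Minkowski bound intact.

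Third, your stated reason for avoiding the direct route is not right. For fixed interior $x$, interior gradient estimates give $|\nabla_xG_D(x,y)|\le C_x\bigl(|x-y|^{1-d}+1\bigr)$ on any bounded domain. When $|y-x|\ge r(x)/2$, apply them to the harmonic function $G_D(\cdot,y)$ on $B(x,r(x)/4)$. When $|y-x|<r(x)/2$, apply them to the bounded harmonic function $G_D(\cdot,y)-\phi(\cdot-y)$ on $B(x,r(x))$. Boundary irregularity only spoils the constant as $x\to\partial D$.
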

\begin{proof}
    See Appendix \ref{proof:prop 2.11}.
\end{proof}
Let $M, N \in \mathbb{N}^*$. 
For $x \in D$, let $\left(X^{x,l}_k\right)_{1\leq k\leq M, 1\leq l \leq N}$ i.i.d copies of $\left(X_k^{x}\right)_{k\geq 0}$ \eqref{eq:wos chain}.
Moreover, with $Y,\widetilde{Y}$ as in  \Cref{prop:U_0 and DU_0}, let $Y^l,\widetilde{Y}^l,1 \leq l \leq N$ be independent such that $Y^l\sim Y$ and $\widetilde{Y}^l \sim \widetilde{Y}$.

\paragraph{Approximate matrix for $U$.} In view of \eqref{eq:u MC} and \eqref{eq:u approx}-\eqref{eq:f approx}, we define the approximate of $U f(x_i)$ 
\begin{equation*}
    U f(x_i)\approx \sum_{j=1}^{m} \bar{f}_j \left[\mathbf{A}_{M,N} \right]_{ij}
    =\left[\mathbf{A}_{M,N}\bar{f}\right]_i, \quad 1\leq i\leq n,
\end{equation*}
with $\mathbf{A}_{M,N} \in \mathbb{R}^{n \times m}$ given by
\begin{align}\label{eq:Estimator_A}
     \left[\mathbf{A}_{M,N} \right]_{ij} 
     &\coloneqq \dfrac{1}{N} \sum_{l=1}^N \dfrac{1}{d} \sum_{k = 0}^{M} r^2(X^{x_i,l}_k) 1_{V_j}\left(X^{x_i,l}_k + r(X^{x_i,l}_k)Y^l\right).
\end{align}

\paragraph{Approximate matrix for $\nabla U$.} In view of \eqref{eq: gradu U MC} and \eqref{eq:u approx}-\eqref{eq:f approx}, we consider,
\begin{equation*}
     \partial_{j} U f(x_i)  \approx \sum_{k=1}^{m} \bar{f}_k \left[\mathbf{B}^{(j)}_{M,N}\right]_{ik}=\left[\mathbf{B}^{(j)}_{M,N}\bar{f}\right]_{i}, \quad 1\leq i\leq n,\; 1\leq j\leq d,
\end{equation*}
with $\mathbf{B}^{(j)}_{M,N} \in \mathbb{R}^{n \times m}$, $1\leq j\leq d$, given by 
\begin{equation}\label{eq:Estimator_B}
\begin{aligned}
    \left[\mathbf{B}^{(j)}_{M,N}\right]_{ik} 
     & \coloneqq \dfrac{1}{r(x_i)^2}  \dfrac{1}{N} \sum_{l=1}^N ((X^{x_i,l}_1 - x_i))_{j}  \sum_{s = 1}^{M} r^2 \left(X^{x,l}_s \right) 1_{V_k} \left(X^{x,l}_s + r\left(X^{x,l}_s\right) Y^l \right) \\
     & \quad + 2 \dfrac{r(x_i)}{d} \ \dfrac{1}{N} \sum_{l=1}^N \left\{ (\widetilde{Y}^l)_{j} \left( \dfrac{1}{|\widetilde{Y}^l|^d} - 1 \right) 1_{V_k} \left(x_i+r(x_i) \widetilde{Y}^l \right)\right\}, \quad 1\leq i\leq n, \ 1\leq k \leq m.
\end{aligned}
\end{equation}

\begin{rem}\label[rem]{rem:MC_matrices_no_singular}
The random matrices $\mathbf{A}_{M,N}$ and $\mathbf{B}^{(j)}_{M,N}$, $1\leq j\leq d$, can be replaced by the numerically more convenient matrices obtained using Remark~\ref{rem:MC_no_singular} in a straightforward manner.
\end{rem}

\paragraph{Approximate matrix for $H$.} In view of \eqref{eq:H MC} and \eqref{eq:g approx}, we consider
\begin{equation*}
    H g(x_i) \approx \sum_{j=1}^{p} \tilde{g}_j \left[\mathbf{H}_{M,N} \right]_{ij}
    =\left[\mathbf{H}_{M,N} \tilde{g}\right]_{i}, \quad 1\leq i\leq n,
\end{equation*}
with $\mathbf{H}_{M,N} \in \mathbb{R}^{n \times p}$ given by
\begin{equation}\label{eq:Estimator_H}
     \left[\mathbf{H}_{M,N} \right]_{ij} \coloneqq \dfrac{1}{N} \sum_{l=1}^N 1_{\Gamma_j}\left(X^{x_i,l}_M \right), \quad 1\leq i\leq n, \ 1\leq j\leq p.
\end{equation}

\paragraph{Approximate matrix for $\nabla H$.} 
In view of \eqref{eq:dh MC} and \eqref{eq:g approx}, we consider
\begin{equation*}
    \partial_{j} H g(x_i)\approx \sum_{k=1}^{p} \tilde{g}_k  \left[\mathbf{K}^{(j)}_{M,N}\right]_{ik}
    =\mathbf{K}^{(j)}_{M,N}\tilde{g}(i), \quad 1\leq i\leq n,\;1\leq j\leq d,
\end{equation*}
with $\mathbf{K}^{(j)}_{M,N} \in \mathbb{R}^{n \times p}$, $1\leq j\leq d$, given by
\begin{equation}\label{eq:Estimator_K}
    \left[\mathbf{K}^{(j)}_{M,N}\right]_{ik} \coloneqq \dfrac{d}{r(x_i)^2} \dfrac{1}{N} \ \sum_{l=1}^N \left\{ 1_{\Gamma_k} \left( X^{X^{x_i,l}_1}_M \right) ((X^{x_i,l}_1 -x_i))_j \right\}, \quad 1\leq i\leq n, \ 1\leq k \leq p.
\end{equation}

\begin{prop}\label[prop]{prop:general_convergence}
    Let $f\in L^\infty(D)$, $g:\overline{D}\rightarrow \mathbb{R}$ as in \Cref{prop:H and DH}.
    Furthermore, assume that the partitions $\mathbf{V}_m,\mathbf{\Gamma}_p$, $m,p\geq 1$, satisfy \eqref{eq:diam_to_zero} and $g|_{\partial D}$ is continuous at all but a finite number of points of $\partial D$. Then $\mathbb{P}$-a.s. we have for every $1\leq i\leq n$, $1\leq j\leq d$,
    \begin{align*}
        &\lim\limits_{m\to\infty}\,\lim\limits_{M\to\infty}\,\lim\limits_{N\to\infty} \left[\mathbf{A}_{M,N} \bar{f}\right]_i=Uf(x_i),\quad \lim\limits_{m\to\infty}\,\lim\limits_{M\to\infty}\,\lim\limits_{N\to\infty} \left[\mathbf{B}^{(j)}_{M,N} \bar{f}\right]_i=\partial_jUf(x_i),\\
        &\lim\limits_{p\to\infty}\,\lim\limits_{M\to\infty}\,\lim\limits_{N\to\infty} \left[\mathbf{H}_{M,N} \tilde{g}\right]_i=Hg(x_i),\quad \,\lim\limits_{p\to\infty}\,\lim\limits_{M\to\infty}\lim\limits_{N\to\infty} \left[\mathbf{K}^{(j)}_{M,N} \tilde{g}\right]_i=\partial_jHg(x_i).
    \end{align*}
\end{prop}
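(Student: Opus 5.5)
The plan is to reduce everything to the three results already available: linearity of the estimators, \Cref{prop:lim_MN}, and \Cref{prop:lim_mp}.

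The first step is to identify each matrix--vector product with an earlier Monte Carlo estimator applied to a discretized input. Since $\bar f_j$ are constants and $f_{\mathbf{V}_m}=\sum_j \bar f_j 1_{V_j}$, definition \eqref{eq:Estimator_A} gives
\begin{equation*}
\left[\mathbf{A}_{M,N}\bar f\right]_i=U^{f_{\mathbf{V}_m}}_{M,N}(x_i).
\end{equation*}
In the same way, $[\mathbf{B}^{(j)}_{M,N}\bar f]_i$ is the $j$-th component of ${}^\nabla U^{f_{\mathbf{V}_m}}_{M,N}(x_i)$. Likewise, $[\mathbf{H}_{M,N}\tilde g]_i=H^{g_{\mathbf{\Gamma}_p}}_{M,N}(x_i)$ and $[\mathbf{K}^{(j)}_{M,N}\tilde g]_i$ is the $j$-th component of ${}^\nabla H^{g_{\mathbf{\Gamma}_p}}_{M,N}(x_i)$. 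These identities are pure linearity of \eqref{eq:u MC}--\eqref{eq:dh MC} in the integrand. In \eqref{eq:Estimator_K} the argument $X^{X_1^{x_i,l}}_M$ is read as the chain continued from $X_1^{x_i,l}$; this is equal in law to the $\mathbf{K}$ version, and a strong law of large numbers argument applies identically.

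The second step handles the inner double limit for fixed $m$ (resp. $p$). We have $f_{\mathbf{V}_m}\in L^\infty(D)$, with $\|f_{\mathbf{V}_m}\|_\infty\le\|f\|_\infty$, so \Cref{prop:lim_MN} applies with $f$ replaced by $f_{\mathbf{V}_m}$. This gives a.s. $\lim_M\lim_N[\mathbf{A}_{M,N}\bar f]_i=Uf_{\mathbf{V}_m}(x_i)$, and similarly for $\mathbf{B}^{(j)}$. For $H$ and $\nabla H$ we must check that $g_{\mathbf{\Gamma}_p}:\overline D\to\mathbb{R}$ satisfies the hypotheses of \Cref{prop:H and DH}(ii). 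It is bounded and measurable as a finite step function. What must be verified is the boundary continuity condition: $\lim_{D\ni x\to y}g_{\mathbf{\Gamma}_p}(x)=g_{\mathbf{\Gamma}_p}(y)$ off a $\mu_x$-null set.

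\textbf{This is the main obstacle.} A general measurable partition gives no such continuity. I would first try to use that the chain values $X^x_M$ accumulate at $\partial D$ only through distances $r(X_M^x)\to0$. The plan is to work with the estimator in the form where $g_{\mathbf{\Gamma}_p}$ is evaluated at the nearest boundary point, i.e. to replace $X_M$ by its projection, which is the standard WoS convention; in that form only $g_{\mathbf{\Gamma}_p}|_{\partial D}$ matters. The claim then follows from \Cref{prop:H and DH}(i) and the bounded convergence theorem, with dominated convergence applied to $g_{\mathbf{\Gamma}_p}(X_M)(X_1-x)$. Failing that, I would assume that the cells $\Gamma_j$ are such that $g_{\mathbf{\Gamma}_p}$ is continuous up to the boundary away from the finitely many interfaces $\overline{\Gamma_j}\cap\overline{\Gamma_k}\cap\partial D$. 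These interfaces are $\mu_x$-null in the Voronoi-type setting used later, so the hypotheses of \Cref{prop:H and DH}(ii) hold. Since there are countably many pairs $(m,M)$ and $(p,M)$ and each law of large numbers holds a.s., the a.s. statements can be intersected into a single full-measure event.

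The final step is the outer limit, which is exactly \Cref{prop:lim_mp} at $x=x_i$. Its hypotheses are the assumptions in force: \eqref{eq:diam_to_zero}, $f\in L^\infty$, and $g|_{\partial D}$ continuous except at finitely many points, so that \Cref{lem:weak_convergence} applies. It yields $Uf_{\mathbf{V}_m}(x_i)\to Uf(x_i)$ and $\partial_jUf_{\mathbf{V}_m}(x_i)\to\partial_jUf(x_i)$. It also yields $Hg_{\mathbf{\Gamma}_p}(x_i)\to Hg(x_i)$ and $\partial_jHg_{\mathbf{\Gamma}_p}(x_i)\to\partial_jHg(x_i)$, where $Hg$ depends only on $g|_{\partial D}$. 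Since these limits are deterministic, they hold on the full-measure event from the second step, which concludes the argument.
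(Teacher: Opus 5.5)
Your route is the paper's route. Its whole proof is the sentence that one combines \Cref{prop:lim_MN} with \Cref{prop:lim_mp}, and your steps are exactly what that sentence leaves implicit: the linearity identities (e.g.\ $[\mathbf{A}_{M,N}\bar f]_i=U^{f_{\mathbf{V}_m}}_{M,N}(x_i)$), the application of \Cref{prop:lim_MN} to $f_{\mathbf{V}_m}$ and $g_{\mathbf{\Gamma}_p}$ at fixed $m,p$, the countable intersection of full-measure events, and the deterministic outer limit from \Cref{prop:lim_mp}. The $U$ and $\nabla U$ parts are complete as you wrote them. The index shift $M\mapsto M+1$ hidden in \eqref{eq:Estimator_K} is harmless.

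The obstacle you flag is genuine, and the paper does not address it. \Cref{prop:lim_MN} applies to $g_{\mathbf{\Gamma}_p}$ only if $g_{\mathbf{\Gamma}_p}$ satisfies the boundary-continuity hypothesis of \Cref{prop:H and DH}(ii). Condition \eqref{eq:diam_to_zero} constrains only $\mathbf{\Gamma}_p\cap\partial D$, so it does not imply this.

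In fact the $H$ and $\nabla H$ assertions fail under the stated hypotheses alone. In the unit disk, take $g(x)=x_1$, split $\partial D$ into $p$ arcs of length $2\pi/p$ with $z_1=(1,0)$, and put all of $D$ into $\Gamma_1$. For $x_i\neq 0$ the chain stays in $D$ almost surely, since each step reaches $\partial D$ only on a null event. Hence $[\mathbf{H}_{M,N}\tilde g]_i=g(z_1)=1$ for all $M,N,p$, whereas $Hg(x_i)=(x_i)_1<1$. Likewise $[\mathbf{K}^{(1)}_{M,N}\tilde g]_i\to 0\neq 1=\partial_1Hg(x_i)$.

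So some structural assumption on $\mathbf{\Gamma}_p$ is unavoidable, contrary to the remark following the proposition. Your second fix is the right repair. For Voronoi cells with boundary centers, $g_{\mathbf{\Gamma}_p}$ is locally constant near every boundary point with a unique nearest center. For the disk and the square, the tie set on $\partial D$ is finite, hence $\mu_x$-null, so \Cref{prop:H and DH}(ii) applies.

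Discard the first fix. Projecting $X_M$ onto $\partial D$ changes the estimator \eqref{eq:Estimator_H}, so it proves nothing about $\mathbf{H}_{M,N}$. Moreover, your bounded-convergence step would still need $g_{\mathbf{\Gamma}_p}|_{\partial D}$ to be continuous at $\mu_x$-a.e.\ point, and an arbitrary measurable partition of $\partial D$ does not give that either.
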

\begin{proof}
    The proof follows by corroborating \Cref{prop:lim_MN} with \Cref{prop:lim_mp}.
\end{proof}

\begin{rem}
\begin{enumerate}
    \item[(i)] The convergences entailed by \Cref{prop:general_convergence} do not require any kind of regularity/geometrical structure of the partitions $\mathbf{V}_m$ and $\mathbf{\Gamma}_p$, the minimal condition \eqref{eq:diam_to_zero} being enough.
    \item[(ii)] The limits involved in \Cref{prop:general_convergence} cannot be interchanged in general.
\end{enumerate}
\end{rem}

Let us summarize this section by retaining the following toolbox of partitions (together with their centers) and matrices that are associated to the prescribed bounded domain $D\subset \mathbb{R}^d$, the Dirichlet Laplacian, and the harmonic extension operator, which have been introduced above:
\begin{center}
\fbox{
\begin{minipage}{0.9\linewidth}
    \centering\textbf{Laplacian Operator Toolbox (LOT)}
\begin{equation*}
    (x_i\in W_i)_{1\leq i\leq n}\subset D,\quad (V_i)_{1\leq i\leq m},\quad (z_i\in \Gamma_i)_{1\leq i\leq p}\in \partial D
\end{equation*}    
\begin{equation}\label{eq:LOT}
U\approx \mathbf{A}_{M,N}\in \mathbb{R}^{n\times m},
\quad \partial_j U\approx \mathbf{B}^{(j)}_{M,N}\in \mathbb{R}^{n\times m},
\quad H\approx\mathbf{H}_{M,N}\in \mathbb{R}^{n\times p},
\quad \partial_j H\approx\mathbf{K}^{(j)}_{M,N}\in \mathbb{R}^{n\times p}.
\end{equation}

\end{minipage}
}
\end{center}
Given such a toolbox, we can easily proceed and solve numerically more general classes of PDEs.
We detail this final step in the next section, by considering two benchmark classes of PDEs. 

\begin{rem}\label[rem]{rem:no voronoi}
    From the numerical point of view, for the above toolbox we only need to store and manipulate the centers $(x_i),(z_i)$ and the matrices $\mathbf{A}_{M,N}, \mathbf{B}^{(j)}_{M,N}, \mathbf{H}_{M,N}, \mathbf{K}^{(j)}_{M,N}$. 
    In other words, the elements of the partitions play no active role in the algorithms. This will become more clear in \Cref{ss:pseudocode}. 
\end{rem}

\paragraph{Example: Voronoi diagrams.} A natural and effective choice of the partitions $\mathbf{W}_n,\mathbf{V}_m,\mathbf{\Gamma}_p$, is to use Voronoi diagrams. They are extensively studied, widely applied, and supported by numerous efficient open‑source implementations.
A Voronoi diagram is a partition of the underlying space, generated by some prescribed centers (seed points).
In our case, in order to construct $\mathbf{W}_n,\mathbf{V}_m,\mathbf{\Gamma}_p$, we proceed as follows (see \Cref{Example_Voronoi} for a visualization in a square domain):
\begin{enumerate}
    \item[$\circ$] Let $(x_i)_{1\leq i\leq n}\subset D$ be the prescribed centers for $\mathbf{W}_n$ and set
    \begin{equation*}
        W_i:=\{x\in D : \|x-x_i\|\leq \|x-x_j\|, \quad \forall\; 1\leq j\leq n\},\quad 1\leq i\leq n.
    \end{equation*}
    \item[$\circ$] Let $(y_i)_{1\leq i\leq m}\subset D$ be the prescribed centers for $\mathbf{V}_m$ and set
    \begin{equation*}
        V_i:=\{y\in D : \|y-y_i\|\leq \|y-y_j\|, \quad \forall\; 1\leq j\leq m\},\quad 1\leq i\leq m.
    \end{equation*}
    \item[$\circ$] Let $(z_i)_{1\leq i\leq p}\subset \partial D$ be the prescribed centers for $\mathbf{\Gamma}_p$ and set
    \begin{equation*}
        \Gamma_i:=\{z\in \overline{D} : \|z-z_i\|\leq \|z-z_j\|, \quad \forall\; 1\leq j\leq p\},\quad 1\leq i\leq p.
    \end{equation*}
\end{enumerate}
As already stressed in \Cref{rem:no voronoi}, in our algorithms we do not use the partitions $\mathbf{W}_n,\mathbf{V}_m,\mathbf{\Gamma}_p$, except for the theoretical analysis of the method; instead, we rely on their centers and on an efficient function that returns the closest center to a given query point $x\in D$.

\begin{figure}[H]
    \centering
    \begin{minipage}{0.22\textwidth}
        \centering
        \includegraphics[width=\textwidth]{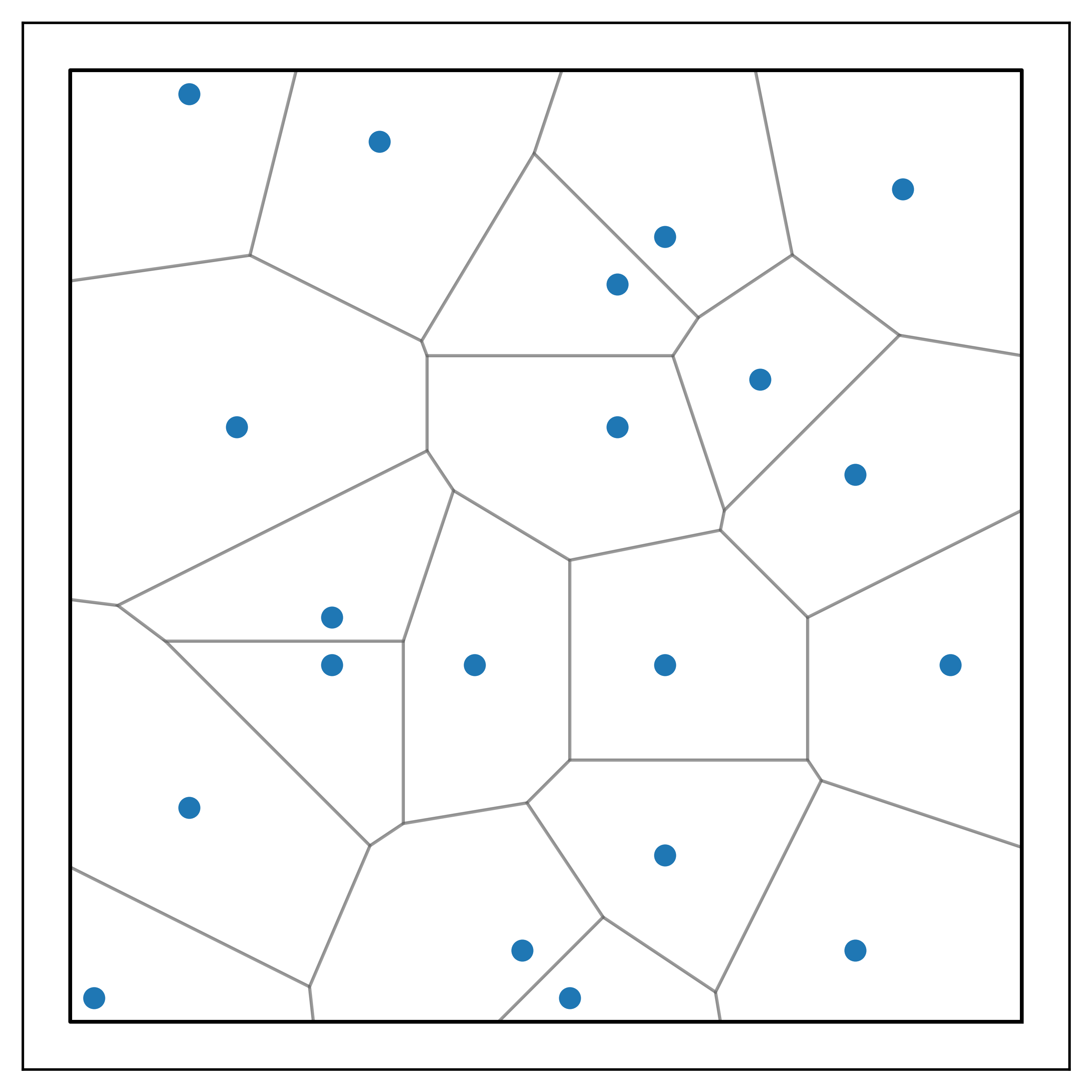}
        \captionof{subfigure}{$n=20$}
    \end{minipage}
    \begin{minipage}{0.22\textwidth}
        \centering
        \includegraphics[width=\textwidth]{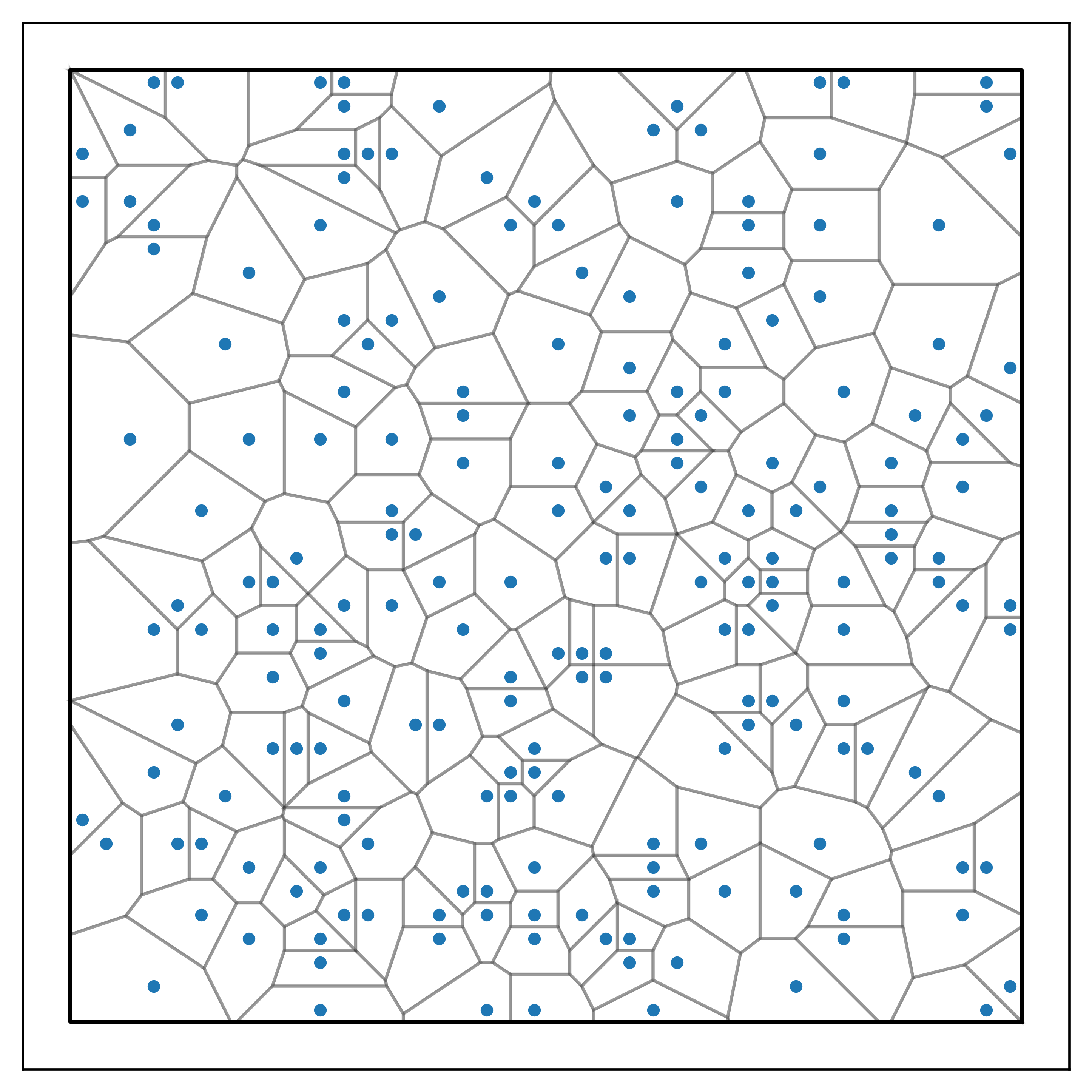}
        \captionof{subfigure}{$n=200$}
    \end{minipage}
    \begin{minipage}{0.22\textwidth}
        \centering
        \includegraphics[width=\textwidth]{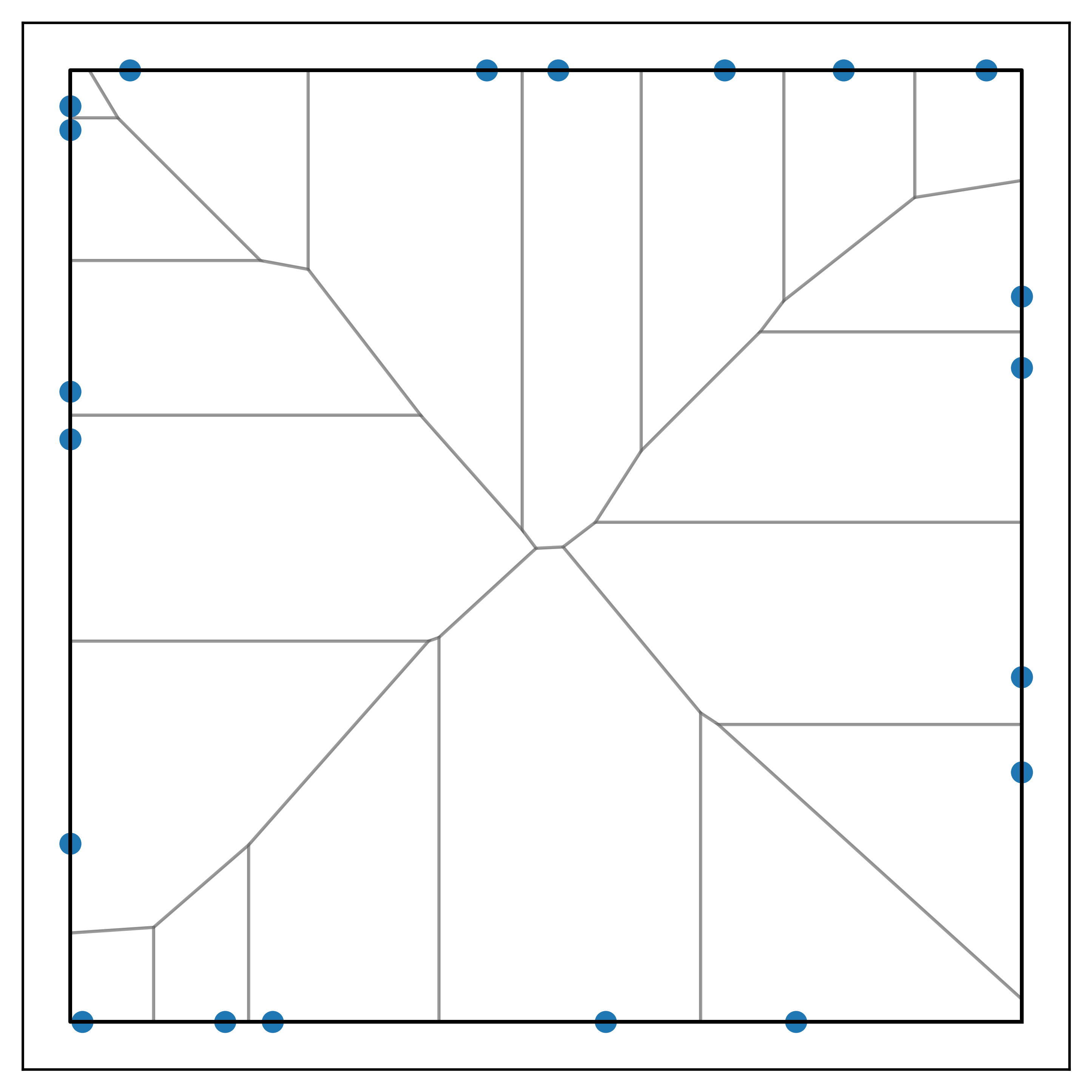}
        \captionof{subfigure}{$p=20$}
    \end{minipage}
    \begin{minipage}{0.22\textwidth}
        \centering
        \includegraphics[width=\textwidth]{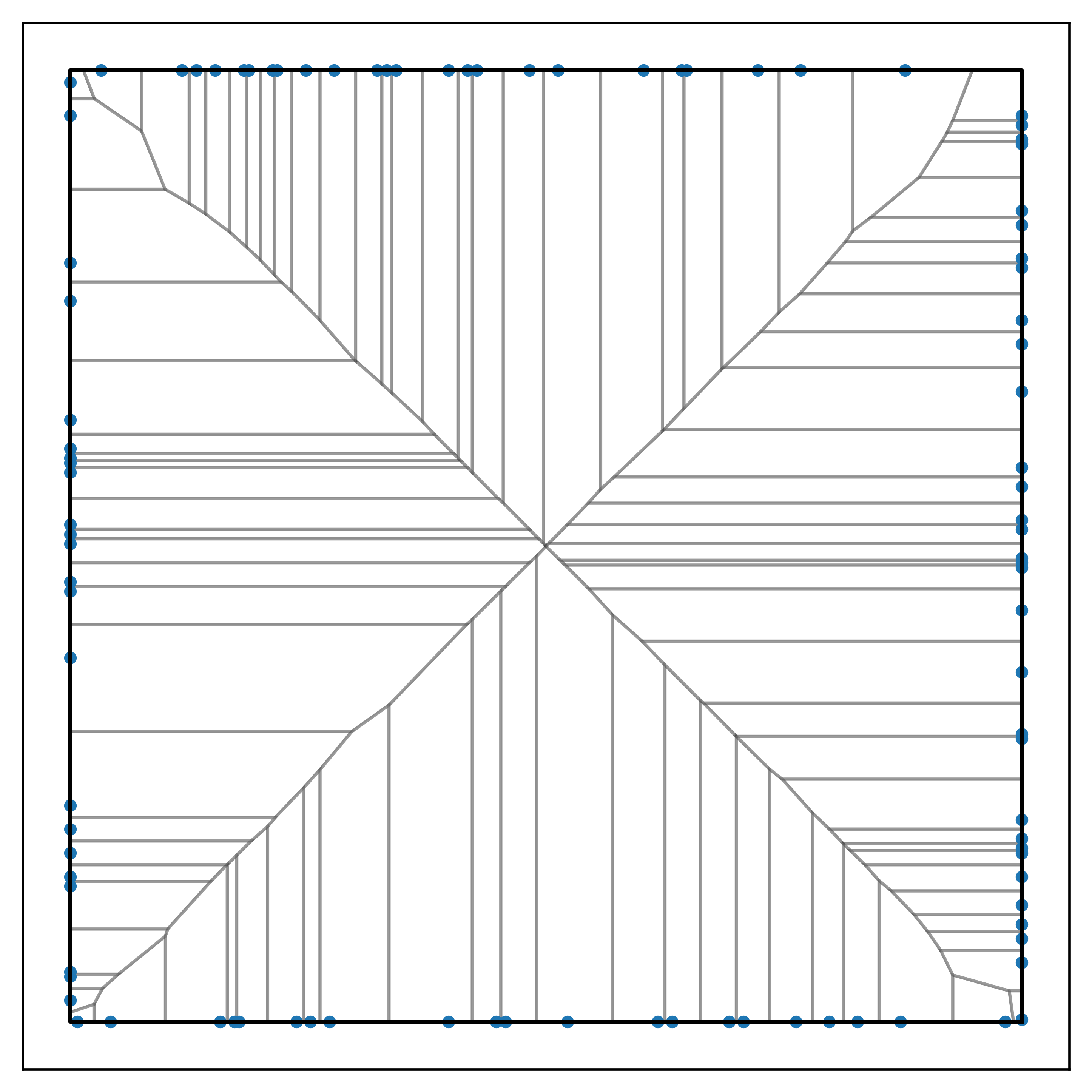}
        \captionof{subfigure}{$p=100$}
    \end{minipage}
    \caption{Example of Voronoi diagrams: (a)-(b) $\mathbf{W}_n$ for $n$ points $x_i \in D$; (c)-(d) $\mathbf{\Gamma}_p$ for $p$ points $z_i \in \partial D$.}
    \label{Example_Voronoi}
\end{figure}

\subsection{Laplacian Operator Toolbox: Pseudocode algorithms}\label{ss:pseudocode}

In this subsection we provide the pseudocode algorithm that enables the implementation of the Laplacian Operator Toolbox \eqref{eq:LOT} introduced in \Cref{ss:matrix_MC}.
Before doing so, we would like to mention that, in \eqref{eq:LOT}, the parameters $M$ (the number of steps performed by the WoS chain \eqref{eq:wos chain}) and $N$ (the number of i.i.d. samples used in the Monte Carlo estimator) can be distinct for each of the estimators $\mathbf{A}_{M,N}, \mathbf{B}^{(j)}_{M,N},\mathbf{H}_{M,N},\mathbf{K}^{(j)}_{M,N}$.
For this reason, but also for the ease of implementation and, if necessary, for personal adaptation, we include below a separate pseudocode algorithm for computing each of the above four estimators, namely \Cref{alg:A,alg:B,alg:H,alg:K}.
Moreover, in \Cref{alg:LOT} we aggregate the four pseudocode algorithms into a unified one that fully implements the Laplacian Operator Toolbox \eqref{eq:LOT} at once.

\input{Algorithm}

\begin{rem}\label[rem]{rem:implementation_tips}
    \begin{enumerate}[label=\roman*)] We would like to point out three aspects that are of crucial importance for the efficient implementation of \Cref{alg:A,alg:B,alg:H,alg:K}/\Cref{alg:LOT}:
        \item  For many types of geometries, the distance function to the boundary is not exactly available, and in these cases, some  approximation of it is in fact employed. For a variety of domains, these can be easily computed \cite[Section 5.1]{sawhney2020monte}; see also \cite{BCPZ} for theoretical quantifiable guarantees that using such an approximation is essentially harmless for the WoS algorithm.
        \item For clarity, we preferred to write explicitly the updates for each entry of the four output matrices; however, in practice, the resulting nested "for" loops are highly time-consuming.
        Thus, we strongly recommend that the "for" loops updating the matrices $\mathbf{A}_{M,N},\mathbf{B}^{(j)}_{M,N},\mathbf{H}_{M,N}$,
        $\mathbf{K}^{(j)}_{M,N}$ to be implemented in vectorized form ("for" loops over $(i,k,j)$) and the computations to be performed in parallel ("for" loop over $l$).
        \item In principle, the most time consuming part of the above algorithms is represented by the computation of the binary matrices $(\alpha_{ik})_{ik}$, $(\delta_{ik})_{ik}$, and $(\gamma_{ik})_{ik}$. To make these computations efficient, one should heavily rely on the Voronoi structure of the partitions. In this case, a generic binary matrix of the type $(1_{V_k}(z_l))_{1\leq k\leq m,1\leq l\leq N}$ with $V_k$ having center $y_k$ satisfies
        \begin{align*}
            \left(1_{V_k}(\xi_l)\right)_{1\leq k\leq m,1\leq l\leq N}
            &=\left(1_{\{k\}}\left(\argmin_{1\leq s\leq m} \|y_s - \xi_l \|\right)\right)_{1\leq k\leq m,1\leq l\leq N}
        \end{align*}
        whilst the computation of the distances $\left(\|y_k-\xi_s\|\right)_{ks}$ from above can be performed in vectorized form by employing an already existing and optimized function such as {\rm torch.cdist} available in PyTorch.
        As a matter of fact, the computation of the above binary matrices is clearly a problem of computing the $1$-nearest neighborhood which can be efficiently tackled using data structures such as {\it $k$-$d$ trees}. As shown in \cite{friedman1977algorithm}, in low-dimensional spaces, the nearest point computation of a query point with respect to $n$ (randomly distributed) points in the space structured by a $k$-$d$ tree has complexity $\mathcal{O}(\log n)$ on average. 
        Such computational tools have already been adopted in the context of WoS in e.g. \cite{sawhney2020monte}.
    \end{enumerate}
\end{rem}

\section{Solving general linear and nonlinear elliptic PDEs through LOT}\label{s:solving PDEs}

In this section we assume that, given a bounded open set $D\subset \mathbb{R}^d$, some partitions $\mathbf{W}_n,\mathbf{V}_m,\mathbf{\Gamma}_p$, and some corresponding centers $(x_i)_{1\leq i\leq n}$, $(y_i)_{1\leq i\leq m}$, and $(z_i)_{1\leq i\leq p}$ respectively, a sample of the Laplacian Operator Toolbox \eqref{eq:LOT} has been stored. 
The role of the following two subsections is to show that once we have such a toolbox, we can easily attempt to solve general classes of boundary value problems of divergence type. 

\subsection{Second order linear elliptic PDEs}\label{ss:linearPDE}

Let $D\subset \mathbb{R}^d$ be a bounded Lipschitz domain, whilst 
\begin{equation*}
    A=(a_{ij})_{i,j}:D\rightarrow \mathbb{R}^{d\times d},\quad b=(b_i)_i:D\rightarrow \mathbb{R}^{d}, \quad c:D\rightarrow [0,\infty)
\end{equation*}
are bounded and measurable, such that $A$ is symmetric and strictly elliptic, namely
\begin{equation*}
  a_{ij}(x)=a_{ji}(x),\; 1\leq i,j\leq d, \quad\mbox{and}\quad  \sum\limits_{1\leq i,j\leq d}a_{ij}(x)\xi_i\xi_j\geq \gamma_0\|\xi\|^2,\quad \xi\in\mathbb{R}^d, \mbox{ for some } \gamma_0>0. 
\end{equation*}
By e.g. \cite[Theorem 8.3]{GiTr01}, if $f\in L^2(D), \, g\in H^{1/2}(\partial D)$, then there is a unique weak solution $u\in H^1(D)$ of the {\it generalized} Dirichlet problem
\begin{eqnarray}\label{eq:linearPDE}
\left\{
\begin{array}{rl}
    \mathcal{L}u = f & \mbox{ in } D, \\[4pt]
    u = g & \mbox{ on } \partial D
\end{array}
\right.\quad, 
\mbox{ where } \mathcal{L}u:=\nabla \cdot (A \nabla u) + b  \cdot \nabla u - cu.
\end{eqnarray}

As announced in the previous sections, the approach we propose in order to numerically find an approximation of the solution $u$, is to represent it as
\begin{equation*}
u\approx Uq+Hg,\quad \mbox{for some unknown } q\in L^\infty(D).    
\end{equation*}

Note that the boundary condition is already included in the representation.
Moreover, the above approximation may be taken with an $H^1(D)$-accuracy that can be made arbitrarily small, more precisely we have:
\begin{lem}\label[lem]{lem:arbitrary_accuracy}
    Let $u\in H^1(D)$ be the unique solution to \eqref{eq:linearPDE} with $f,g$ as above. 
    Then for every $\varepsilon>0$ there exists $q=q_\varepsilon\in L^\infty(D)$ such that 
    \begin{equation*}
        \|u-U q - Hg\|_{H^1(D)}\leq \varepsilon.
    \end{equation*}
\end{lem}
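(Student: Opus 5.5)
The plan is to split $u$ using the harmonic extension, as in the Remark after \Cref{defi:H}, and then approximate the $H_0^1(D)$ part by Dirichlet potentials of bounded functions. Since $D$ is Lipschitz and $g\in H^{1/2}(\partial D)$, that Remark gives $Hg\in H^1(D)$ as the weak solution of $\Delta Hg=0$ with ${\sf Tr}(Hg)=g$. Set $u_0:=u-Hg$. Then ${\sf Tr}(u_0)=g-g=0$, and on a Lipschitz domain this yields $u_0\in H_0^1(D)$. It therefore suffices to find $q\in L^\infty(D)$ with $\|u_0-Uq\|_{H^1(D)}\leq\varepsilon$. Note that $\mathcal{L}$, $A$, $b$, $c$ and $f$ are not needed beyond the fact that $u\in H^1(D)$.

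Next I would use that $\mathcal{U}=\left(-\tfrac12\Delta_0\right)^{-1}:H^{-1}(D)\to H_0^1(D)$ is a continuous linear bijection (see \eqref{eq:mathcal U}). Put $F:=-\tfrac12\Delta_0 u_0\in H^{-1}(D)$, so that $u_0=\mathcal{U}F$. The space $L^\infty(D)$, or more concretely $C_c^\infty(D)$, is dense in $H^{-1}(D)$. This holds because $C_c^\infty(D)\subset L^2(D)$ embeds in $H^{-1}(D)$ via $q\mapsto\int q\,\cdot\,dx$, and its annihilator in $H_0^1(D)$ is trivial: if $\int q v=0$ for all $q\in C_c^\infty$, then $v=0$. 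Hence I can pick $q_k\in C_c^\infty(D)$ with $q_k\to F$ in $H^{-1}(D)$. By continuity, $\mathcal{U}q_k\to u_0$ in $H_0^1(D)$, and since $D$ is bounded the $H_0^1$ norm controls the $H^1$ norm.

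The remaining step is to identify $\mathcal{U}q_k$ with $Uq_k$. Since $q_k\in L^\infty\subset L^p$ with $p>d/2$, the discussion after \Cref{defi:U_0} gives $\mathcal{U}q_k=Uq_k$ $dx$-a.e. As $H^1$ functions are equivalence classes, $\|u-Uq_k-Hg\|_{H^1(D)}=\|u_0-\mathcal{U}q_k\|_{H^1(D)}\to0$. Choosing $k$ large gives $q_\varepsilon$.

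The main point needing care is the first step: that $Hg$ (the probabilistic harmonic extension of \Cref{defi:H}) really coincides with the variational $H^1$ harmonic extension, and that zero trace implies membership in $H_0^1$. I intend to take both from the cited Remark and from standard trace theory on Lipschitz domains, rather than reprove them. The density in $H^{-1}$ is routine.
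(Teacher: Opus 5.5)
Your proposal is correct and takes the same route as the paper. The paper's proof simply asserts that $u-Hg\in H_0^1(D)$ and that $U(L^\infty(D))$ is dense in $H_0^1(D)$. You justify that density claim by carrying the density of $C_c^\infty(D)$ in $H^{-1}(D)$ through the homeomorphism $\mathcal{U}$, and then using $\mathcal{U}q=Uq$ a.e.
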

\begin{proof}
    The claim is immediate since $u-Hg\in H_0^1(D)$ and  $U(L^\infty(D))\subset H_0^1(D)$ is dense.
\end{proof}
Furthermore, by taking $L^2(D)$-inner products in the first equality in \eqref{eq:linearPDE}, with respect to test functions of the type $U\varphi$,  $\varphi\in L^\infty(D)$, and using the approximate representation $u\approx Uq+Hg$, we get
\begin{equation*}
    \int_D -\langle A\nabla (Uq+Hg),\nabla U \varphi\rangle_{\mathbb{R}^d} + b\cdot \nabla (Uq+Hg)\;  U\varphi - c (Uq+Hg) U\varphi \;dx
    \approx \int_D f U\varphi \; dx 
\end{equation*}
hence
\begin{equation*}
    \int_D -\langle A\nabla Uq,\nabla U \varphi\rangle_{\mathbb{R}^d} + b\cdot \nabla Uq\;  U\varphi - c UqU\varphi \;dx
    \approx \int_D f U \varphi + \langle A\nabla Hg,\nabla U \varphi\rangle_{\mathbb{R}^d}- b\cdot \nabla Hg\;  U\varphi + cHg U\varphi\; dx 
\end{equation*}

\begin{rem}
    In view of Lemma~\ref{lem:arbitrary_accuracy}, the above approximation can be made arbitrarily accurate, uniformly with respect to $\varphi$ in bounded subset of $H^{-1}$ (hence $U\varphi$ lies in a bounded set in $H^1_0$).
\end{rem}

Now we use the partition $(W_i)_{1\leq i\leq n}$ and their centers $(x_i)_{1\leq i\leq n}$ to discretize the above integrals.

\begin{rem}\label[rem]{rem:discretization}
The above mentioned discretization is done by considering that the integrands are constant on each cell $W_i$.
In fact, on each $W_i$, we approximate the integrands by their values in the center $x_i$ of $W_i$.
In particular, we assume that values $f(x_i),b(x_i),c(x_i)$ are well defined and represent a feasible approximation of $f|_{W_i},b|_{W_i},c|_{W_i}$, respectively. 
Implicitly, this assumption requires a certain regularity of $f,b,c$.
Of course, the values $f(x_i),b(x_i),c(x_i)$ can be replaced by a certain average of $f,b,c$ on $W_i$.
\end{rem}

Based on the Remark~\ref{rem:discretization}, we are motivated to continue as follows:
\begin{align*}
    &\sum\limits_{1\leq i\leq n} \frac{\lambda(W_i)}{\lambda(D)} \left[-\langle A\nabla Uq(x_i),\nabla U \varphi(x_i)\rangle_{\mathbb{R}^d}+ \langle b(x_i), \nabla Uq(x_i)\rangle_{\mathbb{R}^d}\;  U\varphi(x_i) - c(x_i) Uq(x_i)U\varphi(x_i)\right]\\
    &\quad \approx \sum\limits_{1\leq i\leq n} \frac{\lambda(W_i)}{\lambda(D)} \left[f(x_i) U\varphi(x_i)\right.\\ 
    &\hspace{1cm} + \left.\langle A(x_i)\nabla Hg(x_i),\nabla U \varphi(x_i)\rangle_{\mathbb{R}^d} 
    - \langle b(x_i), \nabla Hg(x_i)\rangle_{\mathbb{R}^d}\;  U\varphi(x_i) 
    + c(x_i)Hg(x_i) U\varphi(x_i)\right]
\end{align*}

Using in the above expression the matrix estimators \eqref{eq:Estimator_A},\eqref{eq:Estimator_B},\eqref{eq:Estimator_H},\eqref{eq:Estimator_K} from \Cref{ss:matrix_MC} for the operators $U,\nabla U,H,\nabla H$, 
and setting
\begin{align*}
    &\bar{q}=\left(\frac{1}{\lambda(V_j)}\int_{V_j}q(x)\;dx\right)_{1\leq j\leq m}\in \mathbb{R}^m,\quad \bar{\varphi}=\left(\frac{1}{\lambda(V_j)}\int_{V_j}\varphi(x)\;dx\right)_{1\leq j\leq m}\in \mathbb{R}^m\\
    &\tilde{f}=\left(f(x_i)\right)_{1\leq i\leq n}\in \mathbb{R}^n,\quad \tilde{g}=\left(g(z_j)\right)_{1\leq j\leq p}\in \mathbb{R}^p,\quad \tilde{c}=(c(x_i))_{1\leq i\leq n}\in \mathbb{R}^n,
\end{align*}
we get
\begin{align}
    &\sum\limits_{1\leq i\leq n} \frac{\lambda(W_i)}{\lambda(D)} \left\{-\left\langle A(x_i)\left[\mathbf{B}^{(\cdot)}_{M,N}\bar{q}\right]_{i},\left[\mathbf{B}^{(\cdot)}_{M,N}\bar{\varphi}\right]_{i}\right\rangle_{\mathbb{R}^d} 
    + b(x_i)\cdot \left[\mathbf{B}^{(\cdot)}_{M,N}\bar{q}\right]_{i}\;  \left[\mathbf{A}_{M,N}\bar{\varphi}\right]_i 
    - c(x_i) \left[\mathbf{A}_{M,N}\bar{q}\right]_i\left[\mathbf{A}_{M,N}\bar{\varphi}\right]_i\right\}\nonumber\\
    &\quad \approx \sum\limits_{1\leq i\leq n} \frac{\lambda(W_i)}{\lambda(D)} \left\{f(x_i) \left[\mathbf{A}_{M,N}\bar{\varphi}\right]_i 
    + \left\langle A(x_i)\left[\mathbf{K}^{(\cdot)}_{M,N}\tilde{g}\right]_{i},\left[\mathbf{B}^{(\cdot)}_{M,N}\bar{\varphi}\right]_{i}\right\rangle_{\mathbb{R}^d}\right.\nonumber\\
    &\left.\quad\quad\quad 
    - b(x_i)\cdot \left[\mathbf{K}^{(\cdot)}_{M,N}\tilde{g}\right]_{i}\;  \left[\mathbf{A}_{M,N}\bar{\varphi}\right]_i 
    + c(x_i)\left[\mathbf{H}_{M,N}\tilde{g}\right]_i \left[\mathbf{A}_{M,N}\bar{\varphi}\right]_i\right\}.\label{eq:matrix_test_varphi}
\end{align}

Let us introduce the following notations
\begin{align}
    \lambda_i
    &:=\frac{\lambda(W_i)}{\lambda(D)}\in [0,1],\quad 1\leq i\leq n,\\
    \mathcal{B}_i
    &:=\left(\left[\mathbf{B}^{(k)}_{M,N}\right]_{ij}\right)_{1\leq k\leq d,\;1\leq j\leq m}\in \mathbb{R}^{d\times m}, 
    \quad \mathcal{K}_i:=\left(\left[\mathbf{K}^{(k)}_{M,N}\right]_{ij}\right)_{1\leq k\leq d,\;1\leq j\leq p}\in \mathbb{R}^{d\times p}, \quad 1\leq i\leq n,\label{eq:B,K_i}\\
    \mathcal{B}^{b}
    &:=\left(\lambda_i\left\langle \left[\mathbf{B}^{(\cdot)}_{M,N}\right]_{ij} , b(x_i)\right\rangle_{\mathbb{R}^d}\right)_{1\leq i\leq n, 1\leq j\leq m}\in \mathbb{R}^{n\times m},\nonumber\\
    \mathcal{K}^{b}
    &:=\left(\lambda_i\left\langle \left[\mathbf{K}^{(\cdot)}_{M,N}\right]_{ij} , b(x_i)\right\rangle_{\mathbb{R}^d}\right)_{1\leq i\leq n, 1\leq j\leq p}\in \mathbb{R}^{n\times p},\nonumber\\
    \mathbf{A}_{M,N}^\lambda
    &:=\left(\lambda_i\left[\mathbf{A}_{M,N}\right]_{ij} \right)_{1\leq i\leq n,1\leq j\leq m}\in \mathbb{R}^{n\times m},\nonumber\\
    \mathcal{A}^{c}
    &:=\mathbf{A}^T_{M,N}{\sf diag}(\lambda \tilde{c})\mathbf{A}_{M,N}\in \mathbb{R}^{m\times m},
    \quad \mathcal{H}^{c}:=\left(\lambda_ic(x_i)\left[\mathbf{H}_{M,N}\right]_{ij}\right)_{1\leq i\leq n, 1\leq j\leq p}\in \mathbb{R}^{n\times p}.\nonumber
\end{align}

Moreover, consider the matrices
\begin{align*}
    \Lambda
    &:=\sum\limits_{1\leq i\leq n} \lambda_i \mathcal{B}_i^T A(x_i)\mathcal{B}_i
    - \mathbf{A}_{M,N}^T \mathcal{B}^{b} 
    + \mathcal{A}^c\in \mathbb{R}^{m\times m},\\
    \Sigma
    &:=\sum\limits_{1\leq i\leq n} \lambda_i \mathcal{B}^T_{i}A(x_i)\mathcal{K}_i
    -\mathbf{A}_{M,N}^T\mathcal{K}^b 
    +\mathbf{A}_{M,N}^T\mathcal{H}^c\in \mathbb{R}^{m\times p}.
\end{align*}

Using the above notations,
the approximate identity \eqref{eq:matrix_test_varphi} recasts as
\begin{align*}
    &-\left\langle \sum\limits_{1\leq i\leq n} \lambda_i \mathcal{B}_i^T A(x_i)\mathcal{B}_i\bar{q},\;\bar{\varphi}\right\rangle_{\mathbb{R}^m} 
    + \left\langle\mathbf{A}_{M,N}^T \mathcal{B}^{b}\bar{q},\; \bar{\varphi}\right\rangle_{\mathbb{R}^m} 
    - \left\langle\mathcal{A}^c\bar{q},\; \bar{\varphi}\right\rangle_{\mathbb{R}^m}\\
    &\quad \approx \sum\limits_{1\leq i\leq n} \lambda_i \left\{f(x_i) \left[\mathbf{A}_{M,N}\bar{\varphi}\right]_i 
    + \left\langle A(x_i)\mathcal{K}_i\tilde{g},\mathcal{B}_i\bar{\varphi}\right\rangle_{\mathbb{R}^d}\right.\\
    &\left.\quad\quad\quad 
    - \left\langle b(x_i), \mathcal{K}_i\tilde{g}\right\rangle_{\mathbb{R}^d}\;  \left[\mathbf{A}_{M,N}\bar{\varphi}\right]_i 
    + c(x_i)\left[\mathbf{H}_{M,N}\tilde{g}\right]_i \left[\mathbf{A}_{M,N}\bar{\varphi}\right]_i\right\}\\
    &\quad 
    =\left\langle \mathbf{A}_{M,N}^\lambda\tilde{f}, \bar{\varphi}\right\rangle_{\mathbb{R}^m}
    +\left\langle \sum\limits_{1\leq i\leq n} \lambda_i \mathcal{B}^T_{i}A(x_i)\mathcal{K}_{i}\tilde{g},\bar{\varphi}\right\rangle_{\mathbb{R}^m} -\left\langle\mathbf{A}_{M,N}^T\mathcal{K}^b \tilde{g},\bar{\varphi}\right\rangle_{\mathbb{R}^m}
    +\left\langle\mathbf{A}_{M,N}^T\mathcal{H}^c \tilde{g},\bar{\varphi}\right\rangle_{\mathbb{R}^m}\\
    % &\quad \phantom{=\left\langle \mathbf{A}_{M,N}^\lambda\tilde{f}, \bar{\varphi}\right\rangle_{\mathbb{R}^m}}
    % -\left\langle\mathbf{A}_{M,N}^T\mathcal{K}^b \tilde{g},\bar{\varphi}\right\rangle_{\mathbb{R}^m}
    % +\left\langle\mathbf{A}_{M,N}^T\mathcal{H}^c \tilde{g},\bar{\varphi}\right\rangle_{\mathbb{R}^m}.
\end{align*}
Since $\varphi$ was arbitrarily chosen, we are motivated to consider
\begin{align*}
    \underbrace{\left[\sum\limits_{1\leq i\leq n} \lambda_i \mathcal{B}_i^T A(x_i)\mathcal{B}_i
    - \mathbf{A}_{M,N}^T \mathcal{B}^{b} 
    + \mathcal{A}^c\right]}_{\Lambda}\bar{q}
    \approx
    -\mathbf{A}_{M,N}^\lambda\tilde{f}
    -\underbrace{\left[\sum\limits_{1\leq i\leq n}\lambda_i \mathcal{B}^T_{i}A(x_i)\mathcal{K}_{i}
    -\mathbf{A}_{M,N}^T\mathcal{K}^b 
    +\mathbf{A}_{M,N}^T\mathcal{H}^c \right]}_{\Sigma}\tilde{g}.
\end{align*}

Consequently, we are naturally lead to consider and solve the following finite dimensional linear system 
\begin{equation}\label{eq:linear_q}
    \boxed{ \mbox{Find } \bar{q}\in \mathbb{R}^m \mbox{ such that:}
    \quad
    \Lambda \bar{q}= -\mathbf{A}^\lambda_{M,N}\tilde{f}-\Sigma \tilde{g}.
    \quad
    }
\end{equation}

Let $\bar{q}$ be a solution to the linear system \eqref{eq:linear_q}.
Then, the Monte Carlo estimator $\bar{u}_{M,N,V,W,\Gamma}$ of the solution $u$ to problem \eqref{eq:linearPDE}, retrieved at the points $(x_i)_{1\leq i\leq n}\subset D$, using the discretization $(V,W,\Gamma)$ of $\overline{D}$ settled in \Cref{ss:matrix_MC}, is obtained through
\begin{equation*}
    \boxed{
    \quad
    \mathbb{R}^n\ni \bar{u}_{M,N,V,W,\Gamma}:=\mathbf{A}_{M,N}\bar{q}+\mathbf{H}_{M,N}\tilde{g}.
    \quad}
\end{equation*}
Moreover, if $\Lambda^{-1}$ represents a (possibly regularized) generalized inverse of $\Lambda$, e.g. the pseudoinverse (see e.g. \cite{Ha87}), and $\bar{q}$ is obtained from \eqref{eq:linear_q} by multiplication with $\Lambda^{-1}$, then $\bar{u}_{M,N,V,W,\Gamma}$ can be written directly in terms of $\tilde{f}$ and $\tilde{g}$ as follows:
\begin{equation}\label{eq:linear_inversion}
    \boxed{
    \quad
    \mathbb{R}^n\ni \bar{u}_{M,N,V,W,\Gamma}:=-\mathbf{A}_{M,N} \Lambda^{-1}\mathbf{A}^\lambda_{M,N}\tilde{f}+\left[\mathbf{H}_{M,N}-\mathbf{A}_{M,N}\Lambda^{-1}\Sigma\right] \tilde{g}.
    \quad}
\end{equation}
Furthermore, let us denote by $G^\mathcal{L}$ and $\mu^\mathcal{L}$ the Green function and the elliptic measure, respectively, associated to the operator $\mathcal{L}$ given in \eqref{eq:linearPDE}, and the bounded domain $D$, and consider the associated integral operators
\begin{equation*}
    \mathcal{G}^\mathcal{L}f(x):=\int_D f(y) G^\mathcal{L}(x,y)\;dy, \;f\in L^\infty, \quad \mbox{and} \quad H^\mathcal{L}g(x):=\int_{\partial D} g(y)\;\mu_x^\mathcal{L}(dy), \; g\in C(\partial D), \quad x\in D.
\end{equation*}
Thus, for $f\in L^\infty(D)$ and $g\in C(\partial D)\cap H^{1/2}(\partial D)$, we have that the solution $u$ to \eqref{eq:linearPDE} admits the representation
\begin{equation*}
u(x)=\mathcal{G}^\mathcal{L}f(x)+H^\mathcal{L}g(x),\quad x\in D,
\end{equation*}
and consequently, corroborating with the estimate \eqref{eq:linear_inversion}, we obtain the following Monte Carlo operator estimates
\begin{equation*}
    \boxed{
    \quad
    \left[\mathcal{G}^\mathcal{L}f(x_i)\right]_{1\leq i\leq n}\approx -\mathbf{A}_{M,N} \Lambda^{-1}\mathbf{A}^\lambda_{M,N}\tilde{f}, \quad \left[H^\mathcal{L}g(x_i)\right]_{1\leq i\leq n}\approx \left[\mathbf{H}_{M,N}-\mathbf{A}_{M,N}\Lambda^{-1}\Sigma\right]\tilde{g}.}
\end{equation*}

\subsection{Euler-Lagrange boundary value problems}\label{ss:EL}

Let $D\subset \mathbb{R}^d$ be a bounded domain, $F:D \times \mathbb{R} \times \mathbb{R}^d\rightarrow \mathbb{R}$ be a continuous function,
and consider the energy functional
\begin{equation}\label{eq:F}
    \mathcal{F}(u)=\frac{1}{\lambda(D)}\int_D F(x,u(x),\nabla u(x)) \; dx, \quad u\in {\sf D}(\mathcal{F}),
\end{equation} 
where $\lambda$ denotes the Lebesgue measure on $\mathbb{R}^d$.
Here, ${\sf D}(\mathcal{F})$ is typically a Banach space of functions $u$, such that $\mathcal{F}(u)$ is well defined.
We assume that $F$ is such that $\mathop{\bigcap}\limits_{p\geq 1} W^{1,p}(D)\subset {\sf D}(\mathcal{F})$, and we denote by ${\sf D}(\mathcal{F})_0$ the closure of $C_c^\infty(D)$ in ${\sf D}(\mathcal{F})$.
The first aim is to minimize $\mathcal{F}$ over all elements that match a prescribed boundary data $g$, namely to find 
\begin{equation}\label{eq:minimization_pb}
    u^\ast = \argmin_{u\in {\sf D}(\mathcal{F}),\; u|_{\partial D}=g} \mathcal{F}(u), \quad \mbox{where }u|_{\partial D}=g \mbox{ is understood in the sense that } u-Hg\in {\sf D}(\mathcal{F})_0,
\end{equation}
assuming that such a minimal solution exists. 
Recall that if $F\in C^1$ in $(u,p)\in \mathbb{R}\times \mathbb{R}^d$, then  any $u^\ast$ satisfying \eqref{eq:minimization_pb} also satisfies the weak Euler-Lagrange equation
\begin{equation}\label{eq:EL}
    \quad\int_{D}\frac{\partial F}{\partial u}(\cdot,u^\ast,\nabla u^\ast)\varphi-\frac{\partial F}{\partial p}(\cdot,u^\ast,\nabla u^\ast)\cdot\nabla \varphi \, dx=0, \quad \varphi\in C_c^\infty(D),
\end{equation}
subject to $u^\ast|_{\partial D}=g, \mbox{ i.e. } u^\ast-Hg\in{\sf D}(\mathcal{F})_0$.
Solving \eqref{eq:EL} is the second aim that we also consider in our numerical tests.

Precisely as in \Cref{ss:linearPDE}, our starting point for the numerically treatment of \eqref{eq:minimization_pb} or \eqref{eq:EL}, is to search for solutions $u^\ast$ that can be approximated as
\begin{equation}\label{eq:representation_u*}
    u^\ast\approx U f^\ast + Hg, \quad \mbox{for some unknown } f^\ast\in L^\infty(D) \mbox{ that remains to be estimated}. 
\end{equation}

Let us assume that $U(L^\infty(D))+Hg\subset{\sf D}(\mathcal{F})$,
so that in light of \eqref{eq:representation_u*}, we are motivated to introduce a new functional, $\mathcal{J}$, given by
\begin{align}
    &\mathcal{J}:{\sf D}(\mathcal{J}):=L^\infty(D)\subset L^2(D)\rightarrow \mathbb{R},\nonumber\\
    &\mathcal{J}(f)
    =\mathcal{F}(U f+Hg)\nonumber\\
    &\phantom{\mathcal{J}(f)}=\frac{1}{\lambda(D)}\int_D F(x,U f(x)+Hg(x),\nabla U f(x)+\nabla Hg(x)) \; dx, \quad f\in L^\infty(D).\label{eq:functional J}
\end{align}

\begin{rem}\label[rem]{rem:C1alpha}
\begin{enumerate}
    \item[(i)] Assume that $D$ satisfies a uniform exterior sphere condition. 
    If $g$ is Lipschitz on $\partial D$, we have that $\nabla Hg$ explodes at most logarithmically at the boundary (see e.g. \cite[Remark 3.1 \& Proposition 3.2]{cimpean2025convergence}), hence $Hg\in W^{1,p}(D)$ for every $1\leq p<\infty$.
    Furthermore, if $f\in L^\infty(D)$ then $Uf$ is globally Lipschitz; indeed, by the already invoked \cite[Corollary 8.36]{GiTr01} we have that  $U f\in C^{1,\alpha}_{\sf loc}(D)$, hence the global Lipschitz regularity is ensured by e.g. \cite[Theorem 1.5, (ii)]{cimpean2025quantitative}.
    Consequently, we have
        \begin{equation*}
        U(L^\infty(D))+Hg\subset {\sf D}(\mathcal{F}).
    \end{equation*}
      \item[(ii)] If $D$ is of class $C^{1,\alpha}$ we can say more: If $g\in C^{1,\alpha}(\overline{D})$, by \cite[Theorem 8.34]{GiTr01} we deduce that 
    \begin{equation*}
        U(L^\infty(D))+Hg\subset C^{1,\alpha}(\overline{D})\subset W^{1,\infty}(D)\subset {\sf D}(\mathcal{F}).
    \end{equation*}
\end{enumerate}
\end{rem}

\paragraph{Discretization of the functional $\mathcal{J}$.}

The first step is to consider the discretization of the integral in \eqref{eq:functional J}, assuming that the integrand is constant and equal to its value in $x_i$, on each cell $W_i, 1\leq i\leq n$ introduced in \Cref{ss:matrix_MC}, namely
\begin{equation*}
    \mathcal{J}(f)\approx\mathcal{J}_n(f):=\sum_{1\leq i\leq n}\frac{\lambda(W_i)}{\lambda(D)} F(x_i,U f(x_i)+Hg(x_i),\nabla U f(x_i)+\nabla Hg(x_i)), \quad f\in L^\infty(D).\label{eq:functional J_n}
\end{equation*}
Note that by the regularity results recalled in \Cref{ss1}, for $f\in L^\infty(D)$ and $g$ bounded and measurable on $\partial D$,
\begin{equation*}
    D\ni x\mapsto F(x,U f(x)+Hg(x),\nabla U f(x)+\nabla Hg(x)) \mbox{ is continuous},
\end{equation*}
so the point-wise evaluation is well-defined.

The next step is to use in the expression of $\mathcal{J}_n(f)$ the (Monte Carlo) matrix estimators $\mathbf{A}_{M,N}$ \eqref{eq:Estimator_A}, $ \mathbf{B}^{(j)}_{M,N}$ \eqref{eq:Estimator_B}, $\mathbf{H}_{M,N}$ \eqref{eq:Estimator_H}, $\mathbf{K}^{(j)}_{M,N}$ \eqref{eq:Estimator_K} from \Cref{ss:matrix_MC} for the operators $U,\nabla U,H,\nabla H$, which leads to
\begin{align}
    \mathcal{J}(f)
    &\approx\mathcal{J}_n(f)\nonumber\\
    &\approx \mathcal{J}_{n,m,p,M,N}(\bar{f})
    :=\sum_{1\leq i\leq n}\frac{\lambda(W_i)}{\lambda(D)} F(x_i,\ \mathbf{A}_{M,N}\bar{f}(i)+\mathbf{H}_{M,N} \tilde{g}(i),\ \mathcal{B}_i\bar{f}+\mathcal{K}_i\tilde{g}),\label{eq:gradJ}
\end{align}
where $\bar{f}\in \mathbb{R}^m$ and $\tilde{g}\in \mathbb{R}^p$ are obtained from $f$ and $g$ as in \eqref{eq:f approx} and \eqref{eq:g approx}, respectively, whilst $\mathcal{B}_i$ and $\mathcal{K}_i$ are given by \eqref{eq:B,K_i}.

As a matter of fact, $\mathcal{J}_{n,m,M,N}$ is now well defined as a functional
\begin{equation}
\mathcal{J}_{n,m,p,M,N}:\mathbb{R}^m\rightarrow\mathbb{R}, 
\end{equation}
and hence the original problems 
\eqref{eq:minimization_pb}, \eqref{eq:EL} are reduced numerically as follows:
\begin{equation}\label{eq:min_el_discrete}
    \boxed{
    \quad
    \eqref{eq:minimization_pb} \mbox{ reduces to } \bar{f}^\ast
    :=\argmin\limits_{\bar{f}\in \mathbb{R}^m}\mathcal{J}_{n,m,p,M,N}(\bar{f}),\quad \mbox{whilst } \eqref{eq:EL} \mbox{ reduces to}\quad  \nabla \mathcal{J}_{n,m,p,M,N}(\bar{f}^\ast)=0.
    \quad
    }
\end{equation}
Consequently, by solving problems \eqref{eq:min_el_discrete}, we end up with the following approximation of the solution $u^\ast$ to problem \eqref{eq:minimization_pb}, or \eqref{eq:EL}, respectively:
\begin{equation*}
    \boxed{
    \quad
    u^\ast(x_i)
    \approx \mathbf{A}_{M,N}\bar{f}^\ast(i)+\mathbf{H}_{M,N}\tilde{g}(i),
    \quad
    \nabla u^\ast(x_i)
    \approx \mathcal{B}_i\bar{f}^\ast+\mathcal{K}_i\tilde{g}, \quad 1\leq i\leq n.
    \quad
    }
\end{equation*}
As a consequence, we can estimate $u^\ast,\nabla u^\ast$ in $D$ by a piecewise constant interpolation on the partition $(W_i)_{1\leq i\leq n}$, namely
\begin{equation*}
    u^\ast(x)\approx \sum\limits_{1\leq i\leq n}\left[\mathbf{A}_{M,N}\bar{f}^\ast(i)+\mathbf{H}_{M,N}\tilde{g}(i)\right]1_{W_i}(x),
    \quad
    \nabla u^\ast(x)\approx \sum\limits_{1\leq i\leq n}\left[\mathcal{B}_i\bar{f}^\ast+\mathcal{K}_i\tilde{g}\right]1_{W_i}(x), \quad x\in D.
\end{equation*}
As a matter of fact, we can use the estimate for $\nabla u^\ast(x_i)$ in order to enhance the approximation for $u^\ast$ through
\begin{equation*}
    u^\ast(x)
    \approx \mathbf{A}_{M,N}\bar{f}^\ast(i)+\mathbf{H}_{M,N}\tilde{g}(i) + \left\langle\left[\mathcal{B}_i\bar{f}^\ast+\mathcal{K}_i\tilde{g}\right], x-x_i\right\rangle, \quad x\in W_i, \quad 1\leq i\leq n.
\end{equation*}

\paragraph{The gradient of \texorpdfstring{$\mathcal{J}_{n,m,p,M,N}$}{}.}
As $\mathcal{J}_{n,m,p,M,N}:\mathbb{R}^m\rightarrow \mathbb{R}$ is an 
explicit ((non)linear) mapping whose regularity is given by the regularity of $F(x,u,p)$ in the $(u,p)$-variable, problems \eqref{eq:min_el_discrete} could in principle be tackled by any preferred method. 
In our numerical tests we use gradient methods like Adam optimizer or High-index Saddle Dynamics (HiSD).
Consequently, the gradient of $\mathcal{J}_{n,m,p,M,N}$ is required.
Its existence is ensured by further imposing
\begin{enumerate}
    \item[${\sf (H_F)}$] The partial derivatives $\partial_uF,\partial_pF$ are continuous.
\end{enumerate}
Under ${\sf (H_F)}$, it is straightforward that $ \mathcal{J}_{n,m,p,M,N}:\mathbb{R}^m\rightarrow\mathbb{R}$ is continuously differentiable.
Moreover, using the explicit expression \eqref{eq:gradJ} of $ \mathcal{J}_{n,m,M,N}$, and setting
\begin{equation*}
    \left[\mathbf{A}_{M,N}\right]_{i}:=\left(\left[\mathbf{A}_{M,N}\right]_{ij}\right)_{1\leq j\leq m}\in \mathbb{R}^m, \quad 1\leq i \leq n,
\end{equation*}
a straightforward differentiation gives:
\begin{equation*}
\boxed{
\quad
\begin{aligned}
   &
   \nabla_{\bar f} \;\mathcal{J}_{n,m,p,M,N}( \bar f)
   = \sum\limits_{i=1}^{n} \frac{\lambda(W_i)}{\lambda(D)} \Big\{ \left[\mathbf{A}_{M,N}\right]_{i} \ \partial_{u} F(x_i, \mathbf{A}_{M,N} \bar f(i) + \mathbf{H}_{M,N} \bar g(i),\mathcal{B}_i \bar f + \mathcal{K}_i \tilde{g})\\
    & \;\qquad\qquad\qquad \qquad \qquad + \mathcal{B}^T_i \ \partial_{p} F(x_i, \mathbf{A}_{M,N} \bar f(i) + \mathbf{H}_{M,N} \bar g(i), \mathcal{B}_i\bar{f} + \mathcal{K}_i \tilde{g}) \Big\}\in \mathbb{R}^m, \quad \bar f\in \mathbb{R}^m.
\end{aligned}
\quad
}
\end{equation*}

%% file: Algorithm.tex
\begin{algorithm}[H]
\footnotesize
\caption{({\footnotesize WoS})-LOT: $\mathbf{A}_{M,N}$}
\label{alg:A}
\begin{algorithmic}
\Require $(\mathbf{x}_i)_{1\leq i\leq n},(\mathbf{y}_i)_{1\leq i\leq m}$
\Comment{Centers of the Voronoi diagrams $\mathbf{W}$ and $\mathbf{V}$}\\ 
\vspace{-10pt}
\State \hspace{\algorithmicindent} \hspace{\algorithmicindent} $r$ \Comment{Distance function to the boundary $\partial D$}\\ 
\vspace{-10pt}
\State \hspace{\algorithmicindent} \hspace{\algorithmicindent} $M,N$ \Comment{Number of WoS steps, number of i.i.d. samples of WoS trajectories} \\
\vspace{-10pt}
\Ensure $\mathbf{A}_{M,N}$ \\

\State $\mathbf{A}_{M,N} \gets \mathbf{0} \quad \in \mathbb{R}^{n \times m}$ \Comment{Initialize matrix}

\vspace{0.1cm}

\For{$l\in \{1,\dots,N\}$} \Comment{Implement the estimator from \eqref{eq:Estimator_A}}

\vspace{0.1cm}

\State $X^{\rm WoS}_{1:n} \gets \mathbf{x}_{1:n}$ \Comment{Initialize coordinates matrix}

\vspace{0.1cm}

\State $Y \sim -2d G_{B(0,1)}(0,y)dy\quad\in B(0,1)\subset \mathbb{R}^d$ \Comment{Sample $Y$ as in \Cref{prop:U_0 and DU_0}; see also \Cref{prop:removing singularities} (i)}

\vspace{0.1cm}

\For{$s\in\{1,\dots,M\}$}
\Comment{Iterate the $M$ steps of the WoS according to \eqref{eq:Estimator_A}}
\vspace{0.1cm}

\State $\mathbf{r}^{\rm WoS}_{1:n} \gets  r\left(X^{\rm WoS}_{1:n}\right)$

\vspace{0.1cm}

\For{$(i,k) \in \{1,\dots,n\} \times \{1,\dots,m\}$} \Comment{Compute $\mathbf{A}_{M,N}$ according to \eqref{eq:Estimator_A}}

\vspace{0.1cm}

\State{$\delta_{ik}\gets 1_{V_k} \left(X^{\rm WoS}_i + \mathbf{r}^{\rm WoS}_{i} Y \right)$} 
\Comment{$V_k$ is the Voronoi cell with center $\mathbf{y}_k$; see also \Cref{rem:implementation_tips} (iii)}

\vspace{0.1cm}

\State $ \left[ \mathbf{A}_{M,N} \right]_{ik} \gets \left[ \mathbf{A}_{M,N} \right]_{ik}+ \frac{1}{N} \ \frac{ \left(\mathbf{r}^{\rm WoS}_{i}\right)^2}{d} \delta_{ik}$

\EndFor

\vspace{0.1cm}

\State $\mathbf{U} \sim {\rm Uniform}(\partial B(0,1))\quad \in \mathbb{R}^d$

\vspace{0.1cm}

\State $X^{\rm WoS}_{1:n} \gets X^{\rm WoS}_{1:n} + \mathbf{r}^{\rm WoS}_{1:n} \mathbf{U} \quad\in \mathbb{R}^{n\times d}$\Comment{Update WoS position }

\EndFor

\EndFor
\end{algorithmic}
\end{algorithm}

\begin{algorithm}[H]
\footnotesize
\caption{({\footnotesize WoS})-LOT: $\mathbf{B}_{M,N}$}
\label{alg:B}
\begin{algorithmic}
\Require $(\mathbf{x}_i)_{1\leq i\leq n},(\mathbf{y}_i)_{1\leq i\leq m}$, \Comment{Centers of the Voronoi diagrams $\mathbf{W}$ and $\mathbf{V}$}\\ 
\vspace{-10pt}
\State \hspace{\algorithmicindent} \hspace{\algorithmicindent} $r$ \Comment{Distance function to the boundary $\partial D$}\\ 
\vspace{-10pt}
\State \hspace{\algorithmicindent} \hspace{\algorithmicindent} $M,N$ \Comment{Number of WoS steps, number of i.i.d. samples of WoS trajectories} \\
\vspace{-10pt}
\Ensure $\left(\mathbf{B}^{(j)}_{M,N}\right)_{1\leq j\leq d}$ \\

\State $\mathbf{B}^{(j)}_{M,N} \gets \mathbf{0} \quad\in \mathbb{R}^{n \times m},\quad 1\leq j\leq d$ \Comment{Initialize matrix}

\vspace{0.1cm}

\State $\mathbf{r}_{1:n} \gets r(\mathbf{x}_{1:n})\quad\in \mathbb{R}^n$
\Comment{Compute the (vectorized) distance from $\mathbf{x}_{1:n}:=(\mathbf{x}_i)_{1\leq i\leq n}$ to $\partial D$}
\vspace{0.1cm}

\For{$l\in \{1,\dots,N\}$} \Comment{Implement the estimator from \eqref{eq:Estimator_B}}

\vspace{0.1cm}

\State $\widetilde{Y} \sim {\rm Uniform}(B({0},1))\quad\in \mathbb{R}^d$ \Comment{Sample uniformly distributed point on unit ball}

\vspace{0.1cm}

\For{$(i,k) \in \{1,\dots,n\} \times \{1,\dots,m\}$} \Comment{Compute the second term in the RHS of \eqref{eq:Estimator_B}}

\vspace{0.1cm}

\State{$\alpha_{ik} \gets 1_{V_k} \left(\mathbf{x}_i + \mathbf{r}_i \widetilde{Y} \right)$}
\Comment{$V_k$ is the Voronoi cell with center $\mathbf{y}_k$; see also \Cref{rem:implementation_tips} (iii)}

\vspace{0.1cm}

\For{$j \in \{1,\dots,d\}$} 

\vspace{0.1cm}

\State $ \left[ \mathbf{B}^{(j)}_{M,N} \right]_{ik} \gets \left[ \mathbf{B}^{(j)}_{M,N} \right]_{ik} +  \frac{2\mathbf{r}_i}{Nd}  \widetilde{Y}_{j} \left( \frac{1}{|\widetilde{Y}|^d} -1 \right) \ \alpha_{ik} $

\EndFor
\EndFor

\vspace{0.1cm}

\State $\mathbf{U}_1 \sim {\rm Uniform}(\partial B(0,1))\quad\in \mathbb{R}^d$ \Comment{Sample uniformly distributed point on unit sphere}

\vspace{0.1cm}

\State $X^{\rm WoS, \;1}_{1:n} \gets \mathbf{x}_{1:n} + \mathbf{r}_{1:n} \mathbf{U}_1 \quad\in \mathbb{R}^{n\times d}$\Comment{Record the first step of WoS starting from the centers $\mathbf{x}_{1:n}$}

\vspace{0.1cm}

\State $X^{\rm WoS}_{1:n} \gets X^{\rm WoS, \;1}_{1:n}$ \Comment{Initialize coordinates matrix}

\vspace{0.1cm}

\State $Y \sim -2d G_{B(0,1)}(0,y)dy\quad\in B(0,1)\subset \mathbb{R}^d$ \Comment{Sample $Y$ as in \Cref{prop:U_0 and DU_0}; see also \Cref{prop:removing singularities} (i)}

\vspace{0.1cm}

\For{$s\in\{1,\dots,M\}$}
\Comment{Iterate the $M$ steps of the WoS according to \eqref{eq:Estimator_B}}
\vspace{0.1cm}

\State $\mathbf{r}^{\rm WoS}_{1:n} \gets  r\left(X^{\rm WoS}_{1:n}\right)$

\vspace{0.1cm}

\For{$(i,k) \in \{1,\dots,n\} \times \{1,\dots,m\}$} \Comment{Compute $\mathbf{B}^{(j)}_{M,N}$ according to \eqref{eq:Estimator_B}}

\vspace{0.1cm}

\State{$\delta_{ik}\gets 1_{V_k} \left(X^{\rm WoS}_i + \mathbf{r}^{\rm WoS}_{i} Y \right)$}

\vspace{0.1cm}

\For{$j \in \{1,\dots,d\}$} 

\vspace{0.1cm}

\State $ \left[ \mathbf{B}^{(j)}_{M,N} \right]_{ik} \gets \left[ \mathbf{B}^{(j)}_{M,N} \right]_{ik}+ \frac{1}{\mathbf{r}_i^2 N}  \left(X^{\rm WoS, \;1}_{i}-\mathbf{x}_i\right)_j (\mathbf{r}^{\rm WoS}_i)^2 \delta_{ik} $
\EndFor
\EndFor

\vspace{0.1cm}

\State $\mathbf{U} \sim {\rm Uniform}(\partial B(0,1))\quad \in \mathbb{R}^d$

\vspace{0.1cm}

\State $X^{\rm WoS}_{1:n} \gets X^{\rm WoS}_{1:n} + \mathbf{r}^{\rm WoS}_{1:n} \mathbf{U} \quad\in \mathbb{R}^{n\times d}$\Comment{Update WoS position }

\EndFor
\EndFor

\end{algorithmic}
\end{algorithm}

\begin{algorithm}[H]
\footnotesize
\caption{({\footnotesize WoS})-LOT: $\mathbf{H}_{M,N}$}
\label{alg:H}
\begin{algorithmic}
\Require $(\mathbf{x}_i)_{1\leq i\leq n}, (\mathbf{z}_i)_{1\leq i\leq p}$, \Comment{Centers of the Voronoi diagrams $\mathbf{W}$ and $\mathbf{\Gamma}$}\\ 
\vspace{-10pt}
\State \hspace{\algorithmicindent} \hspace{\algorithmicindent} $r$ \Comment{Distance function to the boundary $\partial D$}\\ 
\vspace{-10pt}
\State \hspace{\algorithmicindent} \hspace{\algorithmicindent} $M,N$ \Comment{Number of WoS steps, number of i.i.d. samples of WoS trajectories} \\
\vspace{-10pt}
\Ensure $\mathbf{H}_{M,N}$; \\

\State $\mathbf{H}_{M,N} \gets \mathbf{0} \quad\in \mathbb{R}^{n \times p}$ \Comment{Initialize matrix}

\vspace{0.1cm}

\For{$l\in \{1,\dots,N\}$} \Comment{Implement the estimator from \eqref{eq:Estimator_H}}

\vspace{0.1cm}

\State $X^{\rm WoS}_{1:n} \gets \mathbf{x}_{1:n}$ \Comment{Initialize coordinates matrix}

\vspace{0.1cm}

\For{$s\in\{1,\dots,M\}$}
\Comment{Iterate the $M$ steps of the WoS according to \eqref{eq:Estimator_H}}
\vspace{0.1cm}

\State $\mathbf{U} \sim {\rm Uniform}(\partial B(0,1))\quad \in \mathbb{R}^d$

\vspace{0.1cm}

\State $X^{\rm WoS}_{1:n} \gets X^{\rm WoS}_{1:n} + r\left(X^{\rm WoS}_{1:n}\right) \mathbf{U} \quad\in \mathbb{R}^{n\times d}$\Comment{Update WoS position }

\EndFor

\vspace{0.1cm}

\For{$(i,k) \in \{1,\dots,n\}\times\{1,\dots,p\}$} \Comment{Compute $\mathbf{H}_{M,N}$ according to \eqref{eq:Estimator_H}}

\vspace{0.1cm}

\State{$\gamma_{ik}\gets 1_{\Gamma_k} \left(X^{\rm WoS}_i \right)$}
\Comment{$\Gamma_k$ is the Voronoi cell with center $\mathbf{z}_k$; see also \Cref{rem:implementation_tips} (iii)}

\vspace{0.1cm}

\State $ \left[ \mathbf{H}_{M,N} \right]_{ik} \gets \left[ \mathbf{H}_{M,N} \right]_{ik}+ \frac{1}{N}\gamma_{ik}  $

\EndFor
\EndFor
\end{algorithmic}
\end{algorithm}

\begin{algorithm}[H]
\footnotesize
\caption{({\footnotesize WoS})-LOT: $\mathbf{K}_{M,N}$}
\label{alg:K}
\begin{algorithmic}
\Require $(\mathbf{x}_i)_{1\leq i\leq n}, (\mathbf{z}_i)_{1\leq i\leq p}$ \Comment{Centers of the Voronoi diagrams $\mathbf{W}$ and $\mathbf{\Gamma}$}\\ 
\vspace{-10pt}
\State \hspace{\algorithmicindent} \hspace{\algorithmicindent} $r$ \Comment{Distance function to the boundary $\partial D$}\\ 
\vspace{-10pt}
\State \hspace{\algorithmicindent} \hspace{\algorithmicindent} $M,N$ \Comment{Number of WoS steps, number of i.i.d. samples of WoS trajectories} \\
\vspace{-10pt}
\Ensure $\left(\mathbf{K}^{(j)}_{M,N}\right)_{1\leq j\leq d} $; \\

\State $\mathbf{K}^{(j)}_{M,N} \gets \mathbf{0} \quad\in \mathbb{R}^{n \times p},\quad 1\leq j\leq d$ \Comment{Initialize matrix}

\vspace{0.1cm}

\State $\mathbf{r}_{1:n} \gets r(\mathbf{x}_{1:n})\quad\in \mathbb{R}^n$
\Comment{Compute the (vectorized) distance from $\mathbf{x}_{1:n}:=(\mathbf{x}_i)_{1\leq i\leq n}$ to $\partial D$}
\vspace{0.1cm}

\For{$l\in \{1,\dots,N\}$} \Comment{Implement the estimator from \eqref{eq:Estimator_K}}

\vspace{0.1cm}

\State $\mathbf{U}_1 \sim {\rm Uniform}(\partial B(0,1))\quad\in \mathbb{R}^d$ \Comment{Sample uniformly distributed point on unit sphere}

\vspace{0.1cm}

\State $X^{\rm WoS, \;1}_{1:n} \gets \mathbf{x}_{1:n} + \mathbf{r}_{1:n} \mathbf{U}_1 \quad\in \mathbb{R}^{n\times d}$\Comment{Record the first step of WoS starting from the centers $\mathbf{x}_{1:n}$}

\vspace{0.1cm}

\State $X^{\rm WoS}_{1:n} \gets X^{\rm WoS, \;1}_{1:n}$ \Comment{Initialize coordinates matrix}

\vspace{0.1cm}

\For{$s\in\{1,\dots,M\}$}
\Comment{Iterate the $M$ steps of the WoS according to \eqref{eq:Estimator_K}}
\vspace{0.1cm}

\State $\mathbf{r}^{\rm WoS}_{1:n} \gets  r\left(X^{\rm WoS}_{1:n}\right)$

\vspace{0.1cm}

\State $\mathbf{U} \sim {\rm Uniform}(\partial B(0,1))\quad \in \mathbb{R}^d$

\vspace{0.1cm}

\State $X^{\rm WoS}_{1:n} \gets X^{\rm WoS}_{1:n} + \mathbf{r}^{\rm WoS}_{1:n} \mathbf{U} \quad\in \mathbb{R}^{n\times d}$\Comment{Update WoS position }

\EndFor

\vspace{0.1cm}

\For{$(i,k) \in \{1,\dots,n\}\times\{1,\dots,p\}$} \Comment{Compute $\mathbf{K}^{(j)}_{M,N}$ according to \eqref{eq:Estimator_K}}

\vspace{0.1cm}

\State{$\gamma_{ik}\gets 1_{\Gamma_k} \left(X^{\rm WoS}_i \right)$}
\Comment{$\Gamma_k$ is the Voronoi cell with center $\mathbf{z}_k$; see also \Cref{rem:implementation_tips} (iii)}

\For{$j\in \{1,\dots,d\}$}

\State{$\left[ \mathbf{K}^{(j)}_{M,N} \right]_{ ik} \gets \left[ \mathbf{K}^{(j)}_{M,N} \right]_{ik}+ \ \frac{d}{N\mathbf{r}_i^2} \left(X^{\rm WoS, \;1}_{i}-\mathbf{x}_i\right)_j \ \gamma_{ik}$}

\EndFor

\EndFor
\EndFor
\end{algorithmic}
\end{algorithm}

\begin{algorithm}[!hb]
\footnotesize
\caption{({\footnotesize WoS})-LOT}
\label{alg:LOT}
\begin{algorithmic}
\Require $(\mathbf{x}_i)_{1\leq i\leq n},(\mathbf{y}_i)_{1\leq i\leq m}, (\mathbf{z}_i)_{1\leq i\leq p}$ \Comment{Centers of the Voronoi diagrams $\mathbf{W}$, $\mathbf{V}$, and $\mathbf{\Gamma}$}\\ 
\vspace{-10pt}
\State \hspace{\algorithmicindent} \hspace{\algorithmicindent} $r$ \Comment{Distance function to the boundary $\partial D$}\\ 
\vspace{-10pt}
\State \hspace{\algorithmicindent} \hspace{\algorithmicindent} $M,N$ \Comment{Number of WoS steps, number of i.i.d. samples of WoS trajectories} \\
\vspace{-10pt}
\Ensure $\mathbf{A}_{M,N},\; \left(\mathbf{B}^{(j)}_{M,N}\right)_{1\leq j\leq d},\; \mathbf{H}_{M,N}, \;\left(\mathbf{K}^{(j)}_{M,N}\right)_{1\leq j\leq d} $; \\

\State $\mathbf{A}_{M,N} \gets \mathbf{0} \quad \in \mathbb{R}^{n \times m}$ \Comment{Initialize matrices}

\vspace{0.1cm}

\State $\mathbf{B}^{(j)}_{M,N} \gets \mathbf{0} \quad\in \mathbb{R}^{n \times m},\quad 1\leq j\leq d$

\vspace{0.1cm}

\State $\mathbf{H}_{M,N} \gets \mathbf{0} \quad\in \mathbb{R}^{n \times p}$ 

\vspace{0.1cm}

\State $\mathbf{K}^{(j)}_{M,N} \gets \mathbf{0} \quad\in \mathbb{R}^{n \times p},\quad 1\leq j\leq d$

\vspace{0.1cm}

\State $\mathbf{r}_{1:n} \gets r(\mathbf{x}_{1:n})\quad\in \mathbb{R}^n$
\Comment{Compute the (vectorized) distance from $\mathbf{x}_{1:n}:=(\mathbf{x}_i)_{1\leq i\leq n}$ to $\partial D$}
\vspace{0.1cm}

\For{$l\in \{1,\dots,N\}$} \Comment{Implement the estimators from \eqref{eq:Estimator_A},\eqref{eq:Estimator_B},\eqref{eq:Estimator_H},\eqref{eq:Estimator_K}}

\vspace{0.1cm}

\State $\widetilde{Y} \sim {\rm Uniform}(B({0},1))\quad\in \mathbb{R}^d$ \Comment{Sample uniformly distributed point on unit ball}

\vspace{0.1cm}

\For{$(i,k) \in \{1,\dots,n\} \times \{1,\dots,m\}$} \Comment{Compute the second term in the RHS of \eqref{eq:Estimator_B}}

\vspace{0.1cm}

\State{$\alpha_{ik} \gets 1_{V_k} \left(\mathbf{x}_i + \mathbf{r}_i \widetilde{Y} \right)$}
\Comment{$V_k$ is the Voronoi cell with center $\mathbf{y}_k$; see also \Cref{rem:implementation_tips} (iii)}

\vspace{0.1cm}

\For{$j \in \{1,\dots,d\}$} 

\vspace{0.1cm}

\State $ \left[ \mathbf{B}^{(j)}_{M,N} \right]_{ik} \gets \left[ \mathbf{B}^{(j)}_{M,N} \right]_{ik} +  \frac{2\mathbf{r}_i}{Nd}  \widetilde{Y}_{j} \left( \frac{1}{|\widetilde{Y}|^d} -1 \right) \ \alpha_{ik} $

\EndFor
\EndFor

\vspace{0.1cm}

\State $\mathbf{U}_1 \sim {\rm Uniform}(\partial B(0,1))\quad\in \mathbb{R}^d$ \Comment{Sample uniformly distributed point on unit sphere}

\vspace{0.1cm}

\State $X^{\rm WoS, \;1}_{1:n} \gets \mathbf{x}_{1:n} + \mathbf{r}_{1:n} \mathbf{U}_1 \quad\in \mathbb{R}^{n\times d}$\Comment{Record the first step of WoS starting from the centers $\mathbf{x}_{1:n}$}

\vspace{0.1cm}

\State $X^{\rm WoS}_{1:n} \gets X^{\rm WoS, \;1}_{1:n}$ \Comment{Initialize coordinates matrix}

\vspace{0.1cm}

\State $Y \sim -2d G_{B(0,1)}(0,y)dy\quad\in B(0,1)\subset \mathbb{R}^d$ \Comment{Sample $Y$ as in \Cref{prop:U_0 and DU_0},\Cref{prop:removing singularities}}

\vspace{0.1cm}

\For{$(i,k) \in \{1,\dots,n\} \times \{1,\dots,m\}$} \Comment{Update $\mathbf{A}_{M,N}$ with the first term of the 2nd sum in \eqref{eq:Estimator_A} }

\vspace{0.1cm}

\State{$\delta_{ik}\gets 1_{V_k} \left(\mathbf{x}_i + \mathbf{r}_{i} Y \right)$}

\vspace{0.1cm}

\State $ \left[ \mathbf{A}_{M,N} \right]_{ik} \gets \left[ \mathbf{A}_{M,N} \right]_{ik}+ \frac{1}{N} \ \frac{ \mathbf{r}_{i}^2}{d} \delta_{ik}$

\EndFor

\vspace{0.1cm}

\For{$s\in\{1,\dots,M\}$}
\Comment{Iterate the $M$ steps of the WoS according to \eqref{eq:Estimator_A},\eqref{eq:Estimator_B},\eqref{eq:Estimator_H},\eqref{eq:Estimator_K}}
\vspace{0.1cm}

\State $\mathbf{r}^{\rm WoS}_{1:n} \gets  r\left(X^{\rm WoS}_{1:n}\right)$

\vspace{0.1cm}

\For{$(i,k) \in \{1,\dots,n\} \times \{1,\dots,m\}$} \Comment{Compute $\mathbf{A}_{M,N}$ and $\mathbf{B}^{(j)}_{M,N}$ according to \eqref{eq:Estimator_A} - \eqref{eq:Estimator_B}}

\vspace{0.1cm}

\State{$\delta_{ik}\gets 1_{V_k} \left(X^{\rm WoS}_i + \mathbf{r}^{\rm WoS}_{i} Y \right)$}

\vspace{0.1cm}

\State $ \left[ \mathbf{A}_{M,N} \right]_{ik} \gets \left[ \mathbf{A}_{M,N} \right]_{ik}+ \frac{1}{N} \ \frac{ \left(\mathbf{r}^{\rm WoS}_{i}\right)^2}{d} \delta_{ik}$

\vspace{0.1cm}

\For{$j \in \{1,\dots,d\}$} 

\vspace{0.1cm}

\State $ \left[ \mathbf{B}^{(j)}_{M,N} \right]_{ik} \gets \left[ \mathbf{B}^{(j)}_{M,N} \right]_{ik}+ \frac{1}{\mathbf{r}_i^2 N}  \left(X^{\rm WoS, \;1}_{i}-\mathbf{x}_i\right)_j (\mathbf{r}^{\rm WoS}_i)^2 \delta_{ik} $
\EndFor
\EndFor

\vspace{0.1cm}

\State $\mathbf{U} \sim {\rm Uniform}(\partial B(0,1))\quad \in \mathbb{R}^d$

\vspace{0.1cm}

\State $X^{\rm WoS}_{1:n} \gets X^{\rm WoS}_{1:n} + \mathbf{r}^{\rm WoS}_{1:n} \mathbf{U} \quad\in \mathbb{R}^{n\times d}$\Comment{Update WoS position }

\EndFor

\vspace{0.1cm}

\For{$(i,k) \in \{1,\dots,n\}\times\{1,\dots,p\}$} \Comment{Compute $\mathbf{H}_{M,N}$ and $\mathbf{K}^{(j)}_{M,N}$ according to \eqref{eq:Estimator_H} - \eqref{eq:Estimator_K}}

\vspace{0.1cm}

\State{$\gamma_{ik}\gets 1_{\Gamma_k} \left(X^{\rm WoS}_i \right)$}
\Comment{$\Gamma_k$ is the Voronoi cell with center $\mathbf{z}_k$; see also \Cref{rem:implementation_tips} (iii)}

\vspace{0.1cm}

\State $ \left[ \mathbf{H}_{M,N} \right]_{ik} \gets \left[ \mathbf{H}_{M,N} \right]_{ik}+ \frac{1}{N}\gamma_{ik}  $

\For{$j\in \{1,\dots,d\}$}

\State{$\left[ \mathbf{K}^{(j)}_{M,N} \right]_{ ik} \gets \left[ \mathbf{K}^{(j)}_{M,N} \right]_{ik}+ \ \frac{d}{N\mathbf{r}_i^2} \left(X^{\rm WoS, \;1}_{i}-\mathbf{x}_i\right)_j \ \gamma_{ik}$}

\EndFor

\EndFor
\EndFor
\end{algorithmic}
\end{algorithm}

%% file: numerical.tex
\subsection{Numerical examples}\label{sec:numerical examples}

In this section we present numerical simulations for some elliptic partial differential equations: Laplace Dirichlet equation (Example \ref{example Laplace}), Poisson problem (Example \ref{example Poisson}), second order linear elliptic equation in divergence form (Examples \ref{example elliptic div} - \ref{example elliptic div min} ), $p-$Laplace problem (Examples \ref{example plaplace disk} - \ref{example plaplace square}), minimal surface equation (Example \ref{example minimal surface}), semilinear elliptic equation (Example \ref{example semilinear}), in bounded domains $D \subset \mathbb{R}^d$, $d=2$ (disk, square, L-shaped domain). Before presenting the numerical experiments, we briefly recall and fix notations, comment on the used optimization methods and on the types of numerical data plots. Supplementary numerical details are included in Appendix \ref{appendix numerical}. \\

\noindent \textbf{Notations}. We keep the notations for points
\begin{equation*}
    \big(x_i\big)_{1\leq i\leq n} \subset D, \,\, \big(y_i\big)_{1\leq i\leq m} \subset D, \,\, \big(z_i\big)_{1\leq i\leq p} \subset \partial D,
\end{equation*}
and for the estimate matrices 
\begin{equation*}
    \mathbf{A}_{M,N} \in \mathbb{R}^{n \times m}, \,\, \mathbf{B}_{M,N} \in \mathbb{R}^{d \times n \times m}, \,\, \mathbf{H}_{M,N} \in \mathbb{R}^{n \times p}, \,\, \mathbf{K}_{M,N} \in \mathbb{R}^{d \times n \times p},
\end{equation*}of the operators $U_0$, $\nabla U_0$, $H$ and $\nabla H$, respectively, in $D$, with $N$ Monte Carlo samples and $M$ steps of $WoS$. We refer to 
\vspace{-10pt}
\begin{eqnarray*}
    \text{Exact solution of the PDE, if available: } u_{\text{ex}},
\end{eqnarray*}
and denote, the estimate of the solution and the estimate of its gradient proposed in this work by
\begin{equation*}
    \begin{aligned}
    \widehat{u}  & \coloneqq \mathbf{A}_{M,N} \bar{f}^* + \mathbf{H}_{M,N} \tilde{g} \; \in \mathbb{R}^n\\
     \widehat{\nabla u} & \coloneqq \mathbf{B}_{M,N} \bar{f}^* + \mathbf{K}_{M,N} \tilde{g} \; \in \mathbb{R}^{d,n}\\
    \end{aligned}
\end{equation*}
with $\tilde{g}=\big(g(z_i)\big)_{1 \leq i \leq p}$ where $g$ is the given Dirichlet boundary data and 
\begin{eqnarray*}
\bar{f}^*=
\left\{
\begin{array}{ll}
     \big( f(y_i) \big)_{1 \leq i \leq m},  \text{ with given data $f$, as in Example \ref{example Laplace} ($f=0$)  and Example \ref{example Poisson}}, \\[4pt]
    \bar{f}_{alg}  \in \mathbb{R}^m \text{ found by solving the linear algebraic system \eqref{eq:linear_q}, as in Example \ref{example elliptic div} (see also Appendix \ref{appendix numerical})}, \\[4pt]
    \bar{f}_{opt} \in \mathbb{R}^m \text{ found by optimization of $\mathcal{J}_{n,m,p,N,M} $ \eqref{eq:min_el_discrete} as in Examples \ref{example elliptic div min} - \ref{example semilinear} (see the paragraph below)}.
\end{array}
\right.
\end{eqnarray*}
We consider the piecewise constant functions on Voronoi cells $W_i$ associated to points $x_i$, $1\leq i \leq n$,
\begin{equation*}
\begin{aligned}
    & \mathbf{\widehat{u}}(x) \coloneqq \mathbf{\widehat{u}}(x_i) = \widehat{u}(i), \, \quad &&\mathbf{\widehat{\nabla u}}(x)  \coloneqq \mathbf{\widehat{\nabla u}}(x_i) = \widehat{\nabla u}(i), \\
    & \mathbf{{u}}_{\text{ex}}(x) \coloneqq  u_{\text{ex}}(x_i), \quad && \mathbf{\nabla {u}}_{\text{ex}}(x) \coloneqq  \nabla u_{\text{ex}}(x_i) \qquad \qquad x \in W_i, \; 1\leq i\leq n,
\end{aligned}
\end{equation*}
and recall the functional \eqref{eq:F} and its discrete approximation, respectively,
\begin{equation*}
\begin{aligned}
    \mathcal{F} ({u})& = \int_D  F(x,{u}(x),\nabla {u}(x)) \; dx\\
    \mathcal{F}_n ({u})& \coloneqq \sum_{1 \leq i \leq n} \lambda(W_i) \;F(x_i,{u}(x_i),\nabla {u}(x_i)).
\end{aligned}
\end{equation*}
Note that $\mathcal{F} (\mathbf{\widehat{u}}) = \mathcal{F}_n (\mathbf{\widehat{u}})$ and that $\mathcal{F}_n (u_{\text{ex}}) = \mathcal{F}_n ( \mathbf{{u}}_{\text{ex}})$. When shown comparisons (table of errors), we compute the integral in $\mathcal{F}(u)$ or $L^p$-norms using Monte Carlo estimate with $10^7$ samples.\\

\noindent \textbf{Optimization}. In view of \eqref{eq:gradJ}, we recall
\begin{equation*}
    \mathcal{J}_{n,m,p,N,M} (\bar{f}) = \sum_{1 \leq i \leq n} \lambda(W_i) \;F \left( x_i, \, [\mathbf{A}_{M,N} \bar{f} + \mathbf{H}_{M,N} \tilde{g}]_i, \,[\mathbf{B}_{M,N} \bar{f} + \mathbf{K}_{M,N} \tilde{g}]_i \right).
\end{equation*}
For the (local) minimization of $\mathcal{J}_{n,m,p,N,M}$ \eqref{eq:min_el_discrete} we used Adam optimization method \cite[Algorithm~1 (Section 2)]{Adam17} for all examples \ref{example elliptic div min} - \ref{example semilinear}. In the case of multiple solutions in Example \ref{example semilinear}, for the search of saddle points of $\mathcal{J}_{n,m,p,N,M}$ we relied on the p-HiSD (preconditioned high saddle dynamics) method proposed very recently in \cite[(3.3)]{huang2026computingsaddlepointsstiff}. See additional numerical details in Appendix \ref{appendix numerical}. \\

\noindent \textbf{Plots}. We visualize numerical vector data as:
\vspace{-5pt}
\begin{itemize}[itemsep=-2pt]
    \item[--] vector indices or coordinates against their values;
    \item[--] heatmap of values constant on Voronoi cells;
    \item[--] surface, heatmap or contour plot of an interpolation (linear/cubic) of point values.
\end{itemize}

\subsubsection{WoS vs. FEM: A Toy comparison (Laplace and Poisson problems)}

In the first two examples treated below we consider the Laplace and Poisson problems, respectively, we compute the gradients of the solutions obtained by the probabilistic approach used in this paper, and we compare with those obtained by the Finite Element Method using FreeFEM++. In other words, we compare the point-wise estimates for $\nabla H$ and $\nabla U$ from the Laplacian Operator Toolbox \eqref{eq:LOT}, obtained by the two approaches just mentioned. 
We mention at this point that a decisive comparison between our method and FEM is hard to achieve and beyond the scope of the two tests presented below. This is mainly because of two aspects: (i) the two approaches are based on different techniques and require different discretizations; (ii) the algorithms and their numerical implementation resulting from the method developed in this paper are far from being as mature and as optimized as the ones available for the FEM.
In order to make the comparison with FEM as relevant as possible, we paired the number of Voronoi cells used in our discretization with the number of vertices in the mesh required by FEM. Moreover, since in our methods we use a piece-wise constant representation of the input of the operators $\nabla H$ and $\nabla U$, a fair comparison is to consider FEM with piece-wise linear basis functions ($P1$); nevertheless, we compared our results with FEM using $P_2$ elements (quadratic basis functions), as well.

{\example \label{example Laplace}}
\textbf{Comparison with FEM - $\mathbf{\nabla H}$. }
Following \cite{Ai02}, let us consider
\begin{eqnarray*}
\left\{
\begin{array}{ll}
      \Delta u = 0  & \mbox{ in } D = B(0,1) \subset \mathbb{R}^2, \\[4pt]
    u = g & \mbox{ on } \partial D.
\end{array}
\right.
\end{eqnarray*}
with $g(x,y) =  |y| $. For $x \in (-1,1), \, y=0$, the exact solution is $u_{\text{ex}}(x,0) = \frac{1-x^2}{\pi x }\log \frac{1+x}{1-x}$.
The choice of this particular solution is motivated as follows: As it is well known, if a domain $D$ satisfies the exterior sphere condition, and $g$ is $\alpha$-Holder continuous ($0<\alpha<1$), then it's harmonic extension $u$ (i.e. the solution to the above problem) is also $\alpha$-H\"older continuous; see also \cite{cimpean2025quantitative} for a recent study on this topic. This preservation of the boundary regularity is no longer valid for Lipschitz data, as the above example provides a counterexample; one can see that $\nabla u_{\sf ex}$ explodes logarithmically near $(1,0)$, and this is in fact the worst possible behavior of the gradient of a harmonic function with Lipschitz trace on $\partial D$, if $D$ admits an exterior sphere condition (see again \cite{cimpean2025quantitative}).

\input{Aikawa_fem}

%%%%%%%%%%%%%%%%%%%%%%%%%%%%%%%%%%%%%%%%%%%%%%%%%%%%%%
{\example \label{example Poisson}}
    \textbf{Comparison with FEM - $\mathbf{U}$, $\mathbf{\nabla U}$.}
Now we compare LOT vs FEM on an L-shape domain, for a solution $u$ and its gradient whose singularity at the corner is significantly higher than in the previous example. More precisely, consider
\begin{eqnarray*}
\left\{
\begin{array}{ll}
     - \frac{1}{2}\Delta u = f  & \mbox{ in } D  = (-1,1)^2 \setminus [0,1) \times (-1,0], \\[4pt]
    u = 0 & \mbox{ on } \partial D.
\end{array}
\right.
\end{eqnarray*}
Consider $u_{ex}(r,\phi) \coloneqq r^{2/3} \sin{\dfrac{2\phi}{3}}(1-r^2 \cos^2{\phi})(1-r^2\sin^2{\phi}), \; x=r \cos{\phi},\; y=r\sin \phi, \; \phi \in [0,2 \pi), \; (x,y) \in D$, and $f$ taken accordingly.
Note that $\nabla u_{\sf ex}$ has a singularity at the corner of magnitude $r^{-1/3}$, which is in fact typical for such a domain (see \cite{gr11} or \cite{cimpean2025quantitative}). 
The next figures reveal that the Monte Carlo estimators used to compute $\nabla U$ through \eqref{eq:LOT} are quite competitive with the ones provided by FEM, even when using $P_2$ elements.

\input{comparison_fem}

As a conclusion of the first two tests, the Monte Carlo estimators used to produce the Laplacian Operator Toolbox \eqref{eq:LOT} seem to be quite stable and accurate, mesh-free, easy to implement (as they require essentially vector calculus), and still competitive with FEM. 
In order to assess the use of LOT for general elliptic linear and nonlinear problems, in what follows we use it on various benchmark PDEs and compare with the existing numerical results.\\

\newpage
\noindent \textbf{Examples \ref{example elliptic div} - \ref{example semilinear}.} In all of the following numerical simulations, whenever the domain $D$ is the unit disk or the unit square, we use the same estimate  matrices $\mathbf{A}_{M,N} \subset \mathbb{R}^{n,m}, \,\, \mathbf{B}_{M,N} \subset \mathbb{R}^{d,n,m}, \,\, \mathbf{H}_{M,N} \subset \mathbb{R}^{n,p}, \,\, \mathbf{K}_{M,N} \subset \mathbb{R}^{d, n,p},$ for the operators $U$, $\nabla U$, $H$, $\nabla H$, respectively, in $D$. Below we list the coordinates of the discretization (plotted in \Cref{domain_points}) and the parameters considered for the approximations: the number of points $n$, $m$ and $p$, the number of Monte Carlo samples $N$ and the number of $WoS$ steps, $M$. The matrices can be accessed from the Appendix \ref{appendix numerical}. \\

1. \textbf{Unit disk} $D = B(0,1)$: $n=m=300$, $p=60$; $N=10^6$, $M=30$. 
% \vspace{-5pt}
\begin{equation}\label{eq:unit disk points}
\begin{aligned}
\big(x_i\big)_{1\leq i\leq n} &=
\left\{(r_k \cos{\theta_{k,l}}, r_k \sin{\theta_{k,l}})\;\middle|\; r_k = \frac{R}{C} \Big( k-\tfrac{1}{2}\Big), \theta_{k,l} = \frac{2 \pi}{6k-3} l, \; 1 \leq k \leq C, \;1 \leq l \leq 6k-3,\;R=1,C=10 \right\} \\
\big(y_i\big)_{1\leq i\leq m} &= \big(x_i\big)_{1\leq i\leq n} \\
\big(z_i\big)_{1\leq i\leq p} &=
\left\{(R \cos{\theta_k}, R \sin{\theta_k})\;\middle|\; R=1,\;\theta_k=\frac{2\pi}{p}(k-1),\; 1\leq k\leq p \right\}
\end{aligned}
\end{equation}

2. \textbf{Unit square} $D = (0,1)^2$: $n=m=400$, $p=80$; $N=10^7$, $M=30$.
% \vspace{-5pt}
\begin{equation}\label{eq:unit square points}
\begin{aligned}
\big(x_i\big)_{1\leq i\leq n} &=
\left\{(t_k,t_l)\;\middle|\;
t_k=\frac{1}{M_1}\Big(k-\tfrac{1}{2}\Big),\;
t_l=\frac{1}{M_1}\Big(l-\tfrac{1}{2}\Big),\;
1 \leq k \leq M_1, \; 1\leq l \leq M_2, \; M_1=M_2=20
\right\} \\
\big(y_i\big)_{1\leq i\leq m} &= \big(x_i\big)_{1\leq i\leq n} \\
\big(z_i\big)_{1\leq i\leq p} &=
\left\{(t_k,0),\; (1,t_l),\; (t_k,1),\;(0,t_l) \;\middle|\;
t_k=\frac{1}{M_1}\Big(k-\tfrac{1}{2}\Big),\;
t_l=\frac{1}{M_1}\Big(l-\tfrac{1}{2}\Big),\;
1 \leq k \leq M_1, \; 1\leq l\leq M_2,\; \right\}. \\
 & \quad \quad \quad \quad \quad \quad \quad \quad \quad \quad \quad \quad \quad \quad \quad \quad \quad \quad \quad \quad \quad \quad \quad \quad \quad \quad \quad \quad \quad \quad \quad \quad \quad \quad \quad \quad M_1=M_2=20
\end{aligned}
\end{equation}

\begin{figure}[H]
    \centering
    \begin{minipage}{0.22\textwidth}
        \centering
        \includegraphics[width=\textwidth]{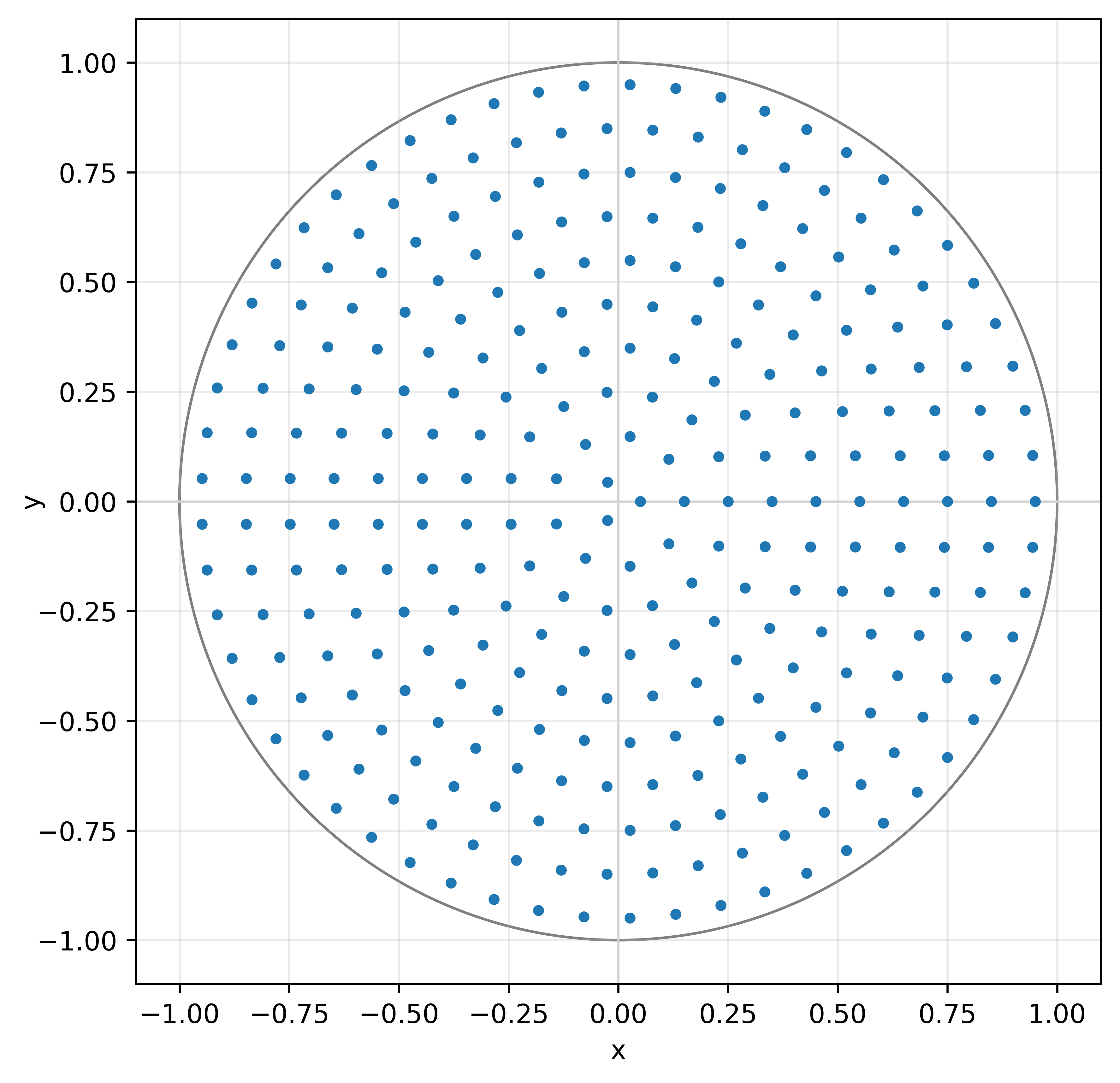}
        \captionof{subfigure}{$n=300$}
    \end{minipage}
    \begin{minipage}{0.22\textwidth}
        \centering
        \includegraphics[width=\textwidth]{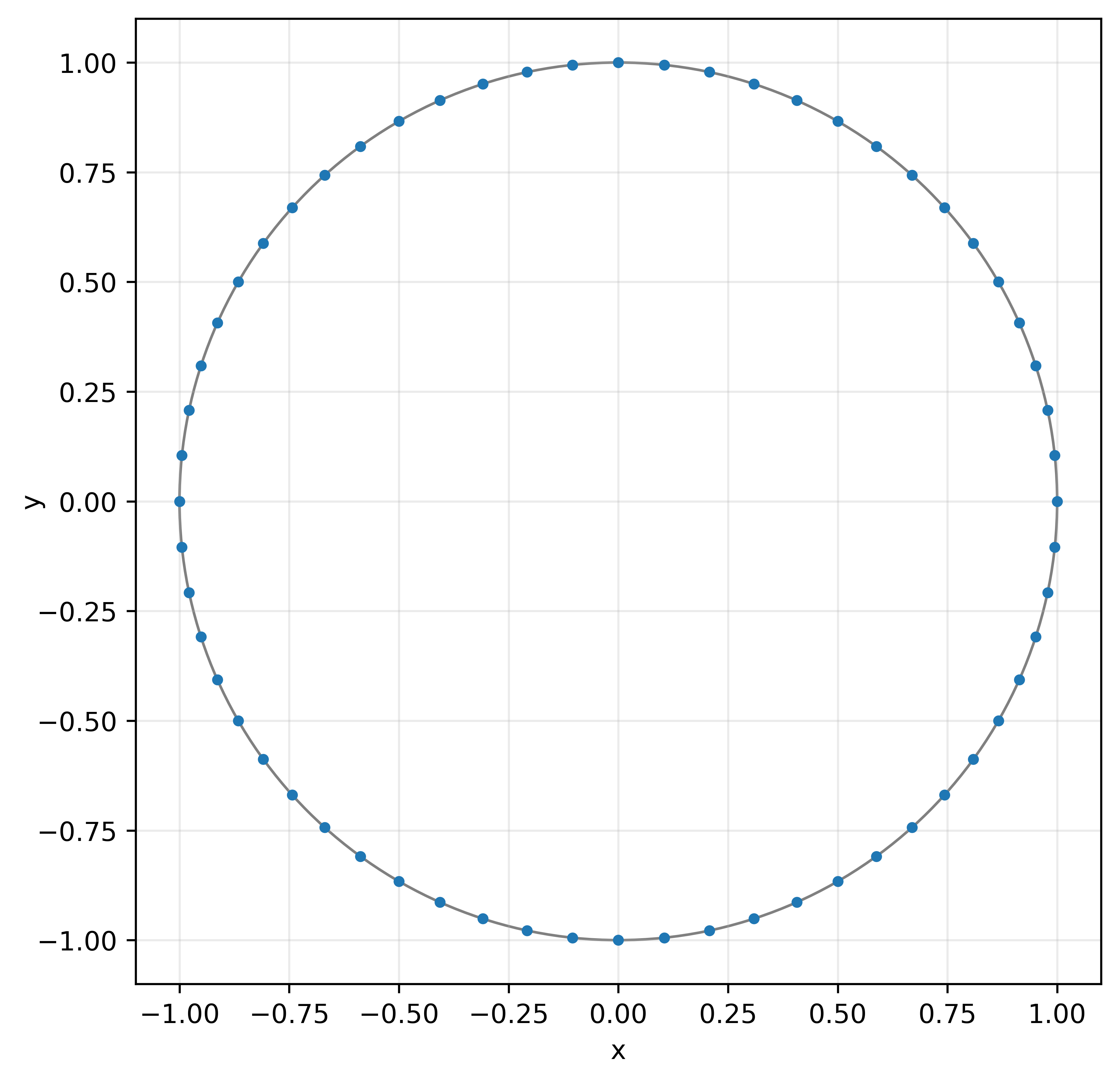}
        \captionof{subfigure}{$p=60$}
    \end{minipage}
    \begin{minipage}{0.22\textwidth}
        \centering
        \includegraphics[width=\textwidth]{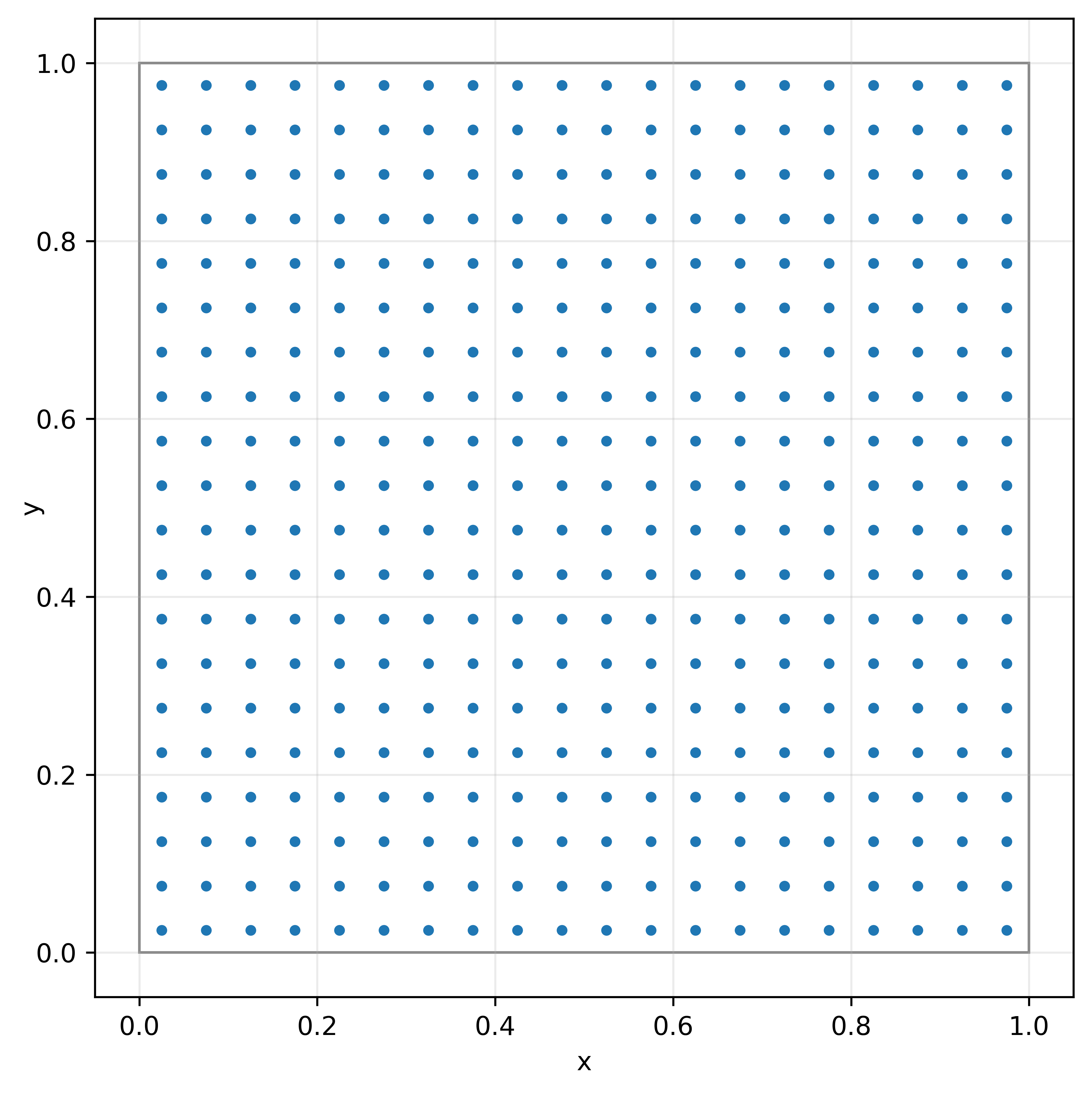}
        \captionof{subfigure}{$n=400$}
    \end{minipage}
    \begin{minipage}{0.22\textwidth}
        \centering
        \includegraphics[width=\textwidth]{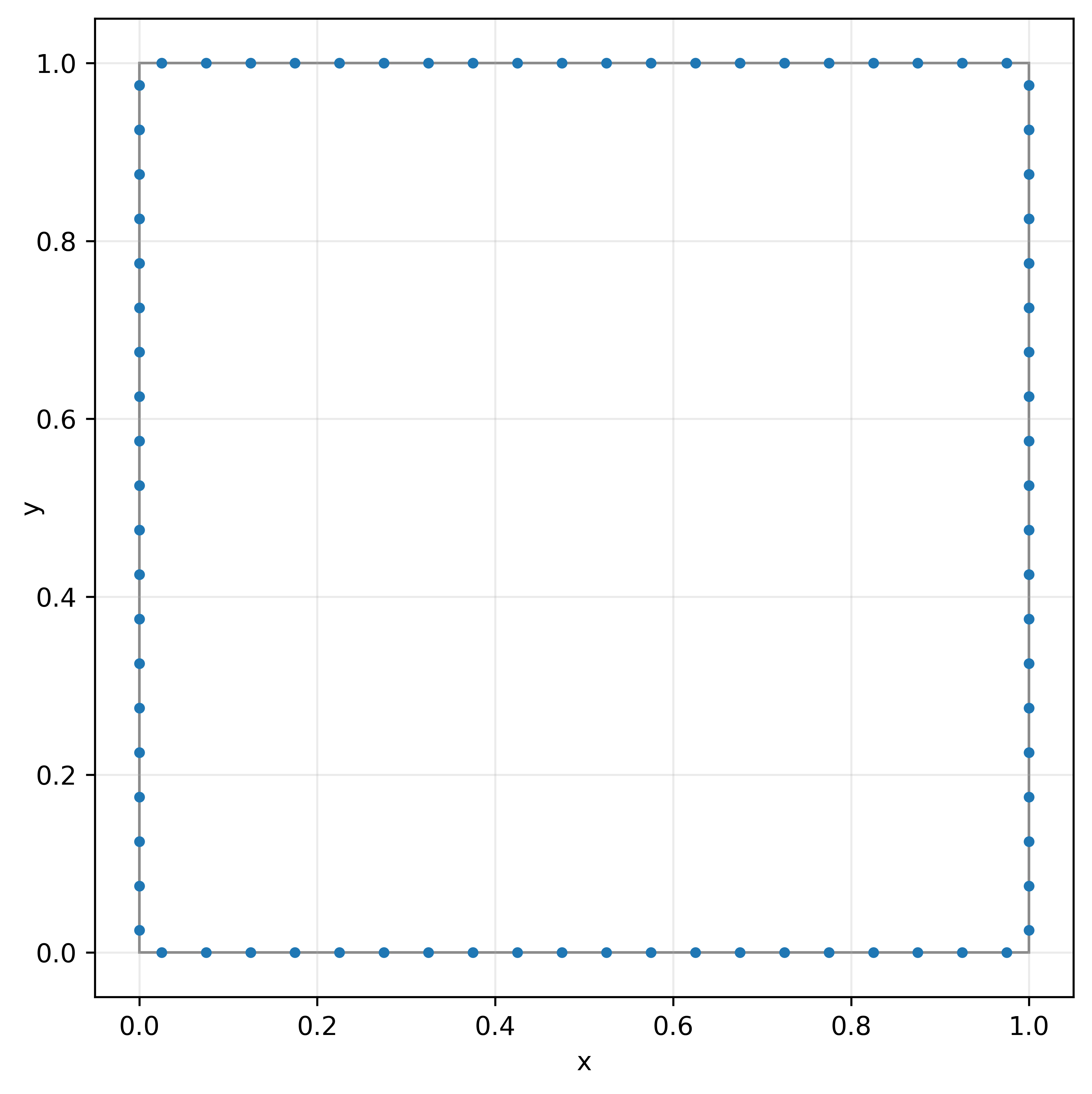}
        \captionof{subfigure}{$p=80$}
    \end{minipage}
    \caption{Points $(x_i)_{1 \leq i \leq n} \subset D$ (a), (c) and points $(z_i)_{1 \leq i \leq p} \subset \partial D$ (b), (d), for $D = B(0,1)$ and $D=(0,1)^2$.}
    \label{domain_points}
\end{figure}

\subsubsection{Solving linear elliptic PDEs as in Section \ref{ss:linearPDE}}

{\example \label{example elliptic div}}
\textbf{Second order elliptic $\bf{\mathrm{div} (A(x) \nabla u ) +   b(x) \cdot \nabla u - c u  = f}$}. Consider
\begin{eqnarray*}
\left\{
\begin{array}{ll}
       \mathrm{div} (A(x) \nabla u ) +   b(x) \cdot \nabla u - c u  = f  & \mbox{ in } D = B(0,1) \subset \mathbb{R}^2, \\[4pt]
    u = g & \mbox{ on } \partial D.
\end{array}
\right.
\end{eqnarray*}
with $A(x,y)= \begin{pmatrix}2+x^2 & x y\\[6pt] x y & 2+y^2\end{pmatrix} $, $ b(x,y)=(-y,x) $, $c=1$. We consider $u_{\text{ex}}(x,y) = (1-x^{2}-y^{2}) \sin{\pi x}$ and $f$, $g$ taken accordingly.

\input{elliptic-solution-operator}

\subsubsection{Solving linear elliptic PDEs as in Section \ref{ss:EL}}
{\example \label{example elliptic div min}}
\textbf{Second order elliptic equation $\bf{- \mathrm{div} (A(x) \nabla u ) +  b(x) \cdot \nabla u + c u  = f, \, b= A \nabla \phi}$}. Let
\begin{eqnarray*}
\left\{
\begin{array}{ll}
      - \mathrm{div} (A(x) \nabla u ) +  ( A(x) \nabla \phi) \cdot \nabla u + c u  = f  & \mbox{ in } D = B(0,1) \subset \mathbb{R}^2, \\[4pt]
    u = g & \mbox{ on } \partial D.
\end{array}
\right.
\end{eqnarray*}
with $A(x,y)= \begin{pmatrix}1 & x y\\[6pt] x y & 1\end{pmatrix} $, $ \phi(x,y)=\tfrac{1}{2}\bigl(x^2+y^2\bigr) $, $c=1$. We consider $u_{\text{ex}}(x,y) = (1-x^{2}-y^{2})(x^3 - 3xy^2)$ and $f$, $g$ taken accordingly.
This is the Euler-Lagrange equation of the functional $\mathcal{F} : \{ u \in H^1(D): \, u = g \mbox{ on } \partial D\} \to \mathbb{R}$,
\begin{equation*}
    \mathcal{F} (u) = \int_D \mathrm{e}^{-\phi} \left( \frac{1}{2} \nabla u \cdot A \nabla u + \frac{1}{2} c u^2 - f u  \right) \; d \textrm{x}.
\end{equation*}

\input{elliptic-disk-new}

\subsubsection{Solving nonlinear elliptic PDEs as in Section \ref{ss:EL}}
\paragraph{p-Laplace equation.} Consider the Poisson problem for the p-Laplace operator $\Delta_p u \coloneqq  \mathrm{div} (|\nabla u|^{p-2} \nabla u)$, $p>1$ constant,
% \vspace{-10pt}
\begin{eqnarray*}
\left\{
\begin{array}{ll}
     - \Delta_{p} u = f  & \mbox{ in } D, \\[4pt]
    u = g & \mbox{ on } \partial D.
\end{array}
\right.
\end{eqnarray*}
This is the Euler-Lagrange equation of the functional $\mathcal{F} : \{ u \in W^{1,p}(D): \, u = g \mbox{ on } \partial D\} \to \mathbb{R}$,
\begin{equation}\label{eq:plaplace}
    \mathcal{F} (u) = \int_D  \frac{1}{p} | \nabla u|^{p}  - f u \; d \textrm{x}.
\end{equation}

{\example\label{example plaplace disk} }\textbf{$p-$Torsion problem on unit disk.} Let $D = B(0,1) \subset \mathbb{R}^2$, $f (x,y) = 1$ (torsion problem), $g=0$ in \eqref{eq:plaplace}. In the case of the unit disk, we know $u_{\text{ex}}(x,y) = \frac{p-1}{p} \left(\frac{1}{2}\right)^{\frac{1}{p-1}} \left(1-(x^2+y^2)^\frac{p}{2 p -2} \right)$.

\input{p-laplace-disk}
\vspace{-5pt}
{\example\label{example plaplace square}} \textbf{$p-$Torsion problem on unit square.} Let $D = (0,1)\times(0,1) \subset \mathbb{R}^2$, $f (x,y) = 1$ (torsion problem), $g=0$ in \eqref{eq:plaplace}. In the case of the unit square, $u_{\text{ex}}$ is not available. In the linear case, i.e. $p=2$, it can be represented via Fourier series. 

\input{p-laplace-square}

\paragraph{Minimal surface equation.}\
{\example \label{example minimal surface}} Consider
% \vspace{-20pt}
\begin{eqnarray*}
\left\{
\begin{array}{ll}
     \mathrm{div} \left( \frac{\nabla u}{\sqrt{1+ |\nabla u|^2}} \right) = 0  & \mbox{ in } D = B(0,1) \subset \mathbb{R}^2, \\[4pt]
    u = g & \mbox{ on } \partial D.
\end{array}
\right.
\end{eqnarray*}
with $g(x,y) = 0.3 \sin(5 \phi), \; x= \cos{\phi},\; y=\sin \phi, \;\phi \in [0,2 \pi)$, example from \cite[Section 5.3]{Sakakibara2024}. In this case, $u_{\text{ex}}$ not available. This is the Euler-Lagrange equation of the functional $\mathcal{F} : \{ u \in W^{1,1}(D): \, u = g \mbox{ on } \partial D\} \to \mathbb{R}$,
\begin{equation*}
    \mathcal{F} (u) = \int_D  \sqrt{ 1+ | \nabla u|^{2}} \; d \textrm{x}.
\end{equation*}

\input{ms-disk-2}

\paragraph{Semilinear elliptic equation.}\
% \vspace{-10pt}
{\example\label{example semilinear}}
\textbf{ Problem $\bf{\Delta u + u^2 = f}$ studied by Breuer, McKenna, Plum \cite{BreuerKennaPlum03}}. Consider
\begin{eqnarray} \label{eq:multiple}
\left\{
\begin{array}{ll}
     \Delta u + u^2 = s \, \sin(\pi x) \sin (\pi y)  & \mbox{ in } D = (0,1)\times(0,1) , \\[4pt]
    u = 0 & \mbox{ on } \partial D.
\end{array}
\right.
\end{eqnarray}
with $s$ constant. This is the Euler-Lagrange equation of the functional $\mathcal{F} : H^1_0(D) \to \mathbb{R}$,
\begin{equation*}
    \mathcal{F} (u) = \int_D  \frac{1}{2} | \nabla u|^{2} -  \frac{1}{3} u^3  + f u \; d \textrm{x}, 
\end{equation*}
where $f (x,y) =  s \, \sin(\pi x) \sin (\pi y)$. Note that since the Laplacian, the source term $f$ and the homogeneous Dirichlet boundary data are invariant under the symmetries of the square, if $u$ is a solution of \eqref{eq:multiple}, then $u \circ R$ is also a solution, for any $R$ a symmetry (rotation or reflection) of the unit square. Hence, by essentially distinct solutions we mean solutions that are not related via $R$. In the case of $s=800$, it was shown in \cite{BreuerKennaPlum03} that \eqref{eq:multiple} has at least 4 essentially distinct solutions, which we also find (see \Cref{fig:multiple s800}) and the authors of \cite{BreuerKennaPlum03} conjecture that more solutions show up as $s$ increases to infinity. Indeed, in the case of $s= 8000$, we find 22 essentially distinct approximate solutions (see \Cref{fig:multiple s8000}).

\begin{figure}[H]
    \centering
    \begin{minipage}{0.5\textwidth}
        \centering
        \includegraphics[width=\textwidth]{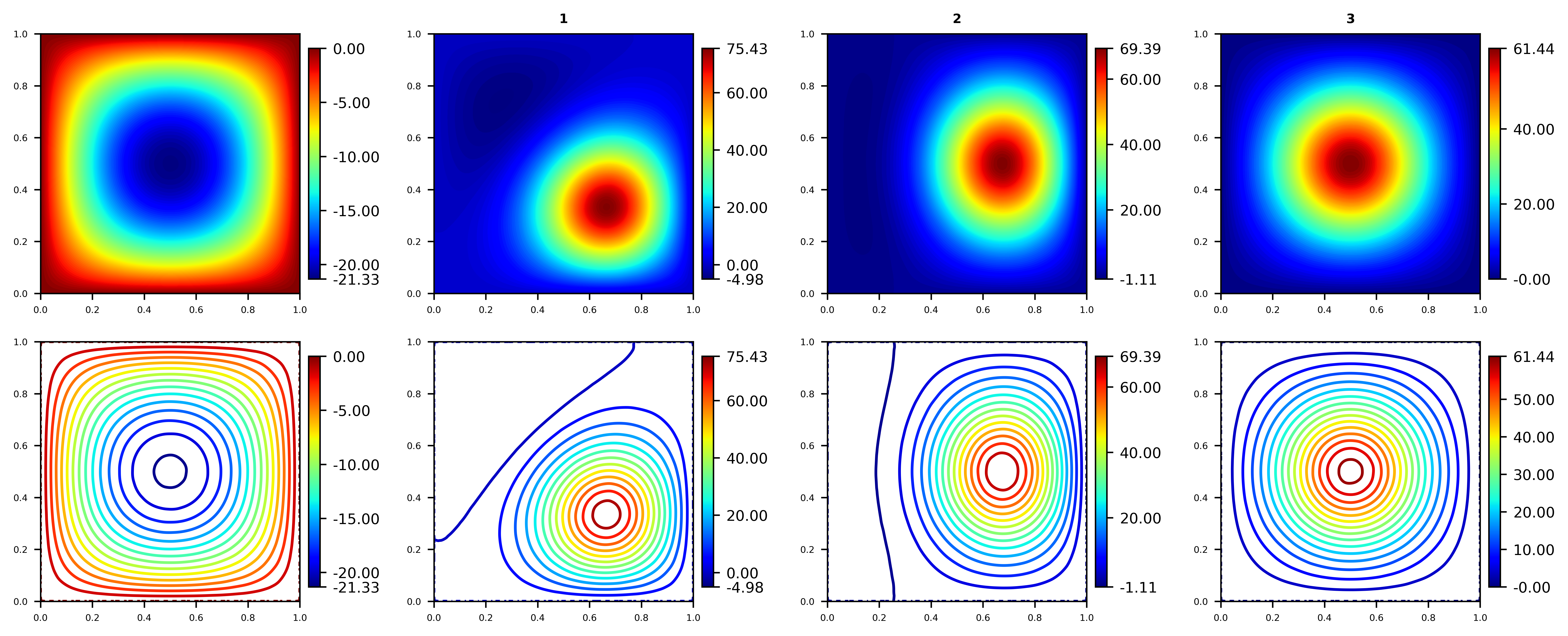}
        % \caption{$n = 400$}
    \end{minipage}
    \caption{4 essentially distinct approximate solutions of $ \Delta u + u^2 = 800 \, \sin(\pi x) \sin (\pi y)$ obtained with the proposed method, namely with our PDE discretization through LOT, in combination with the p-HISD landscape exploration technique \cite{huang2026computingsaddlepointsstiff}. Heatmap plot (top) and contour plot (bottom) of the  interpolated point values. First approximate solution found by minimization of the functional and 3 approximate solutions found by saddle point search with index $k \in \{1,2,3 \}$ (see further numerical details in Appendix \ref{appendix numerical}).}
    \label{fig:multiple s800}
\end{figure}

\noindent We note that the obtained approximate solutions shown in \Cref{fig:multiple s800} are consistent (qualitatively and quantitatively) with the ones in \cite{BreuerKennaPlum03} (see also \cite[Section 4.2]{LinYangyiHuiyuan2024}, \cite[Section B]{Pinn2025} for recent numerical methods) and the peak values of the 4 approximate solutions are consistent with the ones outlined in \cite[Section 4.3]{Allgoweretal09}.

\begin{figure}[H]
    \centering
    \begin{minipage}{0.95\textwidth}
        \centering
        \includegraphics[width=\textwidth]{multiple_solutions/Figure_2026-08-06_183749.png}
    \end{minipage}
    \begin{minipage}{0.95\textwidth}
        \centering
        \includegraphics[width=\textwidth]{multiple_solutions/Figure_2026-08-06_183807.png}
    \end{minipage}
% \end{figure}
% \begin{figure}[H]
    % \centering
    \begin{minipage}{0.95\textwidth}
        \centering
        \includegraphics[width=\textwidth]{multiple_solutions/Figure_2026-08-06_183519.png}
        % \caption{$n = 400$}
    \end{minipage}
    \begin{minipage}{0.95\textwidth}
        \centering
        \includegraphics[width=\textwidth]{multiple_solutions/Figure_2026-08-06_183617.png}
        % \caption{$n = 400$}
    \end{minipage}
    \caption{22 essentially distinct approximate solutions of $ \Delta u + u^2 = 8000 \, \sin(\pi x) \sin (\pi y)$ obtained with the proposed method, namely with our PDE discretization through LOT, in combination with the p-HISD landscape exploration technique \cite{huang2026computingsaddlepointsstiff}. Heatmap plot (top) and contour plot (bottom) of the  interpolated point values. First approximate solution found by minimization of the functional and the others found by saddle point search with index $k \in \{1,2,3,4,5,6,7,8 \}$, index shown at the top of each figure (see further numerical details in Appendix \ref{appendix numerical}).}
    \label{fig:multiple s8000}
\end{figure}

%% file: Aikawa_fem.tex
\begin{figure}[H]
    \centering
    \begin{minipage}{0.7\textwidth}
        \centering
        \includegraphics[width=\textwidth]{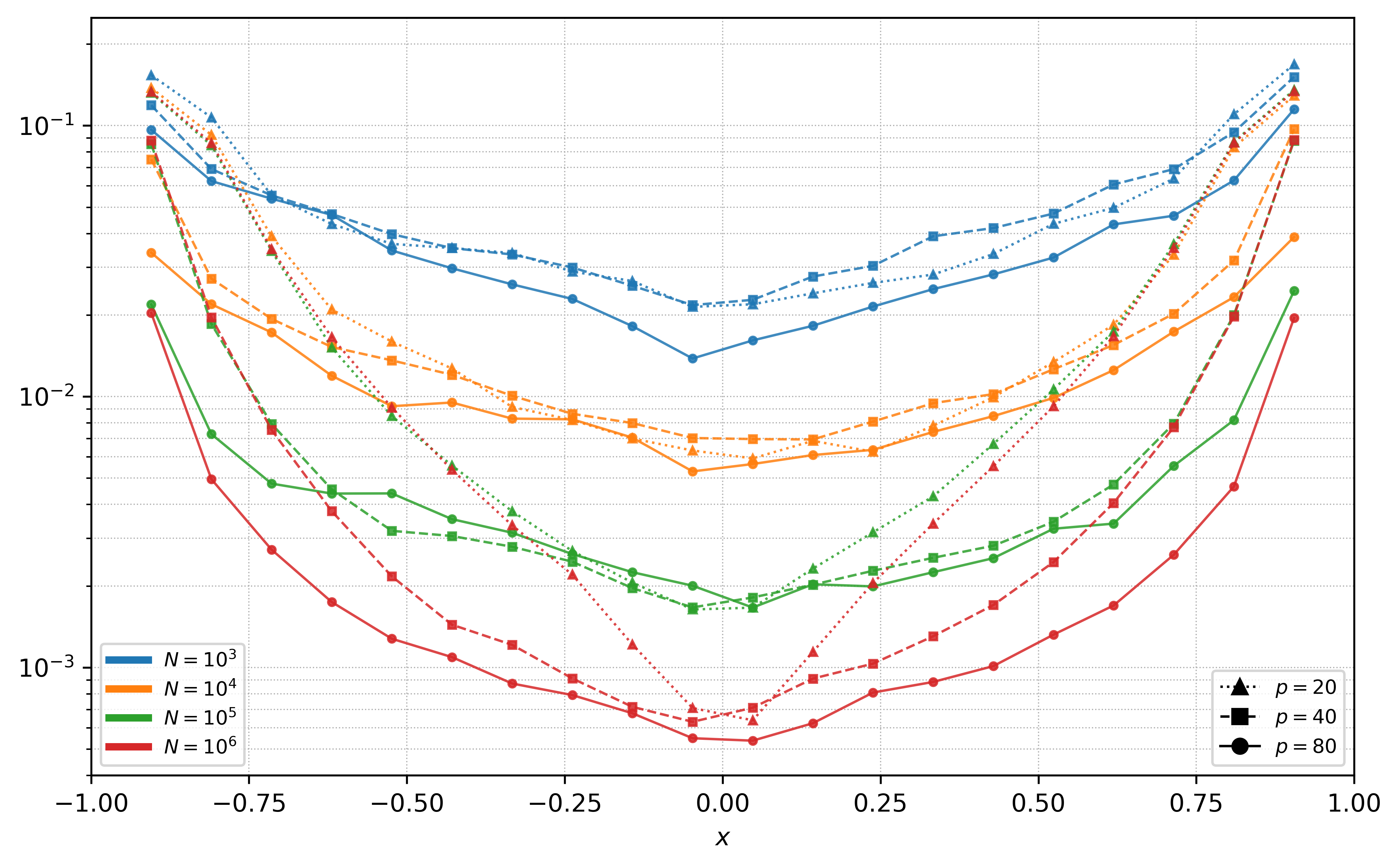}
    \end{minipage}
%    \begin{minipage}{0.7\textwidth}
%        \centering
%        \includegraphics[width=\textwidth]{Aikawa_overleaf/Figure 2026-03-31 214531.png}
%    \end{minipage}
%    \begin{minipage}{0.7\textwidth}
%        \centering
%        \includegraphics[width=\textwidth]{Aikawa_overleaf/Figure 2026-03-31 214533.png}
%    \end{minipage}
%    \begin{minipage}{0.7\textwidth}
%        \centering
%        \includegraphics[width=\textwidth]{Aikawa_overleaf/Figure 2026-03-31 214536.png}
%    \end{minipage}
    \caption{ (LOT) Combined error plots: Mean of pointwise absolute errors $\frac{1}{30}\sum_{i=1}^{30}|\partial_x u_{\text{ex}} - \widehat{\partial_x u}_i|$, 30 estimates $\widehat{\partial_x u}_i$ obtained with the proposed method LOT \eqref{eq:LOT} for computing $\nabla H$, with $N \in \{10^3, 10^4, 10^5, 10^6 \}$ Monte Carlo samples, $M=30$ $WoS$ steps, $p \in \{20, 40, 80\}$ equidistant boundary points. The plotted values correspond to $n=20$ interior points $(x,0) \in D, x\in (-1,1)$ equidistant.}
\end{figure}

\begin{figure}[H]
    \centering
    \begin{minipage}{0.22\textwidth}
        \centering
        \includegraphics[width=\textwidth]{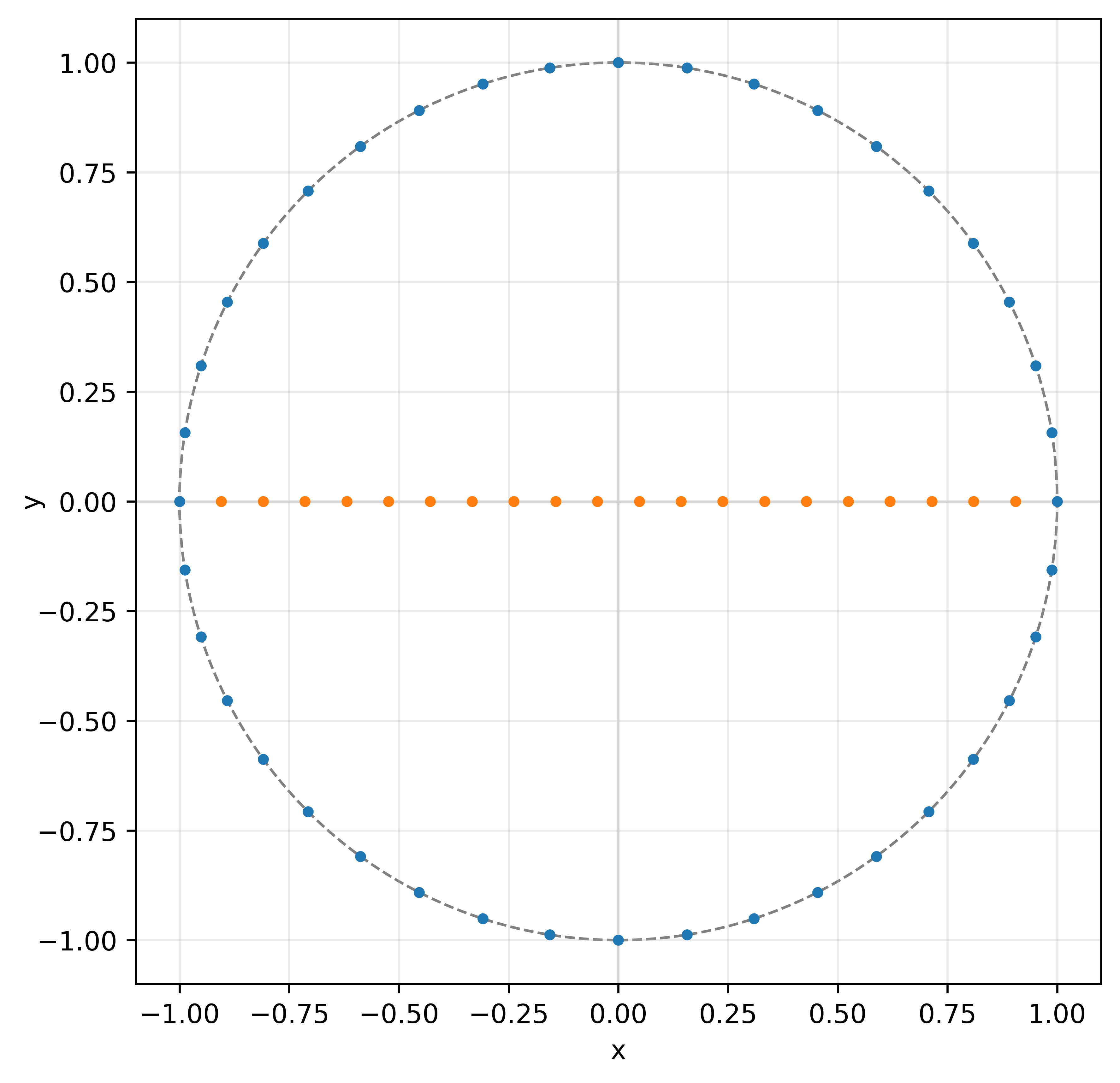}
        \captionof{subfigure}{$n=20$, $p=40$}
    \end{minipage}
    \begin{minipage}{0.22\textwidth}
        \centering
        \includegraphics[width=\textwidth]{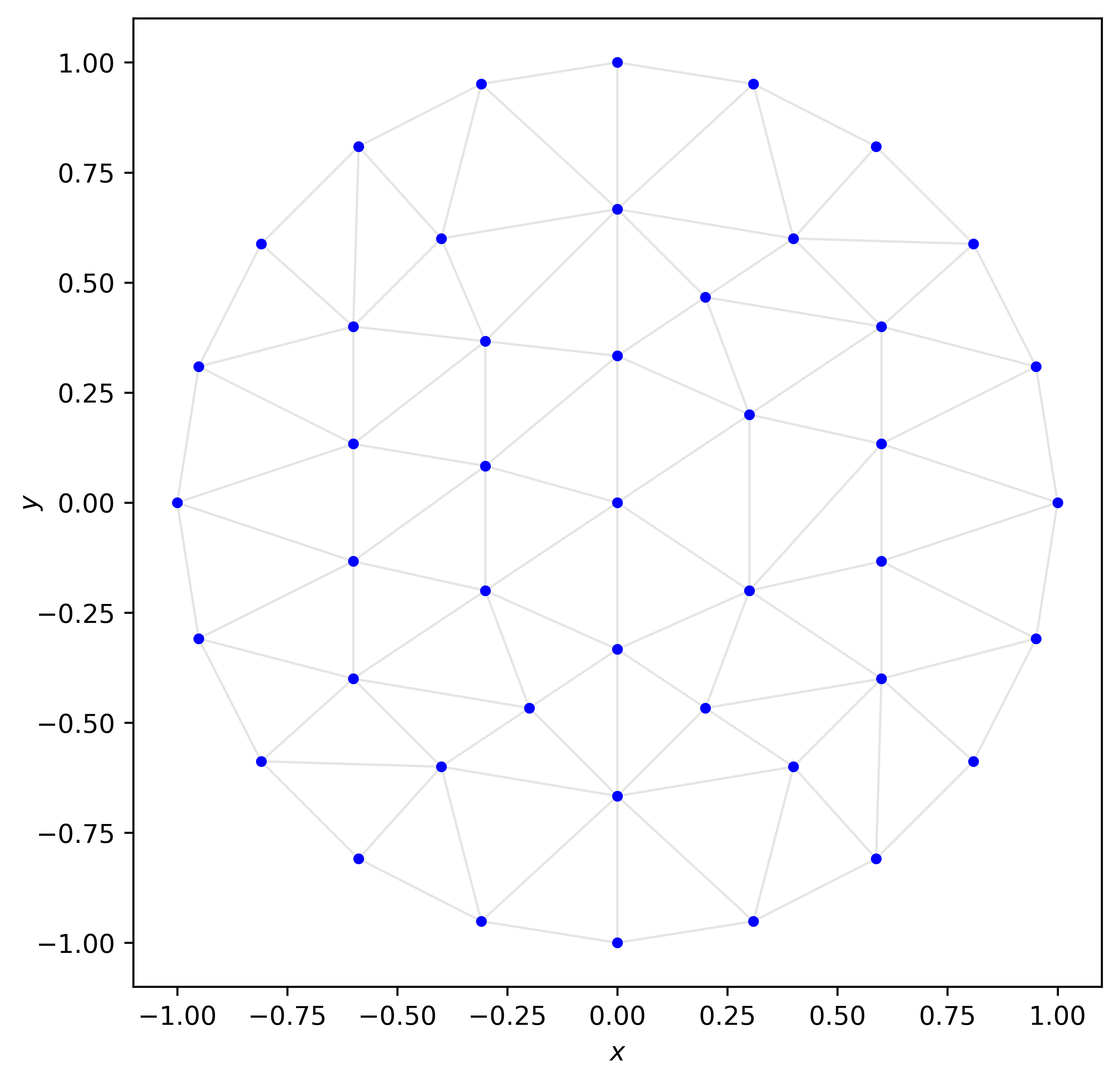}
        \captionof{subfigure}{FEM mesh 1 \\ (45 nodes, $p=20$)}
    \end{minipage}
    \begin{minipage}{0.22\textwidth}
        \centering
        \includegraphics[width=\textwidth]{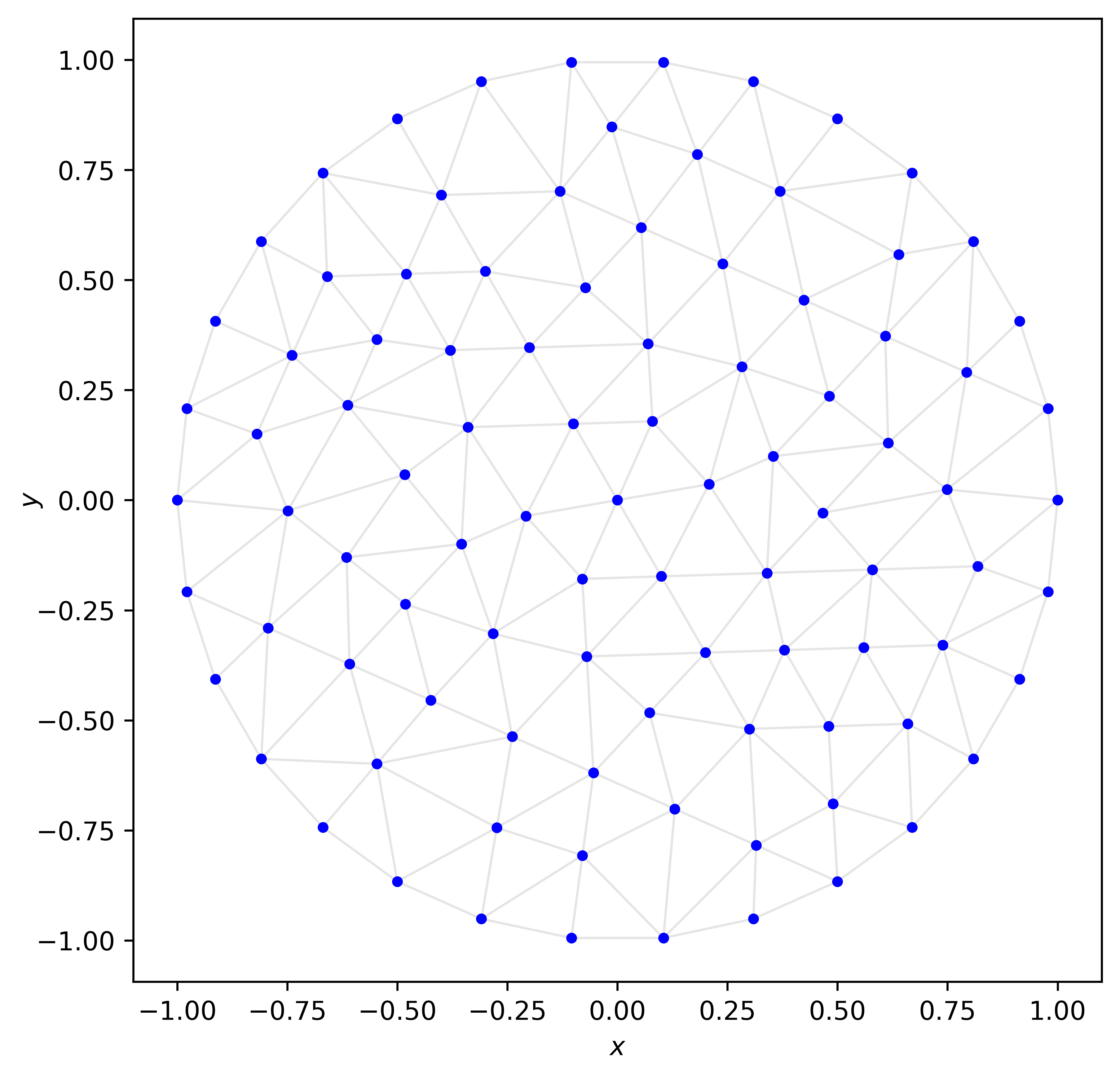}
        \captionof{subfigure}{FEM mesh 2\\
        (95 nodes, $p=30$)}
    \end{minipage}
    \begin{minipage}{0.22\textwidth}
        \centering
        \includegraphics[width=\textwidth]{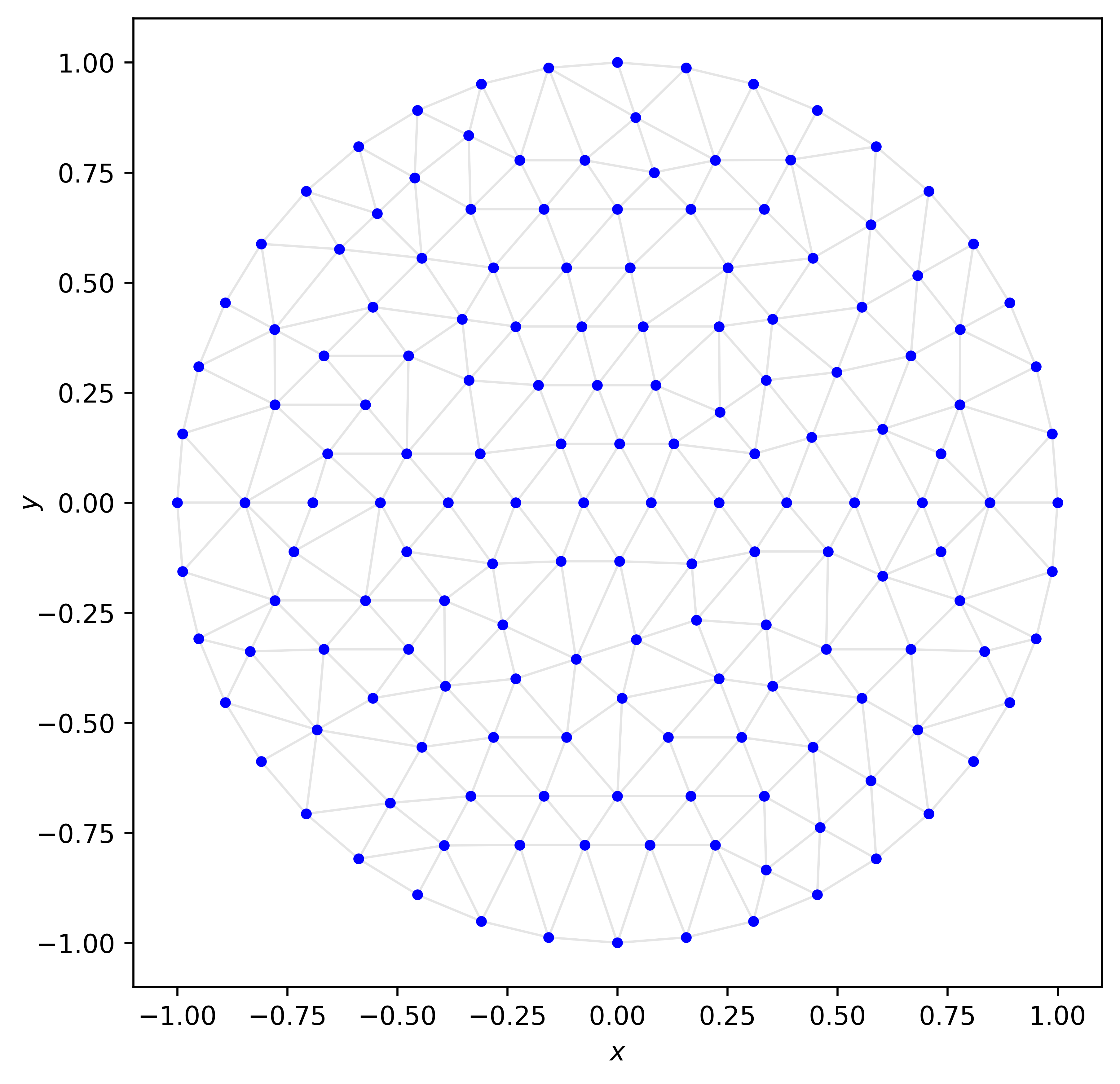}
        \captionof{subfigure}{FEM mesh 3\\
        (162 nodes, $p=40$)}
    \end{minipage}
    \caption{Discretizations: (a) $n=20$ equidistant interior points in $(-1,1)$ where the solution shall be approximated (orange) and $p=40$ equidistant boundary points (blue); (b)-(d): FEM generated discretizations of $\overline{D}$ after fixing $p \in \{20,30,40 \}$ equidistant boundary points.}
\end{figure}

\begin{figure}[H]
    \centering
    \begin{minipage}{0.75\textwidth}
        \centering
        \includegraphics[width=\textwidth]{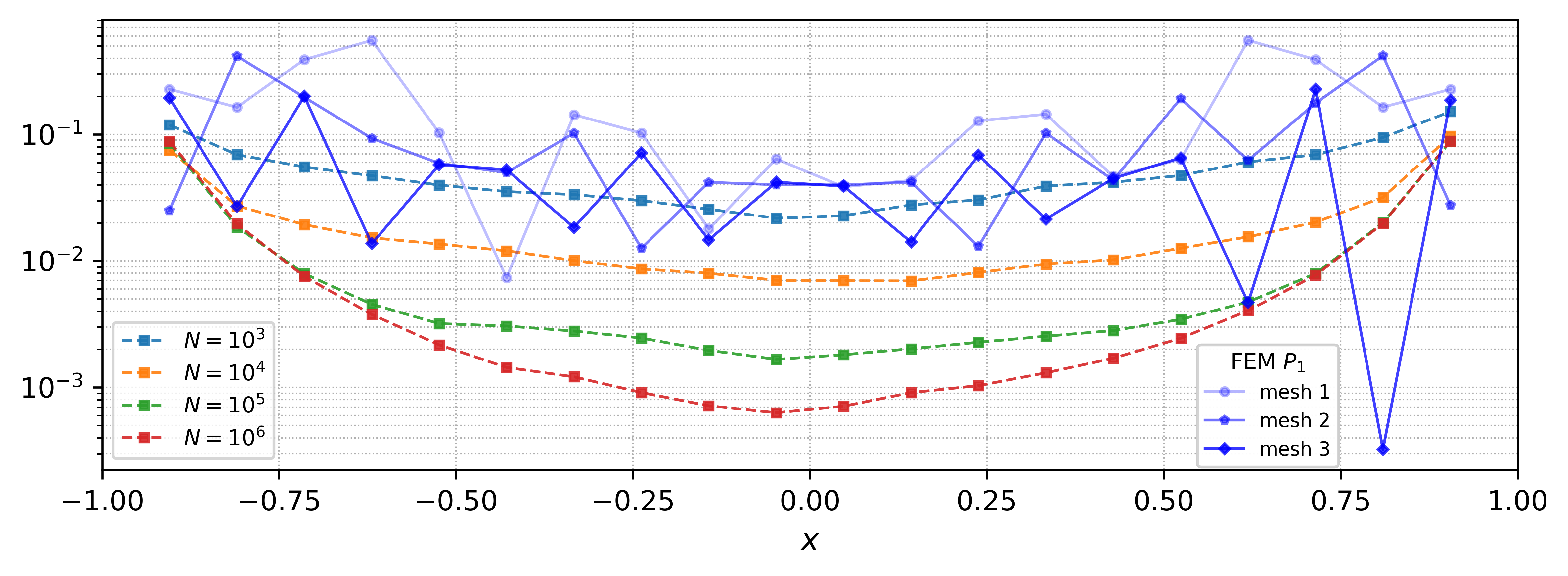}
    \end{minipage}
    \begin{minipage}{0.75\textwidth}
        \centering
        \includegraphics[width=\textwidth]{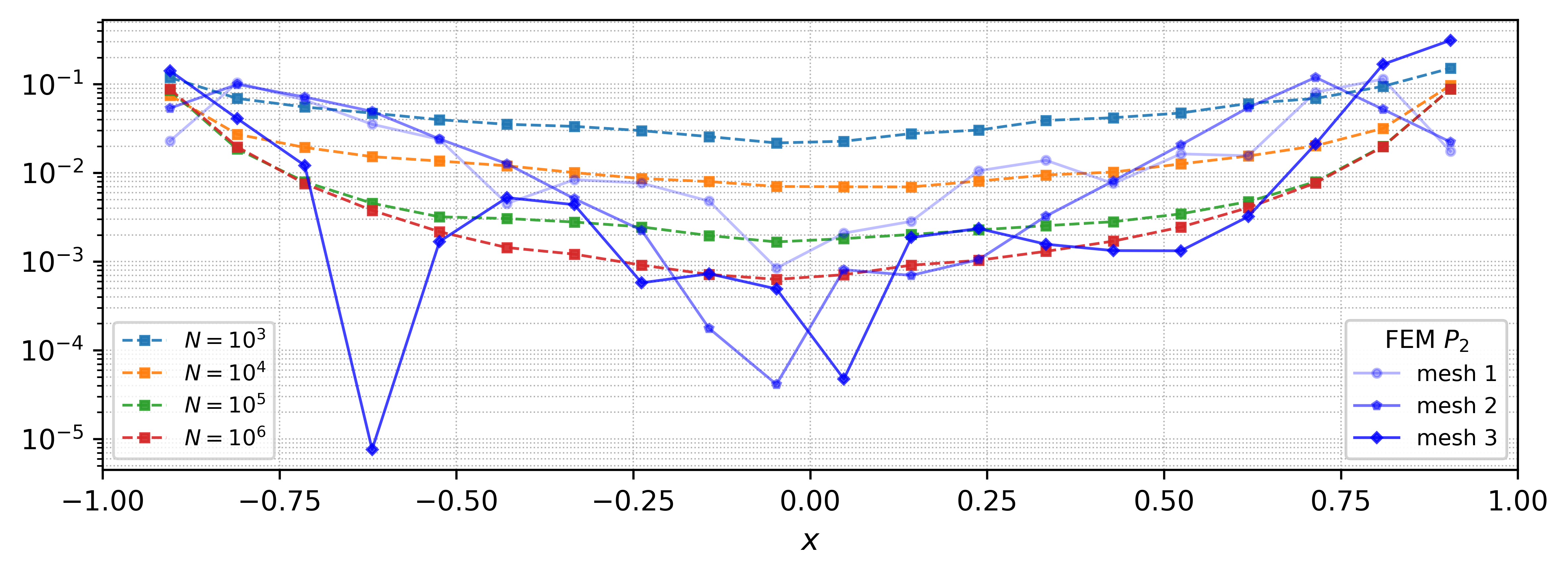}
    \end{minipage}
    \caption{ (LOT and FEM) Combined error plots: Comparison of pointwise absolute error $|\partial_x u_{\text{ex}} - \partial_x u_{fem}|$ between the partial derivative of the exact solution and of the numerical solution obtained using FEM with $P_1$ elements (top) and $P_2$ elements (bottom), respectively (basic blue), and mean of pointwise absolute errors $\frac{1}{30}\sum_{i=1}^{30}|\partial_x u_{\text{ex}} - \widehat{\partial_x u}_i|$, 30 estimates $\widehat{\partial_x u}_i$ obtained with the proposed method LOT \eqref{eq:LOT} for computing $\nabla H$, with $N \in \{10^3, 10^4, 10^5, 10^6 \}$ Monte Carlo samples, $M=30$ $WoS$ steps, $p=40$ equidistant boundary points. The plotted values correspond to interior points $(x,0) \in D, x\in (-1,1)$ equidistant.}
\end{figure}

\begin{figure}[H]
    \centering
    \begin{minipage}{0.6\textwidth}
        \centering
        \includegraphics[width=\textwidth]{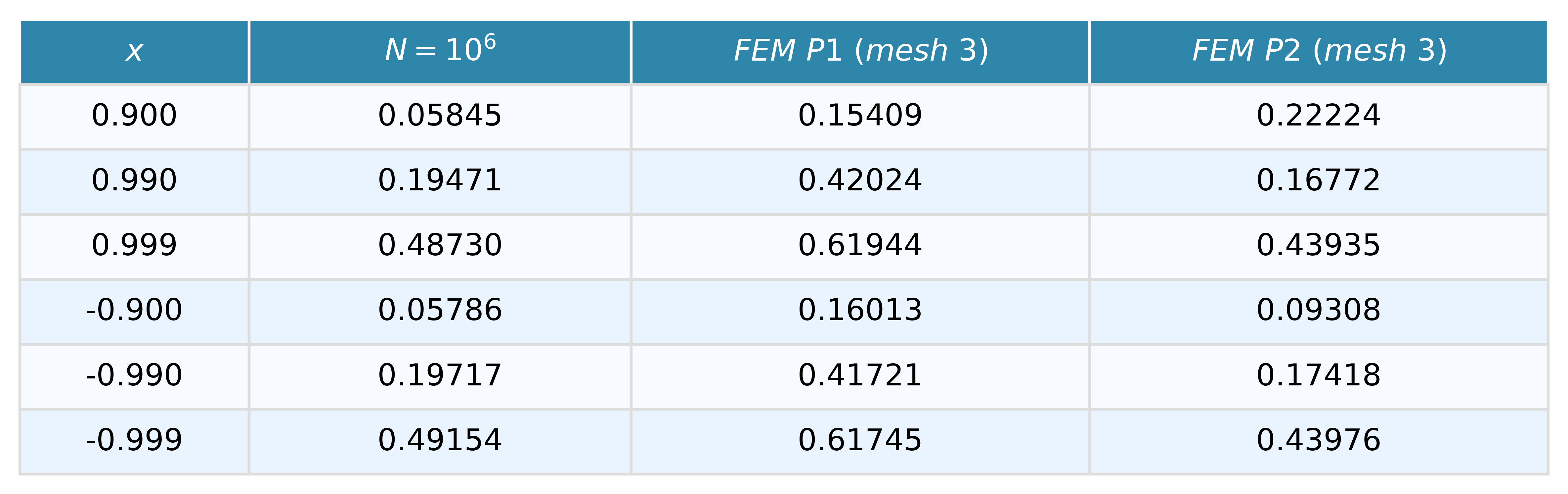}
    \end{minipage}
    \captionof{table}{(LOT and FEM) Comparison of pointwise relative error $|\partial_x u_{\text{ex}} - \partial_x u_{fem}|/ |\partial_x u_{\text{ex}}|$ between the partial derivative of the exact solution and of the numerical solution obtained using FEM with $P_1$ and $P_2$ elements, respectively, and mean of pointwise relative errors $\frac{1}{30}\sum_{i=1}^{30}|\partial_x u_{\text{ex}} - \widehat{\partial_x u}_i|/|\partial_x u_{\text{ex}}|$, 30 samples $\widehat{\partial_x u}_i$ obtained with the proposed method LOT \eqref{eq:LOT} for computing $\nabla H$, with $N=10^6$ Monte Carlo samples, $M=30$ $WoS$ steps, $p=40$ equidistant boundary points. The shown values correspond to interior points $(x,0) \in D$.} \label{table Aikawa}
\end{figure}

%% file: comparison_fem.tex
\begin{figure}[H]
    \centering
    \begin{minipage}{0.25\textwidth}
        \centering
        \includegraphics[width=\textwidth]{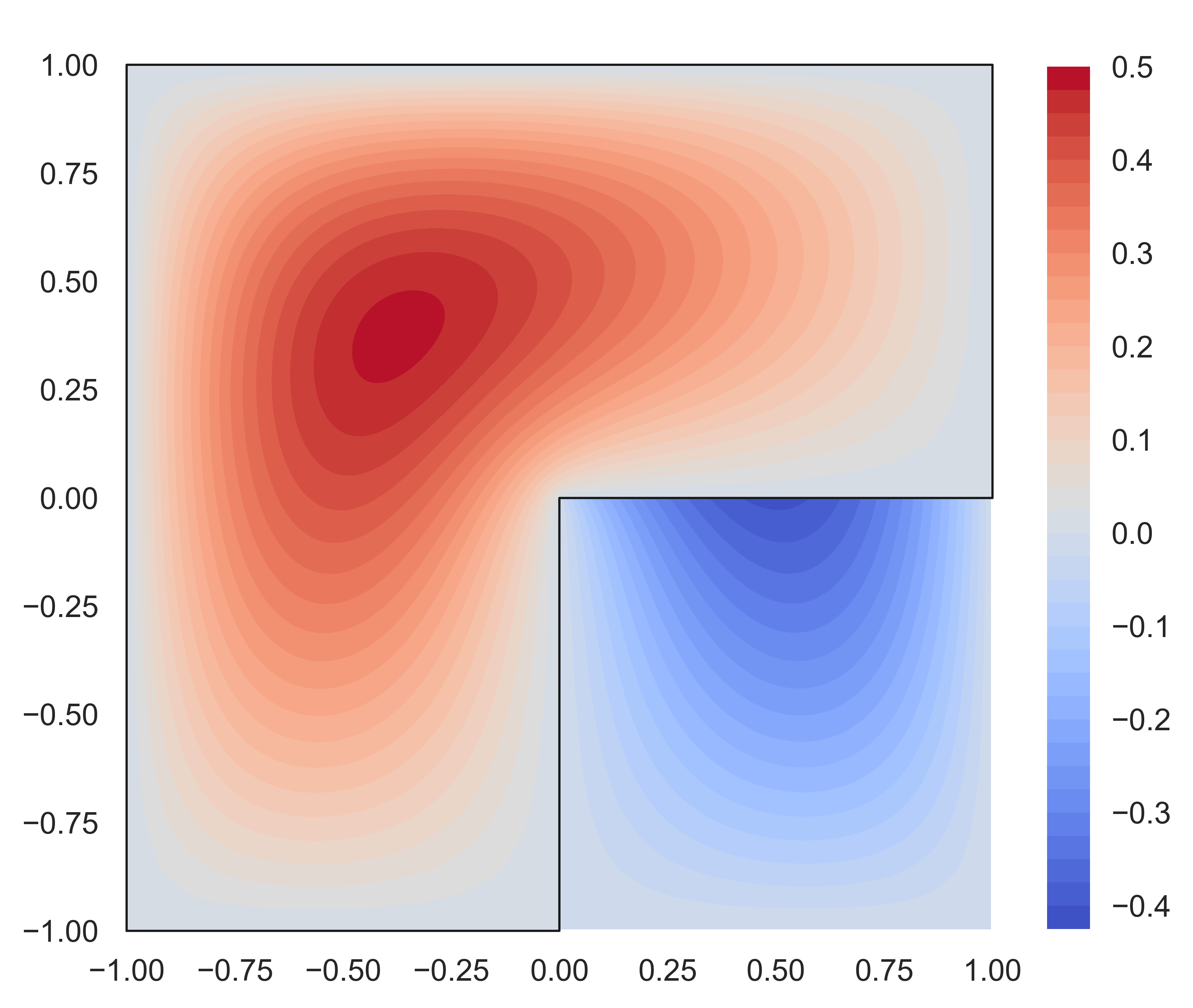}
    % \caption{(Top) exact solution $u$; (bottom) approximate solution $\widetilde{u}$}
    \end{minipage}
    \begin{minipage}{0.22\textwidth}
        \centering
        \includegraphics[width=\textwidth]{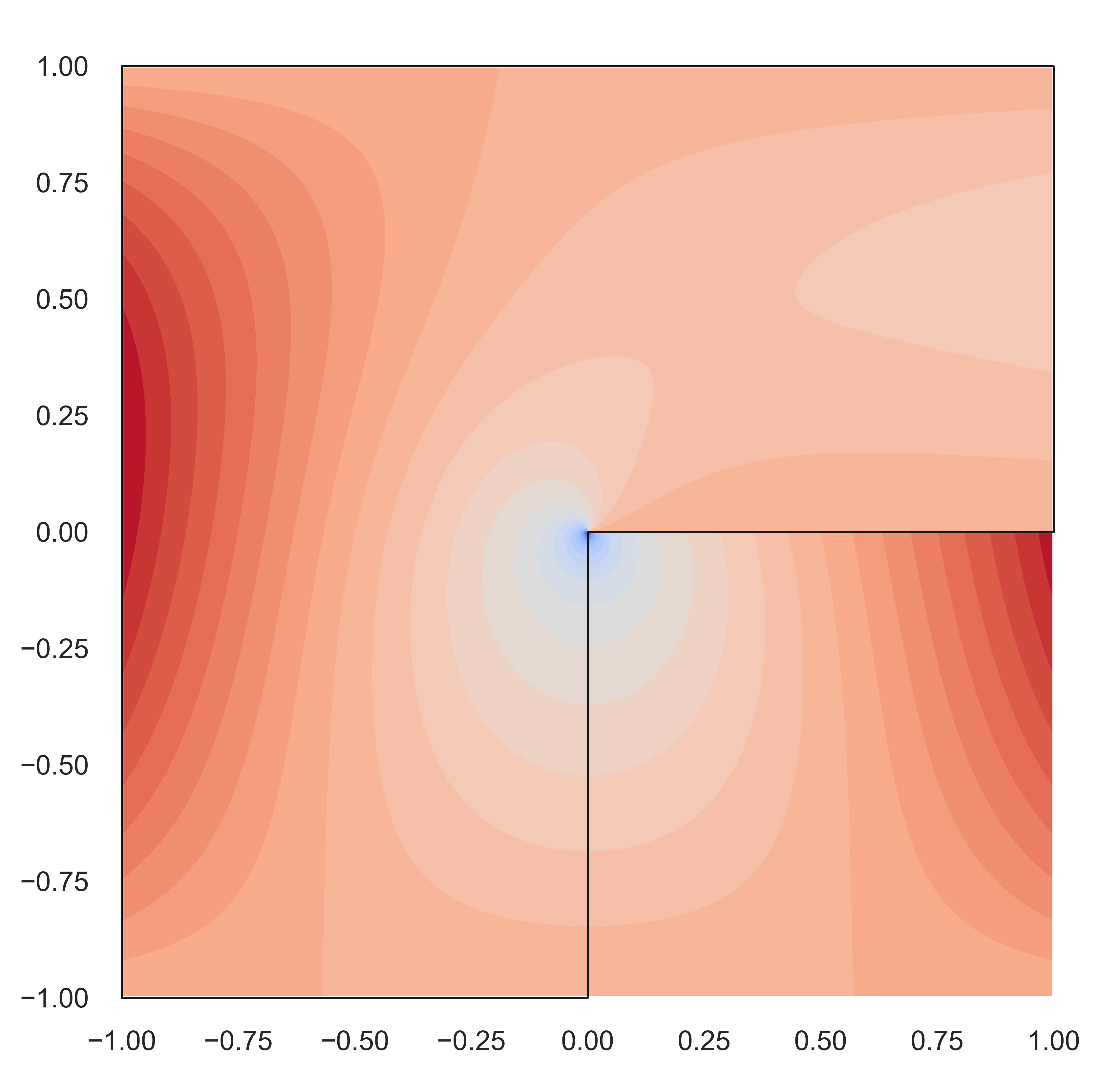}
    % \caption{(Top) exact solution $u$; (bottom) approximate solution $\widetilde{u}$}
    \end{minipage}
    \begin{minipage}{0.22\textwidth}
        \centering
        \includegraphics[width=\textwidth]{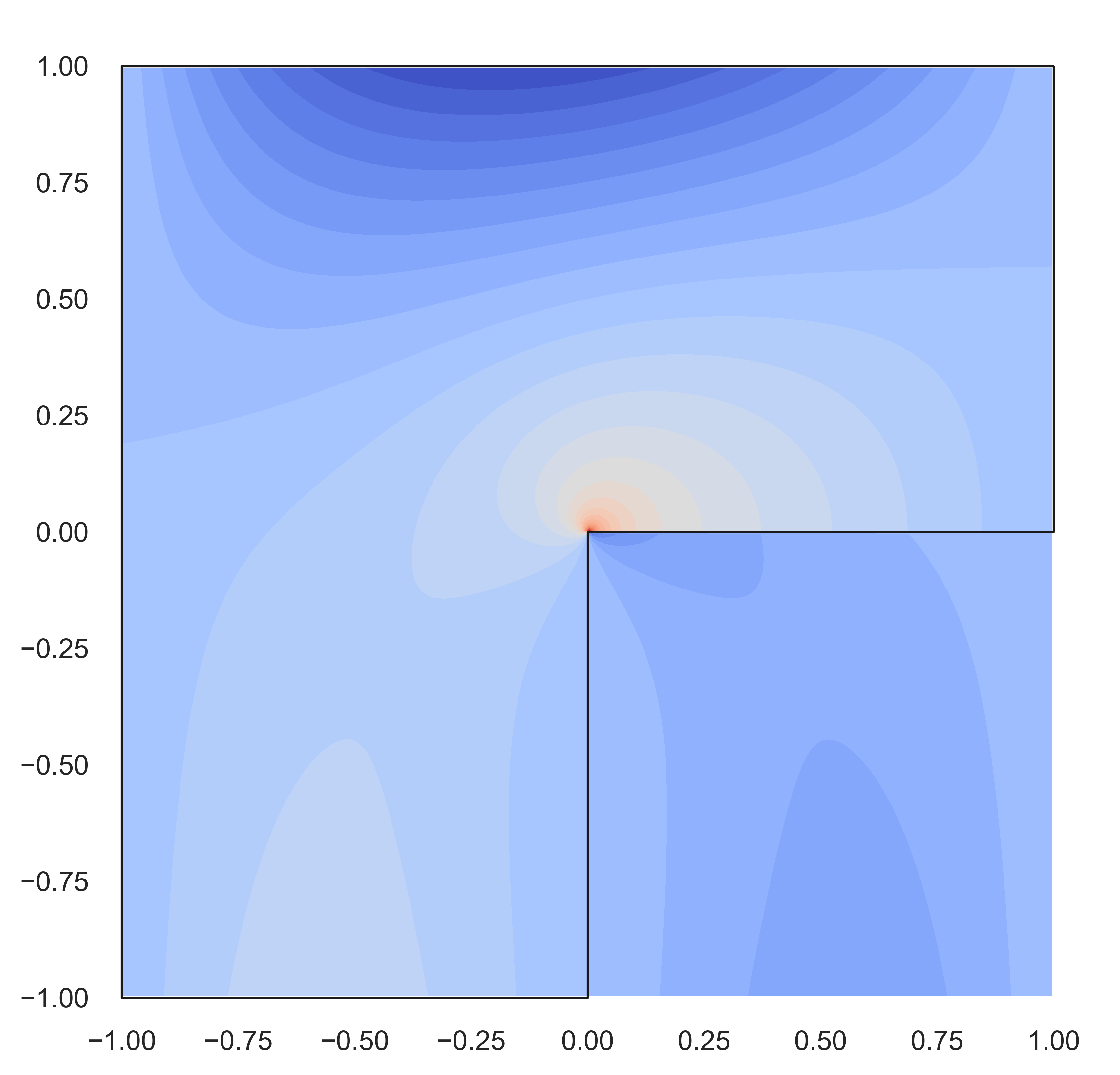}
    % \caption{(Top) exact solution $u$; (bottom) approximate solution $\widetilde{u}$}
    \end{minipage}
    \caption{Exact solution $u_{ex}$ (left), exact partial derivatives $\partial_x u_{ex}$ (middle), $\partial_y u_{ex}$ (right), plotted in $(-1,1)^2$.}
\end{figure}

\begin{figure}[H]
    \centering
    \begin{minipage}{0.24\textwidth}
        \centering
        \includegraphics[width=\textwidth]{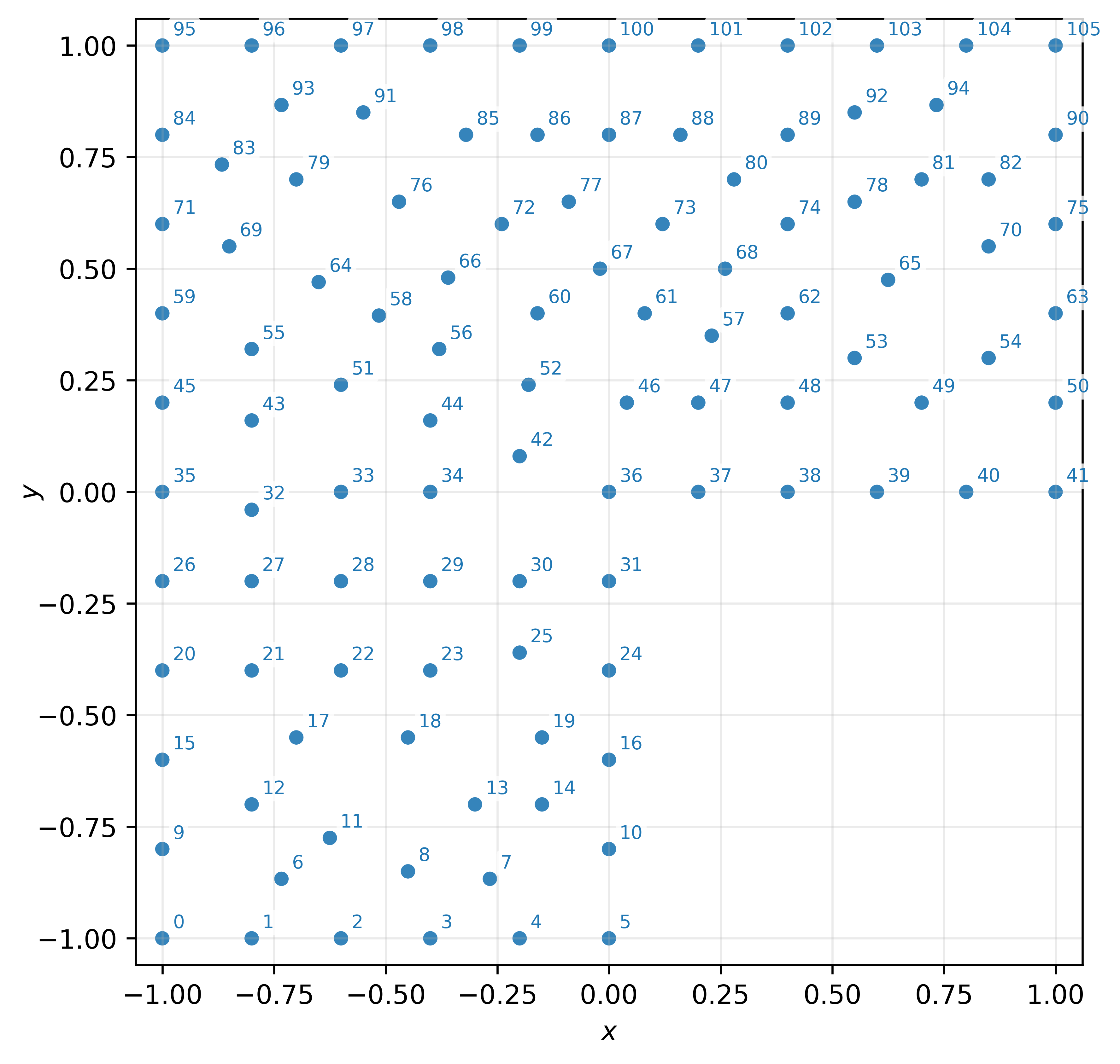}
    \end{minipage}
    \begin{minipage}{0.24\textwidth}
        \centering
        \includegraphics[width=\textwidth]{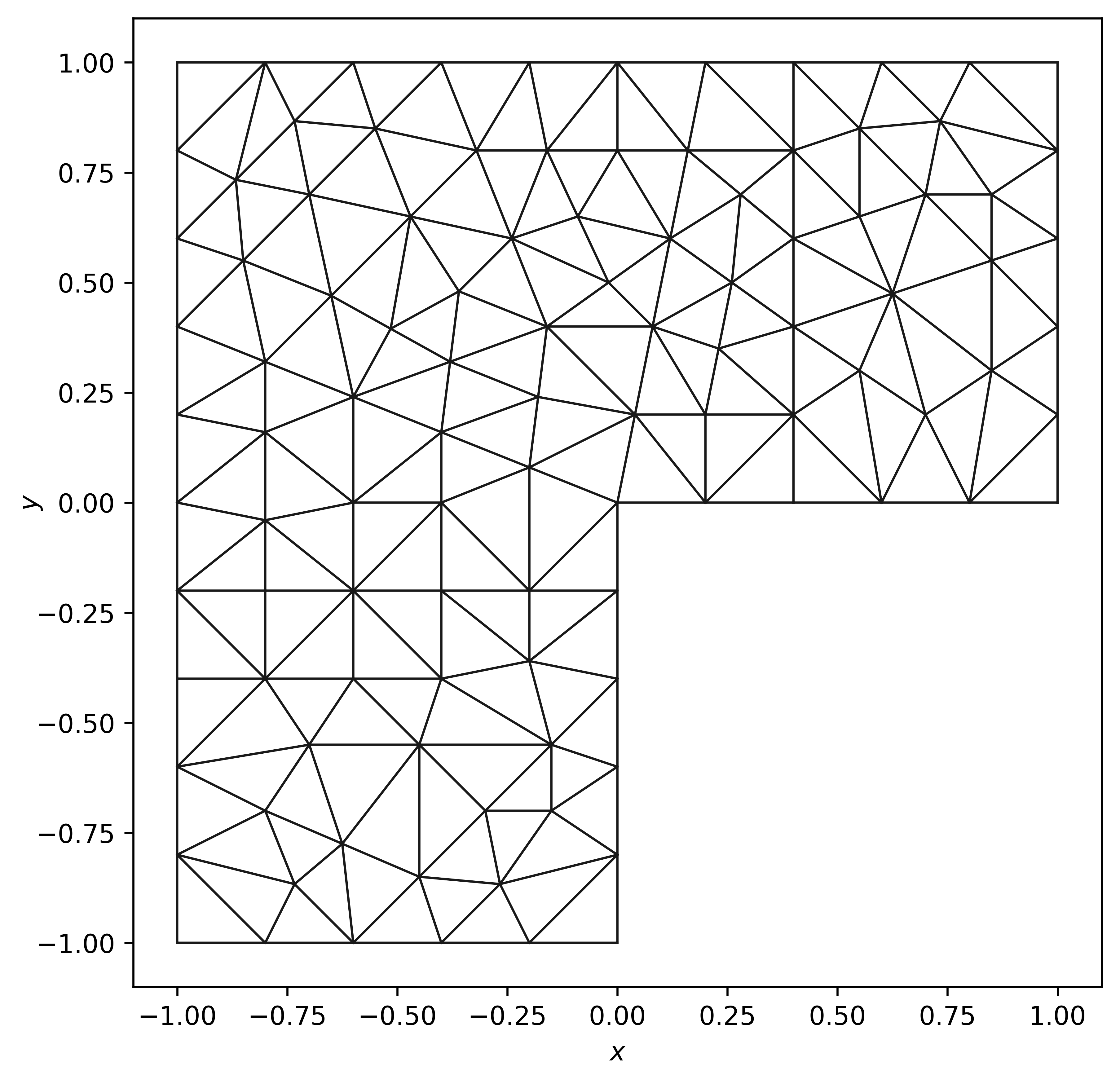}
    \end{minipage}
    \caption{Discretization of $\overline D$ with $106$ points (left) as vertices of the triangulation (right) generated by FEM after fixing equidistant boundary points; points ordered by $x$, then $y$ coordinate, annotated with their corresponding indices.} \label{fig:Lshaped indices}
\end{figure}

\begin{figure}[H]
    \centering
    \begin{minipage}{0.75\textwidth}
        \centering
        \includegraphics[width=\textwidth]{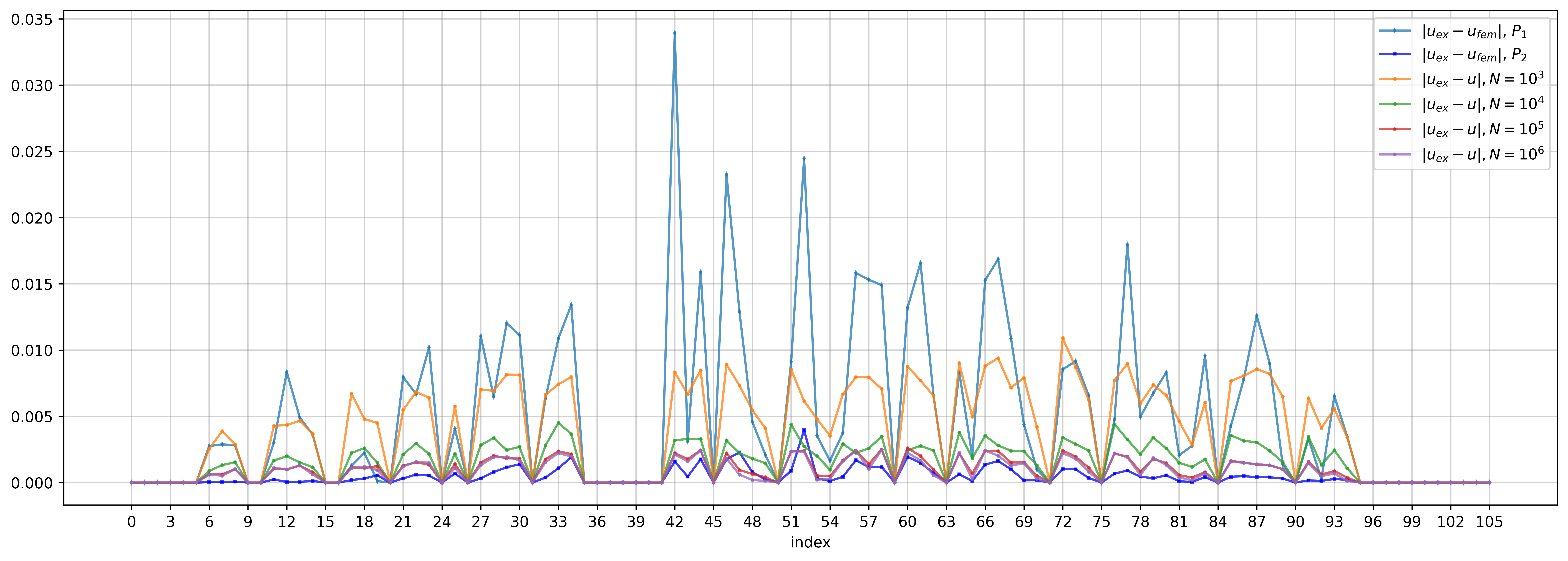}
    \end{minipage}
    \caption{(LOT and FEM) Combined error plots: Comparison of pointwise absolute error $|u_{ex} - u_{fem}|$ between the exact solution and the numerical solution obtained using FEM with $P_1$ and $P_2$ elements (light and basic blue, respectively), and mean of pointwise absolute errors $\frac{1}{30}{\sum_{i=1}^{30}|u_{ex} - \widehat{u}_i|}$, 30 estimates $\widehat{u}_i$ obtained with the proposed method LOT \eqref{eq:LOT} for computing $\nabla U$, with $N \in \{10^3, 10^4, 10^5, 10^6 \}$ Monte Carlo samples, $M=60$ $WoS$ steps. See \Cref{fig:Lshaped indices} for the point indices.}
\end{figure}

\begin{figure}[H]
    \centering
    \begin{minipage}{0.75\textwidth}
        \centering
        \includegraphics[width=\textwidth]{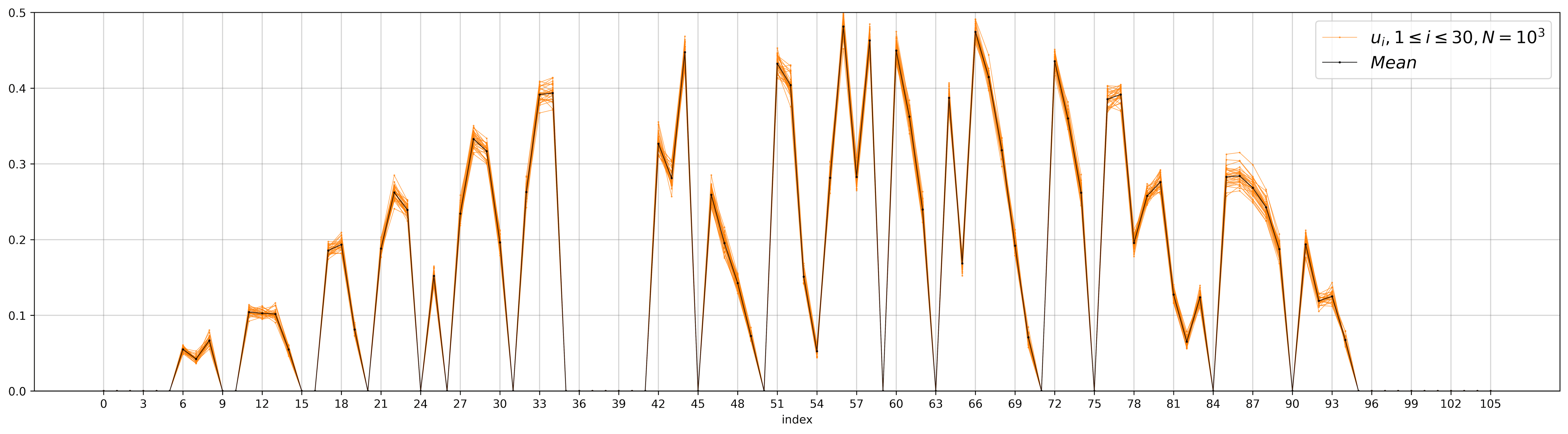}
        % \caption{$n = 400$}
    \end{minipage}
    % \hspace{20pt}
\end{figure}
\begin{figure}[H]
    \centering
    \begin{minipage}{0.75\textwidth}
        \centering
        \includegraphics[width=\textwidth]{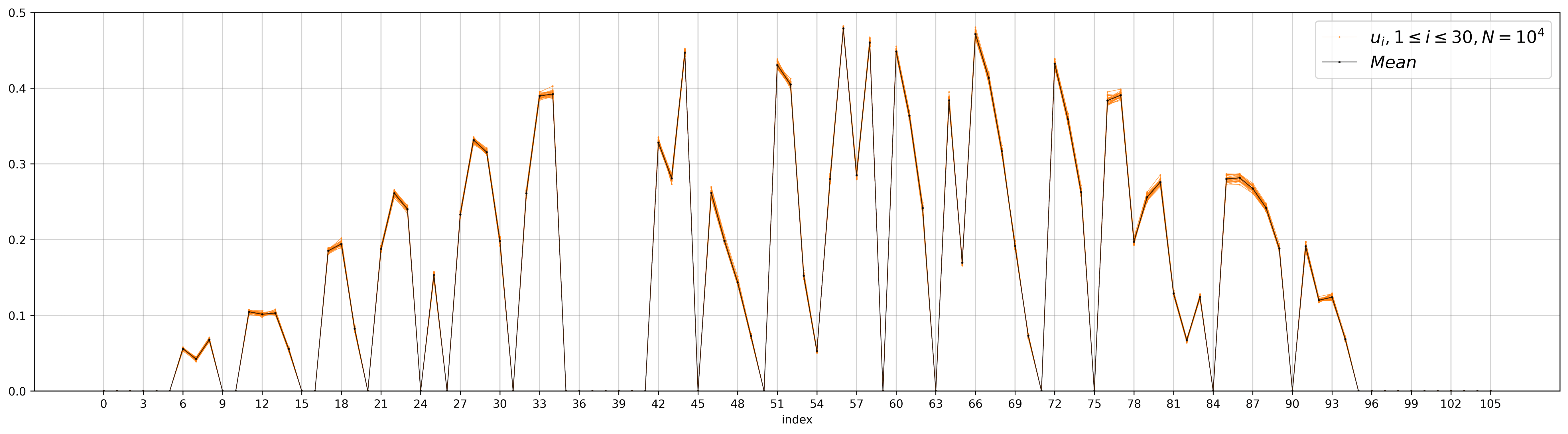}
    \end{minipage}
    \caption{Numerical solution: $30$ estimates $\widehat{u}_i$ obtained with the proposed method (orange) together with their mean (black), using $N=10^3$ Monte Carlo samples (top) and $N=10^4$ (bottom), with $M=60$ $WoS$ steps. See \Cref{fig:Lshaped indices} for the point indices.}
\end{figure}

\begin{figure}[H]
    \centering
    \begin{minipage}{0.75\textwidth}
        \centering
        \includegraphics[width=\textwidth]{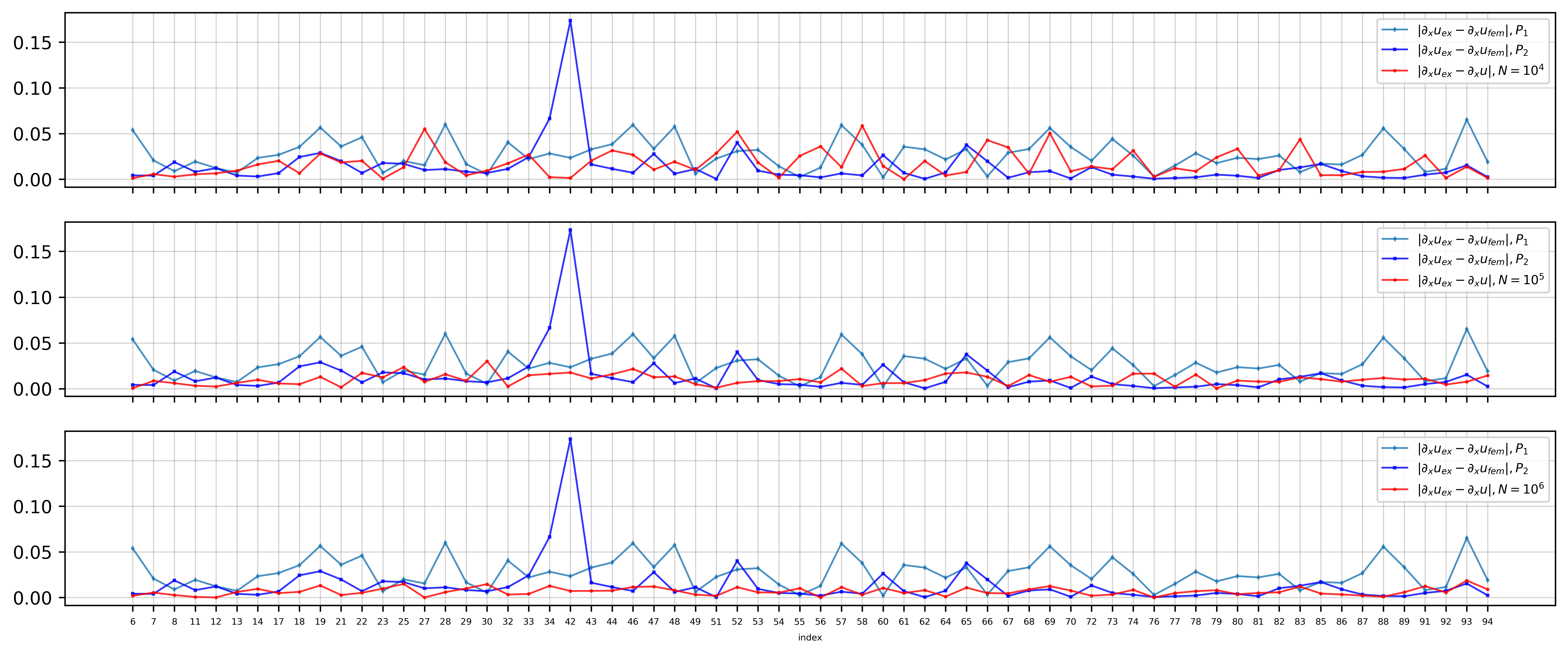}
        % \caption{$n = 400$}
    \end{minipage}
    % \hspace{20pt}
\end{figure}
\begin{figure}[H]
    \centering
    \begin{minipage}{0.75\textwidth}
        \centering
        \includegraphics[width=\textwidth]{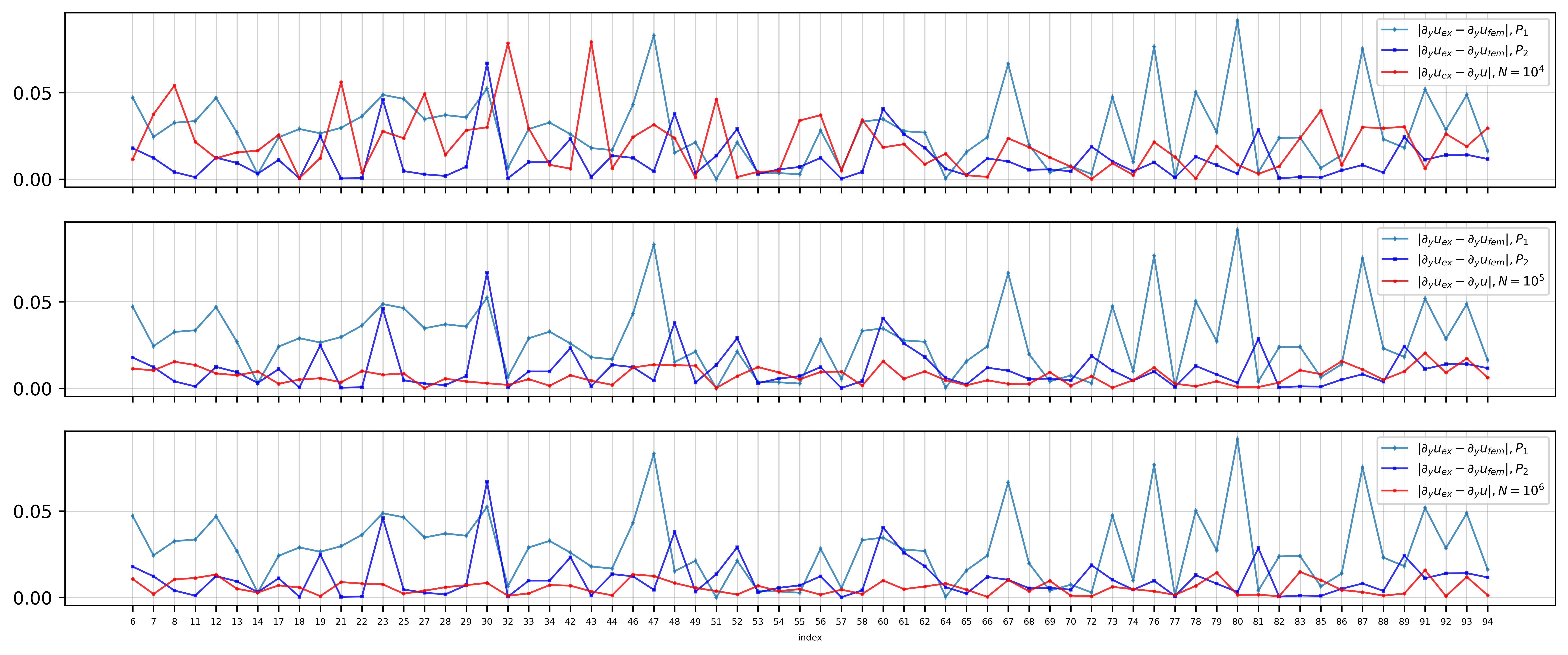}
    \end{minipage}
        \caption{(LOT and FEM) Combined error plots: Comparison of pointwise absolute error $|\partial_x u_{ex} - \partial_x u_{fem}|$ (top), $|\partial_y u_{ex} - \partial_y u_{fem}|$ (bottom) between the partial derivatives of the exact solution and of the numerical solution obtained using FEM with $P_1$ and $P_2$ elements, respectively (light and basic blue), and pointwise absolute errors $|\partial_x u_{ex} - \widehat{\partial_x u}|$ (top), $|\partial_y u_{ex} - \widehat{\partial_y u}|$ (bottom), for one sample obtained with the proposed method LOT \eqref{eq:LOT} for computing $\nabla U$, with $N \in \{ 10^4, 10^5, 10^6 \}$ Monte Carlo samples, $M=60$ $WoS$ steps (red). The plotted values are evaluated at the interior points of the discretization; see \Cref{fig:Lshaped indices} for the point indices.}
\end{figure}

%% file: elliptic-solution-operator.tex
\begin{figure}[H]
    \centering
    \begin{minipage}{0.2\textwidth}
        \centering
        \includegraphics[width=\textwidth]{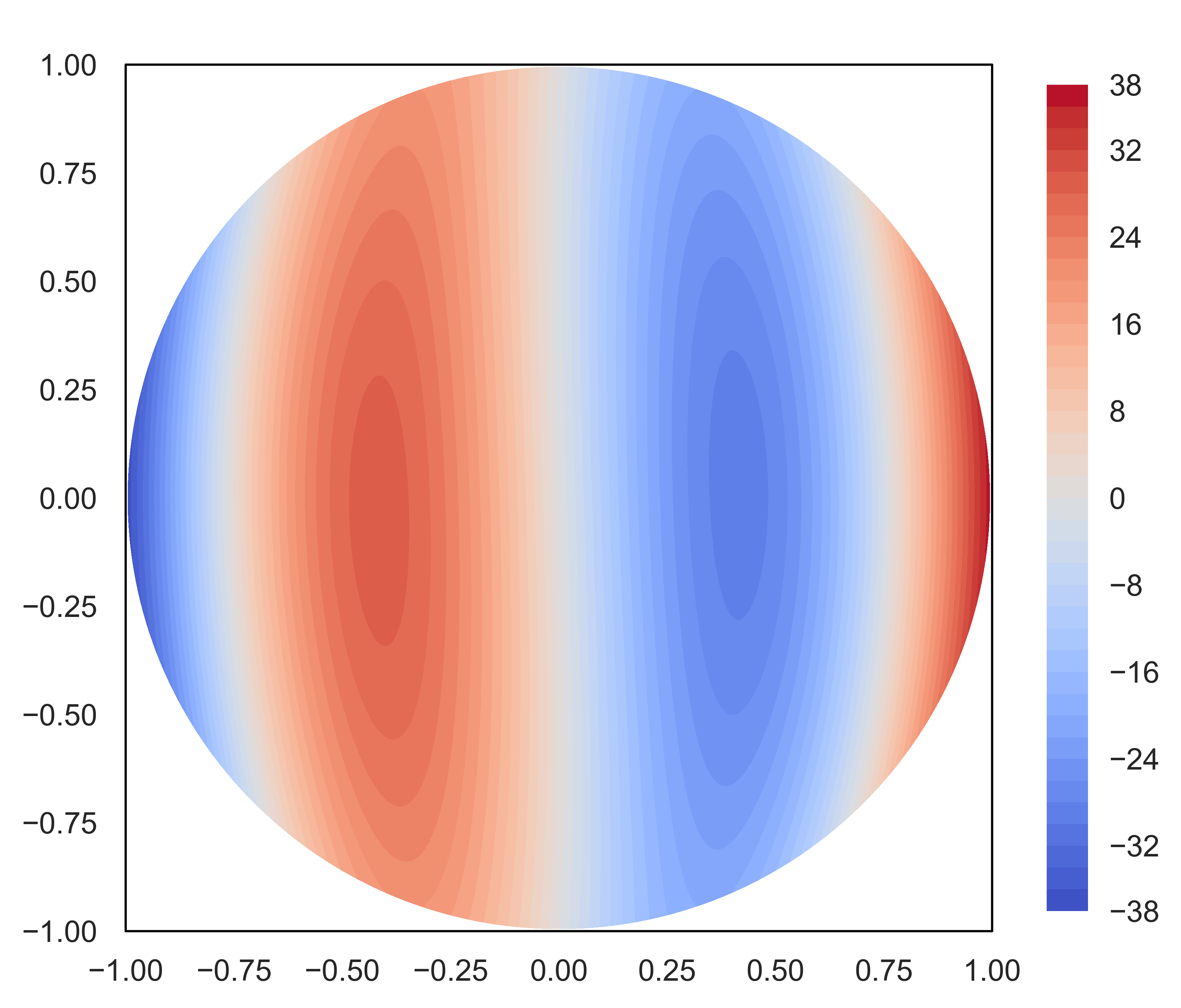}
    \end{minipage}
    % \hspace{20pt}
    \begin{minipage}{0.2\textwidth}
        \centering
        \includegraphics[width=\textwidth]{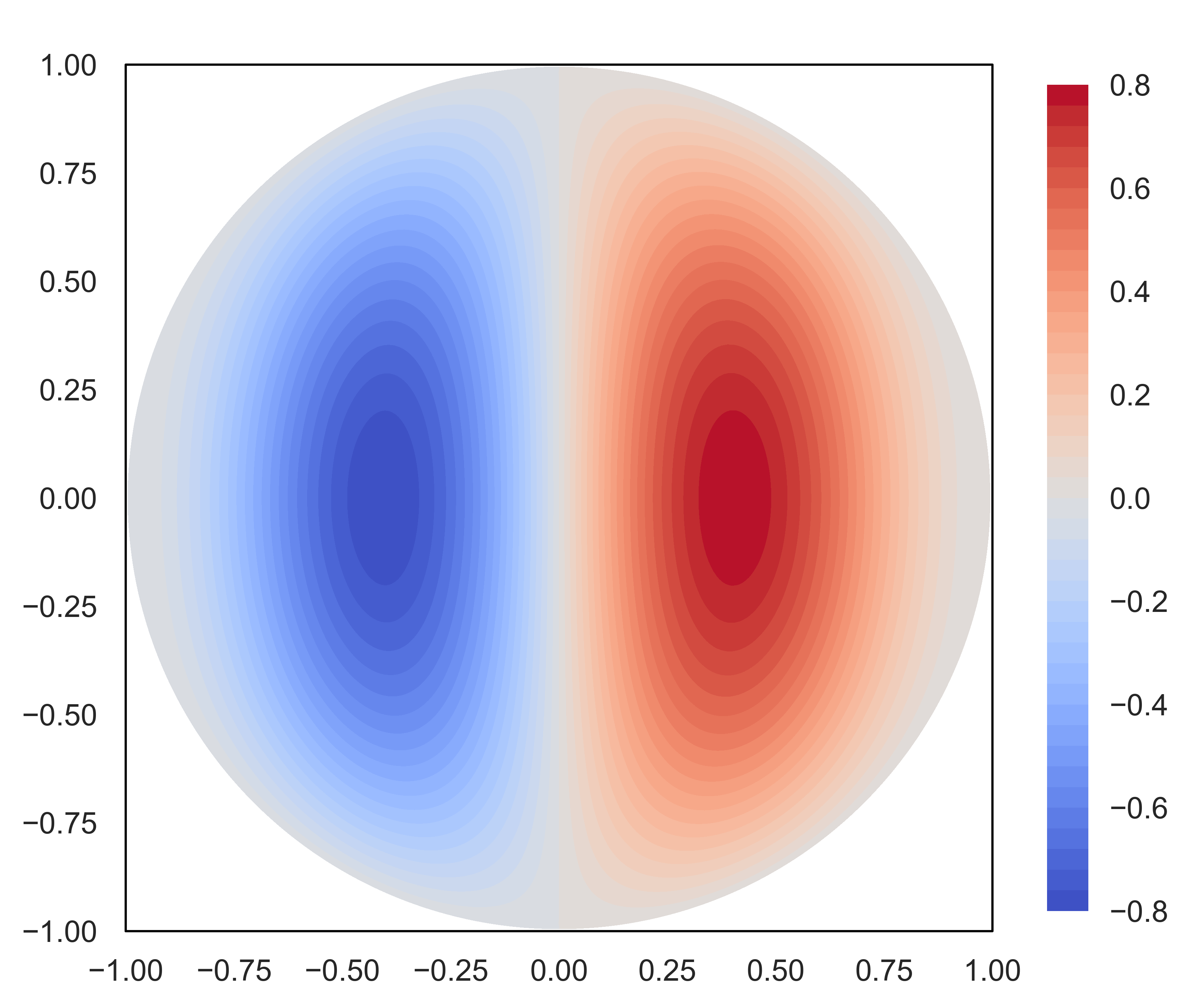}
        % \caption{$n = 400$}
    \end{minipage}
    \begin{minipage}{0.2\textwidth}
        \centering
        \includegraphics[width=\textwidth]{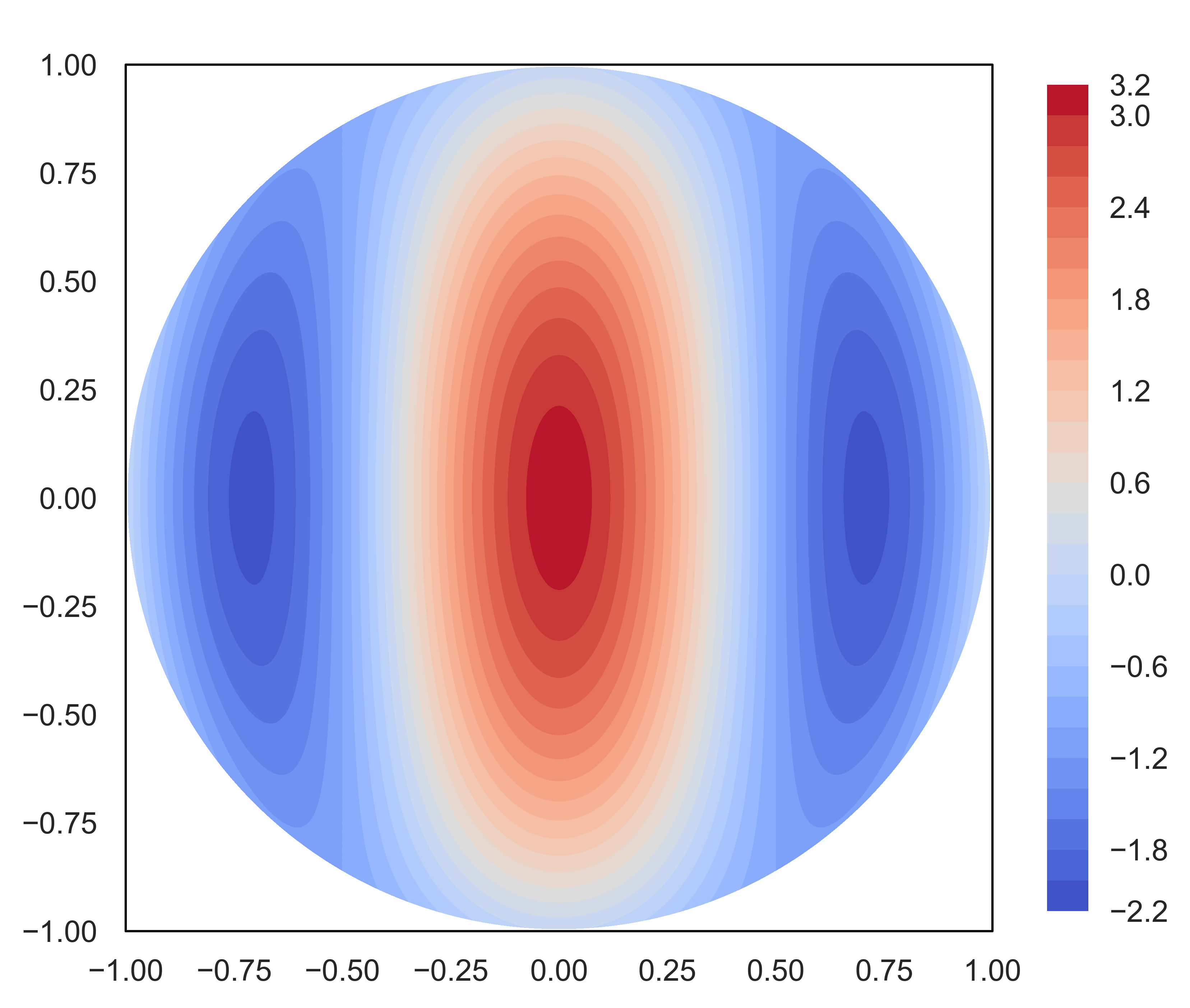}
        % \caption{$n = 400$}
    \end{minipage}
    \begin{minipage}{0.2\textwidth}
        \centering
        \includegraphics[width=\textwidth]{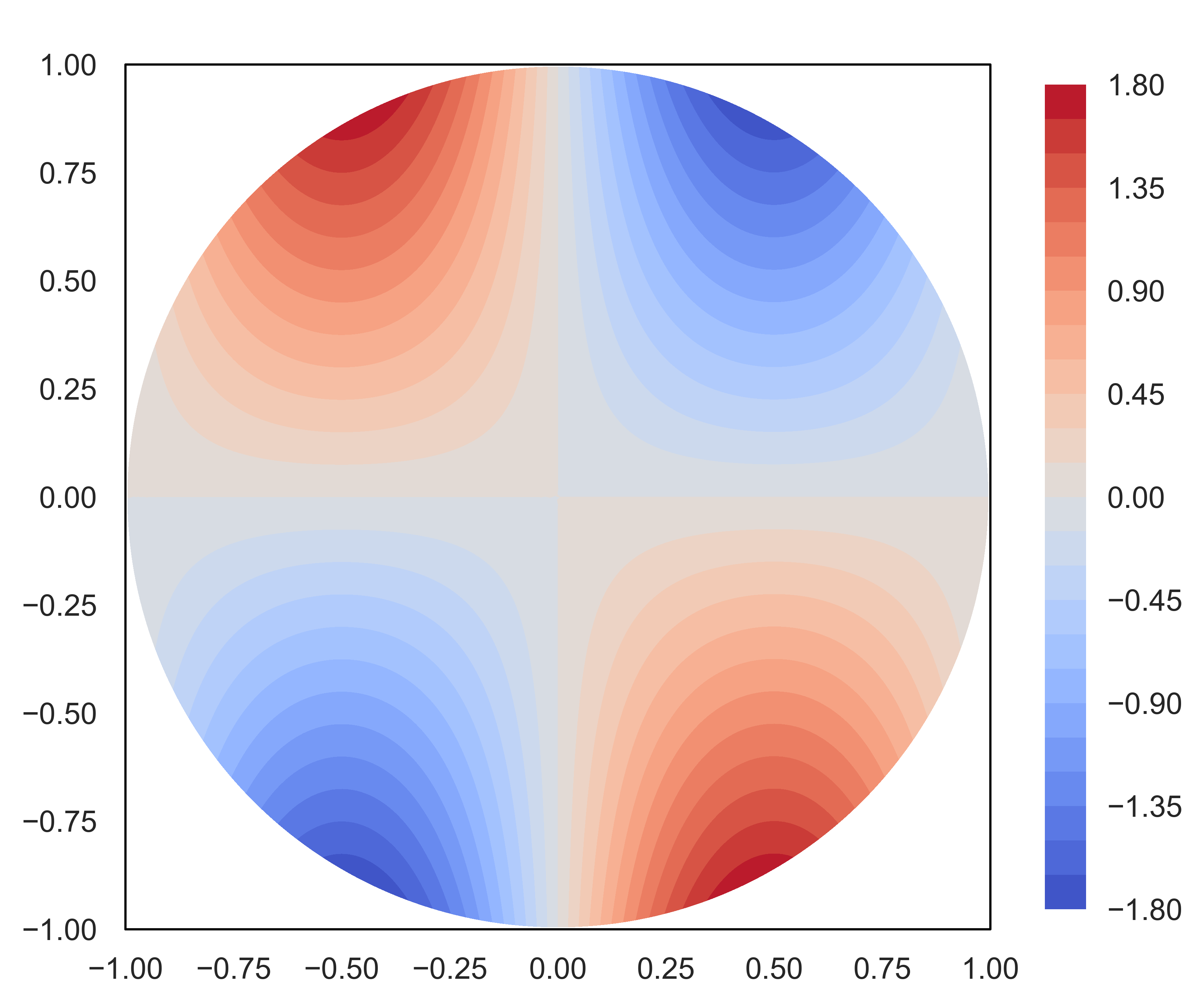}
        % \caption{$n = 400$}
    \end{minipage}
    \caption{Source function $f$ (left), exact solution $u_{ex}$ (middle-left), exact partial derivatives $\partial_x u_{ex}$ and $\partial_y u_{ex}$ (middle-right and right), plotted in $B(0,1)$.}
\end{figure}

\begin{figure}[H]
    \centering
    \begin{minipage}{0.17\textwidth}
        \centering
        \includegraphics[width=\textwidth]{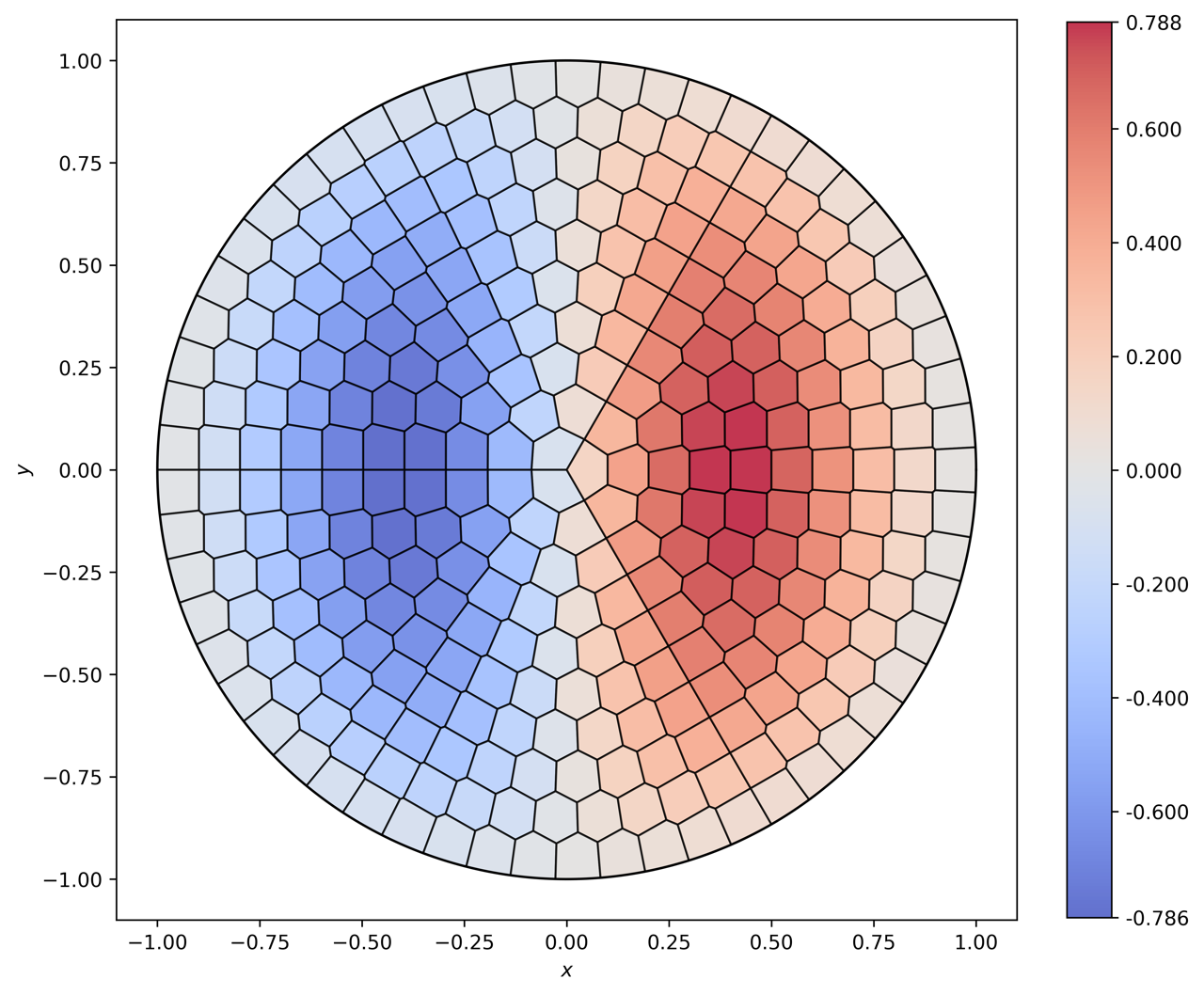}
    \end{minipage}
    % \hspace{20pt}
    \begin{minipage}{0.19\textwidth}
        \centering
        \includegraphics[width=\textwidth]{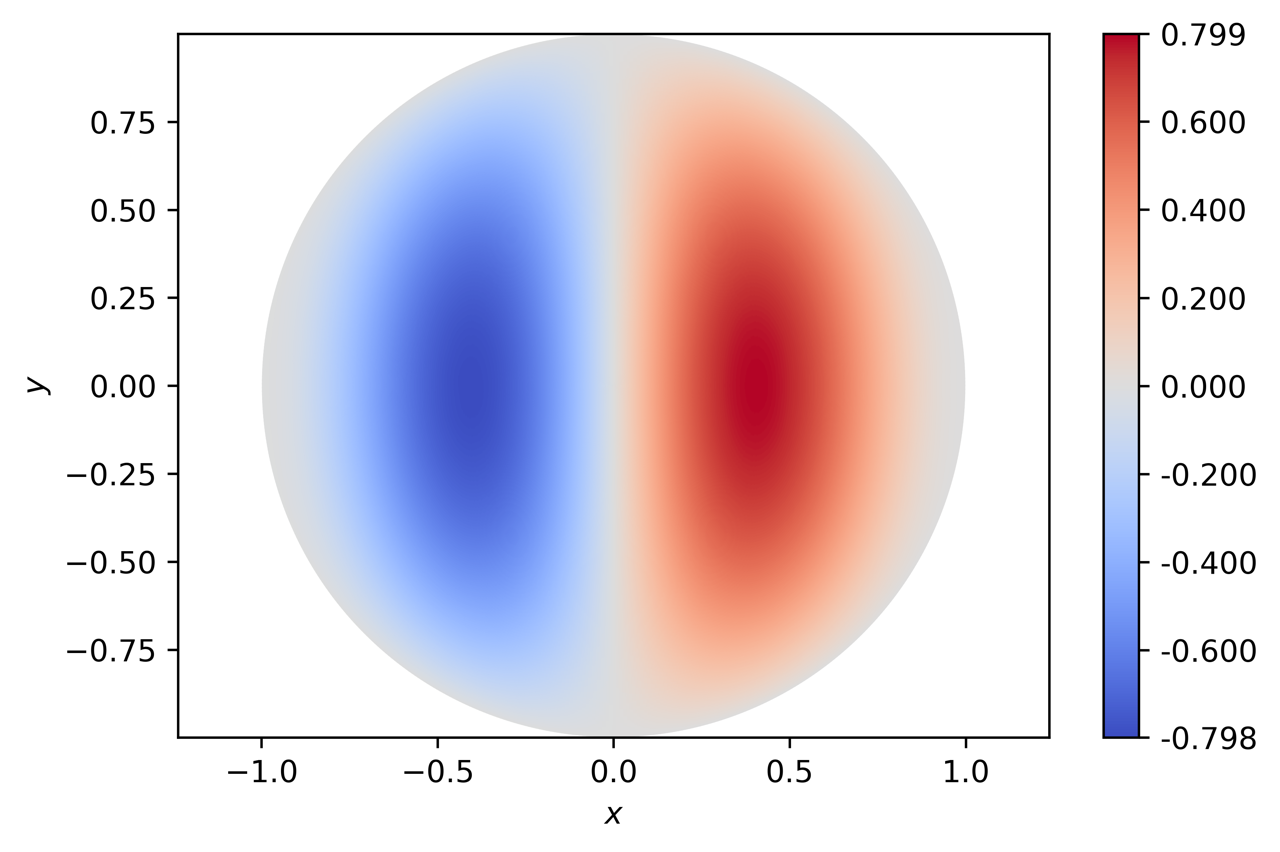}
        % \caption{$n = 400$}
    \end{minipage}
    \begin{minipage}{0.2\textwidth}
        \centering
        \includegraphics[width=\textwidth]{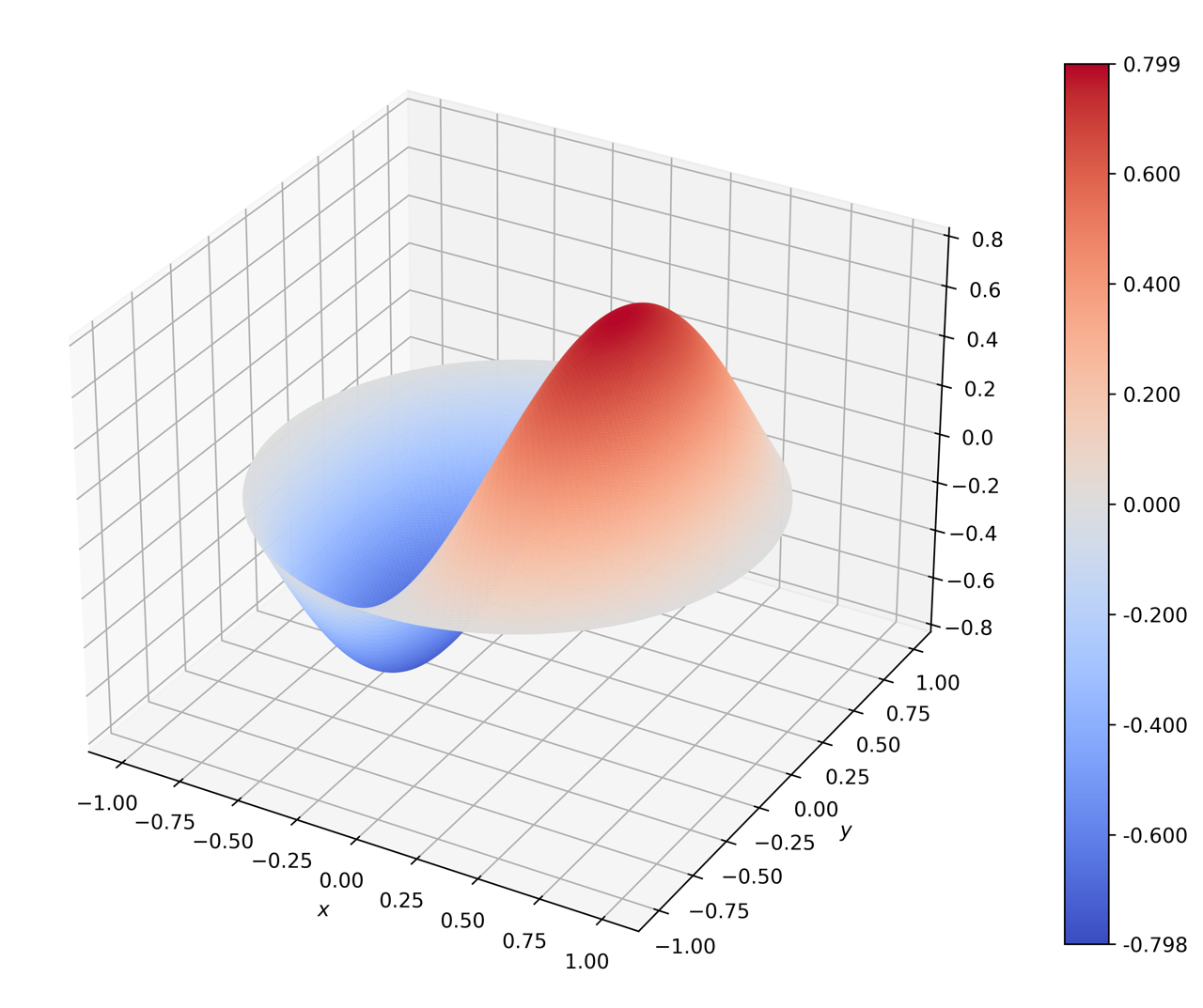}
        % \caption{$n = 400$}
    \end{minipage}
    \begin{minipage}{0.17\textwidth}
        \centering
        \includegraphics[width=\textwidth]{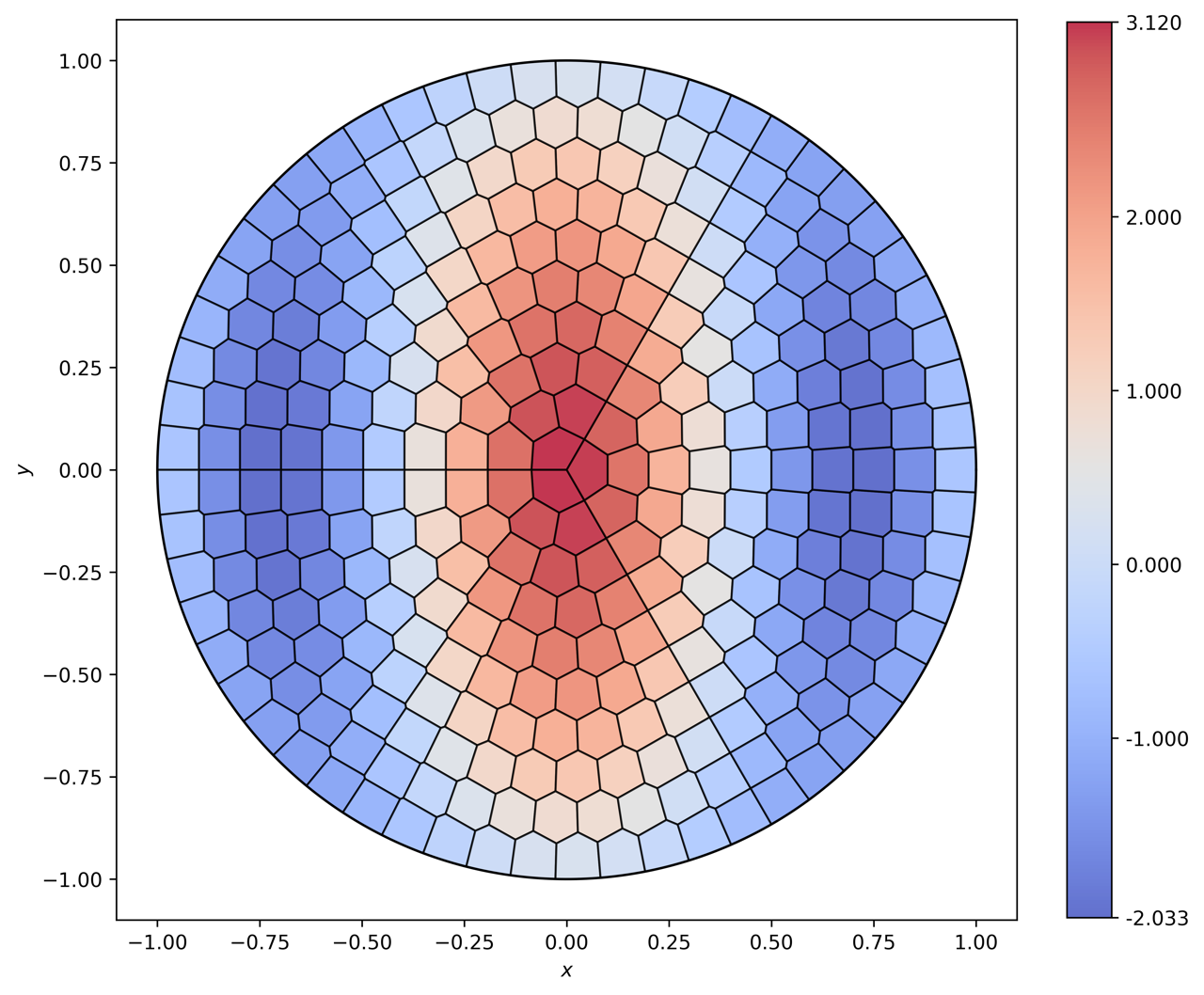}
        % \caption{$n = 400$}
    \end{minipage}
    \begin{minipage}{0.17\textwidth}
        \centering
        \includegraphics[width=\textwidth]{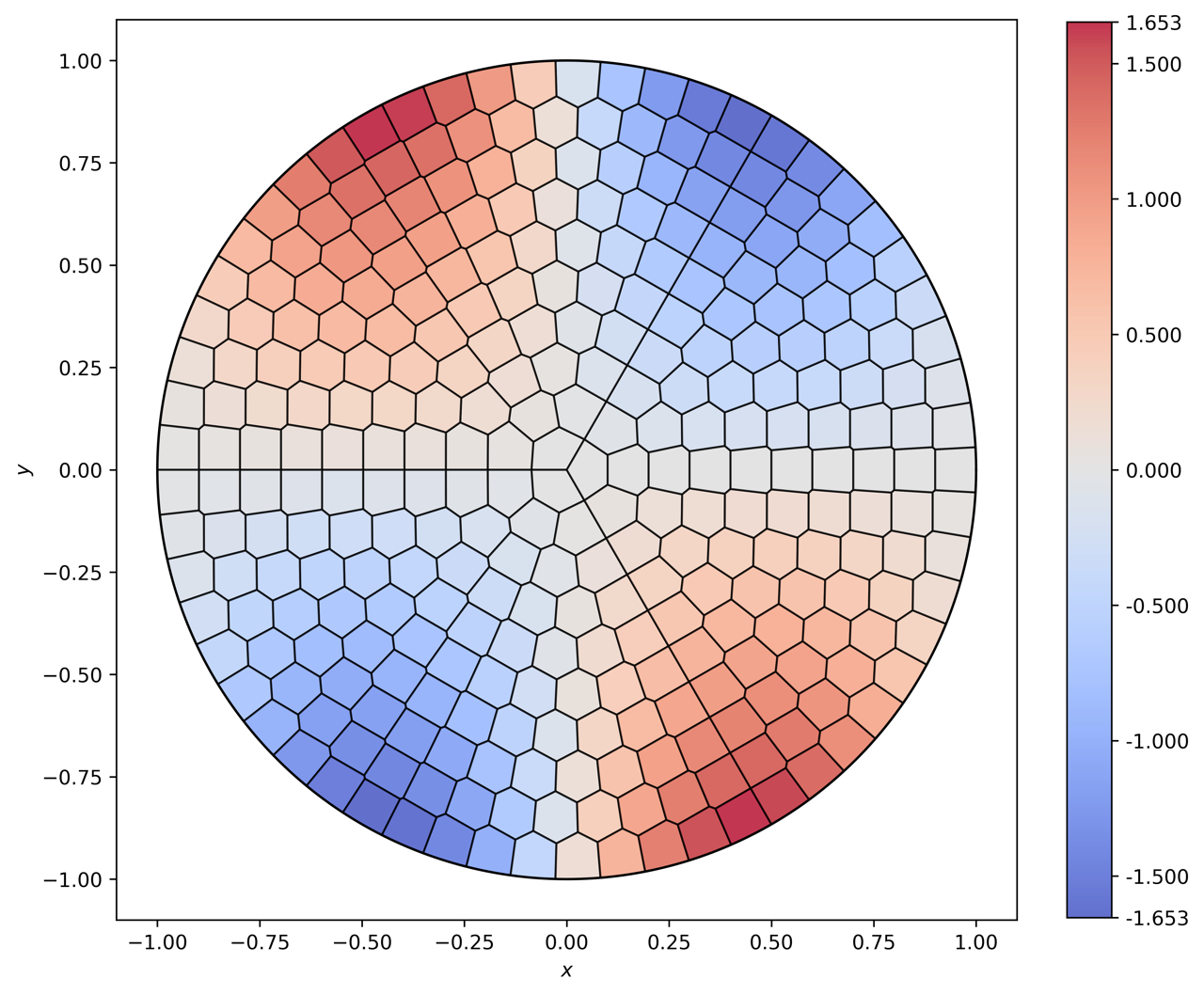}
        % \caption{$n = 400$}
    \end{minipage}\\
% \end{figure}

% \begin{figure}[h!]
    \centering
    \begin{minipage}{0.17\textwidth}
        \centering
        \includegraphics[width=\textwidth]{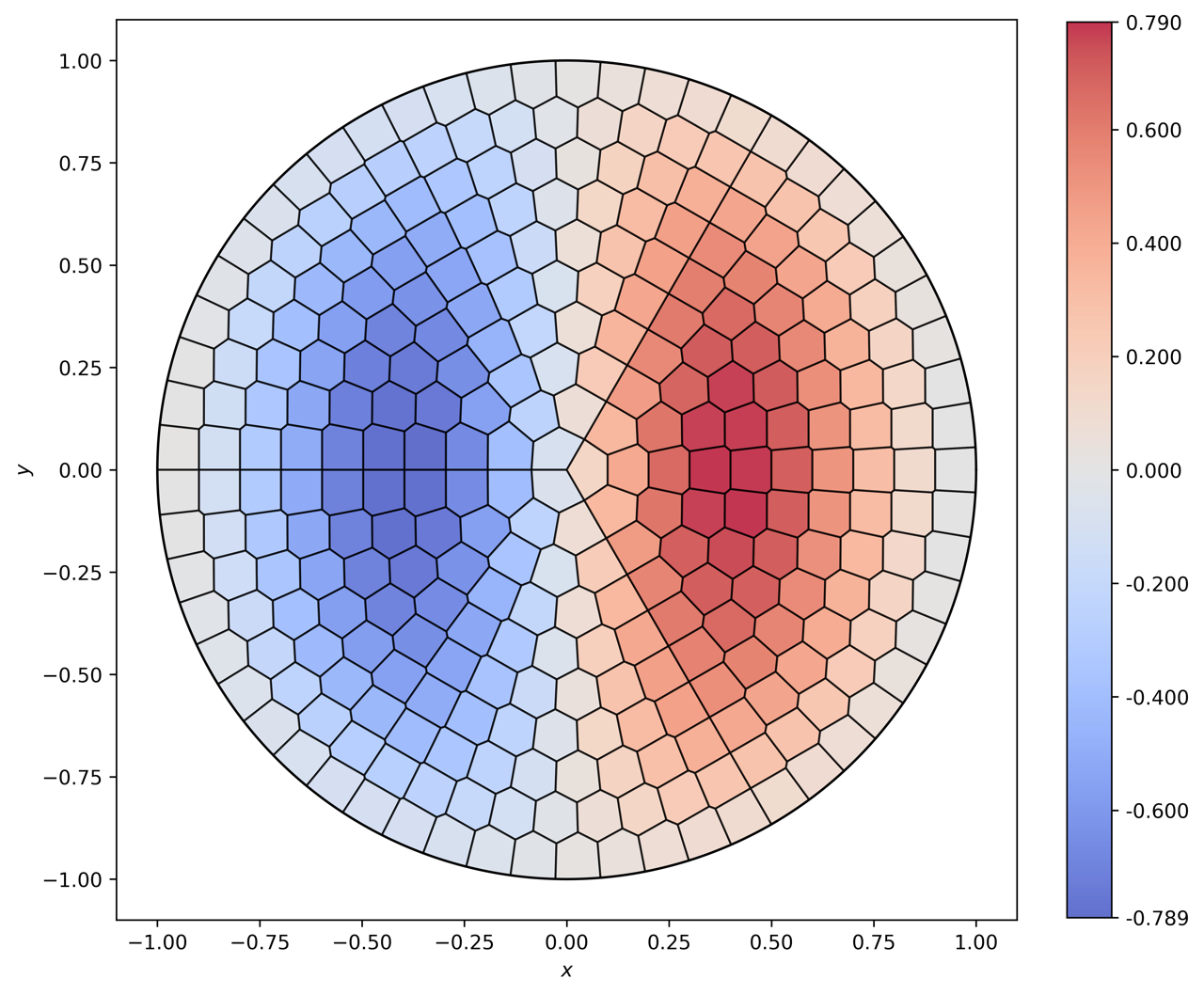}
        % \caption{$n = 400$}
    \end{minipage}
    % \hspace{20pt}
    \begin{minipage}{0.19\textwidth}
        \centering
        \includegraphics[width=\textwidth]{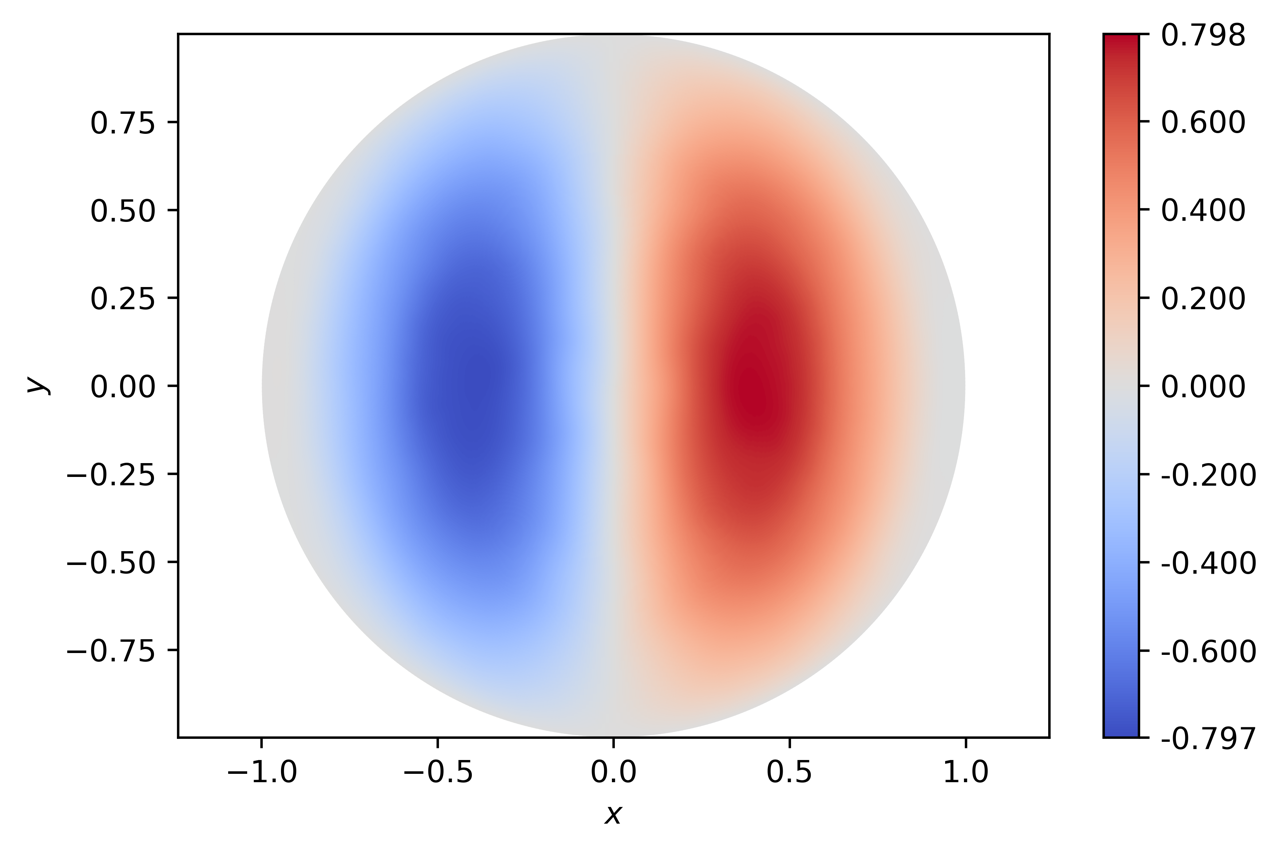}
    \end{minipage}
    \begin{minipage}{0.2\textwidth}
        \centering
        \includegraphics[width=\textwidth]{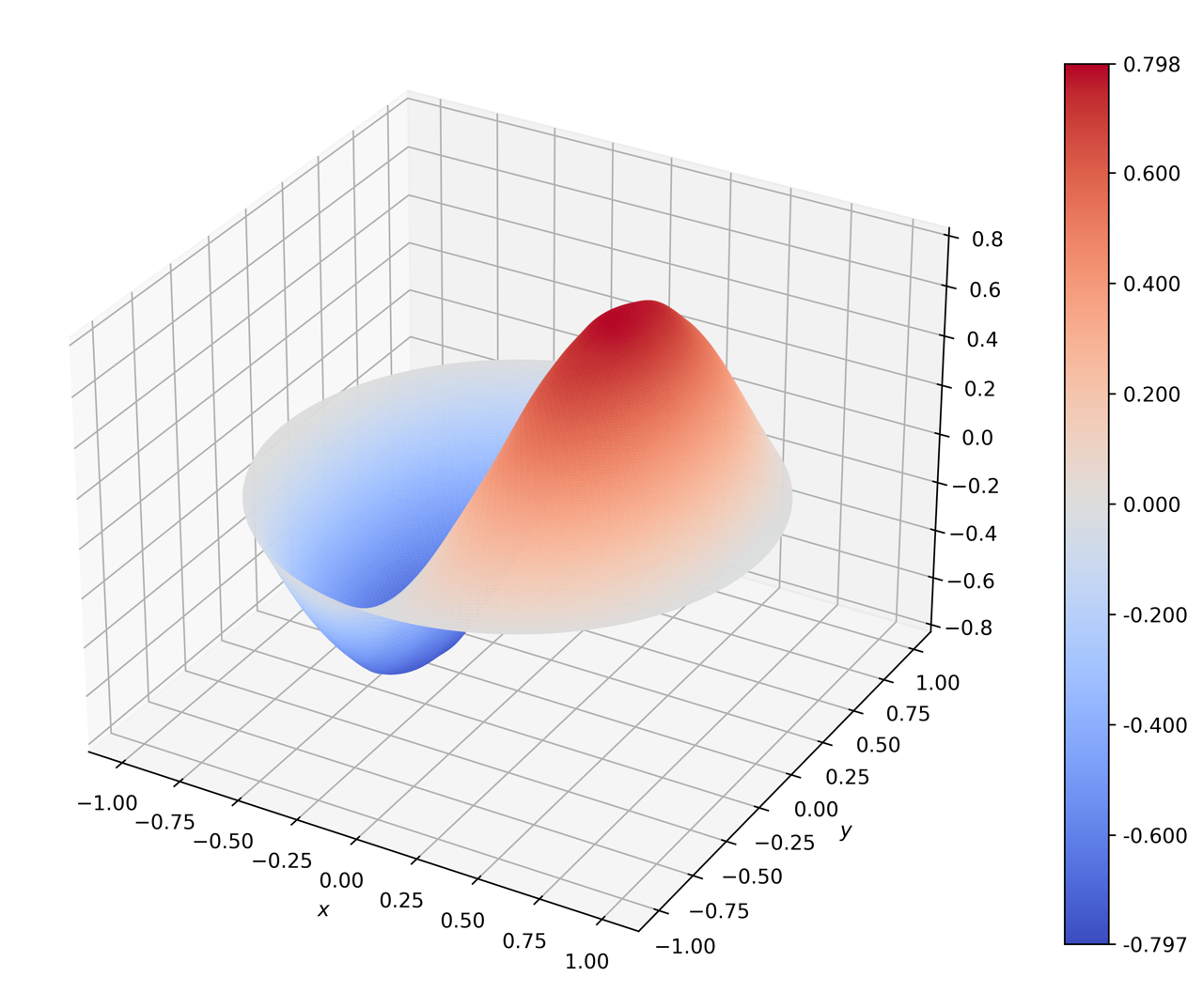}
    \end{minipage}
    \begin{minipage}{0.17\textwidth}
        \centering
        \includegraphics[width=\textwidth]{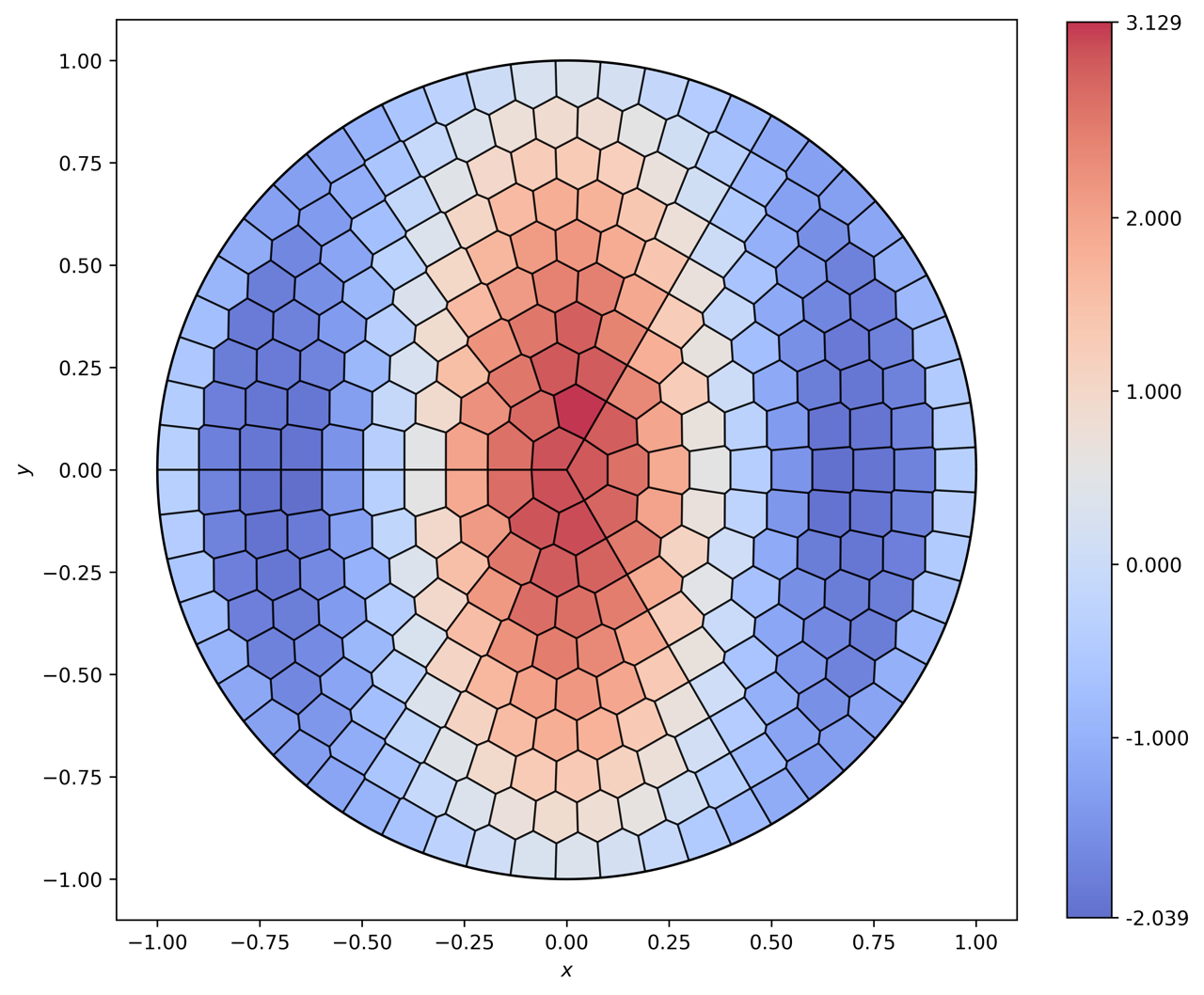}
    \end{minipage}
    \begin{minipage}{0.17\textwidth}
        \centering
        \includegraphics[width=\textwidth]{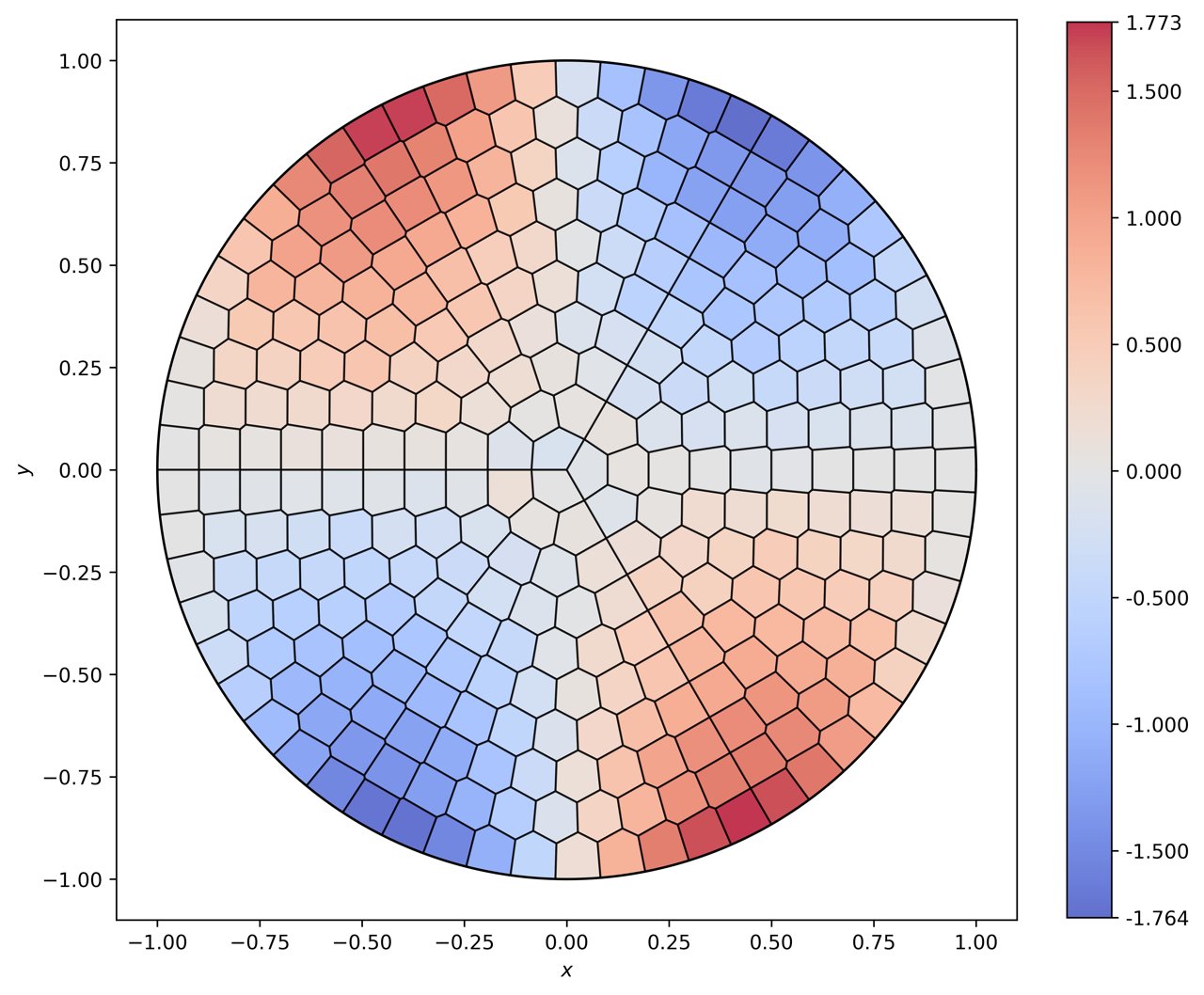}
        % \caption{$n = 400$}
    \end{minipage}
    \caption{(Top) Exact solution $u_{\text{ex}}$ (3 plots) and partial derivatives $\partial_x u_{\text{ex}}$ and $\partial_y u_{\text{ex}}$; (Bottom) Approximate solution $\widehat{u}$ (3 plots) and approximate partial derivatives $\widehat{\partial_x u}$ and $\widehat{\partial_y u}$, obtained with the proposed method.}
\end{figure}

\begin{figure}[H]
    \centering
    \begin{minipage}{0.6\textwidth}
        \centering
         \includegraphics[width=\textwidth]{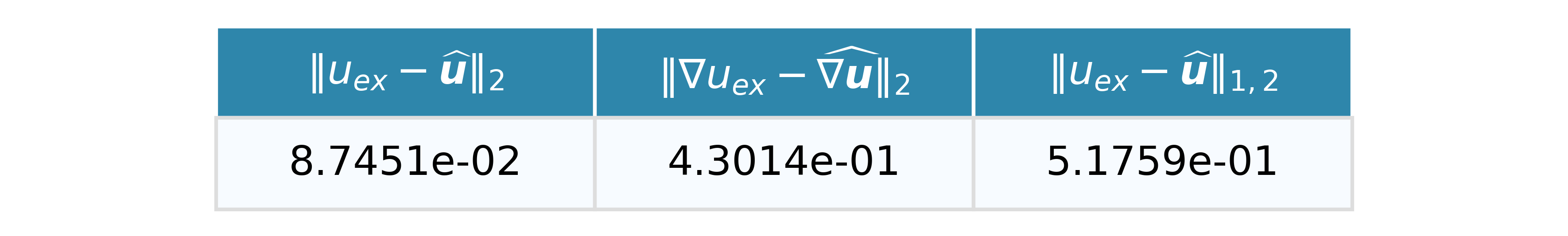}
        \includegraphics[width=\textwidth]{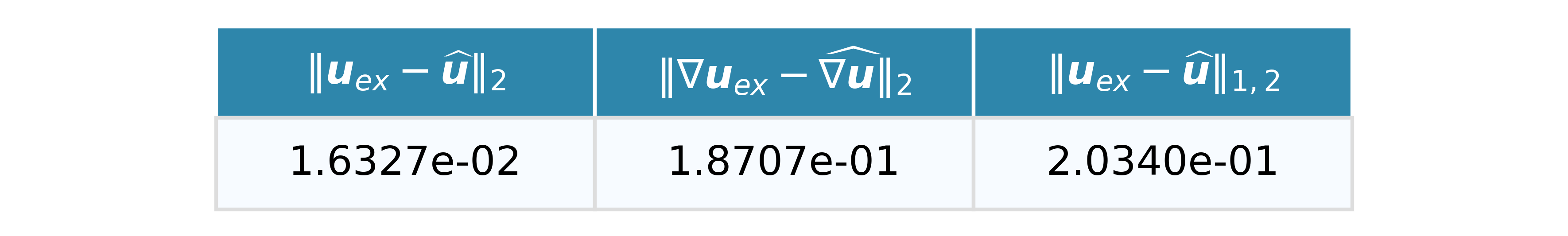}
    \end{minipage}
    \captionof{table}{Errors between exact and approximate solution: absolute errors in $L^2$ and $W^{1,2}$ norms. Note: $\| {u}_{\text{ex}} - \mathbf{u}_{\text{ex}} \|_2 \approx 8.5 \times 10^{-2}$.}
\end{figure}

%% file: elliptic-disk-new.tex
\begin{figure}[H]
    \centering
    \begin{minipage}{0.17\textwidth}
        \centering
        \includegraphics[width=\textwidth]{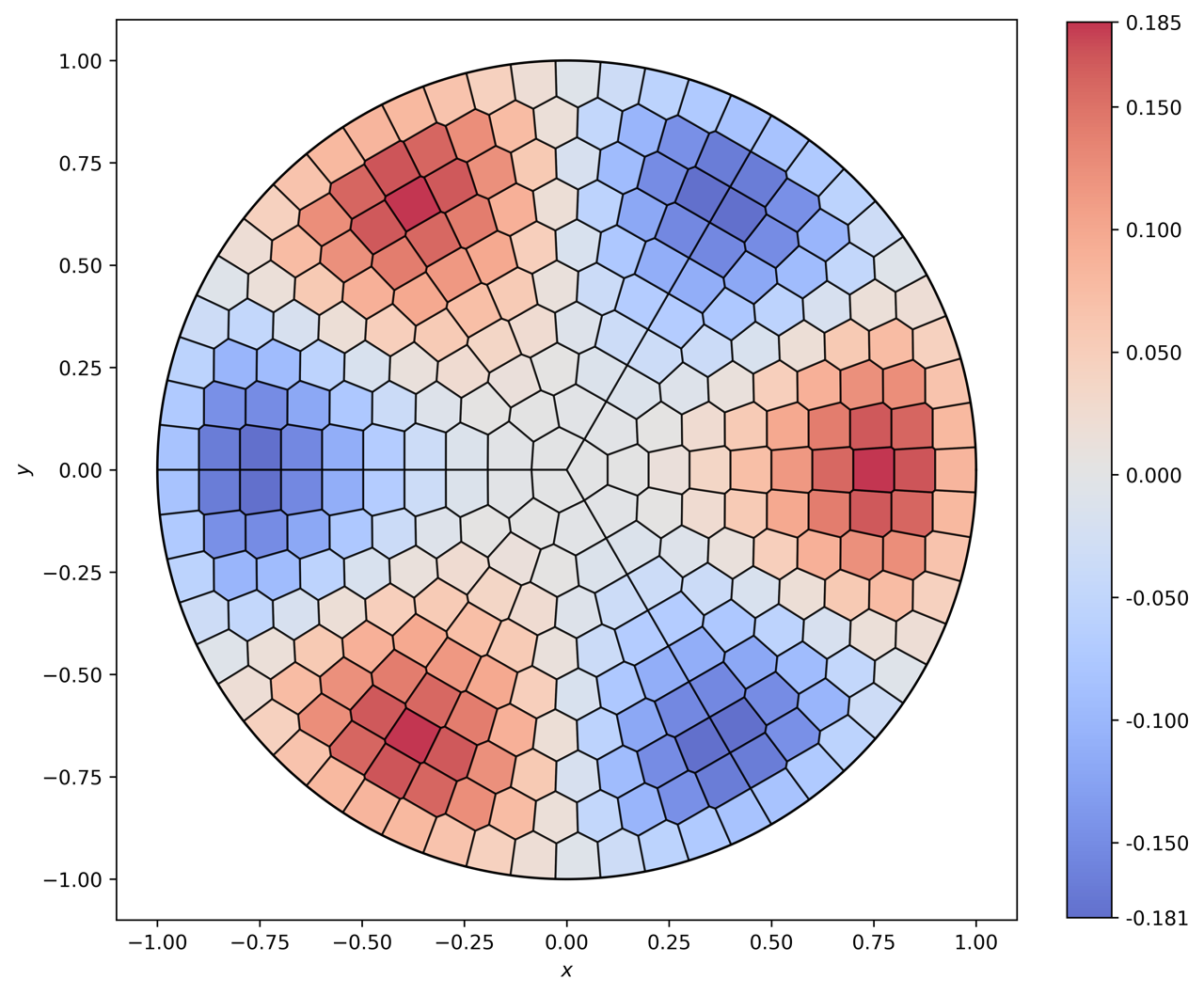}
    \end{minipage}
    % \hspace{20pt}
    \begin{minipage}{0.19\textwidth}
        \centering
        \includegraphics[width=\textwidth]{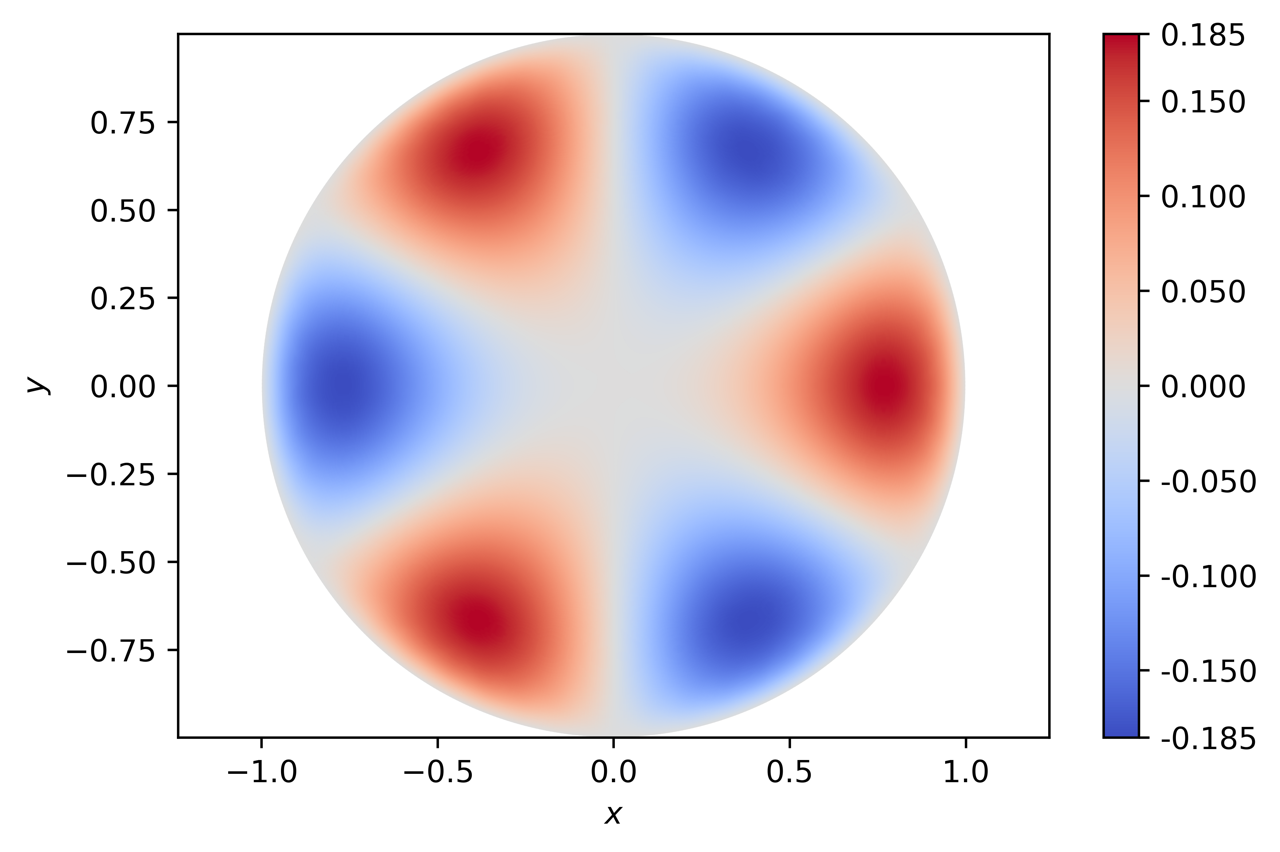}
        % \caption{$n = 400$}
    \end{minipage}
    \begin{minipage}{0.2\textwidth}
        \centering
        \includegraphics[width=\textwidth]{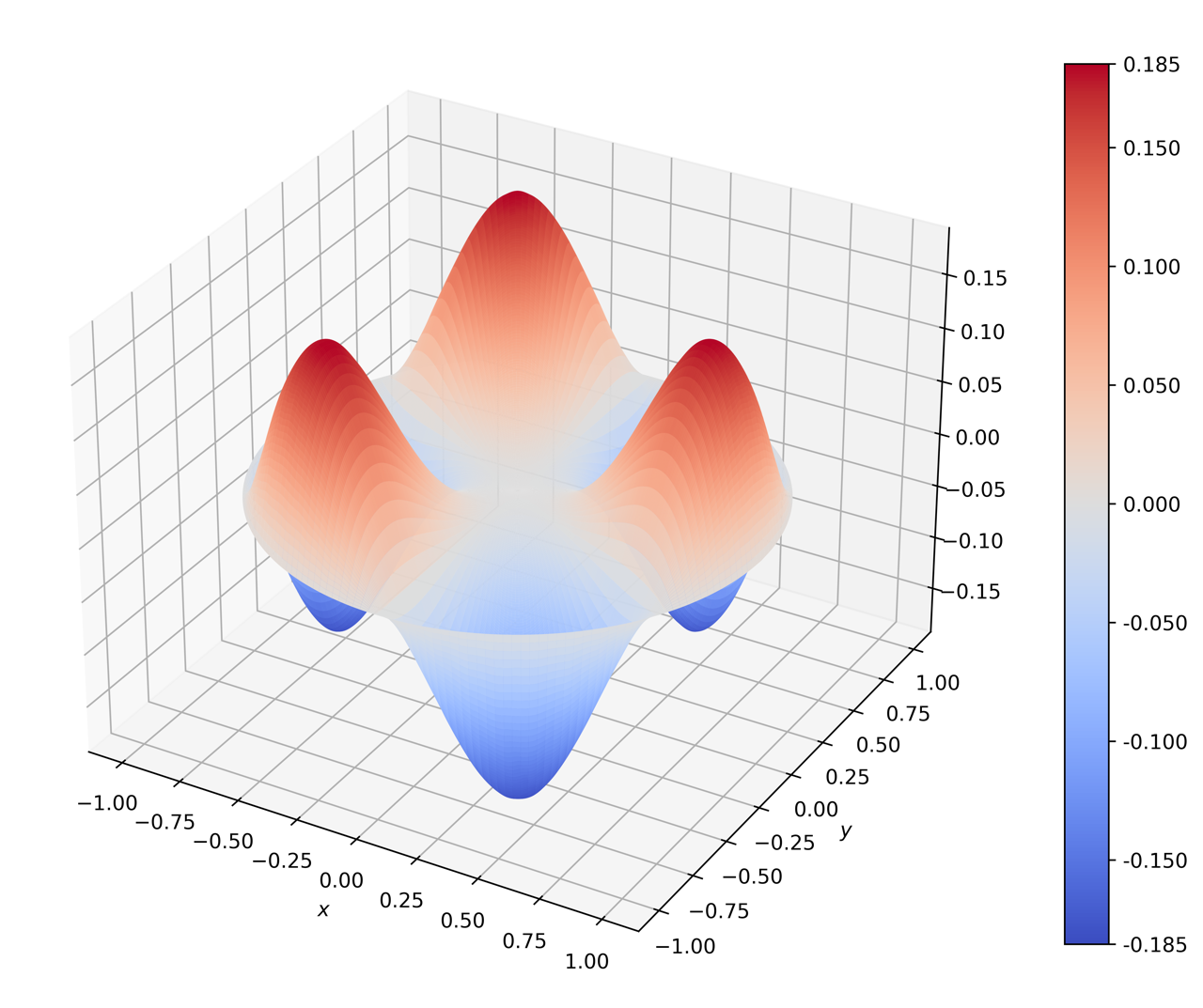}
        % \caption{$n = 400$}
    \end{minipage}
    \begin{minipage}{0.17\textwidth}
        \centering
        \includegraphics[width=\textwidth]{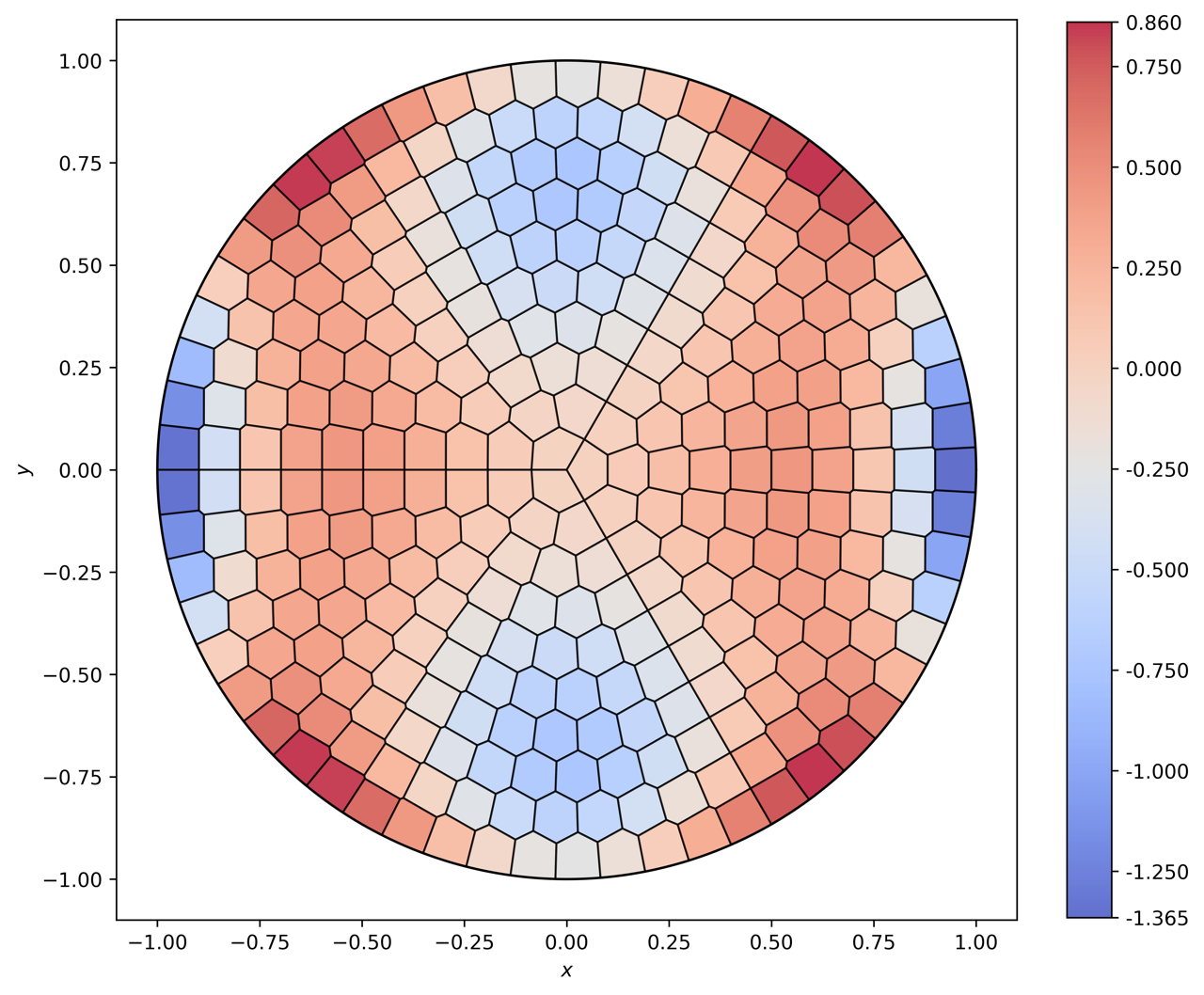}
        % \caption{$n = 400$}
    \end{minipage}
    \begin{minipage}{0.17\textwidth}
        \centering
        \includegraphics[width=\textwidth]{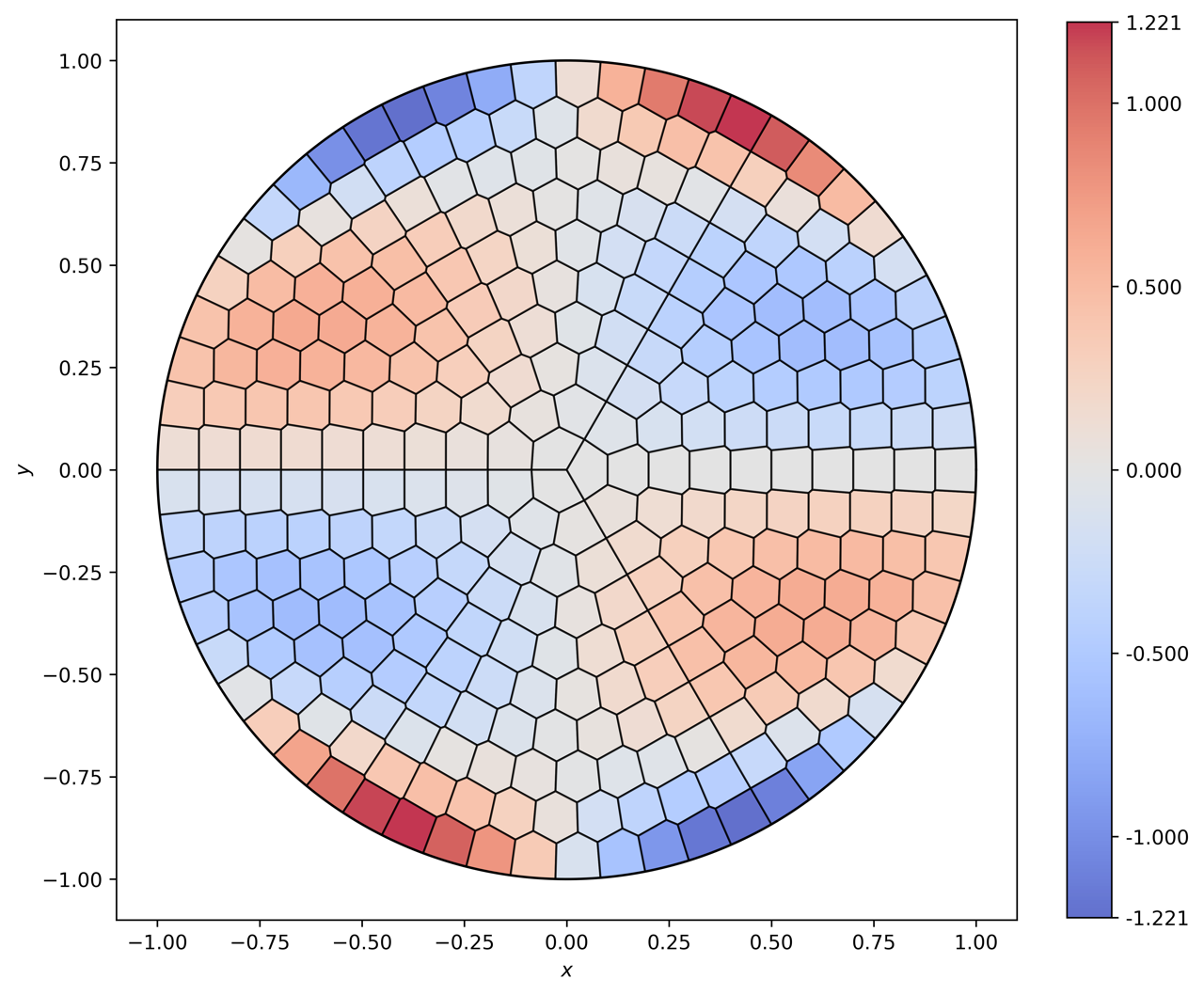}
        % \caption{$n = 400$}
    \end{minipage}\\
% \end{figure}

% \begin{figure}[h!]
    \centering
    \begin{minipage}{0.17\textwidth}
        \centering
        \includegraphics[width=\textwidth]{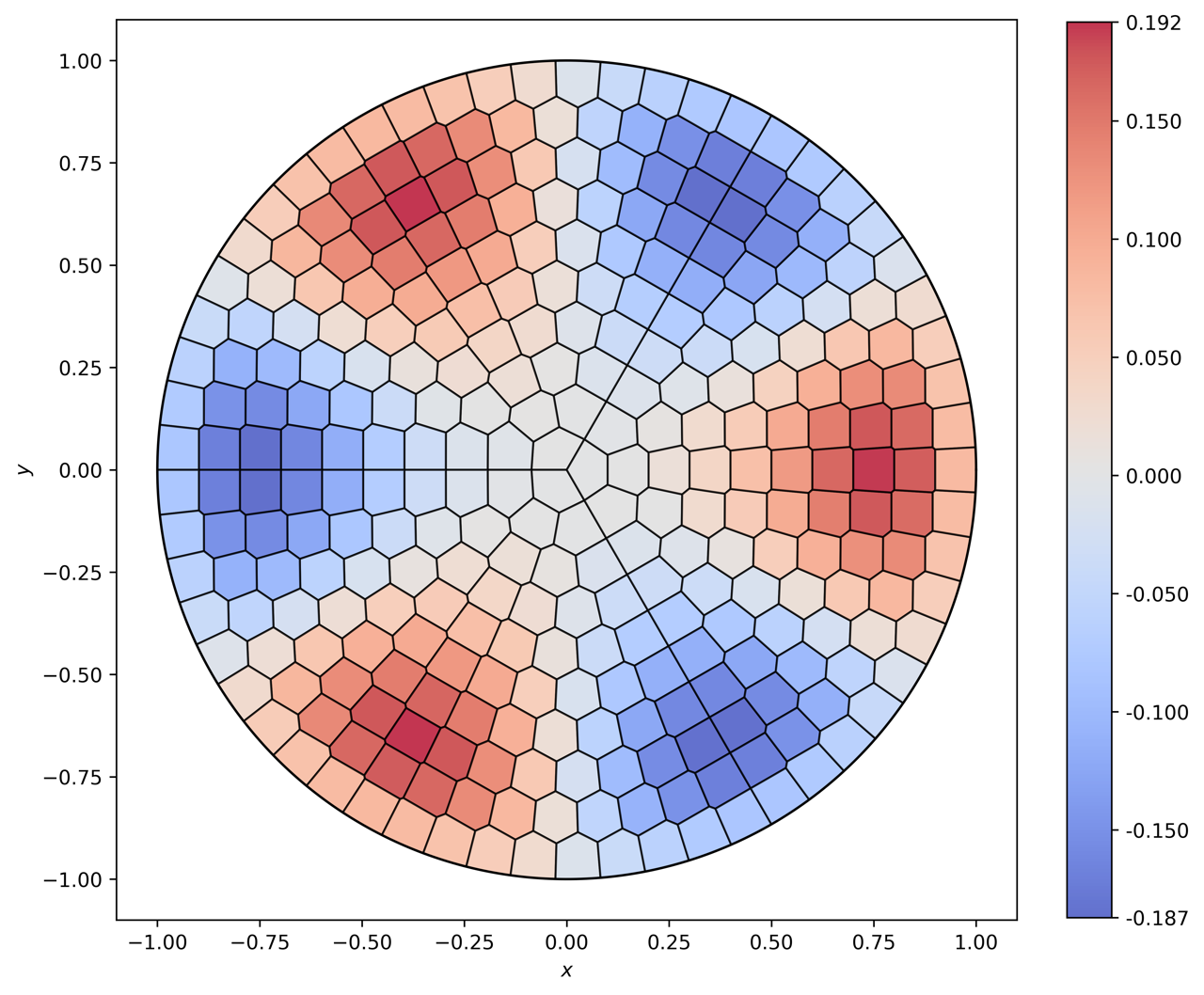}
        % \caption{$n = 400$}
    \end{minipage}
    % \hspace{20pt}
    \begin{minipage}{0.19\textwidth}
        \centering
        \includegraphics[width=\textwidth]{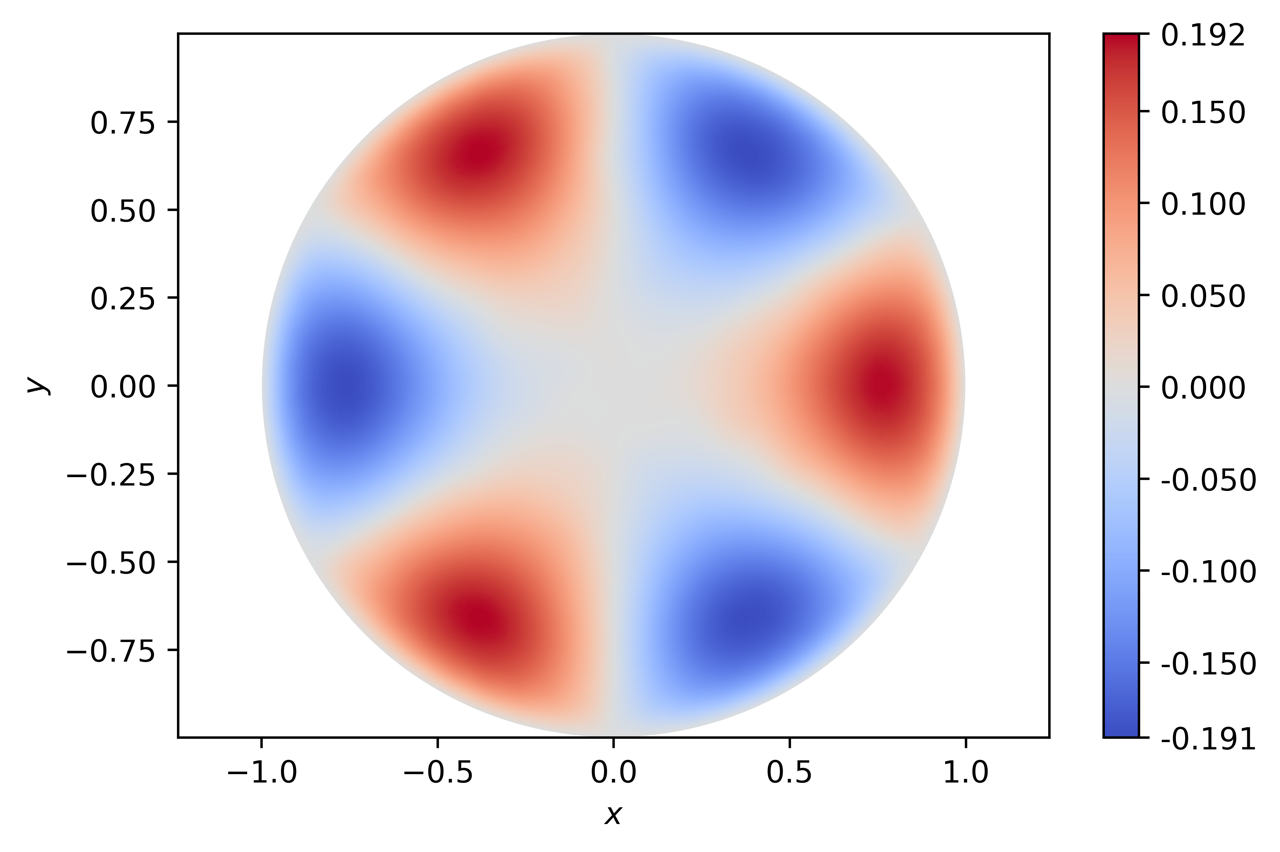}
    \end{minipage}
    \begin{minipage}{0.2\textwidth}
        \centering
        \includegraphics[width=\textwidth]{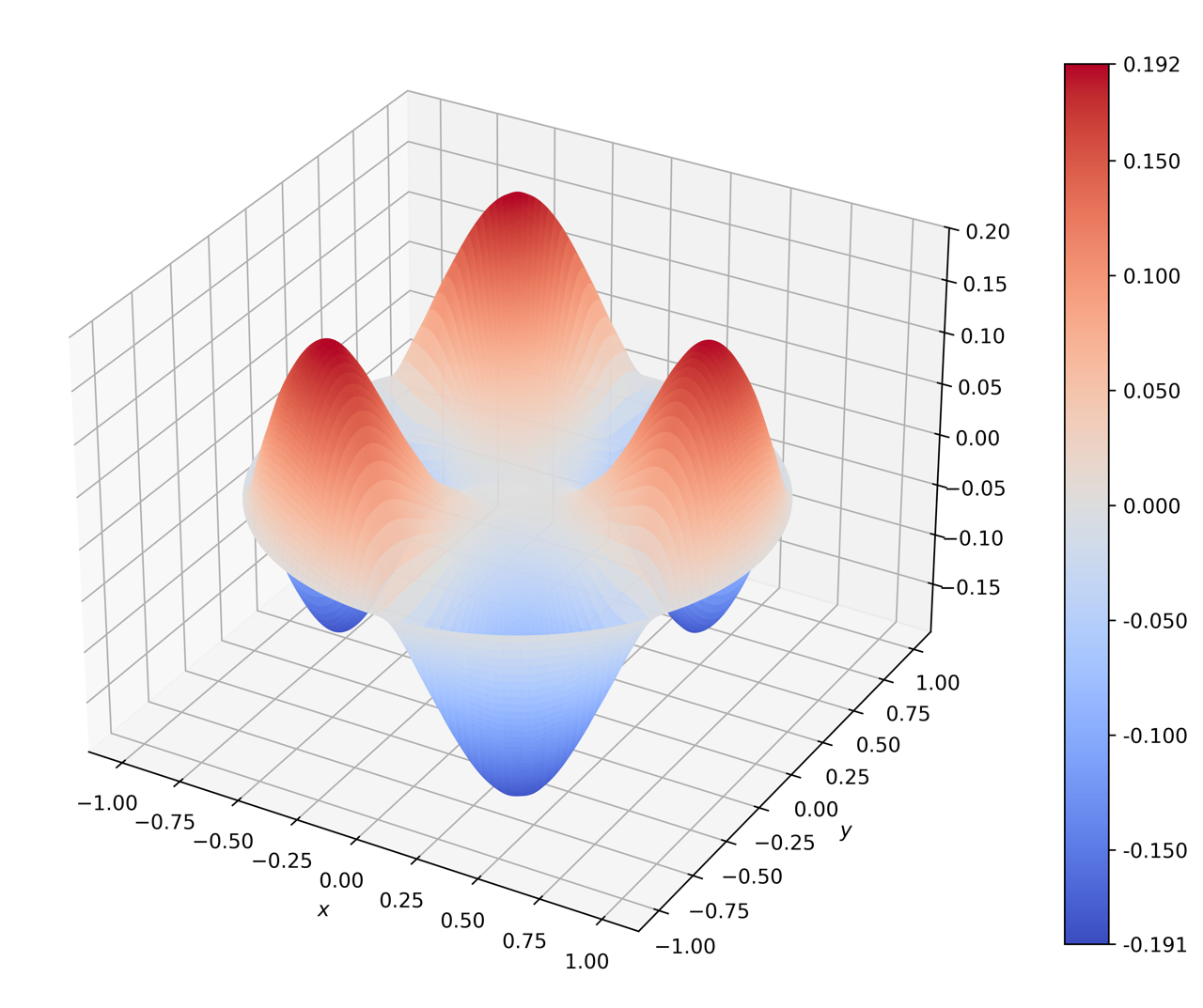}
    \end{minipage}
    \begin{minipage}{0.17\textwidth}
        \centering
        \includegraphics[width=\textwidth]{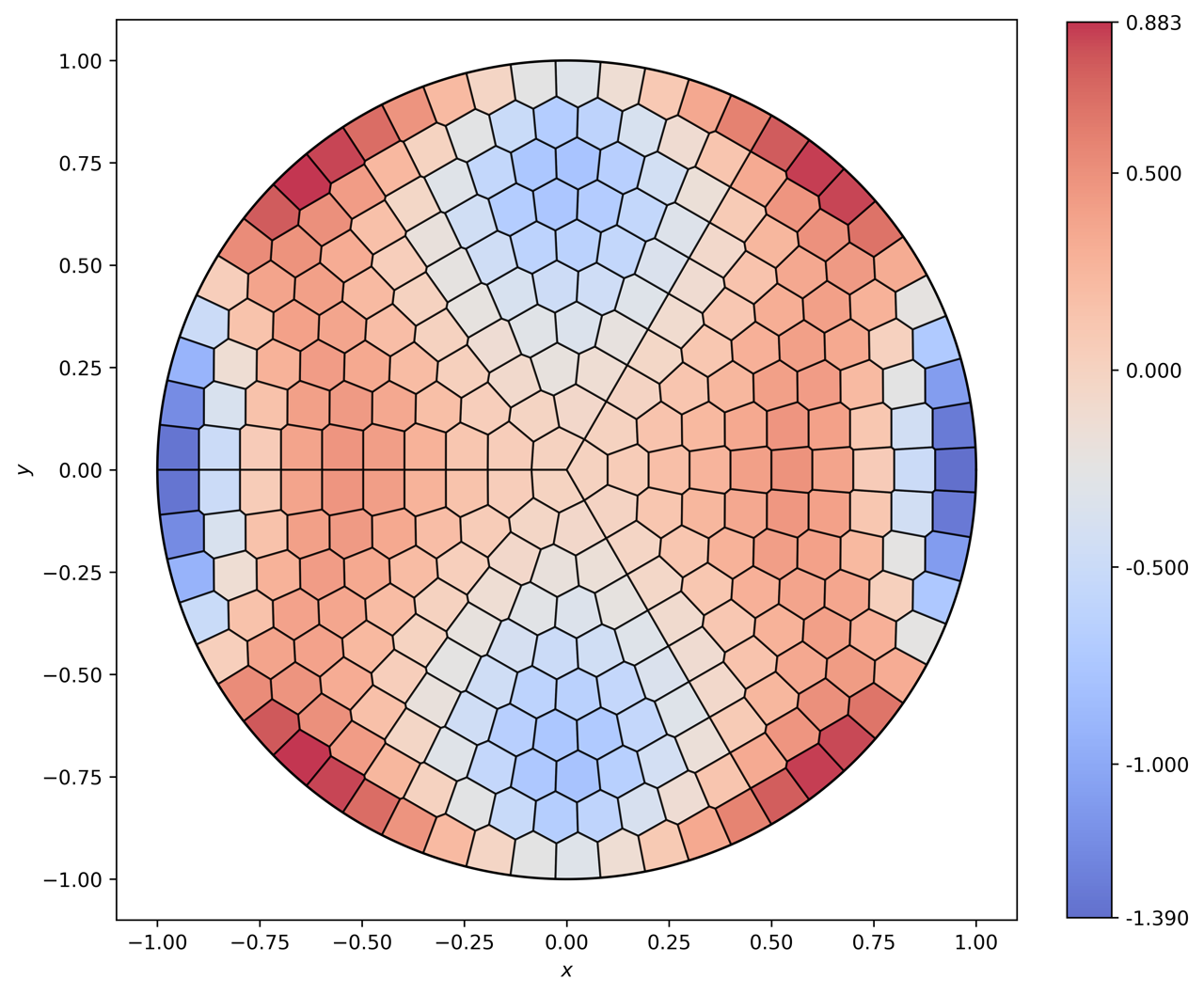}
    \end{minipage}
    \begin{minipage}{0.17\textwidth}
        \centering
        \includegraphics[width=\textwidth]{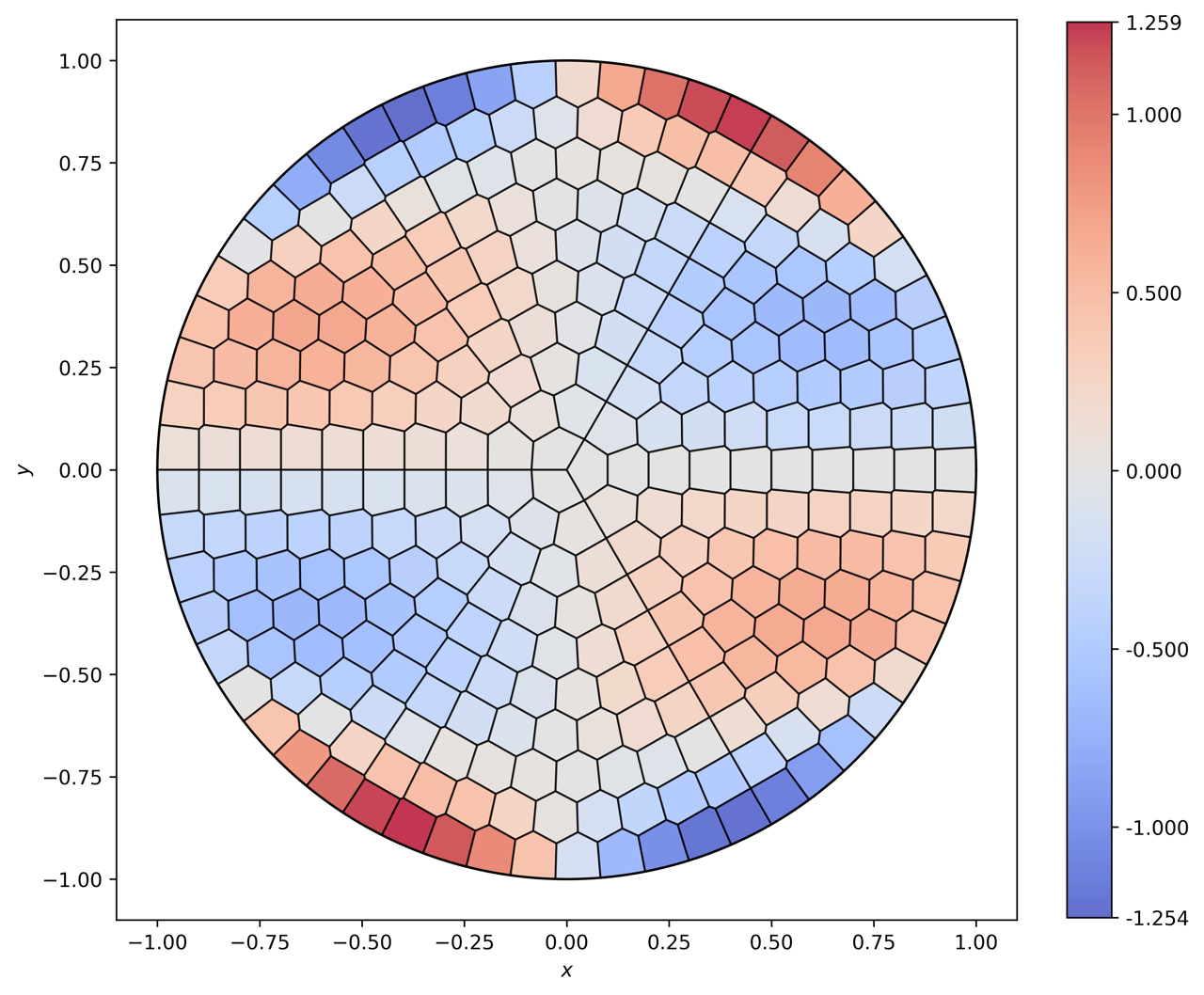}
        % \caption{$n = 400$}
    \end{minipage}
    \caption{(Top) Exact solution $u_{\text{ex}}$ (3 plots) and partial derivatives $\partial_x u_{\text{ex}}$ and $\partial_y u_{\text{ex}}$; (Bottom) Approximate solution $\widehat{u}$ (3 plots) and approximate partial derivatives $\widehat{\partial_x u}$ and $\widehat{\partial_y u}$, obtained with the proposed method.}
\end{figure}

\begin{figure}[H]
    \centering
    \begin{minipage}{0.6\textwidth}
        \centering
         \includegraphics[width=\textwidth]{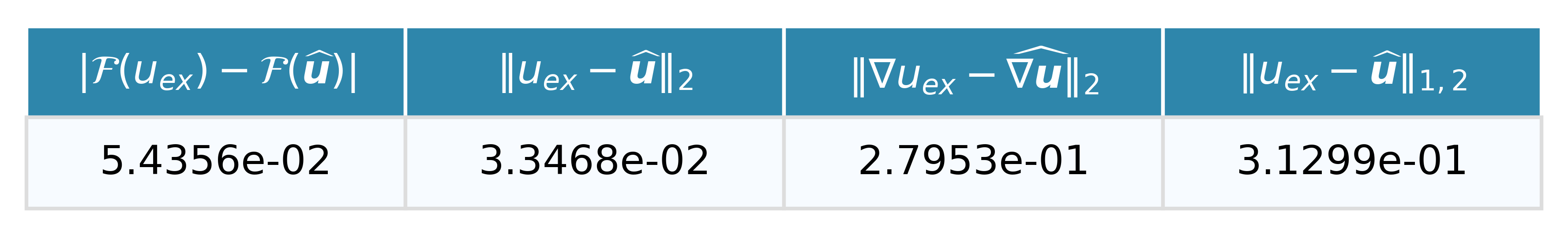}
        \includegraphics[width=\textwidth]{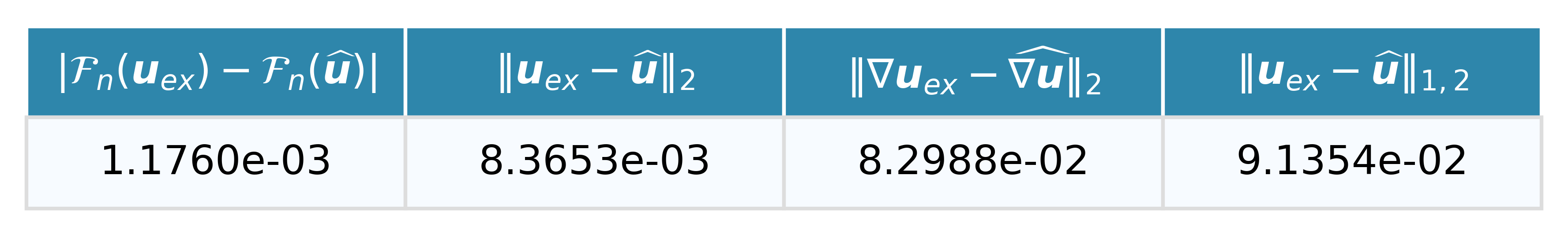}
    \end{minipage}
    \captionof{table}{Errors between exact and approximate solution: functional error and absolute errors in $L^2$ and $W^{1,2}$ norms. Note: the error between the exact and discrete functional of the exact solution is $\left| \mathcal{F}({u}_{\text{ex}}) - \mathcal{F}_n({u}_{\text{ex}}) \right| \approx 5.5\times 10^{-2} $ and $\| {u}_{\text{ex}} - \mathbf{u}_{\text{ex}} \|_2 \approx 3.1 \times 10^{-2}$.}
\end{figure}

\begin{figure}[H]
    \centering
    \begin{minipage}{0.6\textwidth}
        \centering
         \includegraphics[width=\textwidth]{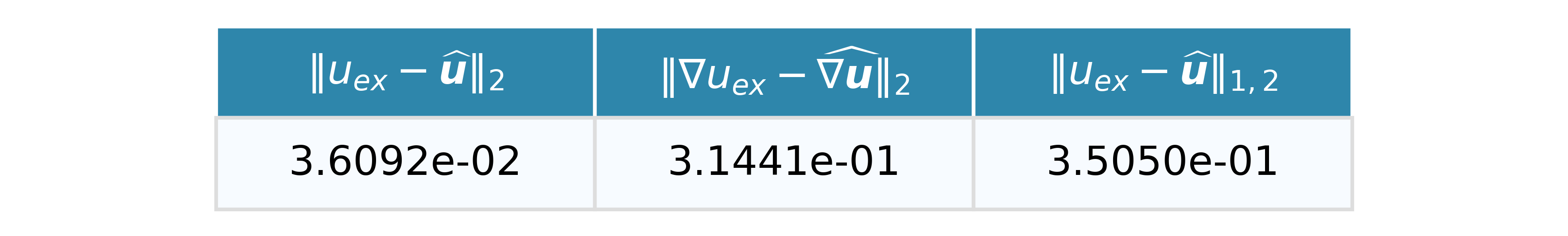}
        \includegraphics[width=\textwidth]{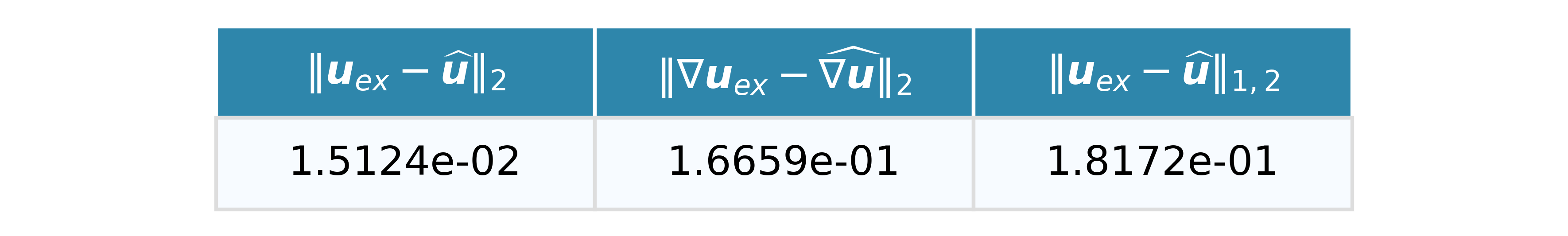}
    \end{minipage}
    \captionof{table}{(Solution operator method) Errors between exact and approximate solution obtain with the solution operator method: absolute errors in $L^2$ and $W^{1,2}$ norms.} \label{table solution operator example elliptic div min}
\end{figure}

%% file: p-laplace-disk.tex
\begin{figure}[H]
    \centering
    \begin{minipage}{0.9\textwidth}
        \centering
        \includegraphics[width=\textwidth]{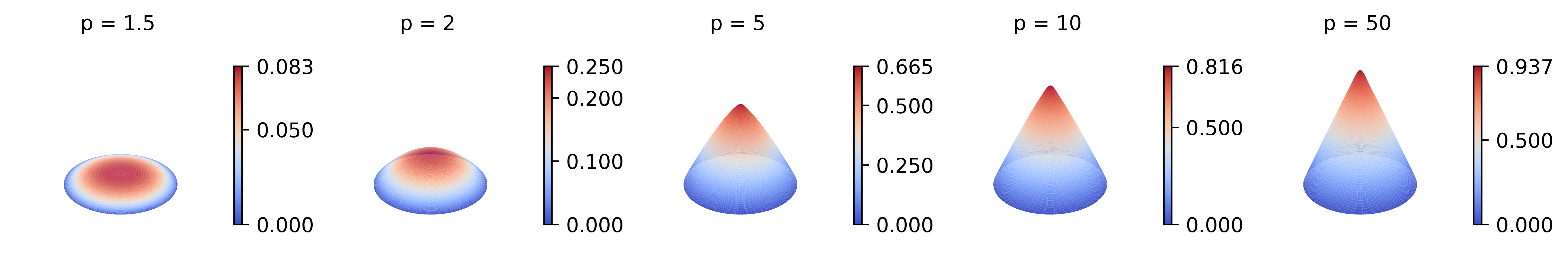}
        % \caption{$n = 400$}
    \end{minipage}
    % \hspace{20pt}
    \begin{minipage}{0.9\textwidth}
        \centering
        \includegraphics[width=\textwidth]{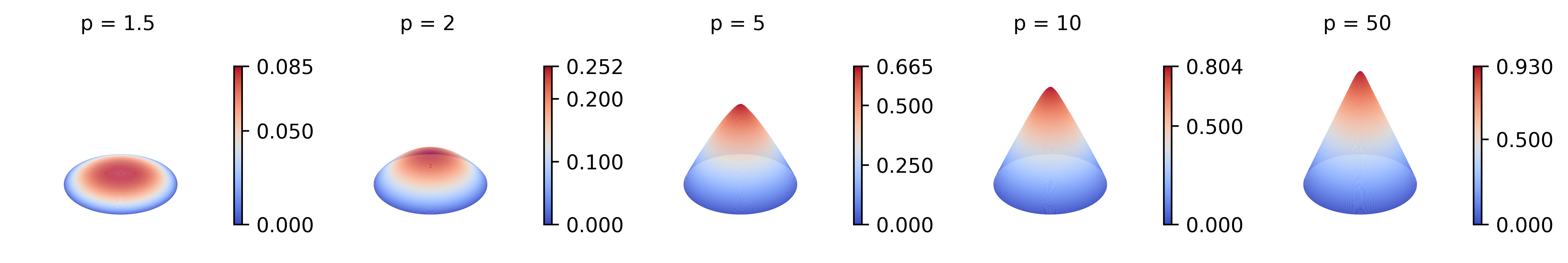}
    \caption{Exact solution $u_{\text{ex}}$ (top) and approximate solution $\widehat{u}$ obtained with the proposed method (bottom).}
    \end{minipage}
\end{figure}
\vspace{-10pt}
\begin{figure}[H]
    \centering
    \begin{minipage}{0.17\textwidth}
        \centering
        \includegraphics[width=\textwidth]{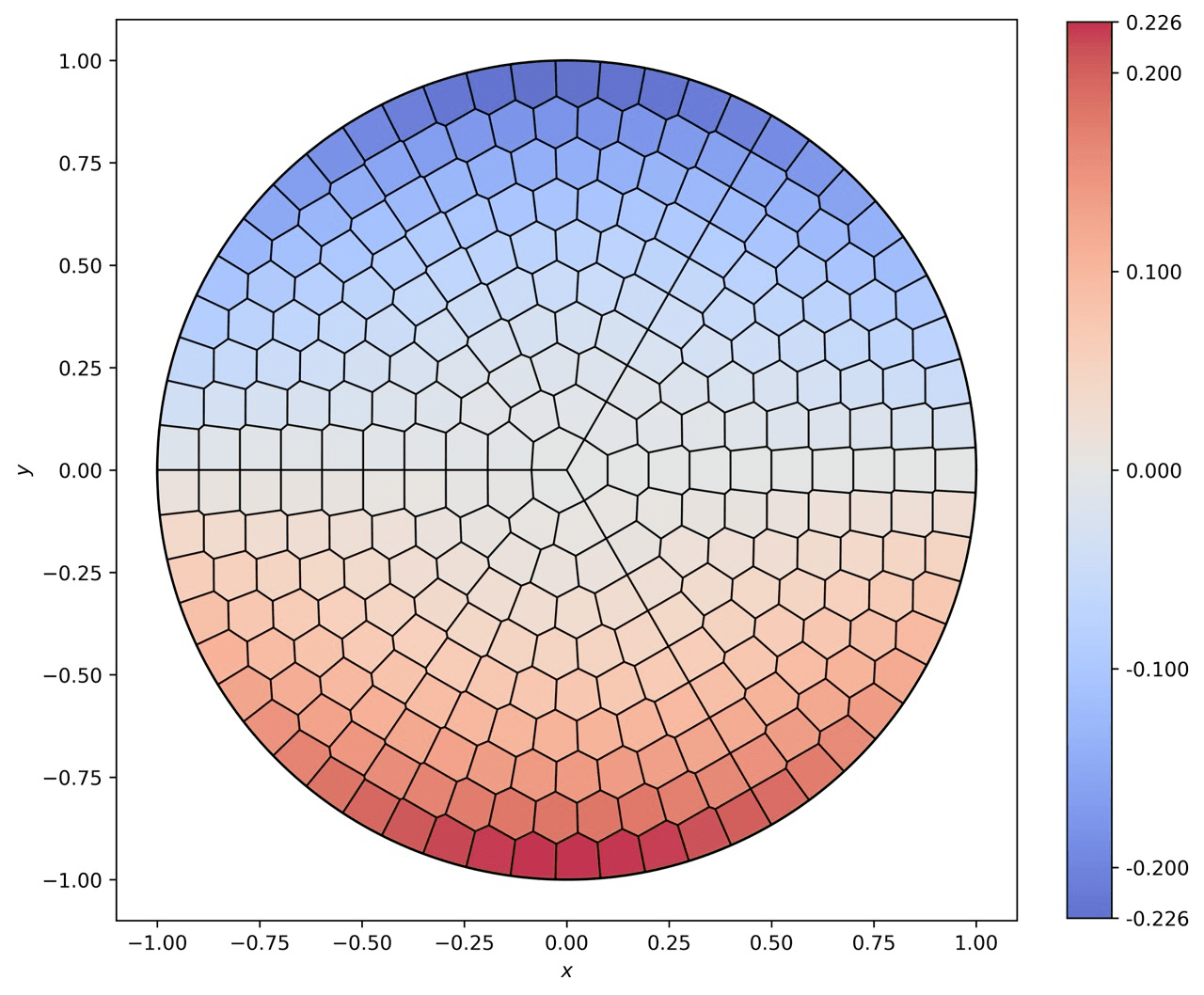}
        % \caption{$n = 400$}
    \end{minipage}
    \begin{minipage}{0.17\textwidth}
        \centering
        \includegraphics[width=\textwidth]{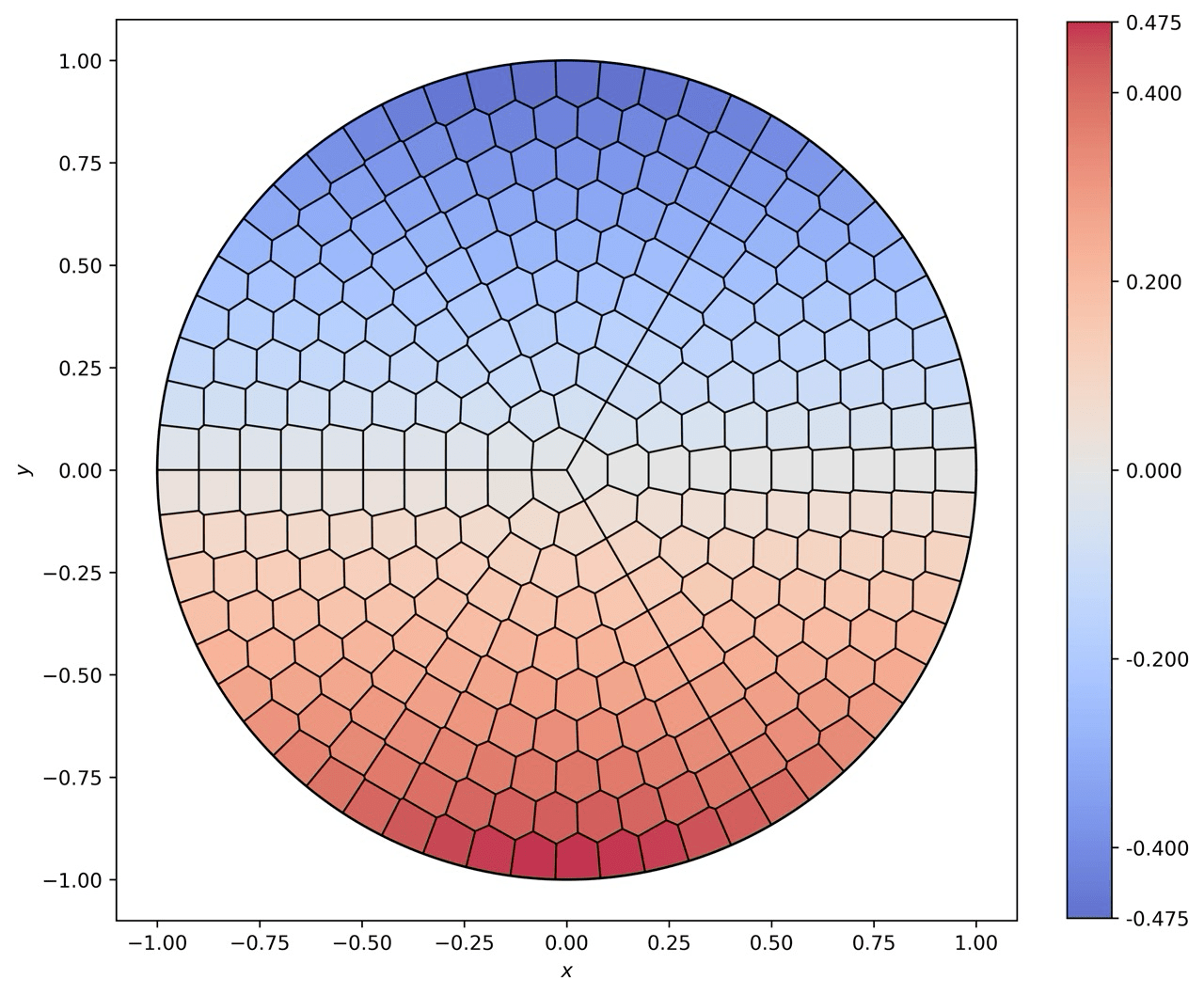}
        % \caption{$n = 400$}
    \end{minipage}
    \begin{minipage}{0.17\textwidth}
        \centering
        \includegraphics[width=\textwidth]{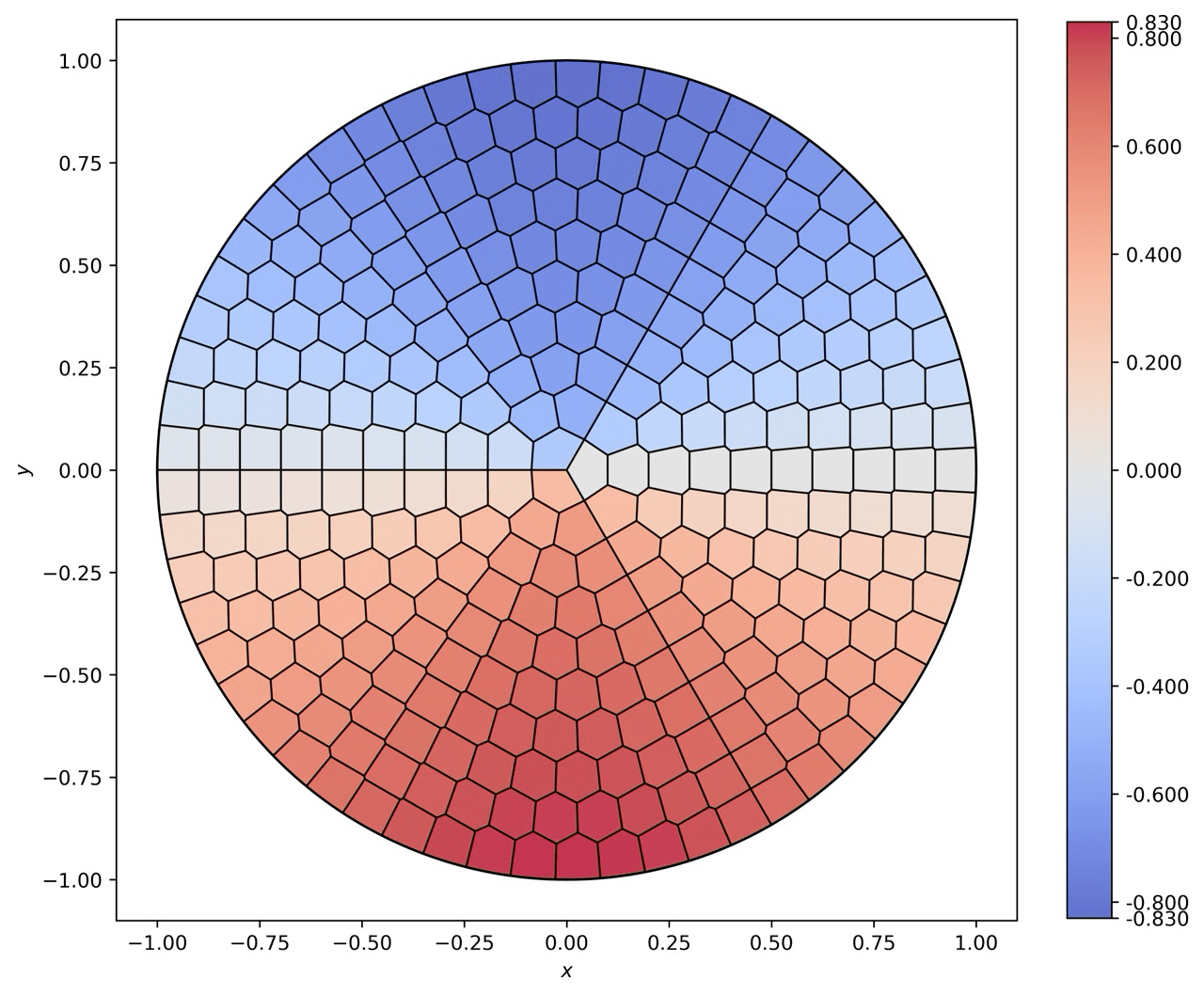}
        % \caption{$n = 400$}
    \end{minipage}
    \begin{minipage}{0.17\textwidth}
        \centering
        \includegraphics[width=\textwidth]{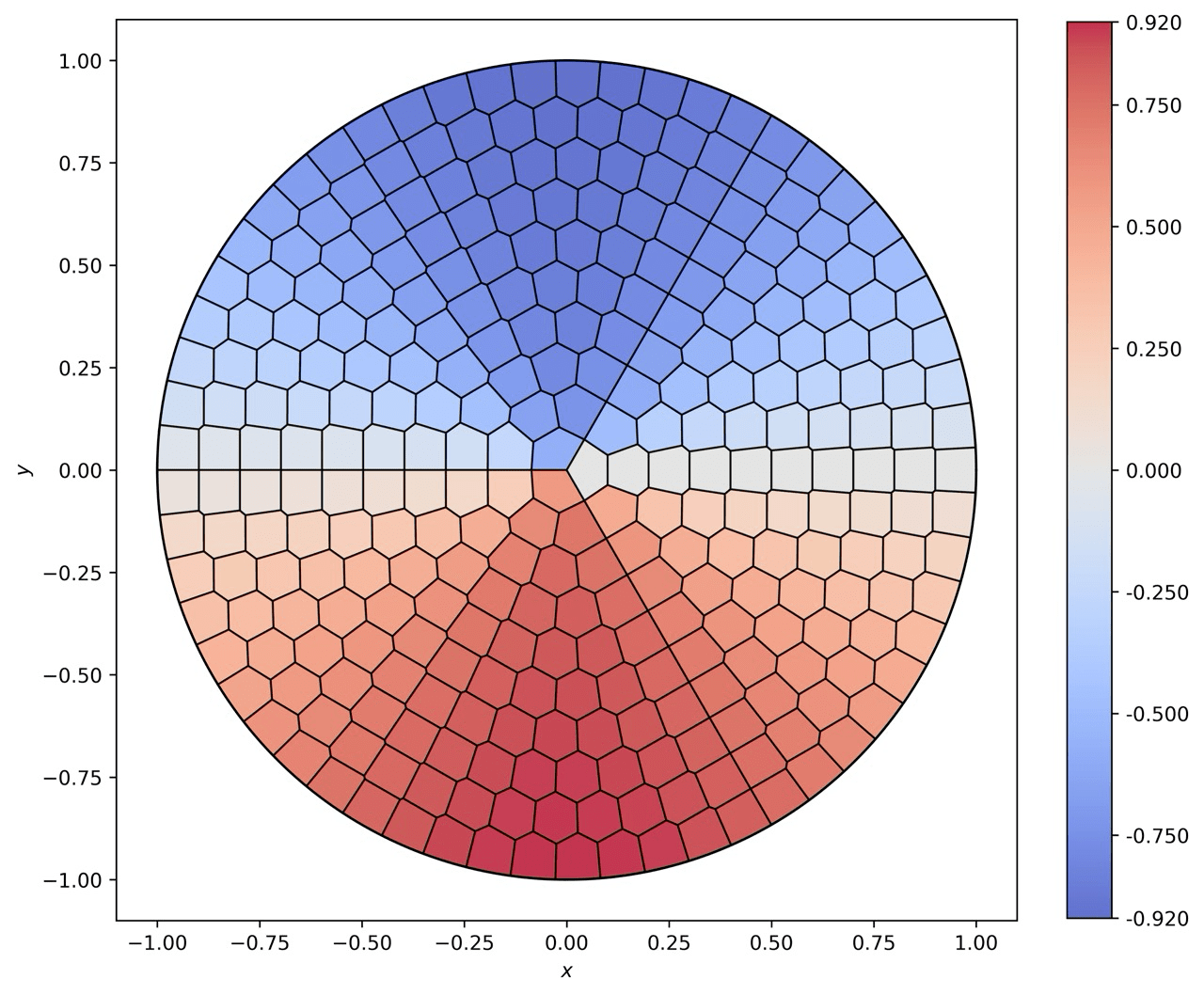}
        % \caption{$n = 400$}
    \end{minipage}
    \begin{minipage}{0.17\textwidth}
        \centering
        \includegraphics[width=\textwidth]{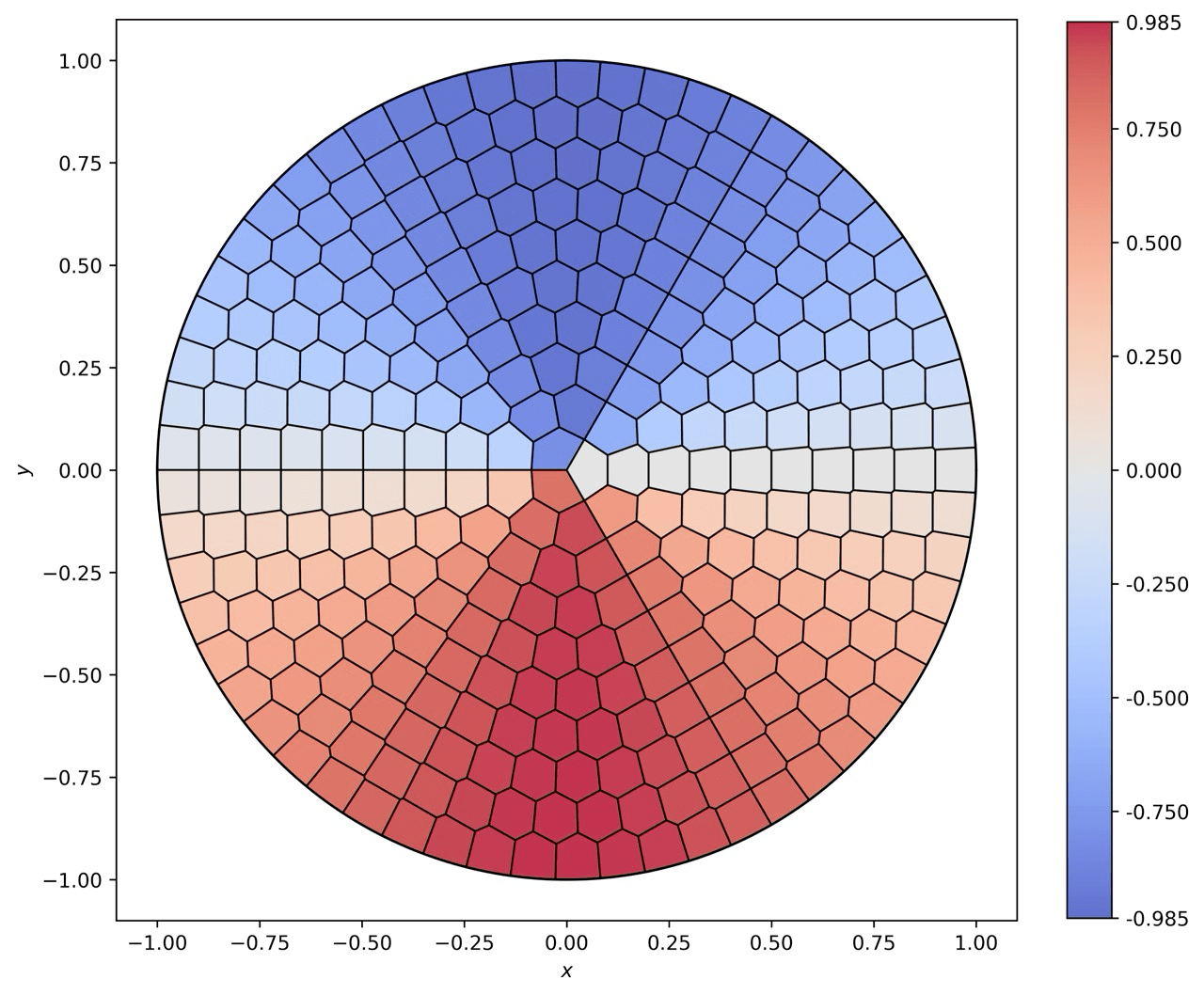}
        % \caption{$n = 400$}
    \end{minipage}\\
% \end{figure}

% \begin{figure}[h!]
    \centering
    \begin{minipage}{0.17\textwidth}
        \centering
        \includegraphics[width=\textwidth]{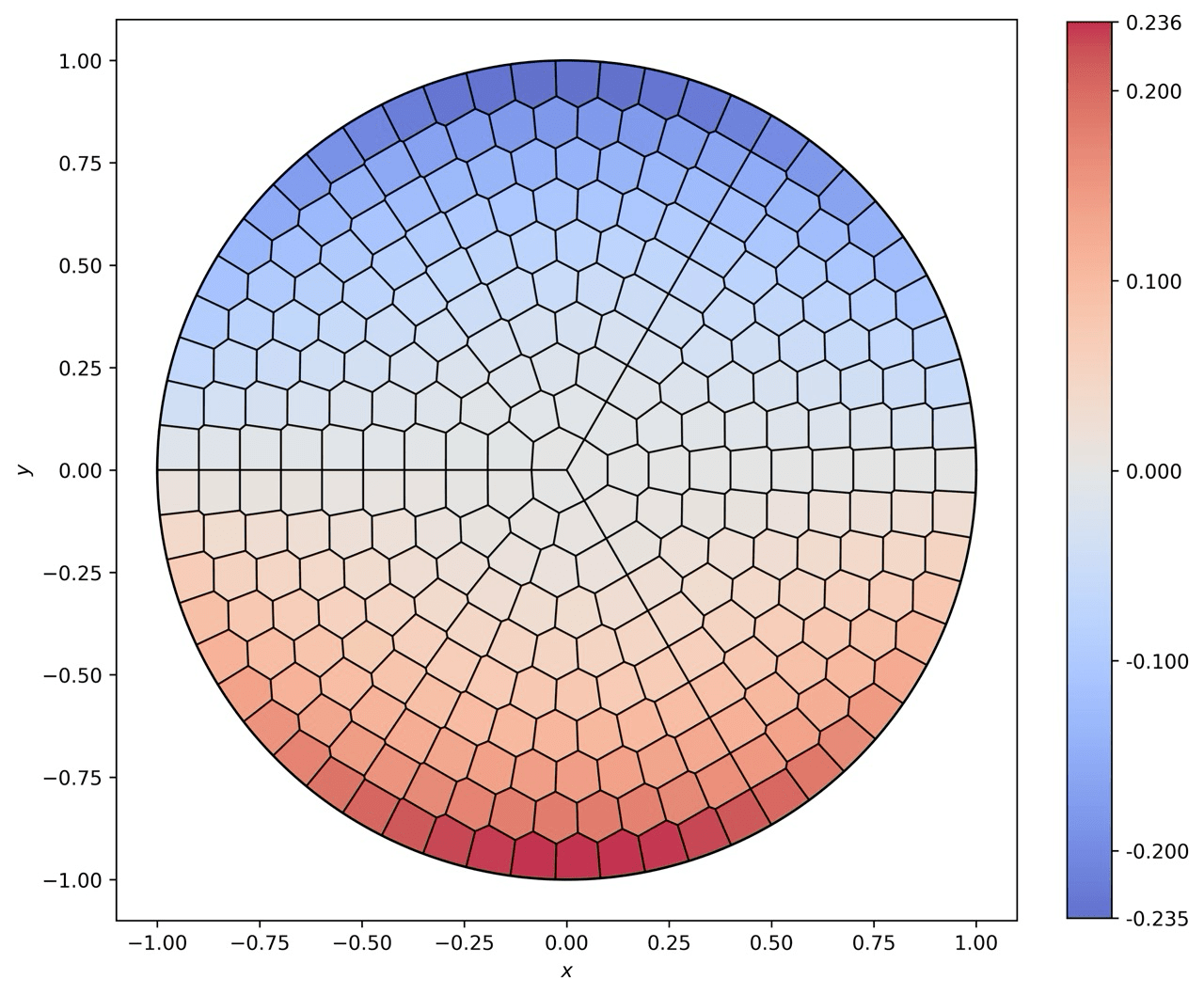}
        % \caption{$n = 400$}
    \end{minipage}
    \begin{minipage}{0.17\textwidth}
        \centering
        \includegraphics[width=\textwidth]{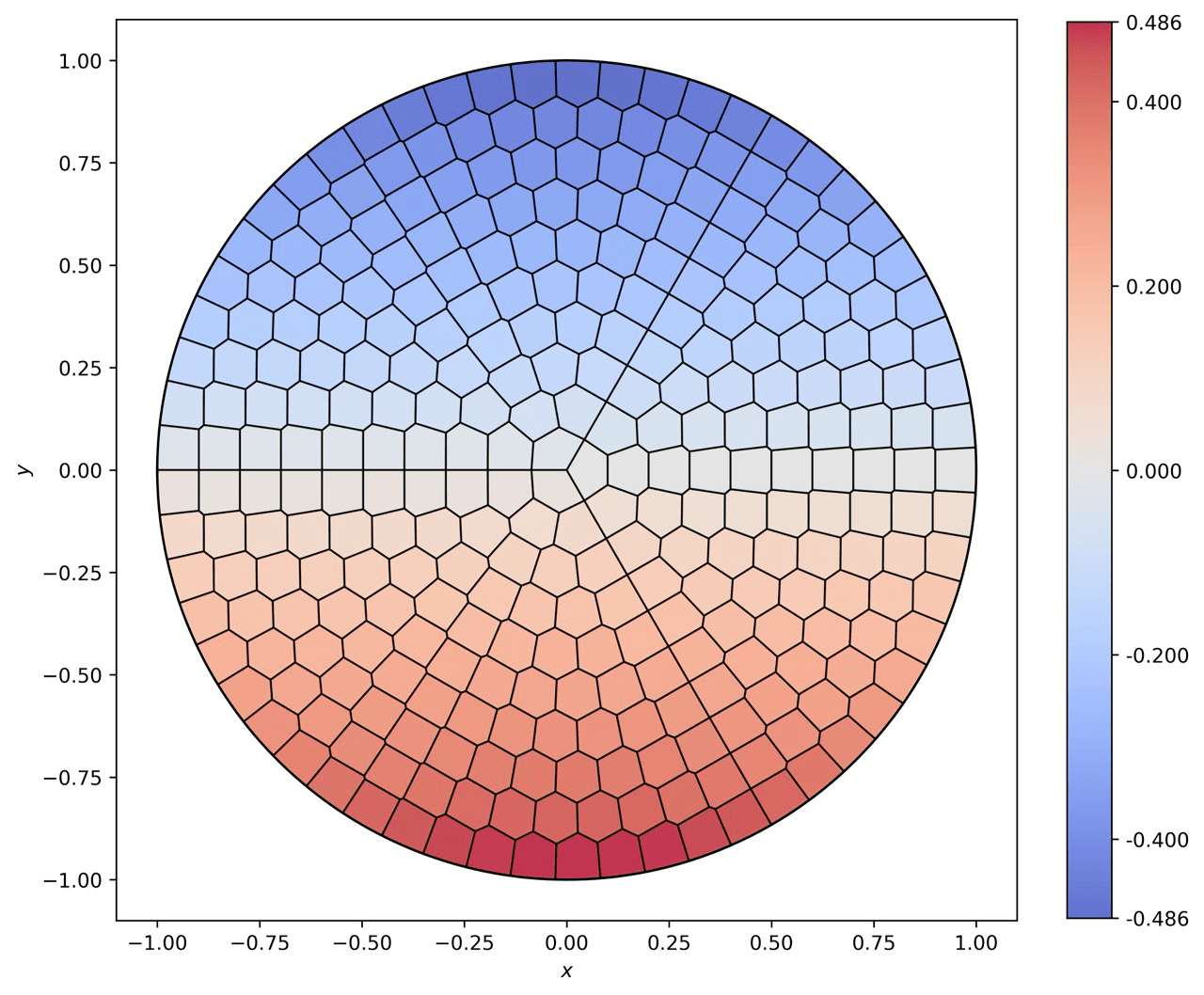}
        % \caption{$n = 400$}
    \end{minipage}
    \begin{minipage}{0.17\textwidth}
        \centering
        \includegraphics[width=\textwidth]{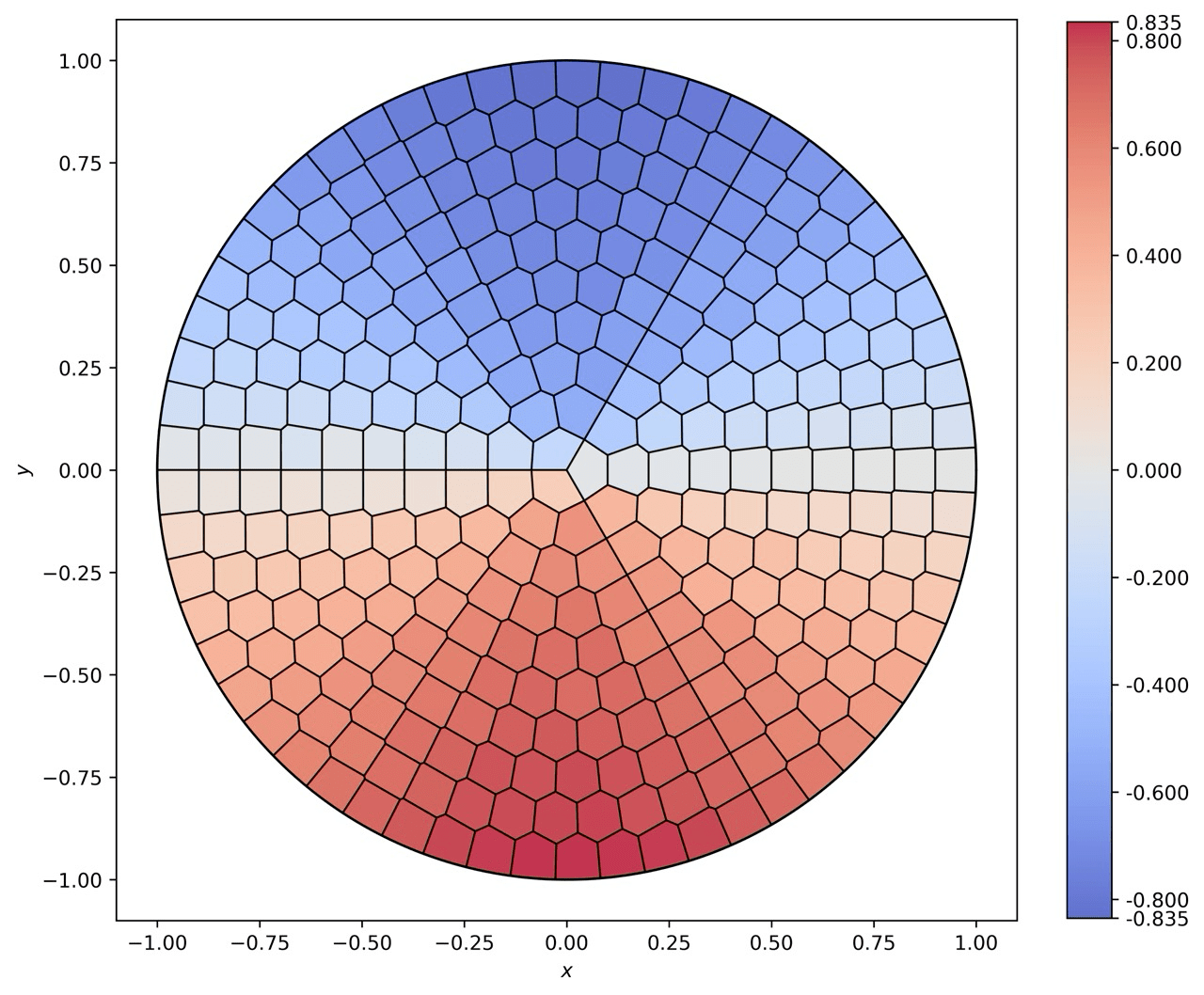}
        % \caption{$n = 400$}
    \end{minipage}
    \begin{minipage}{0.17\textwidth}
        \centering
        \includegraphics[width=\textwidth]{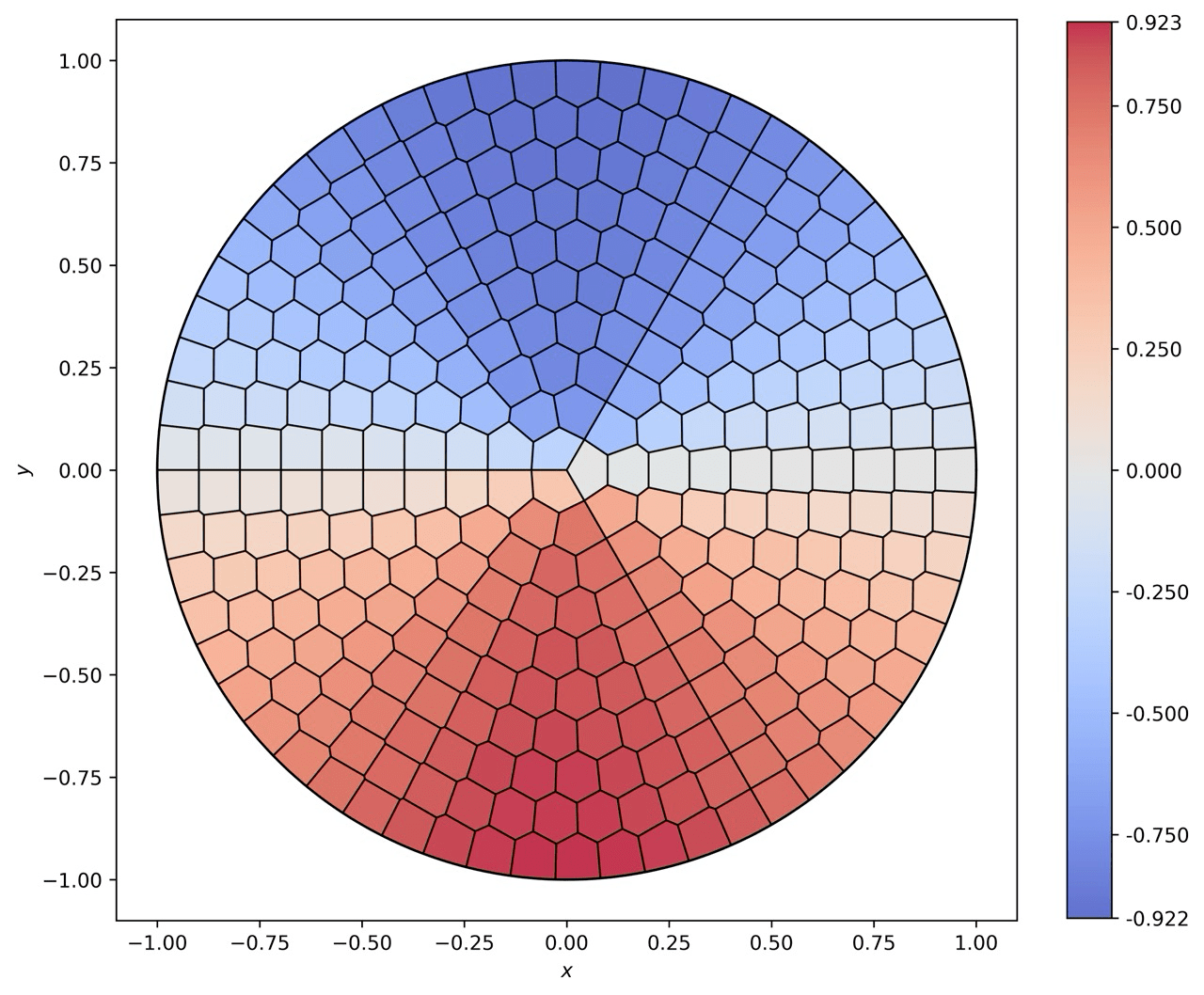}
        % \caption{$n = 400$}
    \end{minipage}
    \begin{minipage}{0.17\textwidth}
        \centering
        \includegraphics[width=\textwidth]{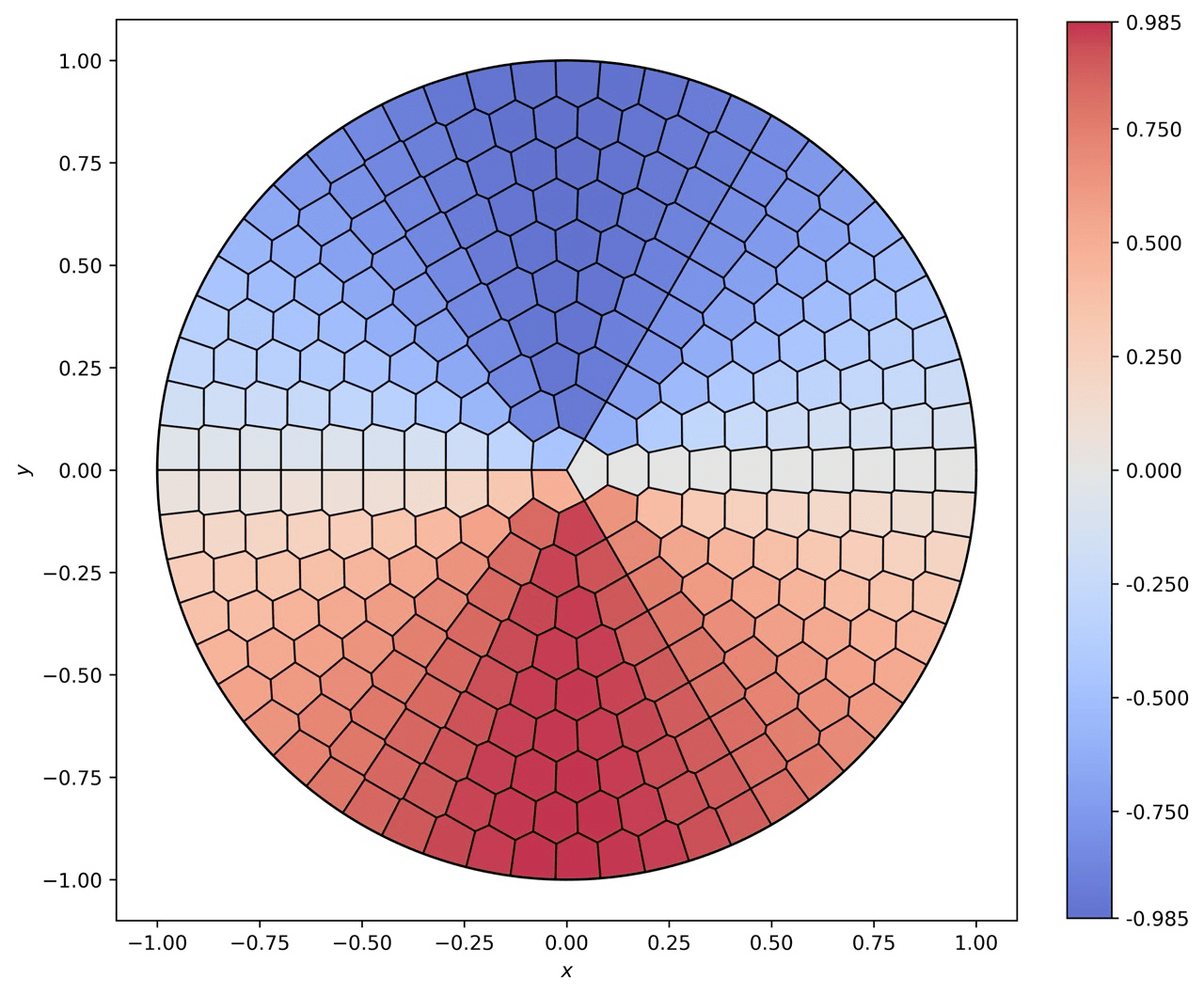}
        % \caption{$n = 400$}
    \end{minipage}
           \caption{Exact partial derivative $\partial_y u_{\text{ex}}$ of the solution (top) and its approximation $\widehat{\partial_y u}$ obtained with the proposed method (bottom), for $p-$Laplace parameter $p \in \{1.5,2,5,10,50 \}$ (from left to right).}
\end{figure}

\begin{figure}[H]
    \centering
    \begin{minipage}{0.7\textwidth}
        \centering
        \includegraphics[width=\textwidth]{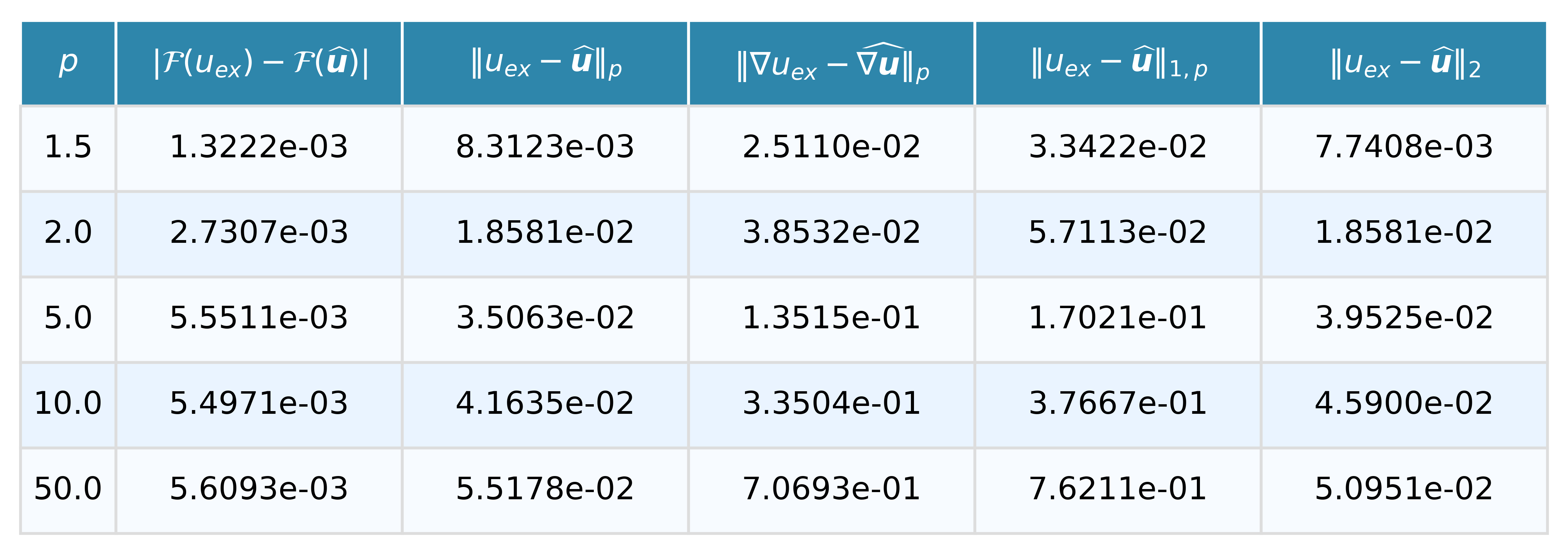}
        \includegraphics[width=\textwidth]{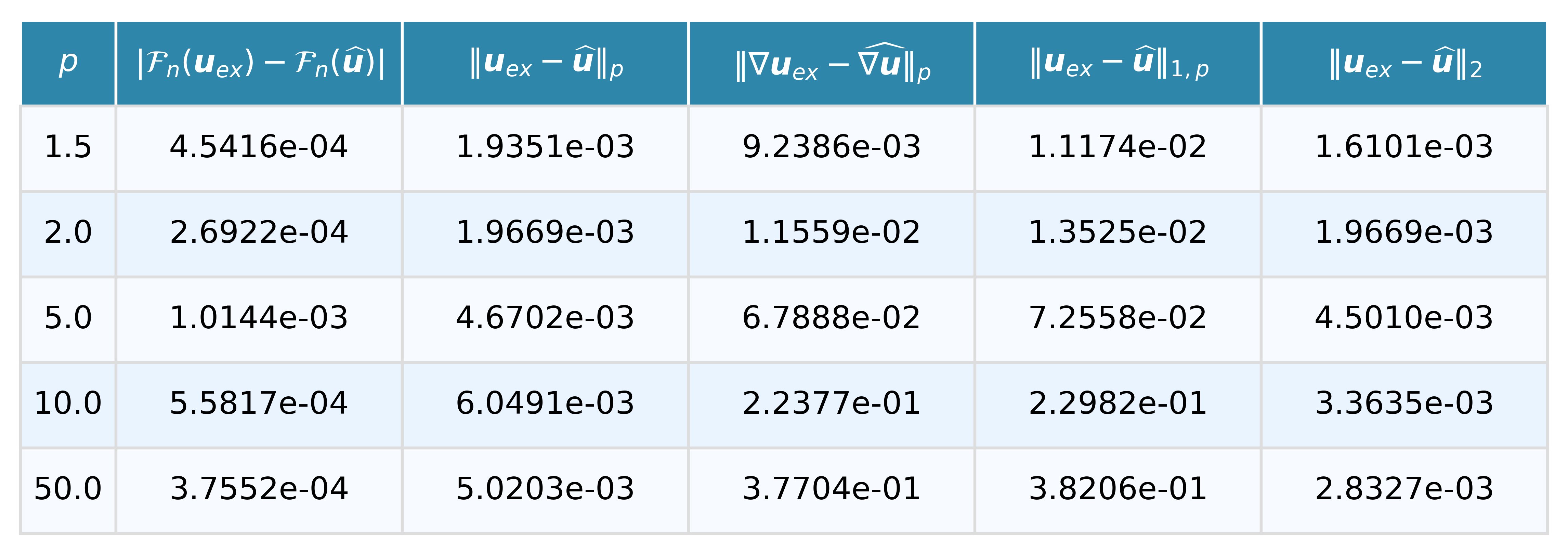}
    \end{minipage}
    \captionof{table}{Errors between exact and approximate solution: functional error and absolute errors in $L^p$, $W^{1,p}$ and $L^2$ norms.}
\end{figure}
\vspace{-20pt}
\begin{figure}[H]
    \centering
    \begin{minipage}{0.7\textwidth}
        \centering
        \includegraphics[width=\textwidth]{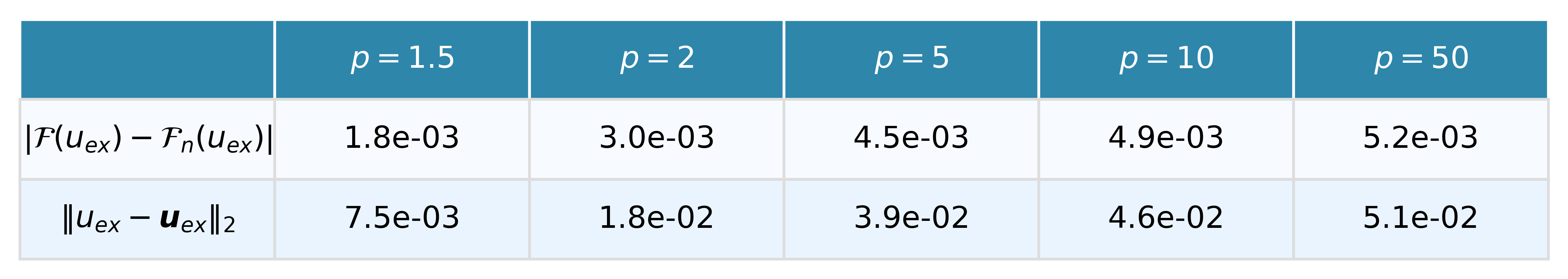}
    \end{minipage}
    \captionof{table}{Note: The error between the exact and discrete functional of the exact solution and similarly in $L^2-$norm.}
\end{figure}

%% file: p-laplace-square.tex
\begin{figure}[H]
    \centering
    \begin{minipage}{0.9\textwidth}
        \centering
        \includegraphics[width=\textwidth]{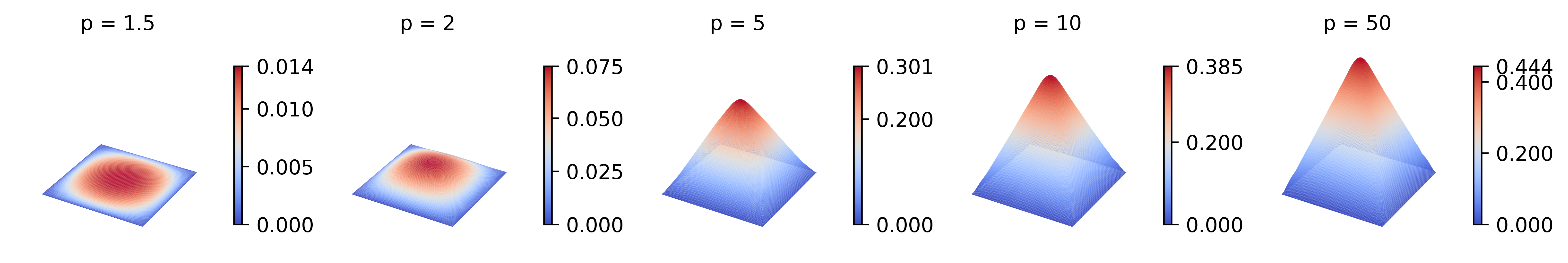}
        \caption{Approximate solution $\widehat{u}$ obtained with the proposed method.}
    \end{minipage}
    % \hspace{20pt}
\end{figure}

\begin{figure}[H]
    \centering
    \begin{minipage}{0.17\textwidth}
        \centering
        \includegraphics[width=\textwidth]{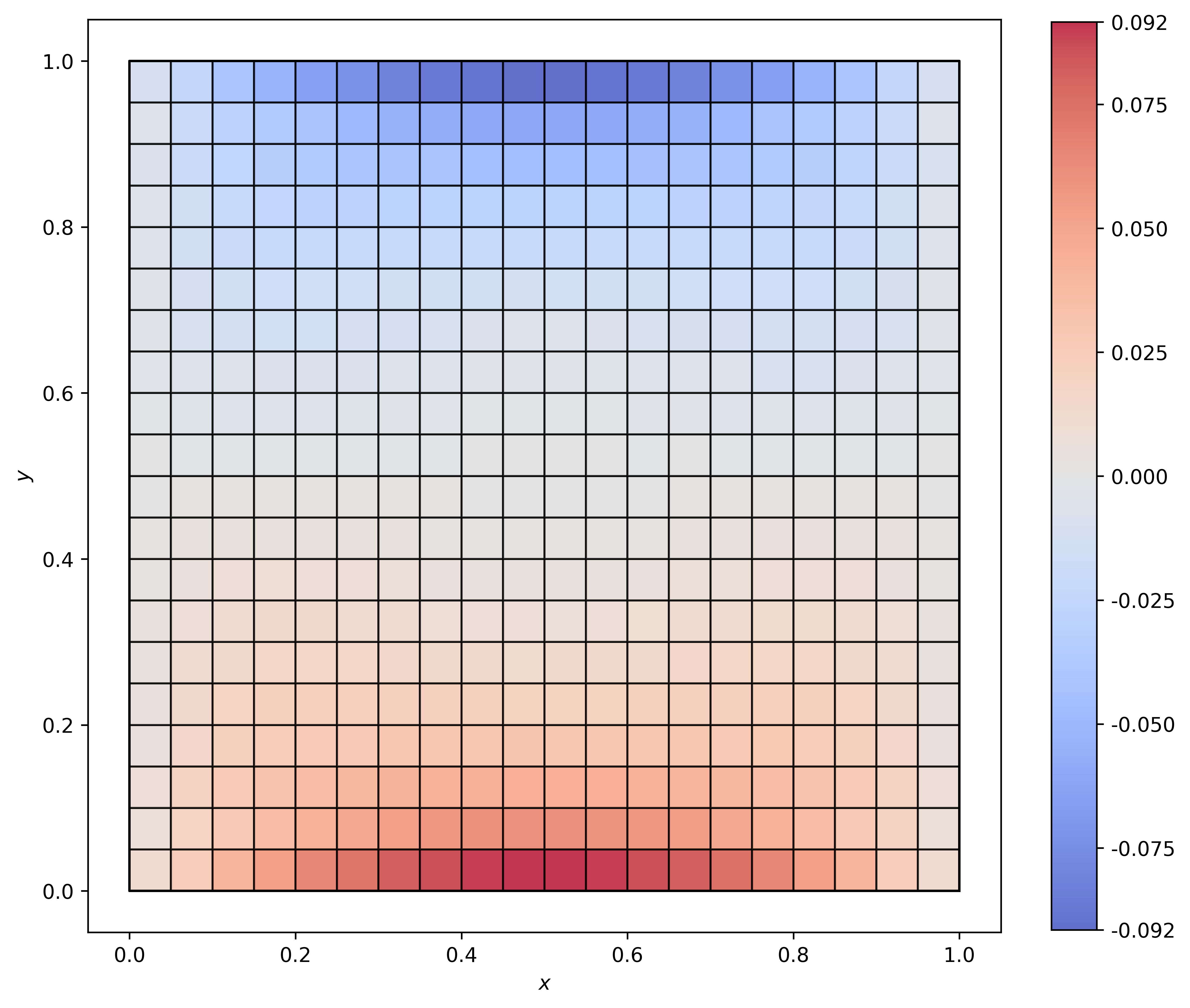}
        % \caption{$n = 400$}
    \end{minipage}
    \begin{minipage}{0.17\textwidth}
        \centering
        \includegraphics[width=\textwidth]{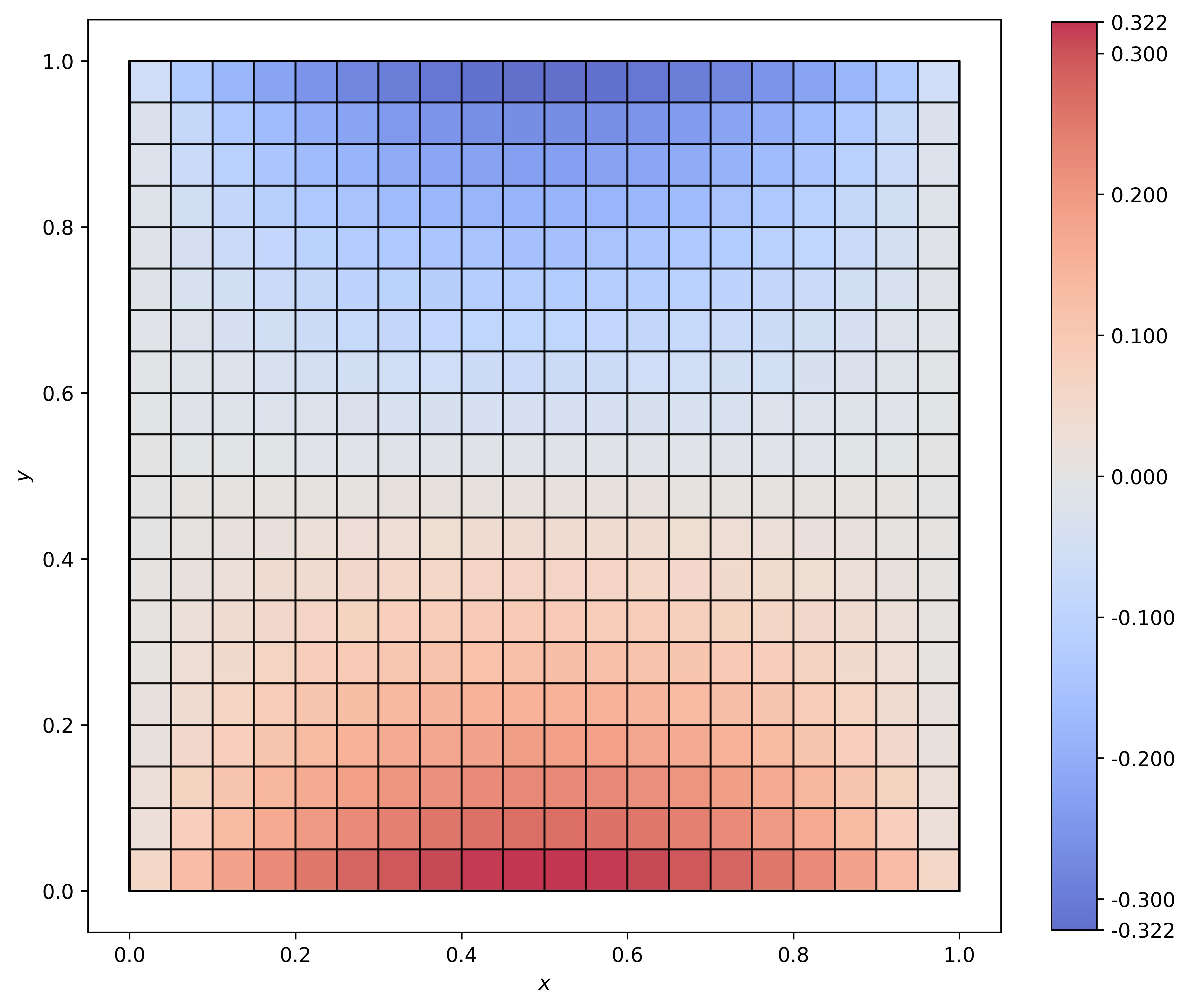}
        % \caption{$n = 400$}
    \end{minipage}
    \begin{minipage}{0.17\textwidth}
        \centering
        \includegraphics[width=\textwidth]{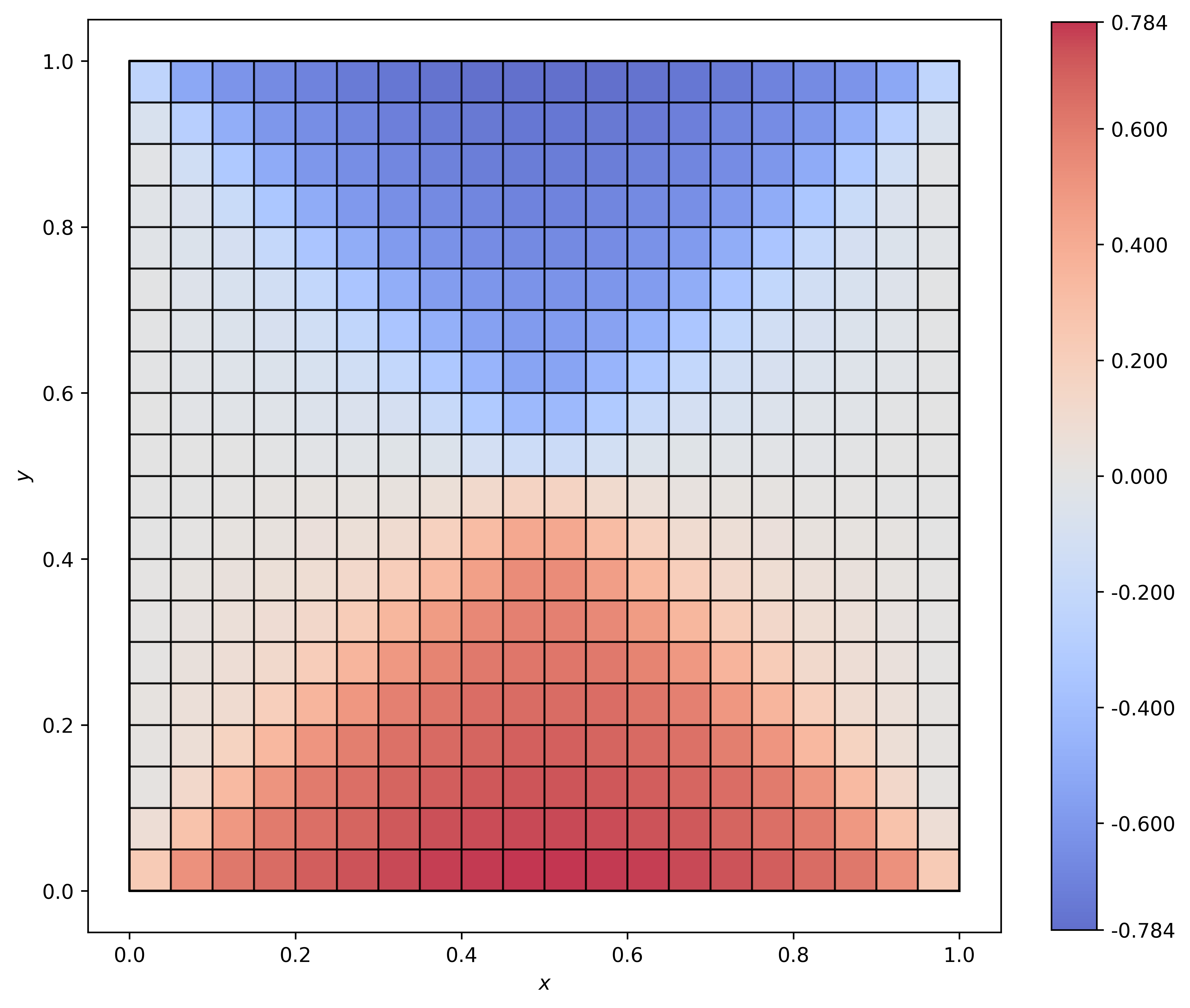}
        % \caption{$n = 400$}
    \end{minipage}
    \begin{minipage}{0.17\textwidth}
        \centering
        \includegraphics[width=\textwidth]{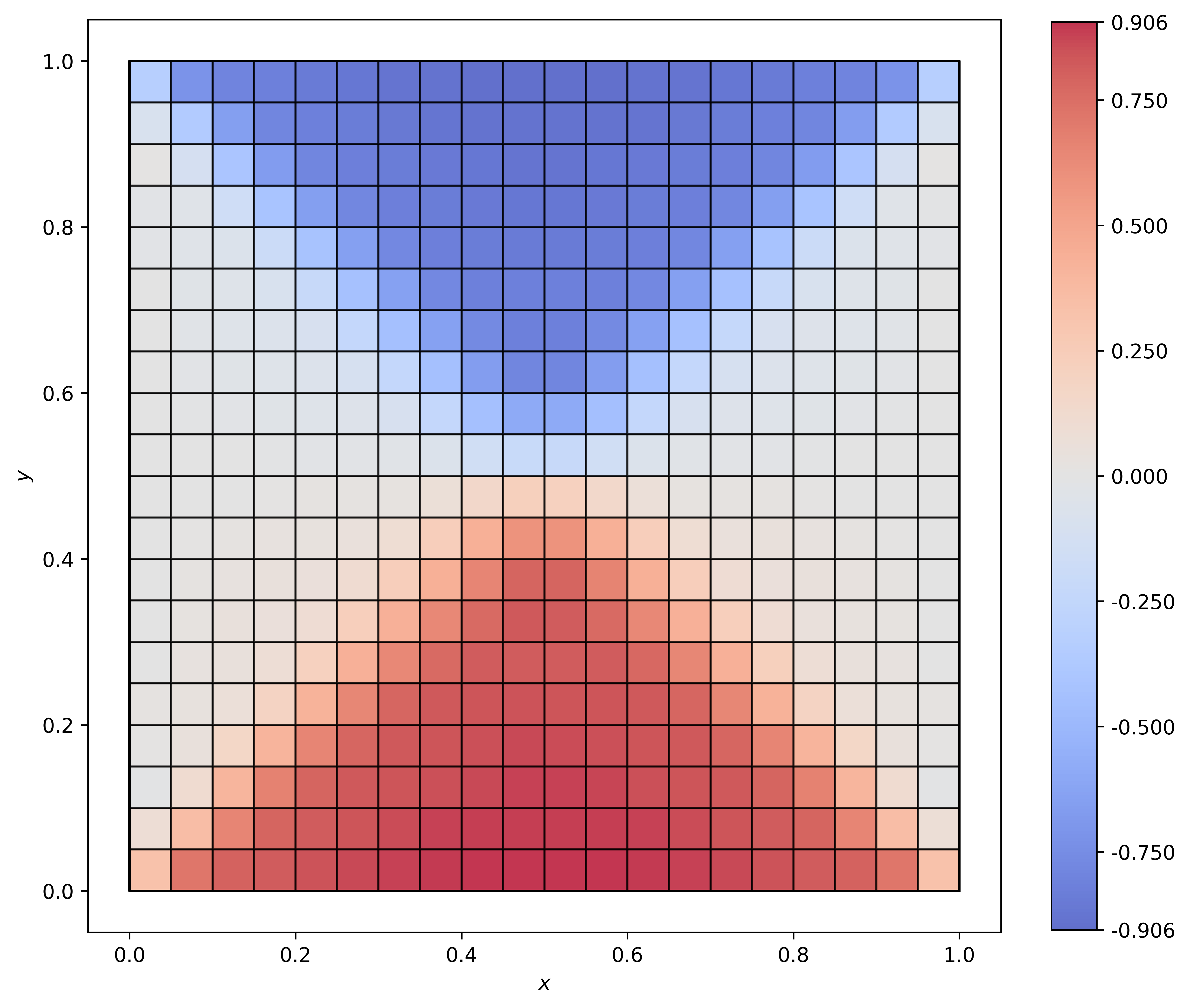}
        % \caption{$n = 400$}
    \end{minipage}
    \begin{minipage}{0.17\textwidth}
        \centering
        \includegraphics[width=\textwidth]{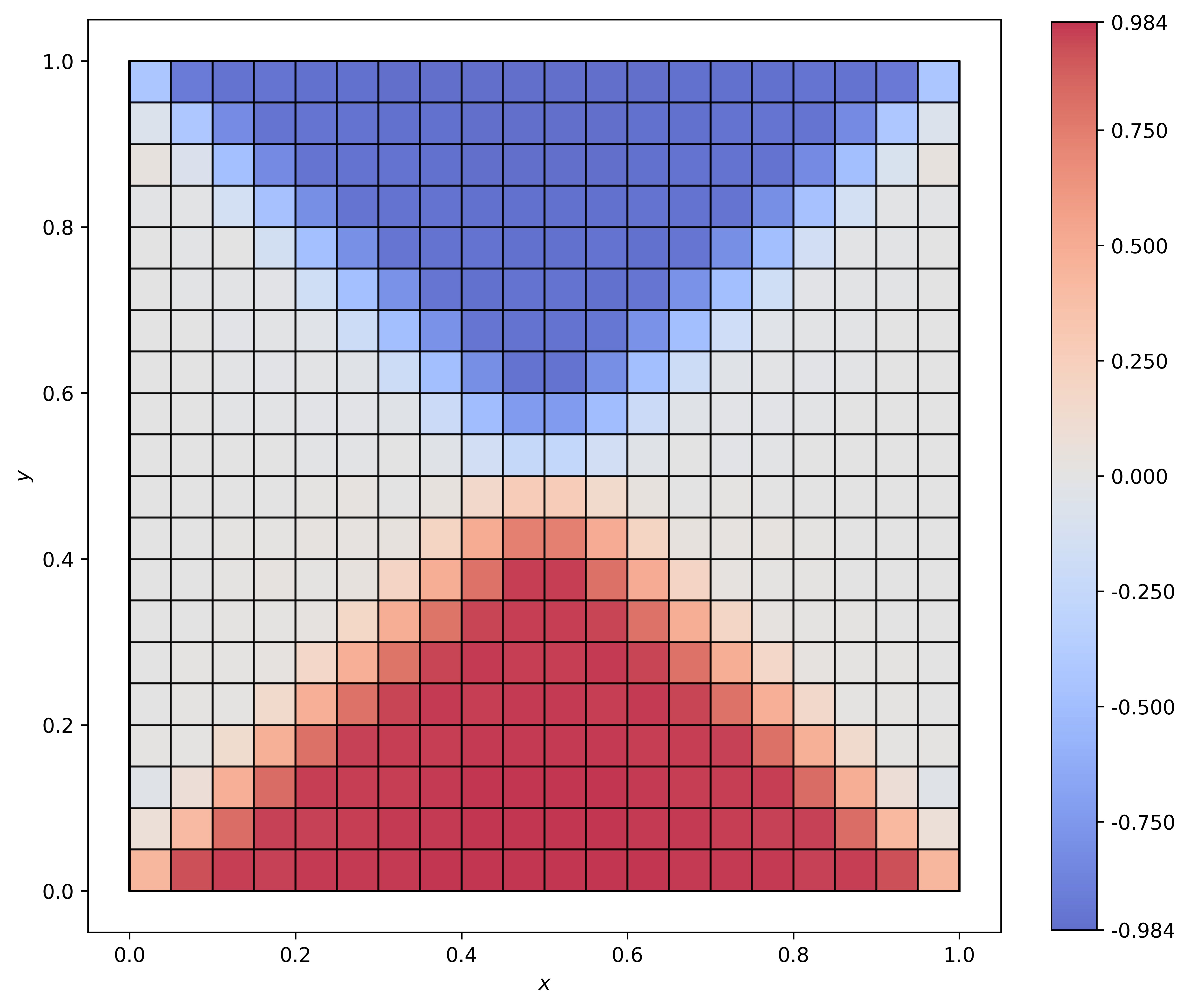}
        % \caption{$n = 400$}
    \end{minipage}
        \caption{Approximate of the partial derivative of the solution, $\widehat{\partial_y {u}}$, obtained with the proposed method, for $p-$Laplace parameter $p \in \{1.5,2,5,10,50 \}$ (from left to right).}
\end{figure}

We note that, for example, for $p=50$, the results are consistent (qualitatively and quantitatively) with the ones obtained in \cite[Example 4]{Aragon2023} on the square $(-1,1)\times(-1,1)$, taking into account the solutions relation $u_{{(0,1)\times(0,1)}}(x,y) = 2^{-\frac{p}{p-1}} u_{(-1,1)\times(-1,1)}(2x-1,2y-1)$, $x,y \in (0,1)$.

%% file: ms-disk-2.tex
\begin{figure}[H]
    \centering
    \begin{minipage}{0.17\textwidth}
        \centering
        \includegraphics[width=\textwidth]{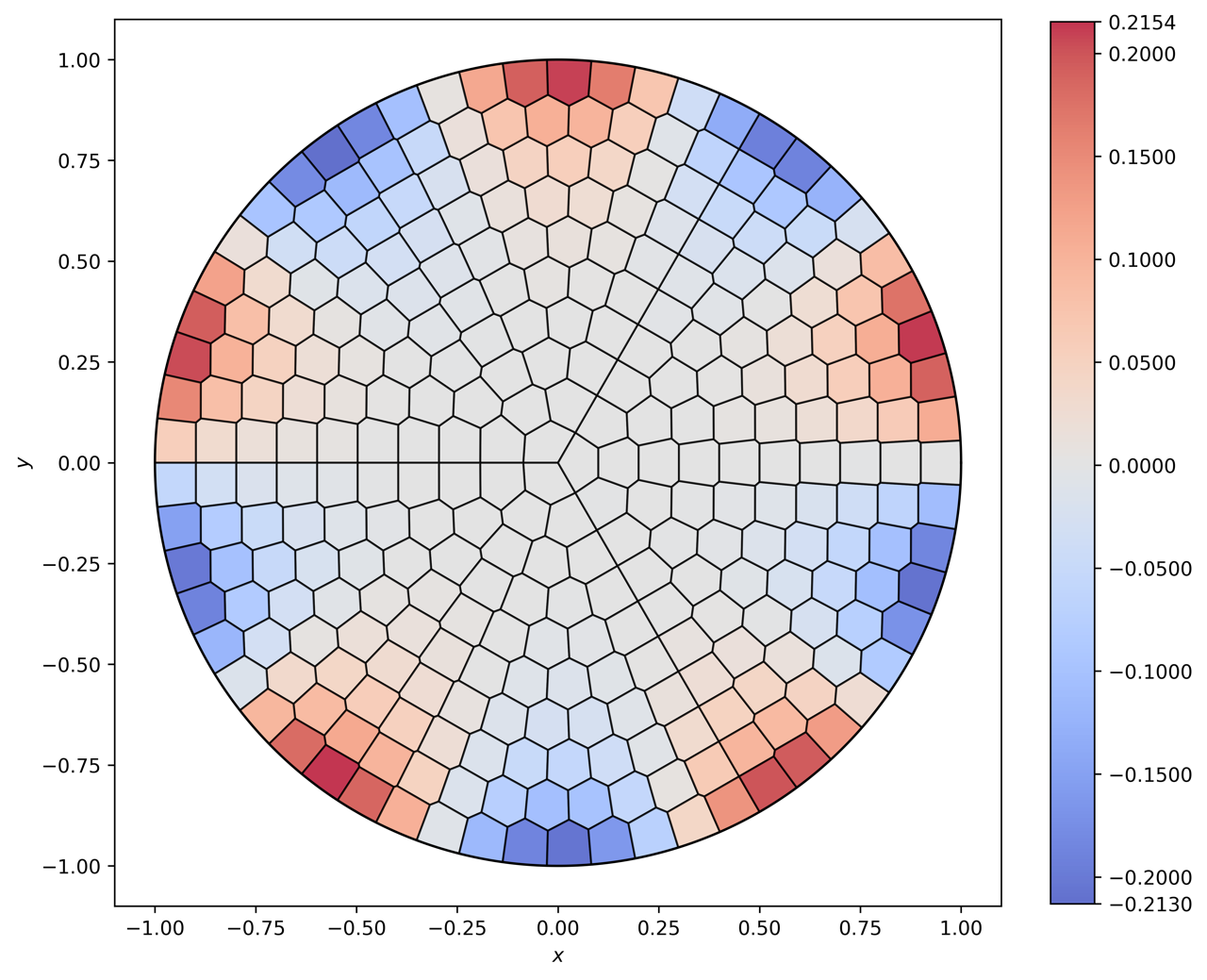}
        % \caption{$n = 400$}
    \end{minipage}
    % \hspace{20pt}
    \begin{minipage}{0.19\textwidth}
        \centering
        \includegraphics[width=\textwidth]{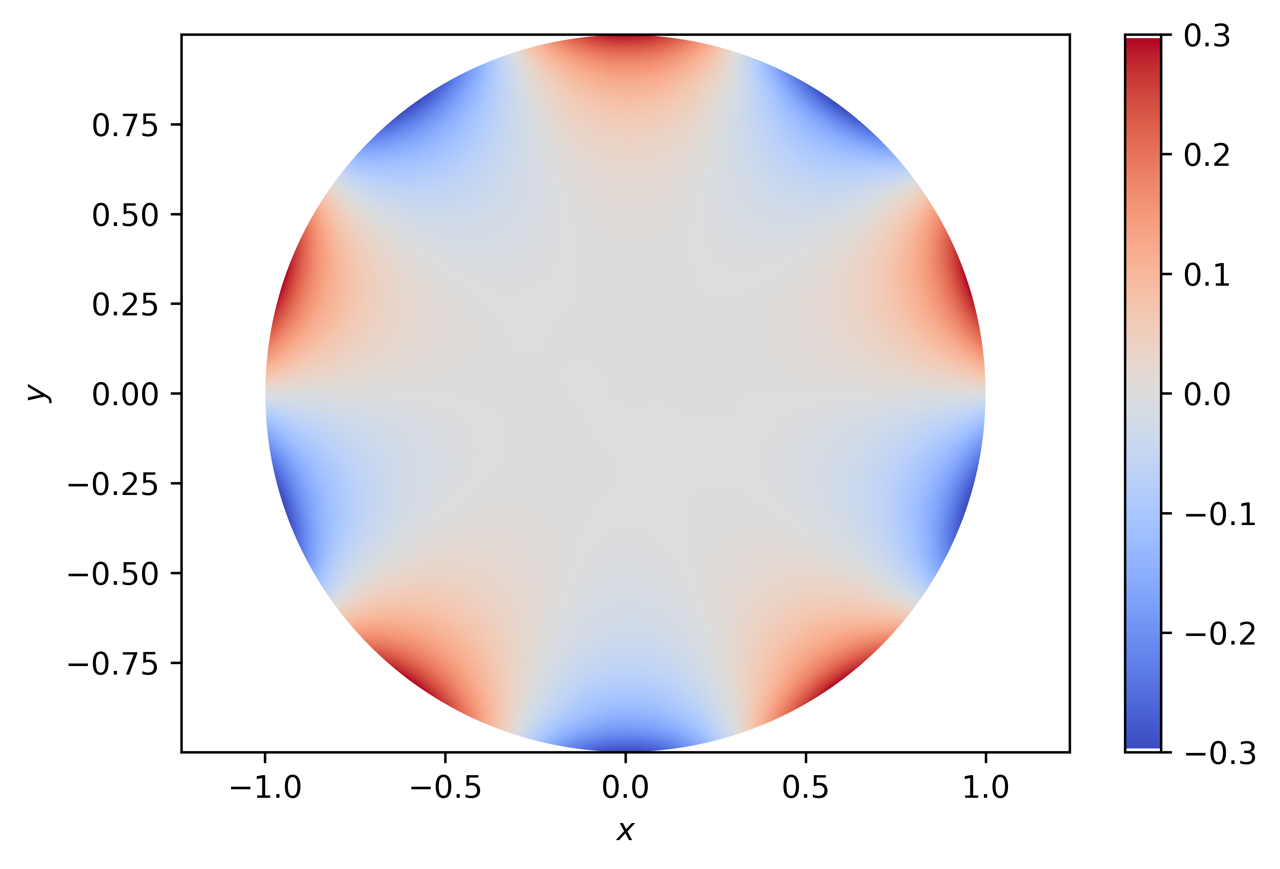}
        % \caption{$n = 400$}
    \end{minipage}
    \begin{minipage}{0.2\textwidth}
        \centering
        \includegraphics[width=\textwidth]{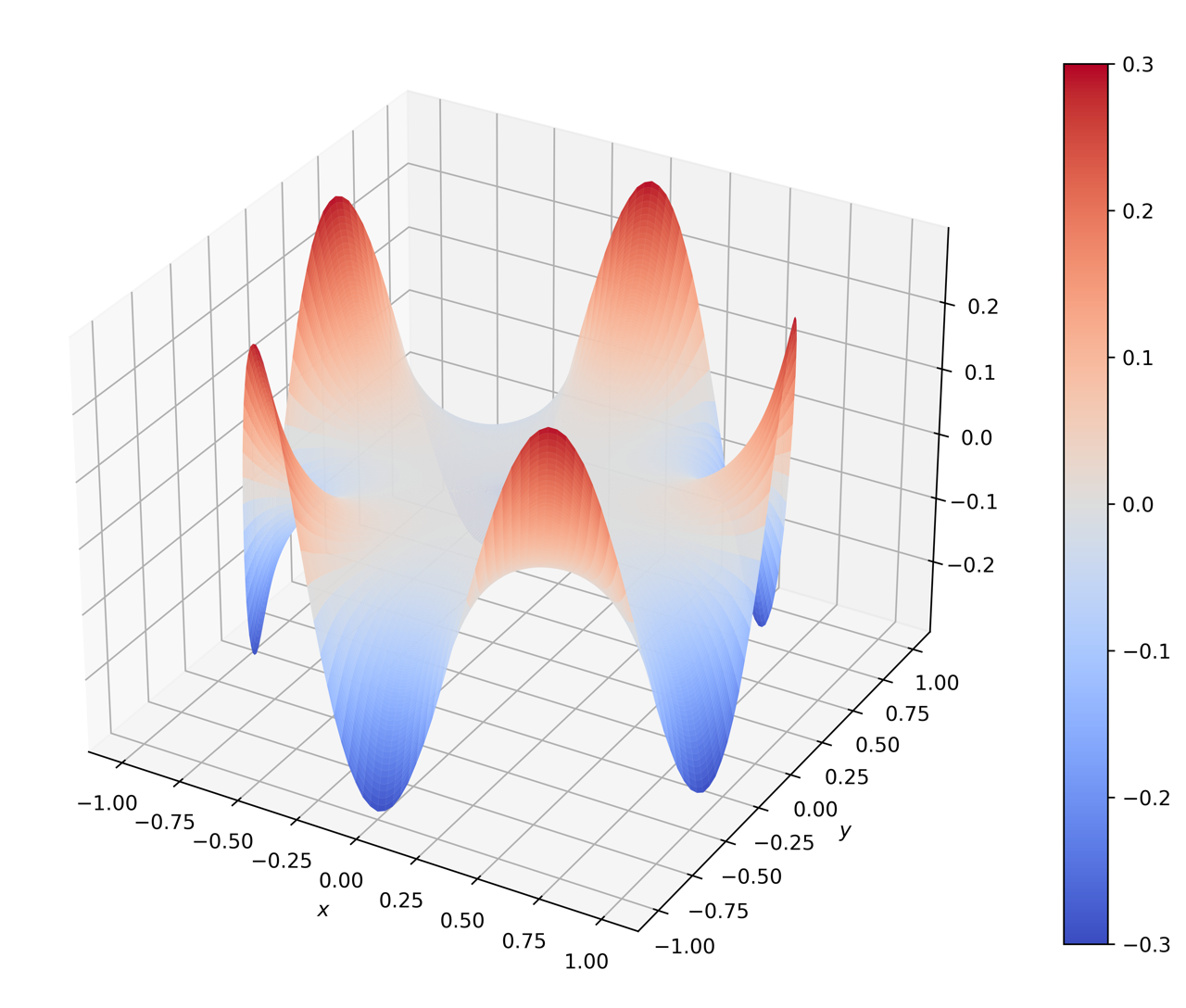}
        % \caption{$n = 400$}
    \end{minipage}
        \begin{minipage}{0.17\textwidth}
        \centering
        \includegraphics[width=\textwidth]{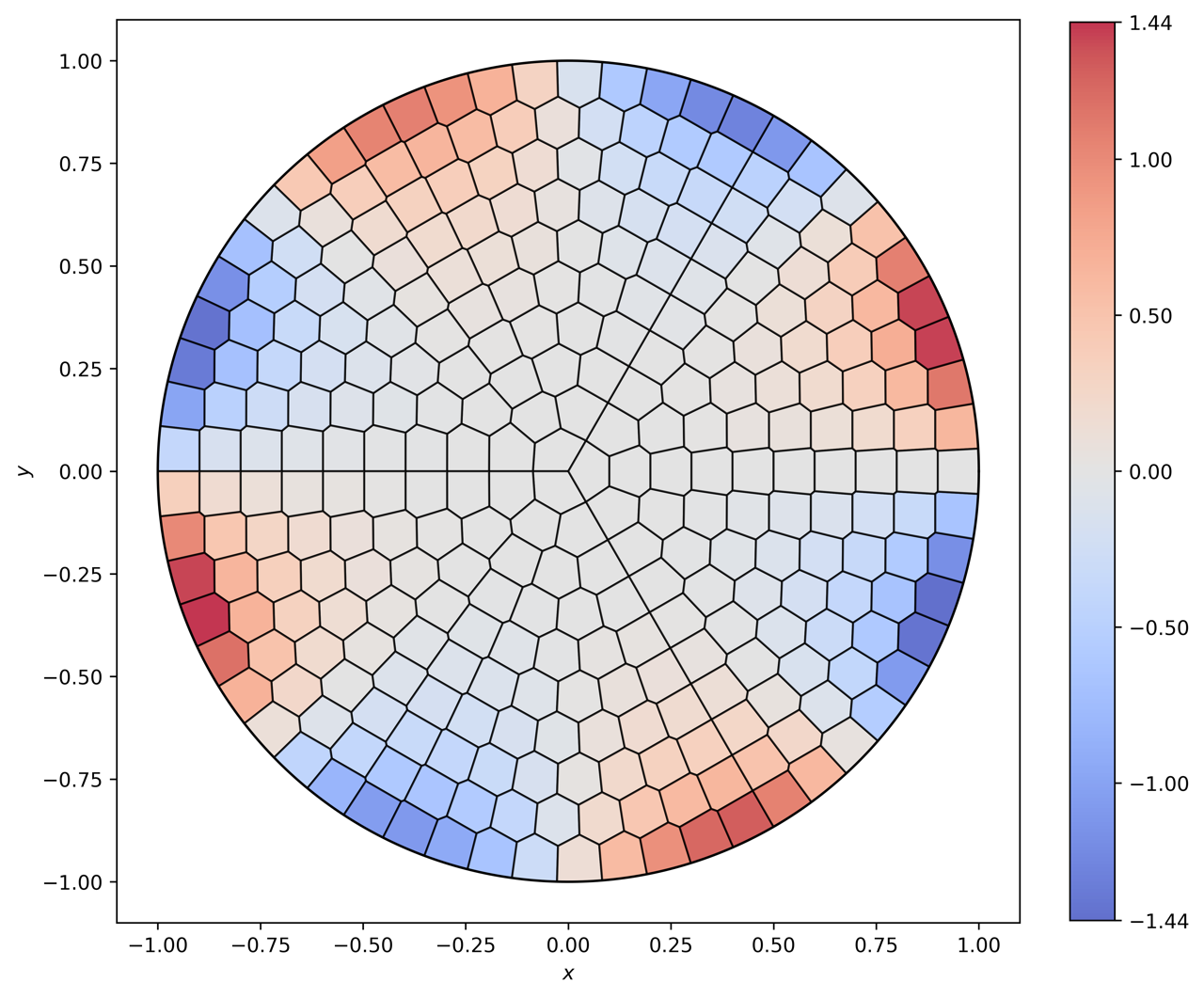}
        % \caption{$n = 400$}
    \end{minipage}
        \begin{minipage}{0.17\textwidth}
        \centering
        \includegraphics[width=\textwidth]{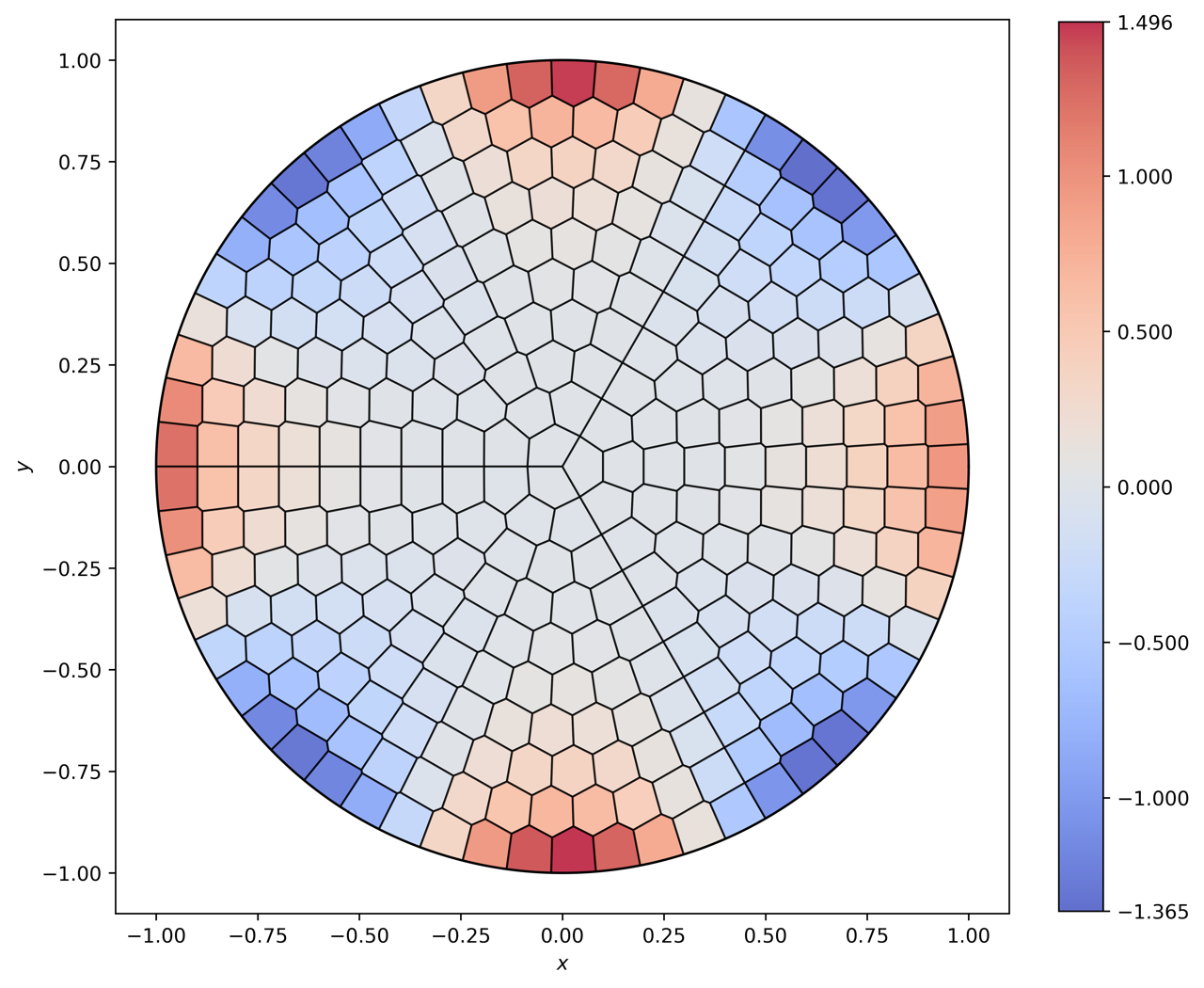}
        % \caption{$n = 400$}
    \end{minipage}
        \caption{Approximate solution $\widehat{u}$ (3 plots) and approximate partial derivatives $\widehat{\partial_x u}$ and $\widehat{\partial_y u}$, obtained with the proposed method. We note that $\widehat{u}$ is qualitatively consistent with the numerical solution in \cite[Section 5.3]{Sakakibara2024}}.
\end{figure}

%% file: Appendix.tex
\appendix
\section{Appendix}

% \subsection{Supplementary numerical details for \Cref{sec:numerical examples}}\label{appendix numerical}
\subsection{Supplementary numerical details for Section \ref{sec:numerical examples}}\label{appendix numerical}

\noindent \textbf{Estimate matrices $\mathbf{A}_{M,N}, \,\, \mathbf{B}_{M,N}, \,\, \mathbf{H}_{M,N}, \,\, \mathbf{K}_{M,N}$, when $D=B(0,1)$, $D=(0,1)\times(0,1)$}. The estimate matrices $\mathbf{A}_{M,N} \subset \mathbb{R}^{n,m}, \,\, \mathbf{B}_{M,N} \subset \mathbb{R}^{d,n,m}, \,\, \mathbf{H}_{M,N} \subset \mathbb{R}^{n,p}, \,\, \mathbf{K}_{M,N} \subset \mathbb{R}^{d, n,p},$ associated to points in \eqref{eq:unit disk points} in case of $D=B(0,1)$ and \eqref{eq:unit square points} in case of $D=(0,1) \times (0,1)$, respectively, are available at \url{https://drive.google.com/drive/folders/1QOxEojmX6tDFwwdR4Y61af0dk3xVH6Ns?}. These can be downloaded and used also for other examples of interest where $D=B(0,1)$ or $D=(0,1)\times(0,1)$. \\

\noindent \textbf{TSVD for \eqref{eq:linear_q}}. In Example \ref{example elliptic div} and in Example \ref{example elliptic div min} (Table \ref{table solution operator example elliptic div min}) we solved the corresponding system of linear algebraic equations \eqref{eq:linear_q} using the truncated singular value decomposition method. Are discarded the singular values $\sigma_i$ of $\Lambda$ in \eqref{eq:linear_q} for which $\sigma_i \leq rcond \, \sigma_{max}(\Lambda)$, with relative singular value threshold $rcond$ listed in Table \ref{table numerical parameters}. We note that for Example \ref{example elliptic div}, $\sigma_{max}(\Lambda) \approx 10^{-2}$, $\sigma_{min}(\Lambda) \approx 10^{-7}$, for Example \ref{example elliptic div min}, $\sigma_{max}(\Lambda) \approx 10^{-3}$, $\sigma_{min}(\Lambda) \approx 10^{-7}$. \\

\noindent \textbf{Optimization of $\mathcal{J}_{n,m,p,N,M}$}. 
As mentioned at the beginning of \Cref{sec:numerical examples}, for the (local) minimization of $\mathcal{J}_{n,m,p,N,M}$ \eqref{eq:min_el_discrete} we used Adam optimization method \cite[Algorithm~1 (Section 2)]{Adam17}, and when searching saddle points of $\mathcal{J}_{n,m,p,N,M}$ we relied on the p-HiSD (preconditioned high saddle dynamics) method proposed in \cite[(3.3)]{huang2026computingsaddlepointsstiff}. For completeness, we recall below the Adam and the discrete preconditioned high saddle algorithms we used, and we list the numerical parameters used in Examples \ref{example elliptic div min} - \ref{example semilinear}. 

\noindent \textbf{ADAM}. Initialize: $\bar{f}_0$, $\alpha>0$,
$\beta_1,\beta_2 \in [0,1)$, $\epsilon>0$,
$m_0$, $v_0$;

\hspace{25pt} Compute: 
\begin{equation*}
    \begin{aligned}
    m_j & = \beta_1 m_{j-1} + (1-\beta_1) \nabla \mathcal{J}(\bar{f}_{j-1}) \\
    v_j & = \beta_2 v_{j-1} + (1-\beta_2) \left[ \nabla \mathcal{J}(\bar{f}_{j-1}) \right]^{\odot 2} \\
    \alpha_j & = \alpha \frac{\sqrt{1-\beta_2^j}}{1-\beta_1^j}\\
    \bar{f}_j & = \bar{f}_{j-1} - \alpha_j \frac{m_j}{\sqrt{v_j}+\epsilon}, \quad j \geq 1.
    \end{aligned}
\end{equation*}
By $[\cdot]^{\odot2}$ we understand element-wise square. For examples \ref{example elliptic div min} - \ref{example semilinear}, we use $\beta_1=0.9,\beta_2=0.999$, $\epsilon = 10^{-10}$, $m_0 = 0 \in \mathbb{R}^m$, $v_0 =0 \in \mathbb{R}^m$ and we list $\bar{f}_0 \in \mathbb{R}^m$ and $\alpha >0$ in Table \ref{table numerical parameters}. 

\newpage
\noindent \textbf{p-HiSD}. Initialize: $\bar{f}_0$, $\eta>0, \tau>0$, $k \in \mathbb{N}^*$, $V^{(0)}= \{ v_1^{(0)}, v_2^{(0)}, \ldots , v_k^{(0)}\}$, $M$;

\hspace{75pt} $M-$orthonormalize $V^{(0)}$;

\hspace{25pt} Compute: 
\begin{equation*}
    \begin{aligned}
    \bar{f}_j & = \bar{f}_{j-1} - \eta R^M_{V^{(j-1)}} M^{-1} \nabla \mathcal{J}(\bar{f}_{j-1}) \\
    v_i^{(j)} & =  v_i^{(j-1)} - \tau P^M_{i,V^{(j-1)}} M^{-1} H(\bar{f}_j) v_i^{(j-1)}, \quad i \in \{1,\ldots,k \},\\
    M&-\text{orthonormalize } V^{(j)} = \{ v_1^{(j)}, v_2^{(j)}, \ldots , v_k^{(j)}\}, \quad j \geq 1,
    \end{aligned}
\end{equation*}
where $R^M_{V^{(j)}} = I - 2 \displaystyle\sum_{i=1}^k v_i^{(j)} (v_i^{(j)})^T M $, $P^M_{i,V^{(j)}} = I - v_i^{(j)} (v_i^{(j)})^T M   - 2 \displaystyle\sum_{l<i} v_l^{(j)} (v_l^{(j)})^T M$ and $H(\bar{f})$ denotes the Hessian of $\mathcal{J}$ at $\bar{f}$. To avoid the computation of the Hessian matrix $H(\bar{f})$, except at initial step, we approximate $H(\bar{f}) v \approx \dfrac{\nabla \mathcal{J}(\bar{f}+h v) - \nabla \mathcal{J}(\bar{f}-h v)}{2h}$, with $h = 10^{-6} \max (1, \| \bar{f} \|_2) $. For Example \ref{example semilinear}, we use $M = \mathbf{A}_{M,N}$ (in view of our proposed method and the comments in \cite[Section 6]{huang2026computingsaddlepointsstiff}), at the initial step we consider $V^{(0)}$ given by the first $k$ eigenvectors of $H(\bar{f}_0)$ and we list $\bar{f}_0 \in \mathbb{R}^m$ and $\eta, \tau >0$ in Table \ref{table numerical parameters}. \\

We include in the following table the norm $\| \nabla J(\bar{f}_{opt}) \|_{2,V} $ with $\bar{f}_{opt}$ at final step, where we denote by $\| \nabla \mathcal{J}(\bar{f}) \|_{2,V} \coloneqq \big(\displaystyle\sum_{i=1}^m \lambda (V_i) \left[\nabla \mathcal{J}(\bar{f})\right]^2_i\big)^{1/2}$. Recall that $\lambda(V_i) = \int_{V_i} \,d \lambda$, with $V_i$ the Voronoi cells associated to points $y_i$ in \eqref{eq:unit disk points}, if $D=B(0,1)$, or in \eqref{eq:unit square points}, if $D=(0,1)\times(0,1)$. More precisely, in the case \eqref{eq:unit disk points} $\lambda(V_i) \approx 10^{-2}$, $1 \leq i \leq m$, and in the case \eqref{eq:unit square points} $\lambda(V_i) = 2.5 \times10^{-3}$, $1 \leq i \leq m$. \\

\begin{table}[H]
\centering
\begin{tabular}{lllll}
\hline
Problem & Method & $\bar{f}_0 \in \mathbb{R}^m$ & Parameter(s) & $\| \nabla J(\bar{f}_{opt}) \|_{2,V} $ \\ \hline
Example \ref{example elliptic div} & TSVD &  & $rcond = 10^{-3}$ &  \\
Example \ref{example elliptic div min} & ADAM & $\bar{f}_0=0.1$ & $\alpha = 0.01$ & $\approx 10^{-4}$ \\
 & TSVD & & $rcond = 10^{-2}$ & \\
Example \ref{example plaplace disk} $p \in \{1.5,2\}$ & ADAM & $\bar{f}_0=1$ & $\alpha = 0.001$ & $\approx 10^{-5}$ \\
\qquad \qquad  \quad $p \in \{5,10\}$ & ADAM & $\bar{f}_0=1$ & $\alpha = 0.01$ & $\approx 5 \times 10^{-6}$ \\
\qquad \qquad  \quad  $p \in \{50\}$ & ADAM & $\bar{f}_0=1$ & $\alpha = 0.1$ & $\approx 5 \times 10^{-6}$ \\
Example \ref{example plaplace square} $p \in \{1.5,2,5,10 \}$ & ADAM & $\bar{f}_0=1$ & $\alpha = 1$ & $\approx 10^{-6}$ \\
\qquad \qquad  \quad  $p =50 $ & ADAM & $\bar{f}_0=1$ & $\alpha = 0.1$ & $\approx 10^{-6}$ \\
Example \ref{example minimal surface} & ADAM & $\bar{f}_0=1$ & $\alpha = 0.01$ & $\approx 10^{-6}$ \\
Example \ref{example semilinear} $s=800$ & ADAM & $\bar{f}_0=1$ & $\alpha = 1$ & $\approx 10^{-6}$ \\
  & p-HiSD $k \in \{1,2,3 \}$ & $\bar{f}_0=1000$ & $\eta = 20$, $\tau = 10$ & $\approx 10^{-6}$ \\
Example \ref{example semilinear} $s=8000$ & ADAM & $\bar{f}_0=1$ & $\alpha = 10$ & $\approx 10^{-6}$ \\
  & p-HiSD $k \in \{1,2,\ldots,8 \}$ & $\bar{f}_0=1000$ & $\eta = 20$, $\tau = 10$ & $\approx 10^{-6}$ \\
  & p-HiSD $k =5 $ & $\bar{f}_0=1000$ & $\eta = 5$, $\tau = 10$ & $\approx 10^{-6}$ \\
  & p-HiSD $k \in \{6,7 \}$ & $\bar{f}_0=1000$ & $\eta = 80$, $\tau = 10$ & $\approx 10^{-6}$ \\
 & p-HiSD $k \in \{2,3,5 \}$ & $\bar{f}_0= \bar{f}_{close}^{\,k=4,\eta=20}$ & $\eta = 20$, $\tau = 10$ & $\approx 10^{-6}$ \\
 & p-HiSD $k \in \{2,3,4,6 \}$ & $\bar{f}_0=\bar{f}_{close}^{\,k=5,\eta=20}$ & $\eta = 20$, $\tau = 10$ & $\approx 10^{-6}$ \\
  & p-HiSD $k = 4$ & $\bar{f}_0=\bar{f}_{close}^{\,k=5,\eta=5}$ & $\eta = 5$, $\tau = 10$ & $\approx 10^{-6}$ \\
 & p-HiSD $k \in \{5,7 \}$ & $\bar{f}_0=\bar{f}_{close}^{\,k=6,\eta=20}$ & $\eta = 20$, $\tau = 10$ & $\approx 10^{-6}$ \\
\hline
\end{tabular}
\caption{TSVD/Optimization/Saddle point search input parameters and convergence results.} \label{table numerical parameters}
\end{table}

Let us comment on the initial data $\bar{f}_0$ in the last 4 lines of Table \ref{table numerical parameters}. More precisely, we detail the last row and same considerations apply to the other 3 rows. During the p-HiSD algorithm with index $k=6$ and $\eta=20$, started from $\bar{f}_0 = 1000 \in \mathbb{R}^m$, there is a transient decrease in $\| \nabla \mathcal{J}(\cdot) \|_{2,V}$ that suggests that the trajectory may have passed close to another saddle point. Motivated by this observation, we restarted the p-HiSD algorithm from this local minimum of $\| \nabla \mathcal{J}(\cdot) \|_{2,V}$, denoted by $\bar{f}_{close}^{\,k=6,\eta=20}$, for other indices different from $6$, resulting in convergence for indices $k \in \{5, 7 \}$.

% \subsection{Proofs for \Cref{s:1}}\label{appendix proofs}
\subsection{Proofs for Section \ref{s:1}}\label{appendix proofs}

\begin{proof}[\bf{Proof of \Cref{prop:U_0 and DU_0}}]\label{proof:prop:U_0 and DU_0}
We mention that the first equality in \eqref{eq:U0_rep_intro} follows by e.g. \cite{Ge92}, whilst the second equality is a consequence of the monotone convergence theorem, the strong Markov and the scaling property for the Brownian motion: Assuming without loss that $0\leq f\in L^p(D), p>d/2$, and setting
\begin{equation}\label{eq:tau_i}
    \tau^x_0=0,\quad \tau^x_{i+1}:=\inf\left\{t>\tau_i^x:\|B_t^x - B^x_{\tau_i^x}\|=r\left(B^x_{\tau_i^x}\right)\right\}, \quad i\geq 0,
\end{equation}
we have
\begin{align}
    \mathbb{E} \left\{ \int_{0}^{\tau^x_{\partial D}} f(B^x_t) \ d t \right\}
    &=\mathbb{E} \left\{ \sum_{i\geq 0}\int_{\tau_i^x}^{\tau^x_{i+1}} f(B^x_t) \ d t \right\}
    = \sum_{i\geq 0}\mathbb{E} \left\{\int_{\tau_i^x}^{\tau^x_{i+1}} f(B^x_t) \ d t \right\}\\
    &=\sum_{i\geq 0}\mathbb{E} \left\{v(B(\tau_i^x))\right\}, \quad \mbox{where} \quad v(y)=r^2(y)\mathbb{E}\left\{\int_0^{\tau^y_{\partial B(0,1)}}f\left((y+r(y)B^0(t))\right)\right\}\\
    &\phantom{=\sum_{i\geq 1}\mathbb{E} \left\{v(B(\tau_i^x))\right\}, \quad \mbox{where} \quad v(y)\,\,}
    =r^2(y)\int_{B(0,1)} -2G_{B(0,1)}(0,z)f(y+r(y)z)\;dz\\
    &\phantom{=\sum_{i\geq 1}\mathbb{E} \left\{v(B(\tau_i^x))\right\}, \quad \mbox{where} \quad v(y)\,\,}
    =\frac{r^2(y)}{d}\mathbb{E}\left\{f(y+r(y)Y)\right\}\\
    &=\dfrac{1}{d} \mathbb{E} \left\{ \sum_{k \geq 0} r^2(X^x_k) f(X^x_k + r(X^x_k) Y)   \right\}, \quad x \in D.
    \end{align} 

The proof of assertion (ii) is based on the following lemma
\begin{lem}\label[lem]{thm:grad U0 representation} Let $D \subset \mathbb{R}^d$, $d \geq 2$, be a bounded domain, and $f \in L^\infty(D)$. 
Then, for all  $x \in D$ we have
\begin{equation}\label{eq:grad U0 representation}
\begin{aligned}
    \nabla U f (x) 
    = \dfrac{d}{r(x)^{d+1} \omega_{d-1}} \int_{\partial B(x,r(x))} (z-x) u(z) \sigma(d z) 
   + \dfrac{2 r(x)}{d \, V_d} \int_{B(0,1)}  \Tilde{y} \left( \dfrac{1}{|\Tilde{y}|^d} - 1 \right) f(x+r(x)\Tilde{y}) \ d \Tilde{y},
\end{aligned}
\end{equation}
where $\omega_{d-1} =\dfrac{2 \pi^{d/2}}{\Gamma(d/2)}$ is the surface area of the unit sphere in $\mathbb{R}^d$, whilst $V_d = \dfrac{\pi^{d/2}}{\Gamma(d/2+1)}$ is the volume of the unit $d$-ball.
\end{lem}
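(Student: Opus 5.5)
The plan is to localize to the ball $B:=B(x,r(x))$, which is compactly contained in $D$ up to its boundary point(s) on $\partial D$. We work with $u:=Uf$ and first restrict to a slightly smaller ball $B_\rho:=B(x,\rho)$ with $\rho<r(x)$, so that $\overline{B_\rho}\subset D$; at the end we let $\rho\uparrow r(x)$. On $\overline{B_\rho}$, $u$ is $C^{1,\alpha}$ by the regularity recalled in \Cref{ss1}, and it is a (strong $W^{2,p}_{\sf loc}$) solution of $-\frac12\Delta u=f$.

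\textbf{Step 1 (decomposition).} Since $u$ is continuous on $\overline{B_\rho}$ and $B_\rho$ is regular, uniqueness for the Dirichlet problem on the ball gives
\begin{equation*}
u=H_{B_\rho}\bigl(u|_{\partial B_\rho}\bigr)+U_{B_\rho}f \quad \mbox{in } B_\rho ,
\end{equation*}
where $H_{B_\rho}$ and $U_{B_\rho}$ are the harmonic extension and Green operators of $B_\rho$. Equivalently, this is \eqref{eq:U0_rep_intro} combined with the strong Markov property at $\tau^x_{\partial B_\rho}$.

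\textbf{Step 2 (harmonic part).} For the harmonic term, differentiate the Poisson kernel of the ball at its center:
\begin{equation*}
\nabla_x P_{B_\rho}(x,z)=\frac{d\,(z-x)}{\omega_{d-1}\rho^{d+1}}.
\end{equation*}
Alternatively, apply the mean value property to the harmonic functions $\partial_j h$ and use the divergence theorem. This produces the first term of \eqref{eq:grad U0 representation} with $\rho$ in place of $r(x)$.

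\textbf{Step 3 (Green part).} For $U_{B_\rho}f(x')=-2\int_{B_\rho}G_{B_\rho}(x',y)f(y)\,dy$, use the explicit Green function of the unit ball, $G(\xi,\eta)=\Phi(\xi-\eta)-\Phi\bigl(|\eta|\xi-\eta/|\eta|\bigr)$, with $\Phi$ normalized consistently with $G_{B(0,1)}(0,\cdot)$ in \Cref{prop:U_0 and DU_0}. Computing the gradient in the first variable at $\xi=0$ gives
\begin{equation*}
\nabla_\xi G(0,\eta)=-\frac{\eta}{dV_d}\bigl(|\eta|^{-d}-1\bigr).
\end{equation*}
Rescaling $y=x+\rho\tilde y$ contributes the factor $\rho^{d}\cdot\rho^{1-d}=\rho$. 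This yields the second term $\frac{2\rho}{dV_d}\int_{B(0,1)}\tilde y\bigl(|\tilde y|^{-d}-1\bigr)f(x+\rho\tilde y)\,d\tilde y$.

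\textbf{Step 4 (main obstacle: justifying the differentiation).} The step I expect to be hardest is justifying differentiation under the integral for merely bounded $f$. The kernel gradient is of order $|x'-y|^{1-d}$, which is integrable, so difference quotients are dominated uniformly for $x'$ near $x$. I would make this rigorous as follows. Near the center, split $G_{B_\rho}$ into the Newtonian part $\Phi(x'-y)$ and a smooth correction. For the Newtonian part, the gradient of the potential of a bounded density is given by the differentiated integral (Gilbarg--Trudinger, Lemma 4.1). The correction is harmless.

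\textbf{Step 5 (limit $\rho\uparrow r(x)$).} Both sides are continuous in $\rho$: the volume term by dominated convergence, and the surface term because $u$ is continuous on $\overline{B(x,r(x))}$ (the surface integral can be rewritten as $\int_{\partial B(0,1)}\theta\,u(x+\rho\theta)\,\sigma(d\theta)$ with bounded $u$). The left-hand side $\nabla u(x)$ does not depend on $\rho$. Passing to the limit gives \eqref{eq:grad U0 representation}.

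If the limit at $\rho=r(x)$ causes trouble, namely when $B(x,r(x))$ touches an irregular part of $\partial D$, I would instead use \eqref{eq:U0_rep_intro} to write $u(z)=\mathbb{E}\bigl\{\int_0^{\tau^z_{\partial D}}f(B^z_t)\,dt\bigr\}$. Boundedness and continuity of $u$ up to $\partial B(x,r(x))$ from inside $D$ then still give the needed continuity, because the surface measure of the touching set is zero.
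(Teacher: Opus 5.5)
Your Steps 1--4 follow the paper's argument: decompose $u=Uf$ on a ball centred at $x$ into a Poisson integral plus a ball Green potential, and differentiate the explicit kernels at the centre. You carry this out more carefully on $B(x,\rho)$ with $\rho<r(x)$, and you justify differentiating under the integral, which the paper does not. The paper instead decomposes directly on $B(x,r(x))$ and never takes a limit in the radius. Your kernel gradients and the scaling factor $\rho$ are correct.

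The step that fails as written is Step 5. The touching set $\partial B(x,r(x))\cap\partial D$ need not be $\sigma$-null. For $D=B(x,R)$ it is the whole sphere, and it has positive measure whenever $\partial D$ contains a cap of $\partial B(x,r(x))$. So ``the surface measure of the touching set is zero'' cannot be the reason the surface term is continuous as $\rho\uparrow r(x)$. What you actually need is $u(x+\rho\theta)\to u(x+r(x)\theta)$ for $\sigma$-a.e.\ $\theta$, \emph{including} touching directions, where $u$ must be read as $0$. Your main line (``$u$ continuous on $\overline{B(x,r(x))}$'') gives this only for regular $D$, via $Uf\in C(\overline D)$ and $Uf|_{\partial D}=0$. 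The paper tacitly assumes the same thing when it prescribes $v_x=u$ on $\partial B(x,r(x))$.

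For an arbitrary bounded domain, finish with the probabilistic route you only gestured at.
\begin{itemize}
\item By the strong Markov property at $\tau_\rho:=\tau^x_{\partial B(x,\rho)}$, the surface term equals $\frac{d}{\rho^2}\mathbb{E}\{\int_{\tau_\rho}^{\tau^x_{\partial D}}f(B^x_t)\,dt\,(B^x_{\tau_\rho}-x)\}$.
\item Since $\tau_\rho\uparrow\tau_{r(x)}$ and the integrand is dominated by $r(x)\|f\|_\infty\tau^x_{\partial D}\in L^1$, dominated convergence lets you pass to $\rho=r(x)$.
\item On $\{B^x_{\tau_{r(x)}}\in\partial D\}$ one has $\tau^x_{\partial D}=\tau_{r(x)}$. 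So the limit is exactly the stated surface integral with $u=0$ on the touching set, which is also how the lemma is used afterwards in the paper.
\end{itemize}
Alternatively, show that the irregular points of the touching set are $\sigma$-null. A Borel subset of a sphere is non-thin at $\sigma$-a.e.\ of its points, by Wiener's criterion.

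A smaller point concerns the volume term. For merely measurable $f$, the pointwise convergence $f(x+\rho\tilde y)\to f(x+r(x)\tilde y)$ can fail. Run dominated convergence in the variable $y=x+\rho\tilde y$ instead, i.e.\ on $\frac{2}{dV_d}\int_{B(x,\rho)}(y-x)\left(|y-x|^{-d}-\rho^{-d}\right)f(y)\,dy$. This integrand converges pointwise and is bounded by $\|f\|_\infty|y-x|^{1-d}$.
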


\begin{proof}[Proof of \Cref{thm:grad U0 representation}]
Let $x \in D$ be fixed, and $v_x,w_x:B(x,r(x))\rightarrow \mathbb{R}$ be the $C^1$-solutions to 
\begin{eqnarray}
\left\{
\begin{array}{ll}
    -\frac{1}{2} \Delta v_x = 0  &\mbox{ in } B(x,r(x))\\[4pt]
    v_x  = u|_{\partial B(x,r(x))}  &\mbox{ on } \partial B(x,r(x))
\end{array},
\right.
\qquad
\left\{
\begin{array}{rl}
    -\frac{1}{2} \Delta w_x = f & \mbox{ in } B(x,r(x)) \\[4pt]
    w_x = 0 & \mbox{ on } \partial B(x,r(x))
\end{array},
\right.
\end{eqnarray}
so that
\begin{align}
    u(y) 
    &= v_x(y) + w_x(y),\quad y\in B(x,r(x)), \label{eq:sol u v w}\\
    \nabla_x u(x) 
    &= \lim_{D \ni y \to x} \left[\nabla_y v_x(y) + \nabla_y w_x(y)\right].\label{eq:grad sol lim}
\end{align}
First, regarding $v_x$, we have that
\begin{equation*}
    v_x(y) = \int_{\partial B(x,r(x))} K_{B(x,r(x))}(y,z) u(z) \sigma(d z), \quad y \in B(x,r(x)),
\end{equation*}
where $K_{B(x,r(x))}(\cdot,\cdot)$ is the Poisson kernel for the ball $B(x,r(x))$, namely
\begin{equation*}
    K_{B(x,r(x))}(y,z) = \dfrac{r(x)^2-|y-x|^2}{r(x) \omega_{d-1} |y-z|^d}, \quad y \in B(x,r(x)), z \in \partial B(x,r(x)).
\end{equation*} 
Thus, by dominated convergence, we have that
\begin{equation}\label{eq:lim partial v}
\begin{aligned}
    \lim_{y \to x} \nabla_y v_x(y) 
    & = \int_{\partial B(x,r(x))} \lim_{y \to x} \nabla_y K_{B(x,r(x))}(y,z) u(z) \sigma(d z)\\
    &=\dfrac{d}{r(x)^{d+1} \omega_{d-1}} \int_{\partial B(x,r(x))} (z-x) u(z) \sigma(d z). 
\end{aligned}
\end{equation}
Secondly, regarding $w_x$, we have that
\begin{equation*}
    w_x(y) = -2 \int_{B(x,r(x))} G_{B(x,r(x))}(y,\Tilde{y}) f(\Tilde{y}) \ d \Tilde{y}, \quad y \in B(x,r(x)),
\end{equation*}
where $G_{B(x,r(x))}(\cdot,\cdot)$ is the Green function for the Laplace operator $\Delta$ on the ball $B(x,r(x))$. 
We recall that the latter is given by
\begin{equation}\label{eq:Green ball}
    G_{B(x,r(x))}(y,\Tilde{y}) = \phi(y-\Tilde{y}) - \phi \left( \dfrac{y-x}{r(x)} (\Tilde{y} - y^\ast) \right), \quad y,\Tilde{y} \in B(x,r(x)),
\end{equation}
where $\phi(\cdot)$ is the fundamental solution for the Laplace equation in $\mathbb{R}^d$, namely
\begin{eqnarray}
\phi(y) =
\left\{
\begin{array}{ll}
    \dfrac{1}{2 \pi} \ln{\|y\|}, & d=2\\[4pt]
     -\dfrac{1}{d (d-2) V_d} \dfrac{1}{\|y\|^{d-2}}, & d \geq 3
\end{array},
\right.
\end{eqnarray}
where $y^\ast := x + \dfrac{r(x)^2 (y-x)}{|y-x|^2}$. 
Thus, for $d \geq 2$, we have that
\begin{equation}\label{eq:lim partial w}
\begin{aligned}
    \lim_{y \to x} \nabla_y w_x(y) & = -2 \int_{B(x,r(x))} \lim_{y \to x} \nabla_y G_{B(x,r(x))}(y,\Tilde{y}) f(\Tilde{y}) \ d \Tilde{y} \\
    & = -\dfrac{2}{d \, V_d} \int_{B(x,r(x))}  (x-\Tilde{y}) \left( \dfrac{1}{|x-\Tilde{y}|^d} - \frac{1}{r(x)^d} \right) f(\Tilde{y}) \ d \Tilde{y} \\
     & = 2 \dfrac{r(x)}{d \, V_d} \int_{B(0,1)}  \Tilde{y} \left( \dfrac{1}{|\Tilde{y}|^d} - 1 \right) f(x+r(x)\Tilde{y}) \ d \Tilde{y}.
\end{aligned}
\end{equation} 
\end{proof}

\noindent{\bf{ Proof of \Cref{prop:U_0 and DU_0} (continued).}} In view of \Cref{thm:grad U0 representation} we have
\begin{equation}\label{eq:dxu proof}
    \dfrac{\partial}{\partial x_i} u (x)  = \dfrac{d}{r(x)^2} \ \mathbb{E} \left[ u(Z) (Z-x) \right] + 2 \dfrac{r(x)}{d} \ \mathbb{E} \left\{ \widetilde{Y}_i \left( \dfrac{1}{|\widetilde{Y}|^d} - 1 \right) f(x+r(x) \widetilde{Y} )\right\}, \quad x \in D,
\end{equation}
where $Z \sim \mathrm{Uniform}(\partial B(x,r(x)))$ and $\widetilde{Y} \sim \mathrm{Uniform}(B(0,1))$. Taking $Z = B^x_{\tau^x_{\partial B(x,r(x))}}$, in view of assertion (i) and by the strong Markov property we have
\begin{equation*}
    u\left(B^x_{\tau^x_{\partial B(x,r(x))}} \right) = \mathbb{E} \left\{ \int_{\tau^{x}_{\partial B(x,r(x))}}^{\tau^{x}_{\partial D}} f(B^{x}_t) \ d t \;\bigg|\; \mathcal{F}_{\tau^{x}_{\partial B(x,r(x))}}\right\},
\end{equation*}
hence
\begin{equation*}
    \mathbb{E} \left\{ u\left(B^x_{\tau^x_{\partial B(x,r(x))}} \right) \left( 
         B^x_{\tau^x_{\partial B (x,r(x))}} -x \right) \right\} = \mathbb{E} \left\{ \int_{\tau^x_{\partial B(x,r(x))}}^{\tau^x_{\partial D} }f(B^x_t) \ d t \ \left( 
         B^x_{\tau^x_{\partial B (x,r(x))}} -x \right)\right\},
\end{equation*}
which proves the first equality in assertion (ii).

The second equality in assertion (ii) is an obvious consequence of the second equality in assertion (i).
\end{proof}

\begin{proof}[\bf{Proof of \Cref{prop:H and DH}}]\label{proof:prop:H and DH}

The first equality in assertion (i) is well-known, see for example \cite{Ge92}.

For the second equality in (i) we proceed as follows. 
As in the proof of \Cref{thm:grad U0 representation} regarding $v_x(y)$ with $x\in D$ being fixed, we obtain similarly that
\begin{equation}\label{eq:dxh proof}
    \dfrac{\partial}{\partial x_i} h (x)  = \dfrac{d}{r(x)^2} \ \mathbb{E} \left[ h(Z) (Z-x) \right],
\end{equation}
where $Z \sim \mathrm{Uniform}(\partial B(x,r(x)))$. 
Taking $Z = B^x_{\tau^x_{\partial B(x,r(x))}}$, in view of the first part of (i) and the strong Markov property, we have that
\begin{equation*}
    h\left(B^x_{\tau^x_{\partial B(x,r(x))}} \right) = \mathbb{E} \left\{ g \left(B^z_{\tau^z_{\partial D}} \right) \right\}\bigg|_{z=Z}
    =\mathbb{E} \left\{ g \left(B^x_{\tau^x_{\partial D}} \right) \big| \mathcal{F}_{\tau^x_{\partial B(x,r(x))}} \right\},
\end{equation*}
hence,
\begin{equation*}
    \mathbb{E} \left\{ h \left(B^x_{\tau^x_{\partial B(x,r(x))}} \right) \left(B^x_{\tau^x_{\partial B(x,r(x))}} -x\right) \right\} 
    = \mathbb{E} \left\{ g \left(B^x_{\tau^x_{\partial D}} \right) \   \left(B^x_{\tau^x_{\partial B(x,r(x))}} -x\right) \right\},
\end{equation*}
which completes the proof of (i). 

In order to prove assertion (ii), let us first notice that for $x\in D$ and $M\geq 0$ we have that $X_M^x$ has the same distribution as $B^x_{\tau_M^x}$, where $\tau_M^x$ is given by \eqref{eq:tau_i}.
Consequently, for $g$ bounded and measurable on $\overline{D}$, we have 
\begin{equation*}
    \mathbb{E}\left\{g(X_M^x)\right\}
    =\mathbb{E}\left\{g\left(B^x_{\tau_M^x}\right)\right\}, 
    \quad \mathbb{E}\left\{g\left(X_M^{x}\right)(X_1^x-x)\right\}
    =\mathbb{E}\left\{g\left(B^x_{\tau_M^x}\right)(B_{\tau_1^x}-x)\right\}.
\end{equation*}
Now, since $(B^x_{t})_{0\leq t\leq \tau^x_{\partial D}}$ is a bounded martingale closed by $B^x_{\tau^x_{\partial D}}$, by Doob's convergence theorem and the fact that $B$ has continuous paths, we deduce that
\begin{equation*}
    \lim\limits_{M\to\infty} B^y_{\tau^x_M}=B^y_{\tau^x_{\partial D}} \quad \mbox{for all } y\in \overline{D}\quad  \mathbb{P}\mbox{-a.s}.
\end{equation*}
From this, using the condition on $g$ imposed in the statement, we get
\begin{equation*}
    \lim\limits_{M\to\infty} g\left(B^x_{\tau^x_M}\right)=g\left(B^x_{\tau^x_{\partial D}}\right)  \quad    \mathbb{P}\mbox{-a.s and boundedly}.
\end{equation*}
Now, assertion (ii) follows by the dominated convergence theorem.
\end{proof}

\begin{proof}[\bf{Proof of \Cref{prop:removing singularities}}]\label{proof:prop 2.6}

(i.1). Let $\psi : B(0,1)\rightarrow \mathbb{R}$ be bounded and measurable. 
Then
    \begin{align*}
            \mathbb{E}\left\{\psi(Y)\right\}
            &=-\frac{2}{\pi}\int_{B(0,1)}\psi(y)\ln{\|y\|}\;dy
            =-\frac{2}{\pi}\int_0^1\ln{r}\int_{S(0,r)}\psi(y)\;\sigma(dy)\;dr\\
            &=-4\int_0^1r\ln{r}\mathbb{E}\left\{\psi(Z_r)\right\}\; dr, \quad \mbox{where } Z_r\sim \mathrm{Uniform}(\partial B(0,r))\\       
            &=\int_0^1\underbrace{(-4r\ln{r})}_{=:\rho(r)}\mathbb{E}\left\{\psi(rZ)\right\}\; dr, \quad \mbox{where } Z\sim \mathrm{Uniform}(\partial B(0,1))\\
            &=\mathbb{E}\left\{\psi(RZ)\right\}, \quad \mbox{where } R\sim \rho, \, R \mbox{ and } Z \mbox{ independent}.
    \end{align*}
    Note that $R$ has the CDF $F_\rho(t) \coloneqq \int_0^t \rho (r) \, dr = t^2-t^2\ln{t^2}$, $t \in [0,1]$. 
            It then follows that $F^{-1}_\rho(t)=e^{\frac{1+W_{-1}(-t/e)}{2}}, \;t\in[0,1]$.
            Consequently, if $U\sim {\rm Uniform}([0,1])$ independently of $Z\sim { \rm Uniform}(\partial B(0,1))$, then $R:=e^{\frac{1+W_{-1}(-U/e)}{2}}\sim \rho$ and
    \begin{equation*}
                \mathbb{E}\left\{\psi(Y)\right\}=\mathbb{E}\left\{\psi(RZ)\right\}.
    \end{equation*}

    Let us now focus on the second representation claimed in (i.1), for which we use the above computations and continue as follows:
    \begin{align}
            \mathbb{E}\left\{\psi(Y)\right\}        &=-4\int_0^1r\ln{r}\mathbb{E}\left\{\psi(rZ)\right\}\; dr, \quad \mbox{where } Z\sim {\rm Uniform}(\partial B(0,1))\\
            &=-4\beta(2,2)\int_0^1\frac{r\ln{r}}{r(1-r)}\beta(r;2,2)\mathbb{E}\left\{\psi(rZ)\right\}\; dr, \quad \beta(r;2,2):=\frac{r(1-r)}{\beta(2,2)}\\
            &=-\frac{2}{3}\mathbb{E}\left\{\frac{\ln{R}}{(1-R)}\psi(RZ)\right\},
    \end{align}
    where $\beta$ is the usual $\beta$ function, whilst $R\sim {\sf Beta}(2,2)$ is independent of $Z$.

(i.2). We have
    \begin{align*}
                \mathbb{E}\{\psi(Y)\}
                &=\frac{2}{(d-2)\lambda(B(0,1))}\int_{B(0,1)}\psi(y) \left(\frac{1}{\|y\|^{d-2}}-1\right)  \;dy\\
                &=\frac{2\sigma(S(0,1))}{(d-2)\lambda(B(0,1))}\int
                _0^1\left(\frac{1}{r^{d-2}}-1\right)r^{d-1}\frac{1}{\sigma(S(0,r))}\int_{S(0,r)}\psi(y)   \;dy\;dr\\
                &=\frac{2d}{d-2}\int_0^1 r\left(1-r^{d-2}\right) \mathbb{E}\left\{\psi(rZ)\right\}\; dr, \quad \mbox{where } Z \sim {\rm Uniform}(\partial B(0,1))\\
                &=\frac{2d}{(d-2)^2}\int_0^1 r^{\frac{2}{d-2}-1}\left(1-r\right) \mathbb{E}\left\{\psi\left(r^{\frac{1}{d-2}}Z\right)\right\}\;dr, \quad \mbox{where } Z \sim {\rm Uniform}(\partial B(0,1))\\
                &= \mathbb{E}\left\{\psi(R^{\frac{1}{d-2}}Z)\right\}, \quad \mbox{where } R\sim \mbox{Beta}\left(\frac{2}{d-2},2\right) \mbox{ independent of } Z.
    \end{align*}    

(ii). We first set a simplifying notation:
\begin{equation*}
    \mathbb{E} \left\{ \widetilde{Y} \left( \dfrac{1}{|\widetilde{Y}|^d} - 1 \right) \underbrace{f(x+r(x) \widetilde{Y} )}_{=:F(\tilde{Y})}\right\}
    =\mathbb{E} \left\{ \widetilde{Y} \left( \dfrac{1}{|\widetilde{Y}|^d} - 1 \right) F(\tilde{Y})\right\}.
\end{equation*}
Then,
\begin{align*}
\mathbb{E} \left\{ \widetilde{Y} \left( \dfrac{1}{|\widetilde{Y}|^d} - 1 \right) F(\tilde{Y})\right\}
&=\frac{1}{\lambda(B(0,1))}\int_{B(0,1)} y \left( \dfrac{1}{|y|^d} - 1 \right) F(y)\; dy\\
&=\frac{1}{\lambda(B(0,1))}\int_0^1 \left( \dfrac{1}{r^d} - 1 \right)\int_{S(0,r)}y  F(y)\; \sigma(dy)\;dr\\
&=\frac{1}{\lambda(B(0,1))}\int_0^1 \left( \dfrac{1}{r^{d}} - 1 \right)\sigma(S(0,r))\mathbb{E}\left\{ Z_r  F(Z_r)\right\}\;dr, \quad \mbox{where } Z_r \sim \mbox{Uniform}(S(0,r))\\
&=\frac{1}{\lambda(B(0,1))}\int_0^1 \left( \dfrac{1}{r^{d}} - 1\right)\sigma(S(0,r))\mathbb{E}\left\{ rZ  F(rZ)\right\}\;dr, \quad \mbox{where } Z \sim \mbox{Uniform}(S(0,1))\\
&=\frac{\sigma(S(0,1))}{\lambda(B(0,1))}\int_0^1 \left( \dfrac{1}{r^{d}} - 1\right)r^d\mathbb{E}\left\{ Z  F(rZ)\right\}\;dr\\
&=d\int_0^1 \left(1 - r^d\right)\mathbb{E}\left\{ Z  F(rZ)\right\}\;dr
=d\int_0^1 \left(1 - r\right)\sum\limits_{0\leq k\leq d-1}r^k\mathbb{E}\left\{ Z  F(rZ)\right\}\;dr\\
&=\frac{d}{2}\int_0^1 \underbrace{2\left(1 - r\right)}_{\rho(r)}\sum\limits_{0\leq k\leq d-1}r^k\mathbb{E}\left\{ Z  F(rZ)\right\}\;dr\\
&=\frac{d}{2}\mathbb{E}\left\{\sum\limits_{0\leq k\leq d-1}R^k  Z  F(RZ)\right\}, \quad \mbox{where } R\sim \rho, R \mbox{ independent of } Z.
\end{align*}
Since $F_\rho(t):=\int_0^t \rho(r)\;dr=2t-t^2$ we get that $F_\rho^{-1}(t)=1-\sqrt{1-t}, t\in[0,1]$, hence
\begin{equation*}
    1-\sqrt{1-U}\sim \rho \quad \mbox{and thus}\quad 1-\sqrt{U}\sim \rho,\quad \mbox{where } U\sim {\rm Uniform}([0,1]). \qedhere
\end{equation*}
\end{proof}

\begin{proof}[\bf{Proof of  Lemma~ \ref{lem:weak_convergence} }]\label{proof:lem 2.10}
    Let us first treat the weak convergence.
    To this end, note that 
    \begin{equation*}
        \|f_{\mathbf{V}_m}\|_{L^\infty(D)}\leq \|f\|_{L^\infty(D)}, \quad m\geq 1,
    \end{equation*}
    which implies that the sequence $\left(f_{\mathbf{V}_m}\right)_{m\geq 1}$ is bounded in $L^r(D), r\geq 1$.
    Since $L^r(D)$ is reflexive for every $1<r<\infty$, by the Banach-Alaoglu theorem, the sequence $\left(f_{\mathbf{V}_m}\right)_{m\geq 1}$ is $L^r(D)$-weakly relatively compact. 
    Thus, the first convergence in the statement follows once we identify the limit of any weakly convergent subsequence with $f$.
    Let $f_{\mathbf{V}_{m_k}}\rightharpoonup \tilde{f}$ in $L^r(D)$ for some $1<r<\infty$.
    Then, for every $h$ Lipschitz on $\mathbb{R}^d$  we have on one hand that
    \begin{equation*}
        \int_D h_{\mathbf{V}_{m_k}} f \;dx
        =\int_D f_{\mathbf{V}_{m_k}} h \;dx
        \mathop{\longrightarrow}\limits_{k\to\infty} \int_D \tilde{f} h \; dx
    \end{equation*}
    On the other hand, for $h$ as above we clearly have that
    \begin{equation*}
        \lim\limits_{m\to\infty} h_{\mathbf{V}_{m_k}}=h\quad \mbox{uniformly on } D. 
    \end{equation*}
    Thus, for all $h$ as above we get
    \begin{equation*}
        \int_D h f \;dx=\int_D h \tilde{f} \;dx,
    \end{equation*}
    hence $\tilde{f}=f$ a.e., and the proof of the first convergence is now complete.

    Let us now turn to the convergence that regards the boundary data $g$.
    First of all, it is clear that
    \begin{equation*}
        \sup\limits_{z\in \partial D} |g_{\mathbf{\Gamma}_{p}}(z)|\leq \sup\limits_{z\in \partial D} |g(z)|.
    \end{equation*}
    Furthermore, if $z\in \partial D $ is a continuity point for $g$, then, since the set $A$ of discontinuities of $g$ is finite, there exists $p_0\geq 1$ such that for every $p\geq p_0$: there exists a unique $i_p\in \{1,\dots,p\}$ such that $z\in \Gamma^{i_p}_p$ and $\overline{\Gamma^{i_p}_p} \cap A=\emptyset$. 
    Thus, if $(z^i_p)_{1\leq i\leq p}$ denote the centers of the partition $\mathbf{\Gamma}_p$, $p\geq 1$, then by the uniform continuity of $g$ in any compact subset of $\partial D\setminus A$ we get
    \begin{equation*}
        |g(z)-g_{\mathbf{\Gamma}_p}(z)|=|g(z)-g(z^{i_p}_p)|\mathop{\longrightarrow}\limits_{p\to\infty} 0.
    \end{equation*}
    The proof is now complete.
\end{proof}

\begin{proof}[\bf{Proof of Proposition~\ref{prop:lim_mp}}]\label{proof:prop 2.11}
As it is well known, see e.g. \cite{GrWi82},  the Green function $G_D$ satisfies 
\begin{equation*}
    G_D(x,\cdot)\in L^q(D),\quad \forall q<\frac{d}{d-2},\quad \nabla_x G_D(x,\cdot)\in L^r(D),\quad \forall r<\frac{d}{d-1},\quad x\in D.
\end{equation*}
Consequently, for every such $r$ and $q$, by \Cref{lem:weak_convergence} we obtain that for every $x\in D$
\begin{equation*}
   Uf_{\mathbf{V}_m}(x)=\int_D G_D(x,y)f_{\mathbf{V}_m}(y)\;dy\mathop{\longrightarrow}\limits_{m\to\infty} \int_D G_D(x,y)f(y)\;dy=U_0f(x),
\end{equation*}
as well as
\begin{equation*}
   \nabla_x Uf_{\mathbf{V}_m}(x)
   =\int_D \nabla_x G_D(x,y)f_{\mathbf{V}_m}(y)\;dy\mathop{\longrightarrow}\limits_{m\to\infty} \int_D \nabla_x G_D(x,y)f(y)\;dy
   =Uf(x).
\end{equation*}
Finally, the second convergence follows by applying the dominated convergence theorem and Lemma~\ref{lem:weak_convergence} to the representation given by Proposition~\ref{prop:H and DH}, taking into account that the Brownian motion never hits a finite set of points, in particular
\begin{equation*}
\mathbb{P}\left\{ B^Z_{\tau_{\partial D}} \in A\right\}=0=\mathbb{P}\left\{ B^{x}_{\tau^Z_{\partial D}} \in A\right\}=0, \quad x\in D,   
\end{equation*}
where $A$ is the finite set of discontinuities of $g$, whilst $Z = B^x_{\tau^x_{\partial B(x,r(x))}}$.
\end{proof}